\pdfoutput=1
\documentclass[a4paper,12pt,reqno,oneside]{amsart}

\usepackage{geometry}
\usepackage{lmodern}
\usepackage[stretch=10]{microtype}
\usepackage{graphicx}
\usepackage{tikz-cd}

\usepackage{amsmath}
\usepackage{amssymb}
\usepackage{amsthm}
\usepackage[matrix,arrow,cmtip]{xy}
\usepackage{bbm}
\usepackage[shortlabels]{enumitem}
\PassOptionsToPackage{hyphens}{url}
\usepackage[bookmarksnumbered,bookmarksdepth=2,bookmarksopen,colorlinks,linkcolor=black,citecolor=black,urlcolor=black,linktoc=all]{hyperref}
\usepackage[capitalise]{cleveref}

\newcommand{\Qbar}{\overline \bQ}

\newcommand{\bC}{\mathbb{C}}

\newcommand{\bN}{\mathbb{N}}

\newcommand{\bQ}{\mathbb{Q}}

\newcommand{\bZ}{\mathbb{Z}}
\newcommand{\ZZ}{\mathbb{Z}}
\newcommand{\QQ}{\mathbb{Q}}
\newcommand{\RR}{\mathbb{R}}
\newcommand{\CC}{\mathbb{C}}

\newcommand{\GG}{\mathbb{G}}
\newcommand{\PP}{\mathbb{P}}
\newcommand{\AAA}{\mathbb{A}}

\newcommand{\cA}{\mathcal{A}}
\newcommand{\cB}{\mathcal{B}}
\newcommand{\cC}{\mathcal{C}}
\newcommand{\cD}{\mathcal{D}}

\newcommand{\cG}{\mathcal{G}}
\newcommand{\cH}{\mathcal{H}}
\newcommand{\cI}{\mathcal{I}}

\newcommand{\cM}{\mathcal{M}}

\newcommand{\cO}{\mathcal{O}}
\newcommand{\cP}{\mathcal{P}}

\newcommand{\fp}{\mathfrak{p}}

\newcommand{\fC}{\mathfrak{C}}
\newcommand{\fG}{\mathfrak{G}}

\newcommand{\fI}{\mathfrak{I}}
\newcommand{\fR}{\mathfrak{R}}

\newcommand{\fX}{\mathfrak{X}}

\newcommand{\Ag}{\mathcal{A}_g}

\newcommand{\gGSp}{\mathbf{GSp}}
\newcommand{\gPGSp}{\mathbf{PGSp}}
\newcommand{\gPSp}{\mathbf{PSp}}

\newcommand{\gK}{\mathbf{K}}

\newcommand{\gM}{\mathbf{M}}

\newcommand{\gSp}{\mathbf{Sp}}

\newcommand{\rM}{\mathrm{M}}

\DeclareMathOperator{\Aut}{Aut}

\DeclareMathOperator{\codim}{codim}

\DeclareMathOperator{\disc}{disc}
\DeclareMathOperator{\End}{End}
\DeclareMathOperator{\Frac}{Frac}
\DeclareMathOperator{\Gal}{Gal}
\DeclareMathOperator{\GL}{GL}

\DeclareMathOperator{\Lie}{Lie}
\DeclareMathOperator{\Mor}{Mor}

\DeclareMathOperator{\rk}{rk}

\DeclareMathOperator{\Sp}{Sp}
\DeclareMathOperator{\Spf}{Spf}
\DeclareMathOperator{\Spec}{Spec}

\newcommand{\an}{\mathrm{an}}

\newcommand{\adj}{\mathrm{adj}}

\newcommand{\can}{\mathrm{can}}

\newcommand{\der}{\mathrm{der}}
\newcommand{\formal}{\mathrm{for}}
\newcommand{\id}{\mathrm{id}}
\newcommand{\inv}{\mathrm{inv}}

\newcommand{\red}{\mathrm{red}}
\newcommand{\rig}{\mathrm{rig}}

\newcommand{\van}{{v\textrm{-}\mathrm{an}}}
\newcommand{\vhatan}{{\hat v\textrm{-}\mathrm{an}}}
\newcommand{\vrig}{{v\textrm{-}\mathrm{rig}}}

\newcommand{\CCan}{{\CC\textrm{-}\mathrm{an}}}

\newcommand{\abs}[1]{\lvert #1 \rvert}

\newcommand{\powerseries}[2]{#1 [\![ #2 ]\!]}
\newcommand{\tatealgebra}[2]{#1 \langle #2 \rangle}

\newcommand{\ov}{\overline}

\newcommand{\fullmatrix}[4]{\left( \begin{matrix} #1 & #2 \\ #3 & #4 \end{matrix} \right)}
\newcommand{\fullsmallmatrix}[4]{\bigl( \begin{smallmatrix} #1 & #2 \\ #3 & #4 \end{smallmatrix} \bigr)}

\newcommand{\defterm}[1]{\textbf{#1}}

\newtheorem{lemma}{Lemma}[section]
\newtheorem{proposition}[lemma]{Proposition}
\newtheorem{theorem}[lemma]{Theorem}
\newtheorem{corollary}[lemma]{Corollary}
\newtheorem{conjecture}[lemma]{Conjecture}

\Crefname{conjecture}{Conjecture}{Conjectures} % Work around bug in cleveref

\Crefname{claim}{Claim}{Claims}

\newtheorem*{lemma*}{Lemma}
\newtheorem*{proposition*}{Proposition}
\newtheorem*{theorem*}{Theorem}
\newtheorem*{corollary*}{Corollary}
\newtheorem*{claim*}{Claim}

\theoremstyle{definition}
\newtheorem*{definition}{Definition}
\newtheorem{remark}[lemma]{Remark}

\usepackage{ifthen}
\newcounter{constant}

\newcommand{\newC}[1]{%
  \ifthenelse{\equal{#1}{*}} {%
      \stepcounter{constant} c_{\theconstant}%
  } {%
      \refstepcounter{constant} c_{\theconstant} \label{C:#1}%
  }%
}
\newcommand{\refC}[1]{c_{\ref*{C:#1}}}

\usepackage{color}

% \makeatletter
% \def\subsection{\@startsection{subsection}{2}%
% \z@{.9\linespacing\@plus.5\linespacing}{-.5em}%
% {\normalfont\bfseries}}
% \makeatother

\title[The large Galois orbits conjecture]{The large Galois orbits conjecture under multiplicative degeneration}

\author{Christopher Daw}
\author{Martin Orr}

% \address{Daw: Department of Mathematics and Statistics, University of Reading,
%     White\-knights,  PO Box 220,  Reading,  Berkshire RG6 6AX,  United Kingdom}
% \email{chris.daw@reading.ac.uk}
% 
% \address{Orr: Department of Mathematics, The University of Manchester, Alan Turing Building, Oxford Road, Manchester M13 9PL, United Kingdom}
% \email{martin.orr@manchester.ac.uk}

\begin{document}

\begin{abstract}
    We establish the PEL type large Galois orbits conjecture for Hodge generic curves in $\cA_g$ possessing multiplicative degeneration. Combined with our earlier works, this concludes the proof of the Zilber--Pink conjecture in $\cA_2$ for such curves. We also deduce several new cases of Zilber--Pink in $\cA_g$ for $g\geq 3$. Our proof uses Andr\'e's G-functions method, using formal and rigid uniformisation of semiabelian schemes to interpret the $p$-adic evaluations of the period G-functions.
\end{abstract}

\maketitle

\setcounter{tocdepth}{1}
\tableofcontents

\section{Introduction}\label{intro}

The large Galois orbits conjecture (LGO) is commonly recognised as the hardest ingredient in the Pila--Zannier strategy for solving questions of unlikely intersections, such as the Zilber--Pink conjecture.
It predicts that points of unlikely intersection between special subvarieties and an arbitrary subvariety of a Shimura variety should have Galois orbits whose size is bounded below by a polynomial in the ``complexity'' of a special subvariety containing the point.  For a more detailed discussion of LGO, see \cref{sec:LGO}.

In this article, we will only consider the Shimura variety~$\Ag$, the moduli space of principally polarised abelian varieties, and its special subvarieties of PEL type, that is, those which parameterise abelian varieties with specified endomorphism rings.
The complexity of a PEL type special subvariety is the absolute value of the discriminant of its generic endomorphism ring.  We refer to LGO in this setting as \textbf{PEL type LGO} (see \cref{lgo-pel}).

Our main result is PEL type LGO for Hodge generic curves in $\cA_g$ satisfying a multiplicative degeneracy condition.
In order to state it precisely, we need to introduce some terminology.
We say that a geometrically irreducible algebraic curve $C \subset \Ag$ is \defterm{Hodge generic} if it is not contained in any special subvariety of~$\Ag$ of positive codimension.

For a point $s\in\cA_g(\ov\QQ)$, we denote by $A_s$ the corresponding abelian variety (defined up to isomorphism over~$\ov\QQ$) and by $\End(A_s) \otimes \QQ$ the endomorphism algebra $\End(A_s)\otimes_\ZZ \QQ$ of $A_s$. For a point $s \in C$, we say that the abelian variety $A_s$ has \defterm{unlikely endomorphisms} if, either
\begin{enumerate}
\item $g \geq 3$ and $\End(A_s) \otimes \QQ \not\cong \QQ$, or
\item $g=2$ and $\End(A_s) \otimes \QQ \not\cong \QQ, \QQ \times \QQ$, or a real quadratic field.
\end{enumerate}
This terminology is derived from the fact that these conditions are equivalent to the condition that $s$ lies on a PEL type special subvariety of $\Ag$ of codimension at least~$2$ (see \cite[Table~1]{ExCM} for $g=2$ and \cite[Prop.~1.7]{PELtype} for $g \geq 3$). In other words, the condition characterises exactly the unlikely intersections between $C$ and the PEL type special subvarieties.

Our main theorem is as follows.

\begin{theorem} \label{main-bound}
Let $g \geq 2$.
Let $C$ be a geometrically irreducible Hodge generic algebraic curve in $\Ag$ defined over $\Qbar$ such that the Zariski closure of~$C$ in the Baily--Borel compactification of $\Ag$ intersects the zero-dimensional stratum of the boundary.
Then there exist constants $\newC{main-bound-mult}, \newC{main-bound-exp} > 0$ such that, for every $s \in C(\Qbar)$ for which $A_s$ has unlikely endomorphisms, we have
\[ [\QQ(s):\QQ] \geq \refC{main-bound-mult} \abs{\disc(\End(A_s))}^{\refC{main-bound-exp}}. \]
\end{theorem}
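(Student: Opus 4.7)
The plan is to follow Andr\'e's G-functions method as adapted in our earlier papers, with the new ingredient being the use of formal and rigid-analytic uniformisation of semiabelian schemes at a place of multiplicative degeneration, in place of the purely archimedean analysis used previously.

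First, I would work in a formal neighbourhood of a cusp. By hypothesis, the Zariski closure $\ov{C}$ meets a zero-dimensional stratum of the Baily--Borel boundary, which corresponds (via the toroidal picture) to totally degenerate semiabelian reduction: there is a place and a formal parameter $t$ along $C$ at the cusp such that the universal semiabelian scheme pulled back to $C$ is, formally/rigid-analytically, a rank-$g$ torus $\gG_m^g$ modulo a lattice given by a period matrix $Q(t)$ whose entries are of the form $t^{m}\cdot u_{ij}(t)$ with $u_{ij}$ a unit formal series and $m$ a positive symmetric matrix in the relevant cone. Taking logarithms one obtains a period matrix of the form $\Omega(t) = M\log t + R(t)$ where $R(t)$ is a matrix of algebraic power series. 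The entries of $R(t)$, suitably renormalised, will play the role of G-functions: they solve the Gauss--Manin connection along $C$, which is a linear ODE with regular singularities, and after the standard trick of passing to a basis adapted to the monodromy filtration at the cusp the formal solutions are G-series with bounded denominators.

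Next, the unlikely endomorphism hypothesis at a point $s \in C(\Qbar)$ is translated into a non-trivial algebraic relation among the periods of $A_s$: the Hodge-generic hypothesis ensures that this relation is not satisfied identically along $C$, so evaluating it on the G-function vector $R(t)$ (together with the explicit $M\log t$ contribution, handled via the standard Mumford--Tate considerations of \cite[Prop.~1.7]{PELtype}) gives a non-zero vector $F(t)$ of G-functions vanishing at $t=s$, or more precisely, a G-function whose value at $s$ is ``abnormally small''. Quantitatively, one needs an upper bound on the archimedean size of this value in terms of the discriminant $|\disc(\End(A_s))|$: this is the usual period upper bound coming from lower bounds on the Faltings height and the fact that endomorphisms force relations of small height, as in our previous work.

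The crux is then the Bombieri--Andr\'e-style hybrid inequality. For each place $v$ of the field of definition of~$s$, either $s$ is close to the cusp $v$-adically and the G-function has small $v$-adic absolute value, or it does not and the $v$-adic value is controlled by the denominator of the G-function. In the archimedean places we bound things below using the non-vanishing of the G-function and a small-value estimate in the spirit of Andr\'e's Hasse principle. The new input is that, at the prime of multiplicative degeneration, we may replace the purely formal estimates with genuine rigid-analytic ones via Raynaud's uniformisation of the semistable N\'eron model: the periods at such a place are literally logarithms of the Tate parameters, which are $p$-adic analytic and algebraic, so the $p$-adic value of $F$ at $s$ can be computed and bounded. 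Summing the resulting local inequalities over all places and invoking the product formula converts the archimedean upper bound into a lower bound on $[\QQ(s):\QQ]$ polynomial in $|\disc(\End(A_s))|$, which is the desired conclusion. The main obstacle I anticipate is the verification that the rigid-analytic periods at the bad place genuinely match the formal G-function expansion (so that one can bound all the local terms coherently), and the careful accounting of denominators required to make Andr\'e's criterion applicable uniformly in $s$.
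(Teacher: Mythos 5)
Your outline correctly identifies the broad skeleton — Andr\'e's G-functions method, interpreted non-archimedeanly via rigid uniformisation of the semiabelian scheme near the cusp, followed by a Hasse-principle step — and you even anticipate the right main obstacle (matching rigid periods to formal expansions). But there are several concrete gaps.

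First, you speak of ``the prime of multiplicative degeneration'' as if there were one bad non-archimedean place. The degeneration occurs at the \emph{point} $s_0 \in C(K)$, not at a prime; and a point $s$ with unlikely endomorphisms may lie $v$-adically close to $s_0$ for arbitrarily many primes $v$ (the endomorphism orders in question embed in $\rM_g(\ZZ)$, so there is no congruence obstruction). One therefore needs coherent $v$-adic interpretations of the \emph{same} power series at \emph{all} non-archimedean places of $K(s)$. The paper's resolution is to uniformise $\fG$ \emph{formally over $\Spec(\cO_K)$} along $\fC_0$ (via SGA3 rigidity of multiplicative-type groups), then prove compatibility of each analytic uniformisation (rigid at finite places, complex at infinite places) with the base change of that formal one. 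Without this, your ``coherent accounting'' step cannot start. Second, you say nothing about \emph{how} the relations are produced at the non-archimedean places: the paper lifts an endomorphism $f \in \End(G_s)\setminus\ZZ$ to an endomorphism of the rigid torus $(\GG_m^\an)^g$ (Gerritzen), reads off a commuting square of pullbacks, and manipulates the resulting matrix identity to a homogeneous relation of degree $g+1$ independent of the place. Third, at archimedean places Andr\'e's Construction~2.4.1 does \emph{not} apply when $\End(A_s)\otimes\QQ$ is $\QQ\times\QQ$ or a real quadratic field (the split into Cases~1--3 fails precisely there because the Rosati involution is trivial); a new symplectic Riemann-relations calculation is needed, which your ``Mumford--Tate considerations'' does not supply. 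Fourth, Andr\'e's Theorem~E only bites once you know the constructed relations lie outside the ideal $I$ of trivial period relations; proving $I$ prime and the relations non-trivial is a real step (a Jacobian/Cohen--Macaulay argument plus an explicit check) you omit entirely. Finally, the discriminant does not enter as a ``period upper bound.'' The relations give, via Theorem~E, a height bound $h(s) \ll [\QQ(s):\QQ]^{c}$ with no discriminant in sight; only afterwards does the Masser--W\"ustholz endomorphism estimate convert the Faltings-height bound into the stated lower bound for $[\QQ(s):\QQ]$ in terms of $\abs{\disc(\End(A_s))}$. Your description inverts this logic.
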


We recall that the Baily--Borel compactification of $\cA_g$ is naturally stratified as a disjoint union
\[\cA_g\sqcup\cA_{g-1}\sqcup\cdots\sqcup\cA_1\sqcup\cA_0\]
of locally closed subvarieties. The zero-dimensional stratum is the point $\cA_0$, and the condition that $C$ intersects it is equivalent to saying
that the associated family of principally polarised abelian varieties degenerates to
a torus (see \cite[Prop.~9.4]{ExCM} for a formal statement.) This is the condition to which we refer by the term \defterm{multiplicative degeneration}.

\subsection{Implications for Zilber--Pink}

Our interest in LGO is motivated by the Zilber--Pink conjecture.
Let $S$ denote a Shimura variety and let $\Sigma(d)$ denote the set of its special subvarieties of dimension at most $d$. The Zilber--Pink conjecture states that the intersection of $\Sigma(d)$ with an irreducible algebraic subvariety $V \subset S$ of dimension less than $\dim(S)-d$, not contained in a proper special subvariety, is not Zariski dense in~$V$. At the time of writing, it is a major unsolved problem in Diophantine geometry. 

Pila and Zannier devised a strategy to solve questions of unlikely intersections using the theory of o-minimal structures from model theory, as well as Diophantine bounds. This strategy first appeared in a new proof \cite{PZ08} of the Manin--Mumford conjecture for abelian varieties. For the Zilber--Pink conjecture for Shimura varieties, the strategy is described by Ren and the first-named author in \cite{DR18}, which builds on the work \cite{HP16} of Habegger and Pila on products of modular curves.

An immediate consequence of our main theorem is that, using results obtained in our earlier papers \cite{ExCM} and \cite{QRTUI}, we can complete the proof of Zilber-Pink in $\cA_2$ for curves with multiplicative degeneration. The new part of this result is that the intersection of $C$ with the union of the so-called $E^2$ curves is finite.  

\begin{corollary}\label{cor:A2}
Let $C$ be a geometrically irreducible Hodge generic algebraic curve in $\cA_2$ defined over $\Qbar$ such that the Zariski closure of~$C$ in the Baily--Borel compactification of $\cA_2$ intersects the zero-dimensional stratum of the boundary.
Then the intersection of $C$ with the union of all special curves of $\cA_2$ is finite.
\end{corollary}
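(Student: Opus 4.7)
The plan is to apply the Pila--Zannier strategy for the Zilber--Pink conjecture in Shimura varieties, as set up in \cite{DR18} and developed for $\cA_2$ in the earlier papers \cite{ExCM} and \cite{QRTUI}. The one-dimensional special subvarieties of $\cA_2$ fall into two families: the quaternionic Shimura curves, whose generic point corresponds to an abelian surface with multiplication by an indefinite quaternion algebra over $\QQ$, and the so-called $E^2$ curves, parametrising abelian surfaces isogenous to a self-product $E\times E$ of a (varying, non-CM) elliptic curve. Together with the finiteness of the intersection of $C$ with the set of CM points, the finiteness of $C\cap(\text{union of quaternionic Shimura curves})$ has already been obtained in \cite{ExCM} and \cite{QRTUI}. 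The corollary therefore reduces to showing that $C$ meets the union of the $E^2$ curves in only finitely many points.

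To treat this remaining case I would feed Theorem~\ref{main-bound} into the Pila--Zannier machinery of those earlier papers. Any intersection point $s\in C\cap Z$ with $Z$ an $E^2$ curve satisfies $\End(A_s)\otimes\QQ\supseteq M_2(\QQ)$ and so has unlikely endomorphisms, with PEL complexity $\Delta(s)=\abs{\disc(\End(A_s))}$. The strategy pulls $C$ back to a fundamental domain in the Siegel upper half space $\cH_2$ via the standard uniformisation, producing a subset $Y$ definable in $\Ranexp$ whose algebraic points record intersections of $C$ with $E^2$-type pre-special subvarieties of $\cH_2$. One then combines an arithmetic height bound on such points in terms of $\Delta(s)$ (established for the PEL setting in \cite{ExCM,QRTUI}), the Pila--Wilkie counting theorem on $Y$, the Galois orbit lower bound
\[ [\QQ(s):\QQ]\geq \refC{main-bound-mult}\,\Delta(s)^{\refC{main-bound-exp}} \]
furnished by Theorem~\ref{main-bound}, and the hyperbolic Ax--Lindemann theorem for $\cA_g$ of Pila--Tsimerman. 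The standard point-counting argument shows that, whenever $\Delta(s)$ is sufficiently large, Galois conjugates of lifts of $s$ must accumulate on positive-dimensional semi-algebraic subsets of $Y$. Ax--Lindemann then forces these to be contained in preimages of proper weakly special subvarieties of $\cA_2$ meeting $C$, which contradicts the Hodge genericity of $C$.

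As emphasised in the introduction, the principal obstacle in this programme has been establishing the polynomial Galois orbit lower bound under multiplicative degeneration, which is exactly what Theorem~\ref{main-bound} supplies. All of the other ingredients---definability of $Y$, the arithmetic height bound, the Pila--Wilkie theorem, and the hyperbolic Ax--Lindemann theorem---are by now standard in the PEL setting and assembled in \cite{DR18,ExCM,QRTUI}. Combining them with Theorem~\ref{main-bound} yields the desired finiteness and hence the corollary.
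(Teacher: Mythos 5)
Your proposal is correct and follows essentially the same route as the paper: identify that the $E^2$ curves are the only remaining case, and feed the new Galois orbit lower bound of Theorem~\ref{main-bound} into the Pila--Zannier machinery already assembled in \cite{ExCM} and \cite{QRTUI}. The paper's own proof is just the one-line citation of \cite[Thm.~1.1]{ExCM}, \cite[Thm.~1.3]{QRTUI} and Theorem~\ref{main-bound}, which packages exactly the argument you unfold.
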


\begin{proof}
This follows from \cite[Theorem 1.1]{ExCM} and \cite[Theorem 1.3]{QRTUI}, together with \cref{main-bound}.
\end{proof}

Similarly, building on our work \cite{PELtype}, we obtain the following cases of Zilber--Pink in $\Ag$ for $g \geq 3$, again, all under multiplicative degeneration.

\begin{corollary}\label{cor-ZP>2}
Let $g \geq 3$.
Let $C$ be a geometrically irreducible Hodge generic algebraic curve in $\Ag$, defined over $\Qbar$, such that the Zariski closure of~$C$ in the Baily--Borel compactification of $\Ag$ intersects the zero-dimensional stratum of the boundary.
Then there are only finitely many points in $C(\Qbar)$ of the following types:
\begin{enumerate}[(i)]
\item $\End(A_s) \otimes \QQ$ is a totally real field other than $\QQ$ (that is, a division algebra of type~I in the Albert classification);
\item $\End(A_s) \otimes \QQ$ is a non-split totally indefinite quaternion algebra over a totally real field (that is, a division algebra of type~II in the Albert classification).
%\item $A_s$ is related via a polarised isogeny to an abelian variety of the form $A_1 \times A_2$, equipped with a product of principal polarisations of $A_1$ and $A_2$ respectively, such that $A_1$ is not isogenous to~$A_2$, and $\End(A_1) \cong \End(A_2) \cong \ZZ$. 
\end{enumerate}
\end{corollary}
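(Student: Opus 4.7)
The plan is to combine the Pila--Zannier framework of our earlier work \cite{PELtype} with \cref{main-bound}, in direct analogy with the deduction of \cref{cor:A2}. Indeed, in \cite{PELtype} we carried out the Pila--Zannier strategy for PEL type unlikely intersections in $\Ag$ whose generic endomorphism algebras lie in Albert classes I or II, and reduced the desired finiteness to PEL type LGO along the curve~$C$. The present corollary will follow by discharging this LGO hypothesis using \cref{main-bound}.

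In more detail, the conditional argument in \cite{PELtype} assembles three standard ingredients. First, the PEL type special subvarieties whose generic abelian varieties have endomorphism algebras of type (i) or (ii) are enumerated by arithmetic data: a totally real field $F$, together with (in case (ii)) a totally indefinite quaternion algebra over $F$ and the necessary compatibility data with the polarisation. Each carries a notion of complexity given by the absolute value of the discriminant of its generic endomorphism ring. Second, one obtains a polynomial upper bound, in terms of this complexity, on heights of lifts to the Siegel upper half space of the relevant unlikely intersection points, via reduction theory of positive definite symmetric matrices; combined with an $\Ranexp$-definable parameterisation of the union of the PEL loci in question, the Pila--Wilkie counting theorem then confines such points to finitely many algebraic blocks on $C$, which by Hodge genericity must be zero-dimensional. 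Third, an arithmetic comparison between $\abs{\disc\End(A_s)}$ and the complexity of a PEL subvariety containing~$s$ converts the resulting discriminant bound into a bound on complexity.

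The upshot is that finiteness holds provided one has a polynomial lower bound of the form $[\QQ(s):\QQ]\geq c_1\abs{\disc\End(A_s)}^{c_2}$ for every unlikely intersection point $s\in C$. This is exactly what \cref{main-bound} supplies under our multiplicative degeneration hypothesis. The substantive obstacle --- proving this Galois orbit bound --- is the main content of the paper and is already overcome by \cref{main-bound}; what remains in the deduction of \cref{cor-ZP>2} is essentially only to check that the conditional framework of \cite{PELtype} specialises to Albert types I and II under our hypotheses, which is routine.
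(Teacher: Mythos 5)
Your proposal is correct and matches the paper's proof, which simply cites the conditional Zilber--Pink result of \cite[Theorem~1.3]{PELtype} together with \cref{main-bound}. The additional detail you give on the Pila--Zannier machinery inside \cite{PELtype} is an accurate gloss but not logically distinct from the paper's one-line deduction.
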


\begin{proof}
This follows from \cite[Theorem 1.3]{PELtype}, together with \cref{main-bound}.
\end{proof}

The reason that \cref{cor-ZP>2} does not cover all points with unlikely endomorphisms is that the Pila--Zannier strategy requires two arithmetic inputs: LGO, which is the problem addressed in this article, and so-called {\it parameter height bounds}, which were the main focus of \cite{QRTUI} and \cite{PELtype}. In the setting of \cref{cor-ZP>2}, one can derive finiteness whenever such parameter height bounds are available for the relevant special subvarieties of PEL type.

At the time of writing, parameter height bounds have been proved when $\End(A_s)$ is a division algebra of type I or~II in the Albert classification \cite{PELtype}, leading to \cref{cor-ZP>2}.  Parameter height bounds when $\End(A_s)$ is a division algebra of type III or~IV are work in progress by Bhatta.  That would complete the generalisation of \cref{cor-ZP>2} to all points where $A_s$ is simple and has unlikely endomorphisms.  It is likely that parameter height bounds for non-simple $A_s$ could be proved using the existing techniques, but there may be some technical complications.

\subsection{Large Galois orbits conjecture}\label{sec:LGO}

We formulate the aforementioned PEL type LGO conjecture as follows.

\begin{conjecture}\label{lgo-pel}
Let $d \in \ZZ_{\geq 0}$.
Let $V$ be a Hodge generic irreducible subvariety of $\Ag$, of dimension less than $\dim(\Ag)-d$, defined over~$\ov\QQ$.
Then there exist constants $\newC{lgo-pel-multiplier}, \newC{lgo-pel-exponent} > 0$, depending only on $V$, such that, for every point $s \in V$, if $\{s\}$ is a component of the intersection of~$V$ with a special subvariety of PEL type $Z \subset \Ag$ with $\dim(Z) \leq d$, we have
\[ \# \Gal(\ov\QQ/\QQ) \cdot s \geq \refC{lgo-pel-multiplier} \abs{\disc(\End(A_s))}^{\refC{lgo-pel-exponent}}. \]
\end{conjecture}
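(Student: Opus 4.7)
The conjecture is open in full generality; the realistic target matching the scope of this paper is the case of \cref{main-bound}, in which $V$ is a Hodge generic curve $C \subset \Ag$ whose Baily--Borel closure meets the zero-dimensional boundary stratum. The plan is to apply André's G-function method to the family of period coordinates on~$C$, exploiting the multiplicative degeneration to obtain a concrete formal and rigid uniformisation of the universal semiabelian scheme near the boundary. The heart of the method will be to show that, if the Galois orbit of a point $s \in C(\ov\QQ)$ with unlikely endomorphisms were too small compared to $|\disc(\End(A_s))|$, then the period G-functions would satisfy a global algebraic relation incompatible with Hodge genericity of~$C$.

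To construct the G-functions, I would choose a smooth projective model $\ov C$ of $C$ containing a point $c_0$ lying over the zero-dimensional cusp, together with a local parameter $t$ at $c_0$. After possibly replacing $\ov C$ by a finite cover to trivialise the relevant monodromy and polarisation data, the theory of Mumford--Raynaud produces a semiabelian extension of the family near $c_0$ whose formal and rigid uniformisations present the fibres as tori modulo a lattice of multiplicative periods $q(t) = (q_{ij}(t))$, symmetric in $i,j$ and vanishing at $t=0$. Together with $2\pi i$, the logarithms $\log q_{ij}(t)$ give a complex period matrix for the family, and the entries $q_{ij}(t)$ are G-functions in the sense of Siegel--André by algebraicity of the Gauss--Manin connection. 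The crucial advantage of the multiplicative degeneration hypothesis is that the same uniformisation is valid $p$-adically at any prime of good reduction, so the $q_{ij}(t)$ can be evaluated $p$-adically at every point $s \in C(\ov\QQ)$ that is $p$-adically close to~$c_0$.

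Next I would translate unlikely endomorphisms of $A_s$ into arithmetic data about these $p$-adic evaluations. An extra endomorphism corresponds to a $\ZZ$-linear relation among the $\log q_{ij}(s)$ (modulo the lattice generated by $2\pi i$) whose coefficients have denominator polynomially controlled by $|\disc(\End(A_s))|$. Feeding these relations into André's theorem on global relations between G-functions, in the form developed in our earlier papers \cite{ExCM}, \cite{QRTUI} and \cite{PELtype}, forces one of two outcomes: either $[\QQ(s):\QQ]$ is bounded below by the required polynomial in $|\disc(\End(A_s))|$, giving the conclusion, or the G-functions $q_{ij}(t)$ satisfy a nontrivial global algebraic relation on~$C$.

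The main obstacle is ruling out the second possibility using Hodge genericity. A nontrivial global algebraic relation among the $q_{ij}(t)$ must either reduce to a relation that is already satisfied by every Hodge generic curve (in which case the endomorphism data must be shown not to yield such a trivial identity) or descend to a proper special, or weakly special, subvariety of $\Ag$ containing $C$, contradicting Hodge genericity. The multiplicative degeneration makes this step tractable: the period matrix has an explicit shape in which endomorphism-induced relations become explicit integer linear relations among the $\log q_{ij}$, and a direct calculation with the symplectic pairing should show that every such relation cuts out a proper special subvariety of $\Ag$ whenever $\End(A_s) \otimes \QQ$ is unlikely in the sense preceding \cref{main-bound}. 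It is at this last step that the boundary geometry and the rigidity of the uniformisation enter in an essential way.
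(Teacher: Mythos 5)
The statement you were asked about is \cref{lgo-pel}, which the paper explicitly labels as a conjecture and does not prove in general; its Theorem~\ref{main-bound} proves only the special case of Hodge generic curves with multiplicative degeneration. You correctly flag this, and your sketch is a sketch of that special case. In broad outline—G-functions near the multiplicative degeneration point, formal/rigid uniformisation to interpret $p$-adic evaluations, relations from endomorphisms, André's Hasse principle—your plan is aligned with the paper. But there are several substantive deviations and gaps.

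First, you build your argument around multiplicative period parameters $q_{ij}(t)$ and their logarithms, asserting that the $q_{ij}$ themselves are the G-functions. The paper explicitly abandons the $q$-expansion picture for $g \geq 2$ (``We can no longer describe the uniformisation via explicit $q$-expansions, instead using rigid and formal geometry''). The G-functions in the paper are the entries $F_{ij}$, $G_{ij}$ of the matrix expressing $\phi^*_{v,s}$ of the invariant forms $\omega_i$ and of de Rham classes $\eta_i$ lying in the span of Gauss--Manin derivatives of the $\omega_i$, relative to the standard torus basis $\tilde\omega_j$. The restriction to Gauss--Manin derivatives is not cosmetic: it is what lets the integral formal model control the $p$-adic evaluations at almost all places simultaneously. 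Working with $\log q_{ij}$ modulo $2\pi i \ZZ$ does not give power series with coefficients in a number field in the higher-dimensional case and does not interface with the rigid/formal machinery the paper sets up.

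Second, your mechanism for converting endomorphisms into relations is not the paper's. You posit ``$\ZZ$-linear relations among the $\log q_{ij}(s)$'' whose coefficients have ``denominator polynomially controlled by $|\disc(\End(A_s))|$,'' and hope that feeding these into André's theorem produces the discriminant on the right-hand side of the bound. That is not how the degree is controlled nor where the discriminant enters. In the paper, a single endomorphism $f \in \End(G_s) \setminus \ZZ$ produces, via the commutation of $\phi^*_{v,s}$ with $f^*$ and $\tilde f^*$ and the adjugate matrix trick, a homogeneous polynomial relation in the $F_{ij}, G_{ij}$ of degree $g+1$ at all relevant non-archimedean places, independent of the discriminant; archimedean relations contribute degree $\leq 2$ per place. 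The total degree is bounded by a constant times $[K(s):K]$ (not by the discriminant). André's theorem then yields a \emph{height bound} $h(s) \ll [K(s):K]^{c}$. The discriminant enters only at the very end through the \emph{Masser--Wüstholz} isogeny/endomorphism estimate (Section~\ref{sec:main-proofs}, referencing \cite{MW:endo}), which converts the height bound into $[K(s):K] \gg |\disc(\End(A_s))|^{c'}$. This final conversion is entirely absent from your sketch, so even granting the rest, you have not produced the discriminant in the conclusion.

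Third, the dichotomy you state (``either the degree bound holds, or a nontrivial global relation forces a contradiction with Hodge genericity'') has the logic of André's theorem back to front. What must be shown—and this is the subtle part of Section~\ref{subsec:trivial-relations}—is that the constructed relation $P$ is \emph{not} in the ideal $I$ (resp.\ $I^\Lambda$) of trivial relations, so that André's theorem applies and gives the height bound. Hodge genericity is used to identify the ideal of trivial relations with $I$ (via the generic Mumford--Tate group being $\gGSp_{2g}$ and the monodromy density argument feeding \cref{lem:P-dense}); it is not used to derive a contradiction after the fact. Establishing non-triviality requires genuine case analysis on the endomorphism algebra (\cref{subsec:archimedean-relations} and the real-quadratic/$\QQ\times\QQ$ case), including the primality of $I$, none of which is touched by your ``direct calculation with the symplectic pairing'' remark.

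In summary: you are in the right neighbourhood, but the key technical content—interpreting periods via Gauss--Manin derivatives rather than $q$-parameters, the uniform $p$-adic interpretation via the integral formal model, the adjugate-matrix construction of relations, the non-triviality analysis including the primality of $I$, and the Masser--Wüstholz step—are all either missing or replaced with claims that would not hold in the form stated.
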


In order to generalise this conjecture to an arbitrary Shimura variety~$S$, it is necessary to define a suitable notion of complexity $\Delta(Z)$ for special subvarieties $Z \subset S$.
In the context of \cref{lgo-pel}, we take $\Delta(Z)$ to be the discriminant of the endomorphism ring of an abelian variety parameterised by a very general point of~$Z$.
Given a suitable notion of complexity, one could state a version of \cref{lgo-pel} with $\Ag$ replaced by the Shimura variety~$S$ and with $\abs{\disc(\End(A_s))}$ replaced by the complexity of the smallest special subvariety containing~$s$.
See \cite[Conj.~11.1]{DR18} for a precise formulation of such a conjecture for general Shimura varieties.
A version of this conjecture for $Y(1)^n$ was first formulated at \cite[Conj.~8.2]{HP16}.

\begin{remark} \label{unlikely-condition-superfluous}
Surprisingly, \cref{lgo-pel} for all $g$ implies \cref{lgo-pel} with the condition $\dim(Z) \leq d$ removed (that is, the condition that the intersections $V \cap Z$ are unlikely).
This may be proved by embedding $\Ag$ as a special subvariety of a sufficiently large $\cA_{g'}$, via a closed immersion $\iota \colon \Ag \to \cA_{g'}$, and choosing a Hodge generic subvariety $W \subset \cA_{g'}$ which contains $\iota(V)$ and satisfies $\dim(W)=\dim(V)+1$.
Then, away from an exceptional subvariety of~$V$, which may be dealt with by induction on~$\dim(V)$, every component of an intersection $V \cap Z$ (where $Z$ is a special subvariety of~$\Ag$ of any dimension) becomes a component of the intersection $W \cap \iota(Z)$, and the latter is unlikely in~$\cA_{g'}$ when $g'$ is large enough.

This argument does not permit us to relax the ``unlikely'' condition in \cref{main-bound}, because applying the argument to a curve~$V \subset \Ag$ would require LGO for a surface $W \subset \cA_{g'}$.
Indeed, we have been unable to prove the analogue of \cref{main-bound} for ``just likely intersections,'' namely, those points $s \in \cA_2$ with $\End(A_s) \otimes \QQ \cong \QQ \times \QQ$ or a real quadratic field (see \cref{just-likely-relations}). 
\end{remark}

The first large Galois orbits conjecture, for CM points in $\Ag$, was stated by Edixhoven \cite[Problem~14]{EM:problems}.
This was proved by Tsimerman \cite[Thm.~5.2]{Tsi18}, using the averaged Colmez conjecture due, independently, to Andreatta, Goren, Howard and Madapusi Pera \cite{AGHMP17} and Yuan and Zhang \cite{YZ18}.
More recently, the analogous conjecture for special points in all Shimura varieties was proved by Pila, Shankar, Tsimerman, Esnault and Groechenig \cite{PST+}, building on work of Binyamini, Schmidt, and Yafaev \cite{BSY}.
This finished the proof of the André--Oort conjecture for all Shimura varieties.

At first sight, Edixhoven's conjecture for CM points is stronger than the $d=0$ case of \cref{lgo-pel}, because it proposes a uniform bound for all CM points in~$\Ag$, while the bound in \cref{lgo-pel} for $d=0$ is only uniform across CM points contained in a codimension-$1$ subvariety $V \subset \Ag$.
However, \cref{unlikely-condition-superfluous} shows that Edixhoven's conjecture is implied by \cref{lgo-pel} (for $g$ replaced by some larger ~$g'$).

Going much further, one could ask whether a bound as in \cref{lgo-pel} might hold with constants depending only on~$g$, uniformly for all points of~$\Ag$ contained in proper special subvarieties of PEL type (without fixing a subvariety $V \subset \Ag$ and restricting to zero-dimensional components of intersections between $V$ and special subvarieties).
This would imply Coleman's conjecture on endomorphisms of abelian varieties of number fields \cite[Conj.~$\mathrm{C}(e,g)$]{BFGR06}, but we have very limited evidence for such a bold conjecture beyond the CM case. We do, however, note that a recent preprint of Fité and Goodman establishes Coleman's conjecture for abelian threefolds whose endomorphism ring is the ring of integers of an imaginary quadratic field \cite{FG}.

Related results have also been proved for commutative algebraic groups, in the form of height bounds for unlikely or ``just likely'' intersections with algebraic subgroups (note that all the known proofs of cases of LGO, including this paper, deduce the Galois bound from a height bound).
These results include \cite[Thm.~1]{BMZ99} and \cite[Lemme~3.3]{Rem05}, where $V$ is a curve, and \cite[Thm.]{Hab09:AVs}, \cite[Cor.~1.3]{Hab09:tori} and \cite[Thm.~2]{Kuh20}, where $V$ is of arbitrary dimension.
All of these results are restricted to intersections which are not in the so-called ``anomalous locus.''
This restriction prevents using the argument of \cref{unlikely-condition-superfluous} to remove the ``unlikely or just likely'' condition from the theorems on commutative group height bounds, because, in the construction of \cref{unlikely-condition-superfluous}, $\iota(V)$ is contained in the anomalous locus of~$W$.

\subsection{The strategy}

Our proof of \cref{main-bound} uses André's strategy from \cite[Ch.~X]{And89}: (i) near a point of multiplicative degeneration, the Taylor series of locally invariant periods of an abelian scheme (over~$\CC$) are G-functions; (ii) endomorphisms of fibres of the abelian scheme lead to algebraic relations between evaluations of the G-functions; (iii) by the ``Hasse principle for G-functions'', non-trivial global relations between evaluations of G-functions imply a height bound.
We then use (iv) Masser--Wüstholz isogeny estimates to convert the height bound into a Galois bound, similarly to previous work on LGO in moduli spaces of abelian varieties.

Since we deal with all endomorphism rings larger than~$\ZZ$, in particular including rings which inject into $\rM_g(\ZZ)$, points with these endomorphism rings may occur arbitrarily $p$-adically close to the point of multiplicative degeneration.  Therefore, we have to construct relations between evaluations of G-functions at all relevant places, both archimedean and non-archimedean.  This paper's new contributions in order to construct these relations are as follows.

\subsubsection{Non-archimedean interpretation of G-functions} \label{sssec:intro-non-arch-interp}

In order to interpret non-archimedean evaluations of the period G-functions, we use rigid analytic uniformisation of abelian schemes with multiplicative reduction.  This is a direct generalisation of the approach in our previous paper on $Y(1)^n$ \cite{Y(1)}, where we used the Tate uniformisation of elliptic curves.
Working with abelian varieties of dimension greater than~$1$ brings substantial new technical challenges.  We can no longer describe the uniformisation via explicit $q$-expansions, instead using rigid and formal geometry.
In order to ``match up'' the periods at different places of a number field, we use formal uniformisation of an abelian scheme, due to Raynaud \cite{Ray71} and Faltings and Chai \cite{FC90}, and verify compatibility between this formal uniformisation and the rigid and complex analytic uniformisations at all places.

In \cite{Y(1)}, we only obtained a non-archimedean interpretation of G-functions corresponding to periods of invariant differential forms.  In this paper, we go slightly further, considering also de Rham classes in the span of Gauss--Manin derivatives of invariant differential forms.  (In the setting of \cref{main-bound}, namely Hodge generic subvarieties of~$\Ag$, this includes all of $H^1_{DR}$, thanks to \cref{gauss-manin-basis}.)  This forces us to work with rigid uniformisations, rather than remaining purely in the world of formal uniformisations.  Being able to interpret the periods of these de Rham classes is essential to our construction of non-archimedean period relations.

In this paper, we only discuss multiplicative uniformisation of abelian schemes with completely multiplicative reduction.  It is likely that our methods would generalise to ``partial multiplicative uniformisation'' of abelian schemes with partially multiplicative reduction.  This would allow non-archimedean interpretation of those period G-functions which degenerate to periods of a torus.
These would be the same G-functions as considered in \cite[Thm.~4.3]{Urbanik} and \cite[Thm.~2.3]{Papas:heights} (restricted to abelian schemes), and would give an alternative non-archimedean interpretation of such G-functions (see Section \ref{sec:lit-review} for a comparison with Urbanik's approach).

\subsubsection{Non-archimedean relations}
For relations between non-archimedean periods, we use a new calculation via the rigid uniformisations established in~\ref{sssec:intro-non-arch-interp}.  As input, this calculation simply requires a single endomorphism of the abelian variety which is not in~$\ZZ$.
Urbanik and Papas have previously constructed non-archimedean period relations for abelian varieties with extra endormorphisms, under additional conditions \cite[Prop.~1.16]{Urbanik}, \cite[Lemma~9.1]{Papas:heights}.

\subsubsection{Archimedean relations}
In order to construct relations between archimedean periods, in most cases, we follow \cite[X, Construction~2.4.1]{And89}.
In fact, the construction from \cite{And89}, along with additional information from \cite[sec.~8]{ExCM}, suffice to deal with all cases except when $\End(A) \otimes \QQ$ is $\QQ \times \QQ$ or a real quadratic field, although not all cases were previously written down.  The reason we have fewer exceptions to this construction than \cite{ExCM} is that we only consider generic endomorphism ring~$\ZZ$, while \cite[X]{And89} and \cite[sec.~8]{ExCM} allowed the generic endomorphism algebra to be any totally real field of odd degree.
In the remaining cases, namely when $\End(A) \otimes \QQ$ is a real quadratic field or $\QQ \times \QQ$, we use new calculations to construct archimedean period relations.

\subsection{Literature review}\label{sec:lit-review}

In \cite{HP12}, Habegger and Pila proved the Zilber--Pink conjecture for so-called {\it asymmetric} curves in $Y(1)^n$. This was one of very few results beyond the Andr\'e--Oort conjecture at that time. They established LGO in this setting via a clever application of a theorem of N\'eron, in combination with Masser--W\"ustholz isogeny estimates. Unfortunately, however, their technique appears to depend essentially on the product decomposition of $Y(1)^n$, as well as the asymmetry condition in their theorem. 

In \cite{ExCM}, \cite{QRTUI}, and \cite{PELtype}, we observed that a technique of Andr\'e \cite[Ch.~X]{And89} could be applied to yield LGO, and hence Zilber--Pink, for certain curves in $\cA_g$ and a variety of endomorphism types, assuming multiplicative degeneration. At the same time, Binyamini and Masser used Andr\'e's technique to give effective versions of the Andr\'e--Oort conjecture for Hilbert modular varieties \cite{BM:AO}. 

Papas has since applied Andr\'e's method in a considerably more general situation, obtaining a height bound for certain exceptional points on arbitrary families of smooth projective varieties, under a ``normal crossings'' compactification condition \cite[arXiv v1]{Papas:heights}.
In a subsequent paper \cite{Papas:ZP}, he extended Andr\'e's method to prove LGO for ``strongly exceptional'' endomorphism rings in abelian schemes with only partially multiplicative degeneration (which is to say, toric rank at least $2$).

In the interim, the authors of this paper extended Andr\'e's technique to $Y(1)^n$ \cite{Y(1)}. The important novelty was that our approach could handle period relations at unboundedly many finite places, bypassing the ``exceptional endomorphisms'' constraint of previous applications of the technique. Andr\'e had previously constructed relations at a fixed finite set of finite places (see \cite{And95}, for example), but, to our knowledge, this was the first work to handle all of them simultaneously.

Recently, in \cite{Urbanik}, Urbanik has established an alternative approach to non-archimedean interpretations of geometric G-functions using modern developments in $p$-adic Hodge theory due to Scholze.
As with our approach, this facilitates the use of period relations in Andr\'e's technique at all finite places.
With his approach, Urbanik obtains height bounds for points possessing a $\QQ$-Hodge summand with complex multiplication. This enables him to prove several striking results on Zilber--Pink \cite[Theorems 1.1 and~1.5]{Urbanik}.

We note that Urbanik's approach is more general than ours in that (i) it deals with variations of Hodge structures coming from arbitrary families of algebraic varieties, (ii) it has access to periods with respect to the full de Rham cohomology (not just derivatives of invariant differential forms), and (iii) it allows degenerations defined by a normal crossings compactification condition, including (in the abelian variety context) some degenerations which are only partially multiplicative, as in \cite{Papas:ZP}.
The restriction to families of abelian varieties seems essential to our approach.  A new idea would also be required to remove the restriction to de Rham classes which are derivatives of invariant differential forms. Our method could, however, be extended to partially multiplicative degenerations, using rigid uniformisations of abelian varieties via Raynaud extensions.

Most recently, Papas has released a new version of \cite{Papas:heights}, incorporating the results and ideas of Urbanik.
In the abelian setting, assuming multiplicative degeneration, he establishes LGO for families of Jacobians of curves of odd genus, as well as many cases for even genus \cite[Theorem~1.4]{Papas:heights}.
We believe his methods are likely capable of proving a result for Jacobians with a notion of unlikely endomorphisms only slightly more restrictive than in \cref{main-bound}.

\subsection{Structure of the paper}

In \cref{sec:prelims}, we introduce some preliminary material. We discuss rigid analytification functors and their relevant properties. We also state and prove some necessary results on the composition of power series. We include a few remarks on invariant differential forms and their derivatives. In \cref{sec:unif}, we discuss various (compatible) uniformisations of semiabelian schemes with multiplicative degeneration.

In \cref{sec:formal-periods}, we prove \cref{exist-formal-periods}. This is the result that produces G-functions whose $v$-adic evaluations, at any place $v$, are $v$-adic periods. In \cref{sec:relations}, we construct global relations between our G-functions at points with unlikely endormorphisms.

In \cref{sec:main-proofs}, we prove our main results. We begin with some auxiliary results on fibered surfaces. We make some reduction steps, using Gabber's Lemma and \cite[Lemma 5.1]{Y(1)}. After the setup is complete, we invoke \cref{exist-formal-periods} to obtain our period G-functions and \cref{exist-relations} to obtain global relations between them. Finally, we establish that these relations are non-trivial.

\subsection*{Acknowledgements}

The authors wish to thank Ariyan Javanpeykar for several helpful suggestions and, in particular, for pointing them to Gabber's Lemma. They thank George Papas and David Urbanik, also for their comments, and for sharing early versions of their work.
They thank Werner L\"utkebohmert for valuable discussions about uniformisation of abelian schemes in rigid geometry. They are grateful to the anonymous referees for many very helpful comments.

\section{Preliminaries}\label{sec:prelims}

\subsection{Notation}

Often we shall work over a base scheme~$C$, with a chosen closed subscheme $C_0$ of~$C$.
We shall write $C_\formal$ for the formal completion of $C$ along $C_0$.
If $X$ is a scheme over~$C$, then $X_0$ will denote the base change $X \times_C C_0$ and $X_\formal$ will denote the formal completion of $X$ along $X_0$, which is canonically isomorphic to the base change $X \times_C C_\formal$ in the category of formal schemes \cite[Prop.~10.9.7]{EGAI}.
This notation may also be applied to morphisms of schemes over~$C$.

If $K$ is a discretely valued non-archimedean field, then a \defterm{$K$-analytic space} shall mean a rigid analytic space over~$K$.

If $K$ is either a discretely valued non-archimedean field or~$\CC$, we write $D(r, K)$ for the open disc of radius~$r$ and centre~$0$ over~$K$, considered as a $K$-analytic space.

\subsection{Places of a number field}

Let $K$ be a number field.
Recall that a \defterm{place} of $K$ is an equivalence class of absolute values on~$K$.
The non-archimedean places of $K$ are naturally in bijection with the maximal ideals of the ring of integers~$\cO_K$, while the archimedean places of $K$ are naturally in bijection with the embeddings $K \to \CC$, up to complex conjugation.

For a place $v$ of $K$, let $K_v$ denote the $v$-adic completion of $K$ if $v$ is non-archimedean, and write $K_v=\CC$ if $v$ is archimedean.
Note that this notation is not standard when $v$ is archimedean: even if the $v$-adic completion of~$K$ is isomorphic to~$\RR$, we still write $K_v=\CC$.
This is because we wish to do complex analytic, rather than real analytic, geometry, because of its close links to algebraic geometry.

Let $R$ be a Dedekind domain whose fraction field is a number field~$K$ (in other words, $R$ is a localisation of the ring of integers~$\cO_K$).
We say that a place $v$ of $K$ is \defterm{visible to~$R$} if either $v$ is archimedean or $v$ is non-archimedean and $R$ is contained in the $v$-adic completion of $\cO_K$ (equivalently, no element of the prime ideal of $\cO_K$ corresponding to~$v$ is invertible in~$R$).

\subsection{Inverse function theorem for globally bounded power series}

Let $K$ be a number field.
Let $f(X) = \sum_{n=0}^\infty a_nX^n \in \powerseries{K}{X}$ be a power series.
For each place $v$ of $K$, write $R_v(f) \in \RR_{\geq 0} \cup \{\infty\}$ for the $v$-adic radius of convergence of $f$.
Write $f^\van$ for the $K_v$-analytic function $D(R_v(f), K_v) \to K_v$ defined by $v$-adic evaluation of~$f$.

The power series $f$ is said to be \defterm{globally bounded} if:
\begin{enumerate}[(a)]
\item for every place $v$ of~$K$, $R_v(f) > 0$;
\item for almost all places $v$ of~$K$, for all~$n \geq 0$, $\abs{a_n}_v \leq 1$.
\end{enumerate}

\begin{lemma} \label{power-series-locally-invertible}
Let $f \in \powerseries{K}{X}$ be a globally bounded power series satisfying $f(0)=0$ and $f'(0) \neq 0$.
Then, for each place $v$ of~$K$, there exists a $K_v$-analytic open neighbourhood $Y_v$ of~$0$ in~$K_v$ and a real number $r^*_v(f)$ such that:
\begin{enumerate}[(a)]
\item $r^*_v(f) > 0$ for all $v$ and $r^*_v(f) = 1$ for almost all~$v$;
\item $Y_v$ is contained in the open disc $D(R_v(f), K_v)$ for all~$v$;
\item for every place $v$, $f^\van$ restricts to an isomorphism (of $K_v$-analytic spaces) from~$Y_v$ to $D(r^*_v(f),K_v)$;
\item for almost all non-archimedean places~$v$, $Y_v = D(1,K_v)$.
\end{enumerate}
\end{lemma}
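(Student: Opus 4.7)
The plan is to build a formal inverse of $f$ and then analyze its $v$-adic behaviour place by place. Since $f(0)=0$ and $a_1 := f'(0) \neq 0$, I would first produce the unique power series $g(Y) = \sum_{n \geq 1} b_n Y^n \in \powerseries{K}{Y}$ satisfying $g(0)=0$ and $f(g(Y)) = Y$ (and hence $g(f(X))=X$). The key structural observation, which does most of the work, is that the coefficient recursion obtained from $f(g(Y))=Y$ takes the form $a_1 b_n + P_n(a_2,\ldots,a_n,b_1,\ldots,b_{n-1}) = 0$ with each $P_n$ a polynomial with integer coefficients. By induction this forces $b_n \in \ZZ[a_1^{-1}, a_2, \ldots, a_n]$, so $a_1$ is the only quantity that ever appears in denominators.

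With $g$ in hand, the construction of $(Y_v, r^*_v(f))$ proceeds place by place. At each of the finitely many archimedean places the classical inverse function theorem supplies some $r^*_v(f) > 0$ and an open neighborhood $Y_v \subset D(R_v(f), K_v)$ of $0$ on which $f^\van$ biholomorphically maps to $D(r^*_v(f), K_v)$; since there are only finitely many such $v$, these contribute nothing to the ``almost all'' clauses in (a) and (d).

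The non-archimedean case is where the structural observation pays off. Let $S$ be the finite set of non-archimedean places with either $|a_1|_v \neq 1$ or $|a_n|_v > 1$ for some $n$; this is finite because $a_1 \in K^\times$ and by the global boundedness hypothesis. For $v \notin S$ non-archimedean, the formula for $b_n$ gives $|b_n|_v \leq 1$ for all $n$, so $g$ converges on the entire open disc $D(1, K_v)$. For $x \in D(1, K_v)$, the bounds $|a_n x^n|_v \leq |x|_v^n$ combined with the ultrametric inequality (and the fact that $|a_1 x|_v = |x|_v$ strictly dominates the higher-order terms) yield $|f^\van(x)|_v = |x|_v$, so $f^\van$ restricts to a self-map of $D(1, K_v)$; symmetrically for $g^\van$. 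The formal identities $f \circ g = g \circ f = \mathrm{id}$ then promote to analytic identities on $D(1, K_v)$ by term-by-term convergence, making $f^\van$ a $K_v$-analytic automorphism of $D(1, K_v)$. Taking $Y_v = D(1, K_v)$ and $r^*_v(f) = 1$ verifies (a)--(d) at these places. For the finitely many remaining $v \in S$, any positive $r^*_v(f)$ produced by the non-archimedean inverse function theorem (which applies because $f'(0) \neq 0$) works.

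The main obstacle, such as it is, lies in the opening step: recognising that the formal inverse of $f$ has denominators solely in powers of $a_1$. Once this is established, global boundedness of $f$ translates directly into the uniform choice $r^*_v(f) = 1$ at almost all non-archimedean $v$, and the rest of the argument reduces to routine ultrametric estimates and standard invocations of the inverse function theorem.
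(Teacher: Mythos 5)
Your proof is correct and follows essentially the same route as the paper's: observe that the formal compositional inverse $g$ of $f$ has $p$-integral coefficients at almost all non-archimedean $v$ (you make this explicit via the identity $b_n \in \ZZ[a_1^{-1},a_2,\dots,a_n]$, while the paper phrases it as $g \in \powerseries{\cO_{K,v}}{X}$ whenever $f \in \powerseries{\cO_{K,v}}{X}$ and $f'(0)\in\cO_{K,v}^\times$), conclude that $f^\van$ and $g^\van$ are mutually inverse self-maps of $D(1,K_v)$ there, and patch the finitely many exceptional places with the inverse function theorem. The only place you are slightly more terse than the paper is at the bad non-archimedean places: the paper gives the explicit scaling argument $h_v(X)=a_v f(b_v X)$ reducing to the good case, which is exactly what justifies the appeal you make to ``the non-archimedean inverse function theorem'' and, in particular, ensures that the image of $Y_v$ under $f^\van$ is genuinely an open disc $D(r_v^*(f),K_v)$ as required by item~(c) and not merely some open neighbourhood of~$0$.
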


\begin{proof}
Since $f$ is globally bounded, for almost all non-archimedean places $v$ of~$K$, $f \in \powerseries{\cO_{K,v}}{X}$.
For almost all places, we have furthermore $f'(0) \in \cO_{K,v}^\times$.
For a place with these properties, $f$ has a compositional inverse $g \in \powerseries{\cO_{K,v}}{X}$.
Then $f^\van$ and $g^\van$ both define $K_v$-analytic maps $D(1,K_v) \to D(1,K_v)$, which are inverse to each other, so $f^\van$ restricts to an isomorphism $D(1,K_v) \to D(1,K_v)$.
Therefore, such places satisfy (b) and~(c) with $r_v^*(f)=1$ and $Y_v=D(1,K_v)$.

For the remaining non-archimedean places, we can choose $a_v, b_v \in K_v$ such that $h_v(X) = a_vf(b_vX)$ satisfies $h_v \in \powerseries{\cO_{K,v}}{X}$ and $h_v'(0) \in \cO_{K,v}^\times$.
By the argument above, $h_v^\van$ restricts to an isomorphism $D(1,K_v) \to D(1,K_v)$.
Hence $f^\van$ restricts to an isomorphism $D(\abs{b_v}^{-1}, K_v) \to D(\abs{a_v}, K_v)$.

For archimedean places~$v$, (b) and~(c) with $r_v^*(f) > 0$ follow from the inverse function theorem applied to the holomorphic function $f^\van$.
\end{proof}

\subsection{Analytification functors} \label{subsec:rigid-analytification-functors}

In this section, we discuss two analytification functors and their relation to one another.

\subsubsection{Analytification of schemes}

Let $K$ be either $\CC$ or a discretely valued non-archimedean field.
We shall write $X \mapsto X^\an$ for the classical analytification functor
\[ \an \colon \{ \text{schemes locally of finite type over } K \} \to \{ \text{analytic spaces over } K \}. \]
The underlying point set of $X^\an$ is the set of closed points of a $K$-scheme~$X$.
For example, if $X$ is the affine line (as a $K$-scheme), then $X^{\an}$ is the affine line (as a $K$-analytic space).

If $K$ is a number field, $v$ is a place of~$K$, and $X$ is a $K$-scheme locally of finite type, then we write $X^\van$ for the $v$-adic analytification $(X \times_{\Spec(K)} \Spec(K_v))^\an$.

\subsubsection{Rigid generic fibre for formal schemes}

Let $R$ be a complete discrete valuation ring with maximal ideal $\fp$ and let $K = \Frac(R)$.
We shall write $\Spf(R)$ to mean $\Spf(R, \fp)$.
Write $\tatealgebra{R}{T}$ for the $\fp$-adic Tate algebra over~$R$ in one variable~$T$.

We write $\fX \mapsto \fX^\rig$ for Berthelot's rigid generic fibre functor
\begin{multline*}
\rig \colon \left\{
\begin{aligned}
  & \text{locally noetherian formal schemes } \fX \text{ over } \Spf(R)
\\& \text{whose reduction $\fX_\red$ is locally of finite type over $\Spec(R)$}
\end{aligned} \right\}
\\ \to \{ \text{rigid spaces over } K \}.
\end{multline*}
This functor is defined at \cite[0.2.6]{Berthelot}.
It generalises Raynaud's generic fibre functor which is defined only for formal schemes which are themselves locally of finite type over~$\Spf(R)$ (in particular, formal schemes in the domain of Raynaud's functor must be adic over~$\Spf(R)$).

For example, if $\fX_1 \cong \Spf(\tatealgebra{R}{T}, (\fp))$ is the formal completion of $\AAA^1_R$ along the closed fibre of $\AAA^1_R \to \Spec(R)$, then $\fX_1^{\rig}$ is the closed unit disc (as a $K$-analytic space).
If $\fX_2 \cong \Spf(\powerseries{R}{T}, (\fp, T))$ is the formal completion of $\AAA^1_R$ along the origin in the closed fibre of $\AAA^1_R \to \Spec(R)$, then $\fX_2^\rig$ is the open unit disc.
Note that $\fX_1$ lies within the domain of Raynaud's generic fibre functor, but $\fX_2$ does not.

% If $R$ is a Dedekind domain whose field of fractions is a number field~$K$, $v$ is a place of~$K$ visible to~$R$, and $\fX$ is a locally noetherian formal scheme over $\Spf(R)$(adjectives), then we write $\fX^\vrig$ for the $K_v$-rigid space $(\fX \times_{\Spec(R)} \Spec(R_v))^\rig$.

\subsubsection{Relationship between the functors \texorpdfstring{$\an$}{an} and \texorpdfstring{$\rig$}{rig}}

Again let $R$ be a complete discrete valuation ring with maximal ideal~$\fp$.  Let $K = \Frac(R)$ and $k = R/\fp$.

The following classical theorem describes the relationship between Raynaud's generic fibre functor for formal schemes and the $K$-analytification functor for $K$-schemes.
\begin{proposition} \label{rig-an-compatibility-raynaud} \cite[Prop.~0.3.5]{Berthelot}
Let $\fX$ be a scheme locally of finite type over~$R$.
Let $X = \fX \otimes_R K$ and let $\fX_{\fp,\formal}$ denote the formal completion of $\fX$ along its fibre over $\fp \in \Spec(R)$.
Then there is a canonical morphism of $K$-rigid spaces $\alpha \colon \fX_{\fp,\formal}^{\rig} \to X^{\an}$.
If $\fX$ is separated and of finite type over~$R$, then $\alpha$ is an open immersion.
% If $\fX$ is proper over~$R$, then $\alpha$ is an isomorphism.
\end{proposition}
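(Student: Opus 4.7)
The plan is to reduce to the affine case, where both sides admit explicit descriptions in terms of Tate algebras, and then glue.

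Cover $\fX$ by affine opens $\fU_i = \Spec(A_i)$ and fix finite presentations $A_i \cong R[T_1,\ldots,T_{n_i}]/I_i$. A direct computation shows that the $\fp$-adic completion is $\hat A_i \cong \tatealgebra{R}{T_1,\ldots,T_{n_i}}/I_i\tatealgebra{R}{T_1,\ldots,T_{n_i}}$, so $\fU_{i,\fp,\formal}$ is topologically of finite type over~$\Spf(R)$ and lies in the domain of Raynaud's generic fibre functor; in this situation Berthelot's functor recovers Raynaud's, and $\fU_{i,\fp,\formal}^{\rig} = \mathrm{Sp}(\hat A_i \otimes_R K)$. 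The canonical ring map $A_i \to \hat A_i$ induces a $K$-algebra homomorphism $A_i \otimes_R K \to \hat A_i \otimes_R K$, which, via the universal property of analytification applied to the affinoid on the left, produces an analytic morphism $\alpha_i \colon \fU_{i,\fp,\formal}^{\rig} \to U_i^{\an}$, where $U_i = \fU_i \otimes_R K$. Naturality of this construction with respect to open inclusions of affines guarantees that the $\alpha_i$ agree on overlaps, so they glue to a global morphism $\alpha \colon \fX_{\fp,\formal}^{\rig} \to X^{\an}$.

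Now suppose $\fX$ is separated and of finite type. Being an open immersion is local on source and target, so it is enough to check the claim on each $\fU_i$. Inside $(\AAA^{n_i}_K)^{\an}$, the closed polydisc $\mathrm{Sp}(\tatealgebra{K}{T_1,\ldots,T_{n_i}})$ is an admissible open rigid subdomain. Under this inclusion, $U_i^{\an}$ is the Zariski closed analytic subspace cut out by $I_i$, whereas $\fU_{i,\fp,\formal}^{\rig}$ is the Zariski closed analytic subspace of the closed polydisc cut out by the same ideal. Consequently $\alpha_i$ identifies $\fU_{i,\fp,\formal}^{\rig}$ with the intersection of $U_i^{\an}$ with the open polydisc, which is an admissible open of $U_i^{\an}$, hence an open immersion. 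Separatedness of $\fX$ ensures that the pairwise intersections $\fU_i \cap \fU_j$ are affine of finite type, so the local open immersions glue without collision to the desired global open immersion~$\alpha$. The main obstacle I anticipate is a clean bookkeeping of the comparison between Berthelot's and Raynaud's functors at this ``standard'' affine level; specifically, verifying that $\mathrm{Sp}(\tatealgebra{K}{T_1,\ldots,T_n})$ sits inside $(\AAA^n_K)^{\an}$ as an admissible open in a manner compatible with passing to closed subschemes defined by ideals. Once this is pinned down the affine case is transparent, and the global case reduces to routine gluing, with separatedness playing its customary role of preventing pathologies in the gluing data.
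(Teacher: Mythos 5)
The paper does not prove this proposition; it simply cites it from Berthelot, so there is no in-house argument to compare against. Evaluating your attempt on its own terms: the affine computation is essentially right. For $\mathfrak{U} = \Spec(R[T_1,\dotsc,T_n]/I)$, noetherianness of $R[T_1,\dotsc,T_n]$ gives $\hat A \cong \tatealgebra{R}{T_1,\dotsc,T_n}/I\tatealgebra{R}{T_1,\dotsc,T_n}$, this formal scheme is adic of finite type so Berthelot's and Raynaud's functors agree on it, and $\mathfrak{U}_{\fp,\formal}^{\rig}$ is the Zariski-closed analytic subspace of the closed unit polydisc cut out by $I$, i.e.\ $U^{\an}$ intersected with the \emph{closed} unit polydisc (you wrote ``open polydisc'' once, a slip); since the closed polydisc is admissible open in $(\AAA^n_K)^{\an}$, the map $\alpha_i$ is an open immersion.

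The gap is in the gluing. That each $\alpha_i$ is an open immersion onto $V_i := \alpha_i(\mathfrak{U}_{i,\fp,\formal}^{\rig})$, together with naturality on overlaps, gives a well-defined morphism $\alpha$, but does not by itself show $\alpha$ is an open immersion: a morphism that restricts to an open immersion on each member of a covering of its source need not be a monomorphism. What has to be verified is that $V_i \cap V_j = \alpha\bigl((\mathfrak{U}_i \cap \mathfrak{U}_j)_{\fp,\formal}^{\rig}\bigr)$ for each $i,j$, and the non-trivial inclusion ``$\subseteq$'' is exactly where separatedness does its real work --- observing that $\mathfrak{U}_i \cap \mathfrak{U}_j$ is affine does not discharge it. Concretely, a point of $V_i \cap V_j$ is a closed point $\Spec L \to X$ (with $L/K$ finite) admitting $\cO_L$-models in both $\mathfrak{U}_i$ and $\mathfrak{U}_j$; by the valuative criterion of separatedness these two $\cO_L$-models of $\Spec L \to \fX$ coincide, so the common model factors through $\mathfrak{U}_i \cap \mathfrak{U}_j$, and hence the point lies in $\alpha\bigl((\mathfrak{U}_i \cap \mathfrak{U}_j)_{\fp,\formal}^{\rig}\bigr)$. (Alternatively one can use the closed diagonal: apply the affine case to $\mathfrak{U}_i \times_R \mathfrak{U}_j$ and note that $(\mathfrak{U}_i \cap \mathfrak{U}_j)_{\fp,\formal}^{\rig}$ is the pullback of the closed analytic diagonal.) Supplying this step would complete the proof; as written, the open-immersion claim is asserted rather than established.
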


\Cref{rig-an-compatibility-raynaud} only applies to the completion of $\fX$ along its special fibre; in other words, the formal scheme $\fX_{\fp,\formal}$ is always in the domain of Raynaud's generic fibre functor.
We shall require a generalisation of \cref{rig-an-compatibility-raynaud} to Berthelot's generic fibre functor, allowing completions along arbitrary closed subsets of the special fibre, as follows.

\begin{proposition} \label{rig-an-compatibility}
Let $\fX$ be a separated $R$-scheme of finite type and let $X = \fX \times_{\Spec(R)} \Spec(K)$.
Let $\fX_\fp$ be the closed fibre of $\fX \to \Spec(R)$ and let $\fX_{0,\fp}$ be a closed subscheme of $\fX_\fp$.
Let $\fX_{\formal\dag}$ denote the formal completion of $\fX$ along $\fX_{0,\fp}$.
Then there is a canonical open immersion $\fX_{\formal\dag}^\rig \to X^\an$.
\end{proposition}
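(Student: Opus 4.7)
The plan is to factor the desired map as $\fX_{\formal\dag}^\rig \to \fX_{\fp,\formal}^\rig \to X^\an$, where the second arrow is the open immersion $\beta$ produced by \cref{rig-an-compatibility-raynaud} (its hypotheses are satisfied since $\fX$ is separated of finite type over~$R$). So the main task is to construct a canonical open immersion $\gamma \colon \fX_{\formal\dag}^\rig \to \fX_{\fp,\formal}^\rig$ and then set the sought morphism to be $\beta \circ \gamma$.

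To construct $\gamma$, I would first observe that since $\fX_{0,\fp} \subseteq \fX_\fp$ as closed subschemes of $\fX$, every nilpotent thickening of $\fX_\fp$ in $\fX$ restricts to a thickening of $\fX_{0,\fp}$ in $\fX$; taking the colimit of structure sheaves over these thickenings gives a canonical morphism of formal schemes $\fX_{\formal\dag} \to \fX_{\fp,\formal}$. Both formal schemes are locally noetherian with reductions of finite type over $k=R/\fp$, so both lie in the domain of Berthelot's rigid generic fibre functor, and applying it yields the required morphism $\gamma$.

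It then remains to show $\gamma$ is an open immersion, which I would do affine-locally on $\fX$ and glue. Over an affine open $\Spec A \subset \fX$, let $I \subset A$ be the ideal defining $\fX_{0,\fp}\cap\Spec A$, with generators $f_1,\ldots,f_r$. The explicit construction of Berthelot's functor (see \cite[0.2.6--0.2.7]{Berthelot}) then presents $\fX_{\formal\dag}^\rig|_{\Spec A}$ as the admissible union over $n \geq 1$ of the affinoid subdomains
\[ U_n = \{ x \in \fX_{\fp,\formal}^\rig|_{\Spec A} : \abs{f_i(x)}^n \leq \abs{\pi},\ i=1,\ldots,r \}, \]
where $\pi$ is a uniformiser of $R$. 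Each $U_n$ is an admissible open in $\fX_{\fp,\formal}^\rig|_{\Spec A}$ and their union is the tube $]\fX_{0,\fp}[$, which is itself an admissible open. Hence $\gamma$ is locally an open immersion of rigid spaces.

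Finally, I would glue these local open immersions. The key point is that the tube $]\fX_{0,\fp}[ \subseteq \fX_{\fp,\formal}^\rig$ depends only on the closed subset $\fX_{0,\fp}$, and not on the chosen ideal of definition or its generators, so the local descriptions agree on overlaps of affine charts. The main technical obstacle is precisely this independence and compatibility check; however, it follows from the intrinsic and functorial nature of Berthelot's construction, as established in \cite[\S 0.2]{Berthelot}. Composition with $\beta$ then gives the desired canonical open immersion $\fX_{\formal\dag}^\rig \hookrightarrow X^\an$.
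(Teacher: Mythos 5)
Correct, and essentially the same proof as the paper's: you factor through the open immersion $\fX_{\fp,\formal}^\rig \to X^\an$ of \cref{rig-an-compatibility-raynaud} and the open immersion $\fX_{\formal\dag}^\rig \to \fX_{\fp,\formal}^\rig$ supplied by \cite[Prop.~0.2.7]{Berthelot}. The paper simply cites that proposition (with $X=\fX_{\fp,\formal}$, $Z=\fX_{0,\fp}$, $\hat X=\fX_{\formal\dag}$), whereas you re-derive it via the explicit tube description; this is harmless extra work, though your phrase about thickenings of $\fX_\fp$ ``restricting to'' thickenings of $\fX_{0,\fp}$ is imprecise — the relevant fact is rather that the $n$-th infinitesimal neighbourhood of $\fX_{0,\fp}$ is a closed subscheme of that of $\fX_\fp$, which yields the canonical morphism $\fX_{\formal\dag}\to\fX_{\fp,\formal}$.
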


\begin{proof}
Let $\fX_{\fp,\formal}$ denote the formal completion of $\fX$ along $\fX_\fp$.
Applying Berthelot's functor to $\fX_{\fp,\formal}$ and $\fX_{\formal\dag}$, and using \cite[Prop.~0.2.7]{Berthelot} with $X = \fX_\formal$, $Z = \fX_{0,\fp}$, $\hat X = \fX_{\formal\dag}$, we obtain a canonical open immersion of rigid spaces
\[ \fX_{\formal\dag}^\rig \to \fX_{\fp,\formal}^\rig. \]
By \cref{rig-an-compatibility}, there is a canonical open immersion of rigid spaces
\[ \fX_{\fp,\formal}^\rig \to X^\an. \]
Composing these immersions yields the required open immersion $\fX_{\formal\dag}^\rig \to X^\an$.
\end{proof}

As an example of \cref{rig-an-compatibility}, consider $\fX = \AAA^1_R = \Spec(R[T])$ and $\fX_{0,\fp} = \{0\} \subset \AAA^1_k = \fX_\fp$ (that is, $\fX_{0,\fp}$ is the closed subscheme of $\fX_\fp$ corresponding to the ideal $(T) \subset k[T]$).
As we noted previously, in this example,
\[ \fX_{\fp,\formal} \cong \Spf(\tatealgebra{R}{T}, (\fp)), \quad \fX_{\formal\dag} \cong \Spf(\powerseries{R}{T}, (\fp, T)), \]
$X^\an$ is the rigid affine line $(\AAA^1_K)^\an$, $\fX_{\fp,\formal}^\rig$ is the closed unit disc, and $\fX_{\formal\dag}^\rig$ is the open unit disc.
The open immersions $\fX_{\formal\dag}^\rig \to \fX_{\fp,\formal}^\rig \to X^\an$ appearing in \cref{rig-an-compatibility} and its proof are the natural open immersions of $K$-rigid spaces
\[ D(1,K) \to\text{closed unit disc} \to  (\AAA^1_K)^\an. \]

\subsection{Invariant forms on abelian schemes and their derivatives} \label{subsec:derivatives-forms}

If $C$ is a scheme and $\pi \colon G \to C$ is a group scheme, we write $\Omega^\inv_{G/C}$ for the $\cO_C$-module of $G$-translation-invariant forms in $\pi_*\Omega^1_{G/C}$.
By \cite[Proposition~3.15]{EvdGM}, $\Omega^{\inv}_{G/C}$ is canonically isomorphic to $e^*\Omega^1_{G/C}$, where $e \colon C \to G$ is the zero section.
If $G/C$ is smooth, then (by definition) $\Omega^1_{G/C}$ is locally free, so $\Omega^{\inv}_{G/C}$ is locally free.

Suppose that $C$ is a smooth irreducible $K$-variety, where $K$ is a field of characteristic zero, and that $G \to C$ is an abelian scheme.
Then we get an injection of $\cO_C$-modules $\Omega^{\inv}_{G/C} \to H^1_{DR}(G/C)$ \cite[(1.2)]{And17}.

Let $\nabla \colon H^1_{DR}(G/C) \to H^1_{DR}(G/C) \otimes_{\cO_C} \Omega^1_C$ denote the Gauss--Manin connection attached to the morphism $G \to C$.
This connection gives rise to a $\cD_C$-module structure on $H^1_{DR}(G/C)$, where $\cD_C$ is the ring of differential operators on~$C$.
Let $\cM_{G/C}$ denote the $\cD_C$-submodule of $H^1_{DR}(G/C)$ generated by $\Omega^{\inv}_{G/C}$.
In other words, letting $T_C$ denote the tangent bundle of~$C$ (which we can identify with the differential operators on~$C$ of order~$1$), we have
\[ \cM_{G/C} = \sum_{n=0}^\infty \sum_{\partial_1, \dotsc, \partial_n \in T_C} \cO_C \nabla_{\partial_1} \dotsm \nabla_{\partial_n} \Omega^{\inv}_{G/C}. \]

In the situations for which this paper proves the large Galois orbits conjecture, $\cM_{G/C} = H^1_{DR}(G/C)$, as shown by the following lemma due to André \cite{And17}.
As noted in \cite[4.1.2]{And17}, \cref{gauss-manin-basis} is not always true when the geometric generic endomorphism algebra has Albert type~IV.

\begin{lemma} \label{gauss-manin-basis}
Let $K$ be a subfield of~$\bC$, let $C$ be a smooth irreducible $K$-variety, and let $G \to C$ be an abelian scheme.
Suppose that the abelian variety $G_{\ov{K(C)}}$ is simple, $\End(G_{\ov{K(C)}}) \otimes_\ZZ \QQ$ has Albert type I, II or III, and the monodromy group of $G_\CC/C_\CC$ is as large as possible given its endomorphisms and polarisation (for a single choice of embedding $K \to \CC$).
Then $\cM_{G/C} = H^1_{DR}(G/C)$.
\end{lemma}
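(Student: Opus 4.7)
The plan is to pass to the complex-analytic setting, apply the Riemann--Hilbert correspondence, and reduce the statement to a representation-theoretic assertion about the monodromy group acting on a single Betti fibre.

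First I would base-change along the given embedding $K \hookrightarrow \CC$; since the inclusion $\cM_{G/C} \subseteq H^1_{DR}(G/C)$ is an inclusion of coherent subsheaves, the desired equality can be checked after this flat extension of scalars. Analytifying on $C_\CC^{\an}$, Riemann--Hilbert identifies $(H^1_{DR}(G_\CC/C_\CC), \nabla)^{\an}$ with the local system $V := R^1 f^{\an}_* \CC$ on $C_\CC^{\an}$. Flat holomorphic subbundles correspond bijectively to sub-local-systems of $V$; hence $\cM_{G/C}^{\an}$ corresponds to a sub-local-system $W \subseteq V$ whose fibre at any $s \in C_\CC^{\an}$ contains $H^{1,0}(G_s) = \Omega^{\inv}_s$, and which is by construction the smallest such sub-local-system.

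Next, fixing a base point $s$, the sub-local-system $W$ is encoded by a $\CC$-subspace of $V_s = H^1_B(G_s, \CC)$ that is stable under the monodromy representation $\rho \colon \pi_1(C_\CC^{\an}, s) \to \GL(V_s)$. Combining the maximality hypothesis on monodromy with Deligne's theorem identifying the identity component of the Zariski closure of $\im(\rho)$ with the derived Mumford--Tate group of the generic Hodge structure, this closure equals $\MT(G_{\ov{K(C)}})^{\der}_{\CC}$. It therefore suffices to prove that any $\MT^{\der}_{\CC}$-stable subspace of $H^1_B(G_s, \CC)$ containing $H^{1,0}(G_s)$ is all of $H^1_B(G_s, \CC)$.

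To carry this out, I would decompose $H^1_B(G_s, \CC)$ into $\MT^{\der}_{\CC}$-isotypic summands. Let $D = \End(G_{\ov{K(C)}}) \otimes \QQ$ with centre~$F$. Since $G_{\ov{K(C)}}$ is simple, $H^1_B(G_s, \QQ)$ is a simple $D$-module; tensoring with $\CC$ splits it into pieces indexed by the embeddings $F \hookrightarrow \CC$ (plus, in Albert types II and III, a further splitting of the quaternion factors of $D_\CC$). The crucial input is that, for Albert types I, II and III, the totally real centre together with the positivity of the Rosati involution forces $H^{1,0}(G_s)$ to project non-trivially onto every $\MT^{\der}_{\CC}$-irreducible summand. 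This is precisely where Albert type IV fails: in the CM situation, the Hodge decomposition is compatible with the CM structure, so $H^{1,0}$ can be contained in a proper sub-sum of the isotypic components. Granted this non-vanishing, any $\MT^{\der}_{\CC}$-stable subspace containing $H^{1,0}(G_s)$ must contain every irreducible summand, and hence equals $H^1_B(G_s, \CC)$.

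The hard part I expect is the final representation-theoretic step — verifying, case by case for Albert types I, II, and III, that $H^{1,0}(G_s)$ has non-zero projection onto each irreducible $\MT^{\der}_{\CC}$-summand of $H^1_B(G_s, \CC)$. The other steps (base change to $\CC$, Riemann--Hilbert, reduction to sub-representations of the monodromy group) are standard; the content of the lemma really lives in this case-by-case Hodge-theoretic verification, which simultaneously explains the exclusion of type~IV.
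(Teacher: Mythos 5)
Your approach is genuinely different from the paper's: the paper proves the lemma by simply observing that the hypotheses place $G/C$ in André's class of abelian schemes of ``restricted PEM type'' and then citing \cite[Thm.~4.2.2]{And17}, which directly computes $\rk(\cM_{G/C}/\Omega^\inv_{G/C}) = g$; the conclusion follows from rank counting. You instead try to reprove the substance of André's theorem from scratch via Riemann--Hilbert and the representation theory of the monodromy group. Your reduction steps (base change to~$\CC$, Riemann--Hilbert, passage to the smallest monodromy-stable subspace of the Betti fibre containing $H^{1,0}(G_s)$, identification of the algebraic monodromy group with $\MT^\der_\CC$ under the maximality hypothesis) are sound in principle.

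However, the crucial final step is both absent and mis-formulated, and the flaw is not merely one of effort. Your proposed criterion --- ``$H^{1,0}(G_s)$ has non-zero projection onto each irreducible $\MT^\der_\CC$-summand of $H^1_B(G_s,\CC)$'' --- is insufficient in Albert types II and III, precisely because there the representation of $\MT^\der_\CC$ on $H^1_B(G_s,\CC)$ has multiplicity~$2$. Writing $D = \End(G_{\ov{K(C)}}) \otimes \QQ$ with centre $F$, each block $V_\sigma$ (for an embedding $\sigma \colon F \to \CC$) decomposes, after Morita equivalence for $D \otimes_{F,\sigma}\CC \cong \rM_2(\CC)$, as $\CC^2 \otimes U_\sigma \cong U_\sigma^{\oplus 2}$, with $U_\sigma$ irreducible for $\MT^\der_\CC$. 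When multiplicities occur, ``projects non-trivially onto each irreducible summand'' is not well defined (the summands are not canonical), and even ``projects non-trivially onto each isotypic component'' does not imply that the stable span is everything: a diagonal subspace $\{(u,\lambda u) : u \in U\} \subset U \oplus U$ projects isomorphically onto both factors yet is itself $\MT^\der_\CC$-stable. To close this gap one needs a further input --- for instance, that $H^{1,0}(G_s)$ is itself a $D$-submodule of $H^1_B(G_s,\CC)$ because endomorphisms preserve the Hodge filtration, forcing $H^{1,0} \cap V_\sigma = \CC^2 \otimes U'_\sigma$ for some non-zero $U'_\sigma \subseteq U_\sigma$, whereupon irreducibility of $U_\sigma$ under $\Sp(U_\sigma)$ gives the result on that block. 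Without some such refinement, the argument fails exactly at the point you flag as ``the content of the lemma,'' so the proposal as written has a genuine hole.
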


\begin{proof}
The rank $\rk(\cM_{G/C}/\Omega^{\inv}_{G/C})$ is called $r$ in \cite{And17}.
By the hypotheses of the lemma, $G/C$ is an abelian scheme of ``restricted PEM type'' in the sense of \cite{And17}.
Hence, by \cite[Thm.~4.2.2]{And17}, we have $r = g$.
Hence,
\[ \rk(\cM_{G/C}) = \rk(\cM_{G/C}/\Omega^{\inv}_{G/C}) + \rk(\Omega^{\inv}_{G/C}) = g + g = \rk(H^1_{DR}(G/C)).
\qedhere \]
\end{proof}

\section{Uniformisation of semiabelian schemes with multiplicative degeneration}\label{sec:unif}

The goal of this section is to describe notions of ``multiplicative uniformisation'' for abelian schemes in formal geometry, rigid geometry and complex analytic geometry.
This generalises the Tate uniformisation for elliptic curves, which was used in the authors' previous work on the Zilber--Pink conjecture for~$Y(1)^n$ \cite{Y(1)}.

Later in this paper, we shall be concerned with abelian schemes over a number field.
In this case, we may have an analytic uniformisation (either rigid or complex analytic) for each place of the number field.
It is important for our application that the uniformisations at different places should be ``compatible'' with each other.
In the case of elliptic curves, this compatibility can be described in a down-to-earth way: the Tate uniformisation is described by explicit power series ($q$-expansions), independent of the place.
When dealing with higher-dimensional abelian varieties, it is difficult to be so explicit.
This is where formal uniformisation comes in: we can formally uniformise our abelian scheme over the ring of integers of the number field, then require each analytic uniformisation to be compatible with the formal uniformisation after base change to the appropriate completion of the number field.

Formal uniformisation appeared in \cite{Ray71}, and is briefly sketched in greater generality in \cite[II.1]{FC90}.
Rigid analytic uniformisation for an abelian variety over a point is due to Raynaud \cite{Ray71}, and has been studied in detail by Bosch and Lütkebohmert \cite{BL84,BL91}.
The existence of rigid uniformisation in the relative setting seems to be new, and is proved in a companion article by the second-named author \cite{Orr:rigid-homs}.
Complex analytic ``multiplicative'' uniformisation of an abelian scheme, in the form in which we use it, is described in \cite[IX.4]{And89}.

Note that, throughout this paper, when we talk about ``uniformisation'' of an abelian scheme, we mean uniformisation by a split torus $\GG_m^g$, not the classical complex analytic uniformisation by a vector space.
Therefore, it applies only to abelian schemes with multiplicative degeneration in a suitable sense.
We expect that one could extend the definitions and results in this section to uniformisation by semiabelian schemes at each place (Raynaud extensions), but this would introduce additional technical complications, and it is not clear what statements one could make about compatibility between places.

On the other hand, as mentioned in Section \ref{sssec:intro-non-arch-interp}, for abelian schemes with partially multiplicative degeneration, it should be relatively straightforward to generalise the results of this section to a ``partial multiplicative uniformisation.''
By this, we mean uniformisation of the relative subgroup which degenerates to a torus (over a formal or small analytic base).
This would suffice to establish the analogue of \cref{exist-formal-periods} for those periods which degenerate to periods of the torus.

Although the machinery behind rigid and complex analytic uniformisations is different, we will often be able to treat them uniformly in the applications.  We have chosen notations to facilitate this uniformity as much as possible.

\subsection{Formal uniformisation}

Let $\fC$ be a noetherian integral scheme and let $\fC_0 \subset \fC$ be a non-empty closed subscheme.
Write $\fC_\formal$ for the formal completion of $\fC$ along~$\fC_0$.
Let $\fG \to \fC$ be a semiabelian scheme.
We write $\fG_0 := \fG\times_{\fC}\fC_0$ and $\fG_\formal := \fG \times_{\fC} \fC_{\formal}$ (in other words, $\fG_\formal$ is the formal completion of $\fG$ along $\fG_0$).
Suppose that $\fG_0 \cong \GG_m^g \times \fC_0$.

\begin{definition}
A \defterm{formal uniformisation of $\fG$ over $\fC_\formal$} is an isomorphism of $\fC_\formal$-formal group schemes $\phi_\formal \colon \GG_m^g \times \fC_\formal \to \fG_\formal$.
\end{definition}

\begin{proposition} \label{exists-formal-uniformisation}
If $\fG \to \fC$ is a semiabelian scheme and $\phi_0 \colon \GG_m^g \times \fC_0 \to \fG_0$ is an isomorphism of $\fC_0$-group schemes, then there exists a unique formal uniformisation of~$\fG$ over~$\fC_\formal$ whose base change to $\fC_0$ is equal to $\phi_0$.
\end{proposition}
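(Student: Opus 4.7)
The plan is to construct $\phi_\formal$ by successively lifting $\phi_0$ through infinitesimal thickenings of $\fC_0$ inside $\fC_\formal$. Let $\cI_0 \subset \cO_\fC$ denote the ideal sheaf of $\fC_0$, and for each $n \geq 0$ set $\fC_n = (\fC_0, \cO_\fC/\cI_0^{n+1})$ and $\fG_n = \fG \times_\fC \fC_n$. Since $\fC_\formal = \varinjlim_n \fC_n$ and $\fG_\formal = \varinjlim_n \fG_n$ as formal schemes, it suffices to produce, by induction on~$n$, a unique isomorphism $\phi_n \colon \GG_m^g \times \fC_n \to \fG_n$ of $\fC_n$-group schemes restricting to $\phi_{n-1}$ (with $\phi_0$ given).

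For the inductive step, I would use the short exact sequence of fppf abelian sheaves on~$\fC_{n-1}$
\[ 0 \to W_n \to \fG_n \to \fG_{n-1} \to 0, \]
where $W_n = \Lie(\fG_{n-1}/\fC_{n-1}) \otimes_{\cO_{\fC_{n-1}}} \cJ_n$ with $\cJ_n = \cI_0^n/\cI_0^{n+1}$. The identification of $\ker(\fG_n \to \fG_{n-1})$ with the vector group~$W_n$ is the usual consequence of formal smoothness of $\fG \to \fC$ (square-zero deformations of sections of a smooth scheme are torsors under the Lie algebra tensored with the square-zero ideal). Applying $\Hom_{\fC_{n-1}\text{-gp}}(\GG_m^g \times \fC_{n-1}, {-})$ gives a six-term exact sequence whose outer groups $\Hom_{gp}(\GG_m^g, W_n)$ and $\Ext^1_{gp}(\GG_m^g, W_n)$ both vanish: the first because any Hopf-algebra map sending the group-like generator of $\cO_{\GG_m}$ to a primitive element of $\cO_{\GG_m^g}$ must be zero (primitive Laurent polynomials are zero), and the second because any smooth commutative extension of $\GG_m$ by a vector group $\GG_a$ is a $2$-dimensional smooth affine commutative group, which splits by its Chevalley/torus–unipotent decomposition. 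Hence the restriction map $\Hom_{\fC_n\text{-gp}}(\GG_m^g \times \fC_n, \fG_n) \to \Hom_{\fC_{n-1}\text{-gp}}(\GG_m^g \times \fC_{n-1}, \fG_{n-1})$ is a bijection, producing a unique $\fC_n$-group scheme morphism $\phi_n$ lifting $\phi_{n-1}$.

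To see that each $\phi_n$ is an isomorphism, observe that it is a morphism between $\fC_n$-smooth commutative group schemes of the same relative dimension $g$, whose reduction $\phi_0$ is an isomorphism; the induced map on relative tangent spaces along the identity section is therefore surjective after reduction, hence surjective by Nakayama, so $\phi_n$ is étale, and then an isomorphism since it is an isomorphism on $\fC_0$. Taking the inverse limit yields the desired formal uniformisation $\phi_\formal$, and uniqueness at each finite level passes to the limit. The main technical point, rather than any genuine obstruction, is the careful identification of the kernel $\ker(\fG_n \to \fG_{n-1})$ with the vector group $W_n$ at each stage; the vanishing of $\Hom$ and $\Ext^1$ from $\GG_m$ to $\GG_a$ is classical and characteristic-independent, and is precisely the ``rigidity of multiplicative type'' that makes both existence and uniqueness immediate once the exact sequence above is in place.
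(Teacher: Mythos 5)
Your overall strategy --- constructing $\phi_\formal$ by lifting $\phi_0$ through the tower of nilpotent thickenings $\fC_n$, using the kernel-is-a-vector-group exact sequence at each stage, and then passing to the limit --- has essentially the same shape as the paper's proof, which also inducts through the $\fC_i$ after reducing to $\fC$ affine, citing [SGA3, X, Cor.~2.3] to see each $\fG_i$ is of multiplicative type and then [SGA3, IX, Thm.~3.6 and Cor.~3.4] for the existence and uniqueness of the lifts. Where you depart is in trying to re-derive the SGA3 lifting statement by hand via a $\Hom/\mathrm{Ext}$ computation, and that is where the gap lies.

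The crux is your claimed vanishing of $\mathrm{Ext}^1_{\fC_{n-1}\text{-gp}}(\GG_m^g, W_n)$. The justification you give --- that a smooth commutative extension of $\GG_m$ by a vector group has a Chevalley/torus--unipotent decomposition and therefore splits --- is a statement about algebraic groups over a \emph{perfect field}. Here the base $\fC_{n-1}$ is a nilpotent thickening of an arbitrary closed subscheme $\fC_0$ of a noetherian integral scheme: it is not a field, and its residue fields need not be perfect, so that decomposition theorem simply does not apply. The vanishing of $\mathrm{Ext}^1(\GG_m, \GG_a)$ over a general base --- equivalently, the formal smoothness of the Hom-functor from a torus into a smooth affine group --- is precisely the nontrivial content of the SGA3 rigidity theorem that the paper invokes as a black box, and it cannot be obtained for free from the field-level splitting you quote. (A secondary point: because you do not reduce to $\fC$ affine as the paper does, the group of global lifts also sees a contribution from $H^1(\fC_{n-1}, \underline{\Hom}(\GG_m^g, W_n))$; this does vanish since the inner Hom from a torus into a vector group is zero, but it needs to be said.) The remaining parts of your argument --- the identification of the kernel with $W_n$, the vanishing $\Hom(\GG_m^g, W_n)=0$ giving uniqueness, the \'etale/Nakayama argument that each $\phi_n$ is an isomorphism, and the passage to the limit --- are sound, so the fix is to replace the Chevalley argument with a genuine reference to (or proof of) rigidity of tori over a general base.
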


\begin{proof}
The uniqueness in the proposition ensures that we can glue together formal uniformisations over an open cover of~$\fC$.
Thus it suffices to prove the proposition under the assumption that $\fC$ is affine.

Write $\fC = \Spec(\fR)$ and let $\fI \subset \fR$ be the ideal which defines $\fC_0$.
Let $\fC_i = \Spec(\fR/\fI^{i+1})$ for each $i \in \ZZ_{\geq0}$.

For each~$i$, $\fG_i := \fG \times_\fC \fC_i$ is a group of multiplicative type by \cite[X, Cor.~2.3]{SGA3mult}.
Hence, by \cite[IX, Thm.~3.6]{SGA3mult}, for each $i \in \ZZ_{>0}$, there exists a homomorphism of $\fC_i$-group schemes $\phi_i \colon \GG_m^g \times \fC_i \to \fG_i$ which lifts $\phi_0$, as well as a homomorphism $\phi_i' \colon \fG_i \to \GG_m^g \times \fC_i$ lifting $\phi_0^{-1}$.
Since $\fG$ and $\GG_m^g$ are commutative, by \cite[IX, Cor.~3.4]{SGA3mult}, $\phi_i$ and $\phi_i'$ are inverses, so $\phi_i$ is an isomorphism $\GG_m^g \times \fC_i \to \fG_i$.
Again using the commutativity of~$\fG$, each~$\phi_i$ is unique, so the $\phi_i$ are compatible with each other.
Hence we can take the limit to obtain an isomorphism $\phi_\formal \colon \GG_m^g \times \fC_\formal \to \fG_\formal$.
\end{proof}

\subsection{Rigid analytic uniformisation} \label{subsec:rigid-uniformisation}

Let $R$ be a complete discrete valuation ring with maximal ideal $\fp$ and let $K = \Frac(R)$.

Let $\GG_m(1)$ denote the open subgroup of the rigid $K$-group $\GG_m^\an$ defined by
\[ \GG_m(1) = \{ x \in \GG_m^\an : \abs{x} = 1 \}. \]
Note that $\GG_m(1) = \GG_{m,\formal}^\rig$, where $\GG_{m,\formal}$ is the formal completion of $\GG_{m,R}$ along the closed fibre of $\GG_{m,R} \to \Spec(R)$.

Let $\cC$ be a rigid space over~$K$.
Let $\cG$ be a smooth commutative rigid $\cC$-group.

\begin{definition}
A \defterm{formal rigid uniformisation} of $\cG$ over~$\cC$ is an isomorphism of rigid $\cC$-groups from $\GG_m(1)^g \times \cC$ to an open $\cC$-subgroup $\cH \subset \cG$.
\end{definition}

\begin{definition}
A \defterm{rigid uniformisation} of $\cG$ over~$\cC$ is a homomorphism of rigid $\cC$-groups $\phi \colon (\GG_m^\an)^g \times \cC \to \cG$ whose restriction to $\GG_m(1)^g \times \cC$ is a formal rigid uniformisation of~$\cG$.
\end{definition}

\begin{proposition} \label{exists-rigid-uniformisation}
Suppose that $\cC$ is geometrically reduced, quasi-compact and quasi-separated.
Let $\cG \to \cC$ be a smooth separated commutative rigid $\cC$-group with connected fibres, such that the morphism $\cG \to \cC$ is without boundary, in the sense of \cite[Def.~1.5]{Lam99}.
If $\cG$ possesses a formal rigid uniformisation $\bar\phi \colon \GG_m(1)^g \times \cC \to \cG$, then there is a unique rigid uniformisation $\phi \colon (\GG_m^\an)^g \times \cC \to \cG$ which restricts to $\bar\phi$.
\end{proposition}

\begin{proof}
This is an immediate consequence of \cite[Cor.~1.3]{Orr:rigid-homs}.
The only condition of \cite[Cor.~1.3]{Orr:rigid-homs} which requires checking is condition~(i).
By the definition of formal rigid uniformisation, the image of $\bar\phi$ is an open $\cC$-subgroup $\cH \subset \cG$ which is isomorphic to $\GG_m(1)^g \times \cC$, hence condition~(i) of \cite[Cor.~1.3]{Orr:rigid-homs} is satisfied.
\end{proof}

\subsubsection{Compatibility with formal uniformisation} \label{sssec:formal-compatibility}

Let $\fC$ be a normal integral $R$-scheme of finite type, let $\fC_\fp$ denote the closed fibre of $\fC \to \Spec(R)$, and let $\fC_{0,\fp}$ be a closed subscheme of $\fC_\fp$.
Let $C$ denote the generic fibre of $\fC \to \Spec(R)$.
Let $\fC_{\formal\dag}$ denote the formal completion of $\fC$ along~$\fC_{0,\fp}$.
Applying \cref{rig-an-compatibility} to $\fC$, we obtain a canonical open immersion
\[ \fC_{\formal\dag}^\rig \to C^\an. \]

Let $\fG \to \fC$ be a semiabelian scheme such that $\fG_{0\dag} := \fG \times_\fC \fC_{0,\fp}$ is isomorphic to $\GG_m^g \times \fC_{0,\fp}$.
Letting $\fG_{\formal\dag} = \fG \times_\fC \fC_{\formal\dag}$ and applying \cref{rig-an-compatibility}, we obtain a canonical open immersion of rigid spaces
\[ \fG_{\formal\dag}^\rig \to G^\an \]
compatible with the open immersion $\fC_{\formal\dag}^\rig \to C^\an$ and with the rigid $C^\an$-group structures on $\fG_{\formal\dag}^\rig$ and $G^\an$.

Let $\cC = \fC_{\formal\dag}^\rig$, which we identify with its image in $C^\an$.
Write $\cG = G^\an|_\cC$, so that $\fG_{\formal\dag}^\rig$ is an open $\cC$-subgroup of $\cG$.

Let $\phi_{\formal\dag} \colon \GG_m^g \times \fC_{\formal\dag} \to \fG_{\formal\dag}$ be a formal uniformisation over $\fC_{\formal\dag}$.
(Such a $\phi_{\formal\dag}$ exists by \cref{exists-formal-uniformisation}.)
Applying Berthelot's functor to $\phi_{\formal\dag}$, we obtain an isomorphism of rigid $\cC$-groups
\[ \phi_{\formal\dag}^\rig \colon (\GG_m^g \times \fC_{\formal\dag})^\rig \to \fG_{\formal\dag}^\rig. \]
Note that $(\GG_m^g \times \fC_{\formal\dag})^\rig$ is canonically isomorphic to $\GG_m(1)^g \times \cC$, so $\phi_{\formal\dag}^\rig$ is a formal rigid uniformisation of $\cG$ over~$\cC$.

\begin{definition}
We say that a rigid uniformisation $\phi$ of $\cG$ over~$\cC$ is \defterm{compatible} with $\phi_{\formal\dag}$ if the restriction of $\phi$ to $\GG_m(1)^g \times \cC$ is equal to~$\phi_{\formal\dag}^\rig$.
\end{definition}

\begin{lemma} \label{exists-rigid-uniformisation-compatible}
Let $\phi_{\formal\dag}$ be a formal uniformisation of $\fG_{\formal\dag}$ over $\fC_{\formal\dag}$.
There exists a unique rigid uniformisation of $\cG$ over~$\cC$ compatible with $\phi_{\formal\dag}$.
\end{lemma}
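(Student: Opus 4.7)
The plan is to deduce the lemma directly from \cref{exists-rigid-uniformisation} with the choice $\bar\phi = \phi_{\formal\dag}^\rig$. The formal rigid uniformisation $\phi_{\formal\dag}^\rig \colon \GG_m(1)^g \times \cC \to \cG$ is already in hand: it is obtained in \cref{sssec:formal-compatibility} by applying Berthelot's rigid generic fibre functor to $\phi_{\formal\dag}$ and identifying $(\GG_m^g \times \fC_{\formal\dag})^\rig$ canonically with $\GG_m(1)^g \times \cC$. Thus the ``formal rigid uniformisation'' input required by \cref{exists-rigid-uniformisation} is for free.

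What remains is to verify the running hypotheses of \cref{exists-rigid-uniformisation} for $\cG \to \cC$. Smoothness and commutativity of $\cG$ are inherited through analytification from the corresponding properties of the semiabelian group scheme $\fG \to \fC$, restricted to the generic fibre $G \to C$. The genuinely nontrivial point is the existence of an admissible affinoid covering on which $\cG$ has no boundary in the sense of \cite[Def.~5.9]{Lut90}. In the settings where this lemma will be applied, the generic fibre $G \to C$ is an abelian scheme (the purely toric reduction taking place only over $\fC_{0,\fp}$), so $G^\an \to C^\an$ is proper; this properness pulls back to any affinoid cover of $\cC \subset C^\an$ and implies the no-boundary condition. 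I would therefore select an admissible affinoid cover of $\cC$ adapted to this properness.

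Granting this, \cref{exists-rigid-uniformisation} delivers a unique rigid uniformisation $\phi \colon (\GG_m^\an)^g \times \cC \to \cG$ whose restriction to $\GG_m(1)^g \times \cC$ equals $\phi_{\formal\dag}^\rig$. By the definition stated immediately before the lemma, this is precisely the condition that $\phi$ be compatible with $\phi_{\formal\dag}$, and uniqueness follows from the uniqueness clause of \cref{exists-rigid-uniformisation}. The only step demanding real work is the no-boundary verification; the remainder of the argument is merely a translation between the formal and rigid frameworks via Berthelot's functor.
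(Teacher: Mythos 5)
Your plan---reduce to \cref{exists-rigid-uniformisation} with $\bar\phi = \phi_{\formal\dag}^\rig$---is exactly the paper's, and you correctly isolate the no-boundary hypothesis of that proposition as the point requiring verification. But your verification of that hypothesis is wrong. You claim that in the intended settings $G \to C$ is an abelian scheme (``the purely toric reduction taking place only over $\fC_{0,\fp}$''), so that $G^\an \to C^\an$ is proper. In fact $G \to C$ is only semiabelian: the degeneration point $s_0$ lies in $C(K)$, not merely in the special fibre, and $s_0^\an$ belongs to the tube $\cC$ (it reduces into $\fC_{0,\fp}$), so $\cG = G^\an|_{\cC}$ has the non-proper fibre $(\GG_m^\an)^g$ over $s_0^\an$. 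Properness therefore fails precisely on the open $\cC$ where the lemma is needed; and in the generality of \cref{sssec:formal-compatibility}, where the lemma is stated, no abelianness of $G$ over any part of $C$ is even assumed.

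The paper sidesteps properness entirely: it takes an affine open cover $\fC = \bigcup_i \fC_i$, observes that $\cG_i := \cG|_{\cC_i}$ comes from the analytification of the separated $K$-scheme of finite type $G \times_\fC \fC_i$, and invokes \cite[Cor.~5.11]{Lut90}, which asserts that the analytification of any separated $K$-scheme of finite type has no boundary---with no properness hypothesis. This criterion accommodates the toric fibre at $s_0$ without difficulty. Substituting it for your appeal to properness closes the gap; the rest of your reduction to \cref{exists-rigid-uniformisation} is sound.
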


\begin{proof}
By \cite[Cor.~5.11]{Lut90}, $G^\an \to \Sp(K)$ is without boundary.
Since $C \to \Spec(K)$ is separated, so is $C^\an \to \Sp(K)$.
Applying \cite[Prop.~1.13]{Lam99} to $\varphi \colon G^\an \to S^\an$ and $\psi \colon C^\an \to \Sp(K)$, we deduce that $G^\an \to C^\an$ is without boundary.
By \cite[Prop.~1.14]{Lam99}, the property of being without boundary is preserved by base change, so it follows that $\cG \to \cC$ is without boundary.
Therefore, we can apply \cref{exists-rigid-uniformisation} to $\bar\phi = \phi_{\formal\dag}^\rig \colon \GG_m(1)^g \times \cC \to \cG$, proving the lemma.
\end{proof}

\subsection{Complex uniformisation}

Let $\cC$ be a simply connected complex manifold of dimension~$1$, let $s_0 \in \cC$ and let $\cC^* = \cC \setminus \{s_0\}$.
Let $\cG \to \cC$ be a smooth commutative analytic $\cC$-group such that $\cG_0:=\cG_{s_0} \cong (\GG_{m,\CC}^\an)^g$ and $\cG|_{\cC^*}$ is proper.
% \martin{I'm restricting to $\dim(\cC)=1$ because that's what \cite[IX.4]{And89} does -- the argument there relies on N\'eron models.  Of course dimension~$1$ is enough for us.  But \cite{FC90} and \cite[III.2c]{And90} both state that this works over higher-dimensional bases, without justification.  The thing that is not clear to me is why the map from global sections of $\ker(\exp)$ to the fibre over~$s_0$ is bijective.}

We define a uniformisation of $\cG$ by a multiplicative rather than additive group because we are interested in $\cC$-groups with multiplicative degeneration and want to match our definitions of formal and rigid uniformisations.

\begin{definition}
A \defterm{complex analytic uniformisation} of $\cG$ over~$\cC$ is a homomorphism of analytic $\cC$-groups $\phi \colon (\GG_{m,\CC}^\an)^g \times \cC \to \cG$ which restricts to an isomorphism $(\GG_{m,\CC}^\an)^g \to \cG_0$.
\end{definition}

\begin{lemma} \label{exists-complex-uniformisation}
Let $\cG \to \cC$ be a smooth commutative complex analytic $\cC$-group such that $\cG_0 \cong (\GG_{m,\CC}^\an)^g$ and $\cG|_{\cC^*}$ is proper.
Then there exists a complex analytic uniformisation $(\GG_{m,\CC}^\an)^g \times \cC \to \cG$.
\end{lemma}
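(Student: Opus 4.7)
The plan is to produce $\phi$ as a quotient of the relative exponential $\Psi \colon V := \Lie(\cG/\cC) \to \cG$. Each fibre of $\cG \to \cC$ is either an abelian variety or $\GG_m^g$, so the fibrewise exponential is defined on the entire Lie algebra, and these assemble into a globally defined holomorphic homomorphism $\Psi$ of analytic $\cC$-groups. Over $\cC^*$ the kernel $\Lambda := \ker(\Psi|_{\cC^*})$ is a sheaf of rank-$2g$ lattices in $V|_{\cC^*}$, whereas $\ker(\Psi_{s_0}) \subset V_{s_0} = \Lie(\GG_m^g)$ is the discrete subgroup $2\pi i \ZZ^g$ of rank~$g$.

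The heart of the argument is to identify a rank-$g$ sublattice $\Lambda_0 \subset V$ that is contained in $\ker(\Psi)$ on every fibre and recovers $\ker(\Psi_{s_0})$ at $s_0$. By the multiplicative degeneration hypothesis (equivalently, maximally unipotent local monodromy), the monodromy $T$ on $\Lambda$ around $s_0$ is unipotent with $\mathrm{rk}(T - \mathrm{Id}) = g$; hence the invariant sublattice $\Lambda^T$ has rank~$g$ and, being monodromy-invariant over a simply connected base, is a constant (trivial) sub-local system on $\cC^*$. Classical limit mixed Hodge theory for multiplicatively degenerating abelian varieties (equivalently, the Raynaud--Mumford analytic uniformisation) identifies the weight-zero piece $\Lambda^T$ with the character lattice of the special-fibre torus, so its extension $\Lambda_0$ across $s_0$ matches $\ker(\Psi_{s_0})$ fibrewise. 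In particular $\Lambda_0$ is a sub-bundle of $V$ defined on all of $\cC$, contained in $\ker(\Psi)$ everywhere and equal to it at $s_0$.

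Consequently $\Psi$ factors through a homomorphism $\phi' \colon V/\Lambda_0 \to \cG$ of analytic $\cC$-groups. The quotient $V/\Lambda_0$ is a smooth commutative $\cC$-group whose fibres are complex Lie groups of type $\GG_m^g$: indeed $\Lambda_0$ spans a totally real $g$-dimensional subspace of each fibre of $V$ (which is automatic for weight-zero pieces of limit Hodge structures of type $(-1,0) + (0,-1)$ degenerating to weight $0$), so each fibrewise quotient is biholomorphic to $(\GG_{m,\CC}^\an)^g$ via the componentwise exponential. Families of such groups over $\cC$ are classified by $H^1(\cC, \GL_g(\ZZ)) = \Hom(\pi_1(\cC), \GL_g(\ZZ)) = 0$, so we obtain an isomorphism $V/\Lambda_0 \cong (\GG_{m,\CC}^\an)^g \times \cC$, which we may pick to be compatible with the given identification $\cG_0 \cong (\GG_{m,\CC}^\an)^g$ on the fibre over $s_0$. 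The composite $\phi \colon (\GG_{m,\CC}^\an)^g \times \cC \cong V/\Lambda_0 \xrightarrow{\phi'} \cG$ is then the desired complex analytic uniformisation: it is a $\cC$-group homomorphism, and the fact that $\ker(\Psi_{s_0}) = \Lambda_0|_{s_0}$ ensures that its restriction to the fibre over $s_0$ is an isomorphism onto $\cG_0$.

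The main obstacle is the identification of $\Lambda_0|_{s_0}$ with the kernel of the exponential on the special-fibre torus. This is the non-formal ingredient: it is the limit-MHS/Raynaud--Mumford description of multiplicatively degenerating abelian varieties, matching the abstract ``weight-zero invariant lattice'' produced by the monodromy to the concrete character lattice $2\pi i \ZZ^g$ attached to the identification $\cG_0 \cong \GG_m^g$ at the boundary.
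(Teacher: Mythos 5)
Your construction is correct in spirit and lands on the same object (the quotient of $\Lie(\cG/\cC)$ by a constant rank-$g$ lattice), but the route you take is genuinely heavier than the paper's. The paper works directly with the group $\Lambda$ of \emph{global} sections of $\ker(\exp)$ over all of $\cC$ (not $\cC^*$), and cites \cite[IX.4]{And89} for the single crucial fact that evaluation at~$s_0$ gives a bijection $\Lambda \to (\Lambda_C)_0 = \ker(\exp_{\cG_0})$; everything else (constancy of $\Lambda_C$, trivialisation of the quotient) then follows from elementary manipulations. You instead first pass to the local system on $\cC^*$, invoke the equivalence between multiplicative degeneration and maximally unipotent local monodromy (a nontrivial fact, essentially SGA7), obtain $\Lambda^T$ of rank~$g$, then appeal to the limit-MHS/Raynaud--Mumford description of the degenerating family to match $\Lambda^T$ with the cocharacter lattice of the special-fibre torus, and finally use $H^1(\cC,\GL_g(\ZZ))=0$ to trivialise the quotient family. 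That is roughly the content unpacked inside André's IX.4, so nothing is wrong, but it brings in a large body of theory (unipotency criterion, weight filtration at the boundary, Raynaud--Mumford) where the paper uses a clean black-box citation plus a short linear-algebra argument.

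Two small points to tighten. First, $\cC^*$ is \emph{not} simply connected (you remove a point from the simply connected $\cC$, so $\pi_1(\cC^*)\cong\ZZ$); the reason $\Lambda^T$ is a trivial sub-local system is precisely that $\pi_1(\cC^*)$ is cyclic, generated by the loop around $s_0$, on which $\Lambda^T$ is by definition invariant. Second, the assertion that $\Lambda_0|_{s_0}$ \emph{equals} (not merely is a finite-index subgroup of) $\ker(\Psi_{s_0})$ is exactly the non-formal input; by continuity you get containment, and equality is what the limit-MHS identification is supplying, so it is worth making this explicit rather than folding it into the word ``matches''.
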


\begin{proof}
Consider the exponential map $\exp \colon \Lie(\cG/\cC) \to \cG$.
Since $\cC$ is simply connected, $\Lie(\cG/\cC) \cong \CC^g \times \cC$.
The kernel of $\exp$ is a $\cC$-subgroup of $\Lie(\cG/\cC)$ with discrete fibres.
Above $\cC^*$, $\ker(\exp)$ is locally constant of rank~$2g$.

Let $\Lambda$ denote the group of global sections $\cC \to \ker(\exp)$.  Let $\Lambda_\cC$ be the union of the images of these global sections, which is a $\cC$-subgroup of $\Lie(\cG/\cC)$. According to \cite[IX, 4.3]{And89}, the natural map $\Lambda \to (\Lambda_\cC)_0$ is bijective, so $\Lambda \cong \ZZ^g$.
Furthermore, $\Lambda \to (\Lambda_\cC)_s$ is injective for all $s \in \cC$.
Hence $\Lambda_\cC$ is a constant $\cC$-group, isomorphic to $\ZZ^g \times \cC$.

We deduce that there is an isomorphism of complex analytic $\cC$-groups
\[ \Lie(\cG/\cC) / \Lambda_\cC \cong (\CC^g \times \cC) / (\ZZ^g \times \cC) \cong (\GG_{m,\CC}^\an)^g \times \cC. \]
Now, $\exp \colon \Lie(\cG/\cC) \to \cG$ factors through the quotient $\Lie(\cG/\cC) / \Lambda_\cC$, so this isomorphism induces a homomorphism of analytic $\cC$-groups $\phi \colon (\GG_{m,\CC}^\an)^g \times \cC \to \cG$.
Since $\Lambda \to (\Lambda_\cC)_0$ is bijective, the restriction $\phi_0 \colon (\GG_{m,\CC}^\an)^g \to \cG_0$ is an isomorphism.
Thus $\phi$ is a complex analytic uniformisation, as required.
\end{proof}

\subsubsection{Compatibility with formal uniformisation}

Let $C$ be a smooth algebraic curve over~$\CC$, let $C^*$ be a Zariski open subset of~$C$, and let $s_0 \in C(\CC) \setminus C^*(\CC)$.
Let $G \to C$ be a smooth commutative $S$-group scheme such that $G_0 \cong \GG_{m,\CC}^g$ and $G|_{C^*}$ is an abelian scheme.

We shall make use of the notion of formal completion of analytic spaces over~$\CC$ (see \cite{Har75}).
Recall that, for any $\CC$-scheme~$X$ of finite type and any closed subscheme $X_0 \subset X$, if $X^\an_\formal$ denotes the formal completion of $X^\an$ along $X_0^\an$, then there is a commutative diagram of ringed spaces \cite[p.~20]{Har75}
\begin{equation} \label{eqn:an-for-diagram}
\begin{tikzcd}
    X^\an_\formal  \ar[r] \ar[d]
  & X^\an          \ar[d]
\\  X_\formal      \ar[r]
  & X.
\end{tikzcd}
\end{equation}

Let $\cC$ be a simply connected open neighbourhood of $s_0^\an$ in $C^\an$, such that $\cC \setminus \{ s_0^\an \} \subset (C^*)^\an$.
Then $\cC_\formal$ (the formal completion of $\cC$ along $\{s_0\}$) is canonically isomorphic to $C^\an_\formal$.
Let $\cG = G^\an|_{\cC}$.

\begin{definition}
Let $\phi_\formal \colon \GG_m^g \times C_\formal \to G_\formal$ be a formal uniformisation of $G$ over~$C_\formal$.
Let $\phi_\an \colon (\GG_{m,\CC}^\an)^g \times \cC \to \cG$ be a complex analytic uniformisation of $\cG$ over~$\cC$.
We say that $\phi_\formal$ and $\phi_\an$ are \defterm{compatible} if they have the same base change to $\cC_\formal \cong C^\an_\formal$.
\end{definition}

In order to establish the existence of a complex analytic uniformisation compatible with a given formal uniformisation, we shall require the following lemma, which is a complex analytic version of \cite[IX, Cor.~3.4]{SGA3mult}. Its proof is based on Jason Starr's comments on a MathOverflow question of the second author \cite{MO:S-groups}.

\begin{lemma} \label{formal-analytic-torus-end}
Let $\cC$ be a complex manifold (not necessarily of dimension~$1$) and let $s_0 \in \cC$.
Let $\cC_{\formal}$ denote the formal completion of~$\cC$ along~$\{s_0\}$.
Then the map $\End_{\cC_\formal\mathrm{\mathbf{-Grp}}}((\GG_{m,\CC}^\an)^g \times \cC_\formal) \to \End_{\CC\mathrm{\mathbf{-Grp}}}((\GG_{m,\CC}^\an)^g)$ given by base change along $\{ s_0 \} \to \cC_\formal$ is injective.
\end{lemma}

\begin{proof}
Since $\End_{\textbf{Grp}}(\GG_m^g) \cong \rM_{g \times g}(\End_{\textbf{Grp}}(\GG_m))$ (over any base space), it suffices to prove the lemma for $g=1$.

Let $\cI \subset \cO_\cC$ be the ideal sheaf of the closed subset $\{s_0\}$.
For $n \in \ZZ_{\geq 0}$, let $\cC_n$ denote the analytic subvariety of~$\cC$ defined by the ideal~$\cI^{n+1}$ (a fat point).
% Let $r = \dim(\cC)$.
% Since $\cC$ is a complex manifold, we have
% \[ \cO(\cC_n) = \CC[X_1,\dotsc,X_r]/(X_1,\dotsc,X_r)^{n+1}. \]
Let $\cG_n = \GG_{m,\CC}^\an \times \cC_n$.
Since $\cC_\formal = \varinjlim_n \cC_n$, it suffices to prove that
\[ \End_{\cC_n\textbf{-Grp}}(\cG_n) \to \End_{\cC_{n-1}\textbf{-Grp}}(\cG_{n-1}) \]
is injective for all $n \geq 1$.
Since this is a group homomorphism, it suffices to show that its kernel is trivial.

We first consider morphisms of $\cC_n$-analytic spaces instead of morphisms of $\cC_n$-analytic groups.
By the universal property of the direct product,
% giving an endomorphism of~$\cG_n$ in the category of $\CC_n$-analytic spaces is equivalent to giving a morphism of complex analytic spaces $\cG_n \to \GG_{m,\CC}^\an$.
we have
\[ \End_{\cC_n\textbf{-AnSp}}(\cG_n) \cong \Mor_{\CC\textbf{-AnSp}}(\cG_n, \GG_{m,\CC}^\an)
% = \{ \phi \in \cO(\cG_n) : \phi(x) \neq 0 \text{ for all } x \in \cG_n \}.
\]
Since $\cG_n$ and $\cG_{n-1}$ have the same reduced space, the condition $\phi|_{\cG_{n-1}}=1$ on a holomorphic function $\phi \in \cO(\cG_n)$ forces $\phi$ to map $\cG_n$ into $\CC \setminus \{0\} = \GG_{m,\CC}^\an$.
Hence, we obtain
\[ \{ \phi \in \End_{\cC_n\textbf{-AnSp}}(\cG_n) : \phi|_{\cG_{n-1}} = 1 \} \cong \{ \phi \in \cO(\cG_n) : \phi|_{\cG_{n-1}} = 1 \}. \]

Since $\cC$ is a complex manifold, letting $r = \dim(\cC)$, we have
\[ \cO(\cG_n) \cong \cO(\GG_{m,\CC}^\an)[X_1,\dotsc,X_r]/(X_1,\dotsc,X_r)^{n+1}. \]
Consequently we may identify $\{ \phi \in \End_{\cC_n\textbf{-AnSp}}(\cG_n) : \phi|_{\cG_{n-1}} = 1 \}$ with
\[ \{ 1 + f \colon f \in \cO(\GG_{m,\CC}^\an)[X_1,\dotsc,X_r] \text{ homogeneous of degree } n  \text{ in } X_1,\dotsc,X_r \}. \]

Returning to morphisms of $\cC_n$-analytic groups, if $1+f$ corresponds to a homomorphism of $\cC_n$-groups, then
\[ 1+f(YZ;X_1,\dotsc,X_r) = (1+f(Y; X_1,\dotsc,X_r))(1+f(Z; X_1,\dotsc,X_r)) \]
in $\cO(\cG_n \times_{\cC_n} \cG_n) = \cO(\GG_{m,\CC}^\an \times \GG_{m,\CC}^\an \times \cC_n)$, where $Y$, $Z$ denote coordinates on the two copies of $\GG_m$.
Since $f$ is homogeneous of degree~$n$ in $X_1,\dotsc,X_r$, the product $f(Y;X_1,\dotsc,X_r)f(Z;X_1,\dotsc,X_r)$ is zero modulo the ideal $(X_1,\dotsc,X_r)^{n+1}$.
Hence we obtain
\[ f(YZ;X_1,\dotsc,X_r) = f(Y; X_1,\dotsc,X_r) + f(Z; X_1,\dotsc,X_r). \]
In other words, viewing $f$ as a polynomial in $X_1,\dotsc,X_r$ with coefficients in $\cO(\GG_{m,\CC}^\an)$, each coefficient is a holomorphic group homomorphism $\GG_{m,\CC}^\an \to \GG_{a,\CC}^\an$.
The only such homomorphism is the trivial one, so we deduce that $f=0$.
Hence the kernel of $\End_{\cC_n\textbf{-Grp}}(\cG_n) \to \End_{\cC_{n-1}\textbf{-Grp}}(\cG_{n-1})$ is $\{1\}$, as required.
\end{proof}

\begin{lemma} \label{exists-complex-uniformisation-compatible}
Let $\phi_\formal$ be a formal uniformisation of $G$ over~$C_\formal$.
Then there exists a unique complex analytic uniformisation of $\cG$ over~$\cC$ compatible with $\phi_\formal$.
\end{lemma}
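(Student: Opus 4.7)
My plan is to first establish an analytic analog of the uniqueness assertion in \cref{exists-formal-uniformisation}, then use it to adjust the complex analytic uniformisation given by \cref{exists-complex-uniformisation} so as to achieve compatibility with $\phi_\formal$. Specifically, I claim: if $\psi_1, \psi_2 \colon (\GG_{m,\CC}^\an)^g \times \cC_\formal \to \cG_\formal$ are two formal uniformisations in the complex analytic category whose base changes to $\{s_0\}$ agree, then $\psi_1 = \psi_2$. Indeed, $\psi_2^{-1} \circ \psi_1$ is an endomorphism of the formal $\cC_\formal$-group $(\GG_{m,\CC}^\an)^g \times \cC_\formal$ whose restriction to the closed fibre is the identity, and so $\psi_1 = \psi_2$ by \cref{formal-analytic-torus-end}.

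For existence, start with any complex analytic uniformisation $\phi_\an^0 \colon (\GG_{m,\CC}^\an)^g \times \cC \to \cG$ produced by \cref{exists-complex-uniformisation}. Its base change to $\cC_\formal$, and the base change of $\phi_\formal$ to $\cC_\formal \cong C^\an_\formal$ (via the morphism of ringed spaces $C^\an_\formal \to C_\formal$ in diagram~(\ref{eqn:an-for-diagram})), are both formal uniformisations of $\cG_\formal$ over $\cC_\formal$. Their restrictions to the fibre over $s_0$ are two group isomorphisms $(\GG_{m,\CC}^\an)^g \to \cG_0$ and hence differ by an element $\alpha$ of $\Aut((\GG_{m,\CC}^\an)^g) = \GL_g(\ZZ)$. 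Precomposing $\phi_\an^0$ with $\alpha \times \id_\cC$ (for a suitable $\alpha$) yields a new complex analytic uniformisation $\phi_\an$ whose restriction to $s_0$ now matches that of $\phi_\formal$; by the rigidity claim above, the full base change of $\phi_\an$ to $\cC_\formal$ then equals that of $\phi_\formal$, establishing compatibility.

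For uniqueness, suppose $\phi_\an$ and $\phi'_\an$ are both compatible with $\phi_\formal$. The quotient $\delta(g,s) := \phi_\an(g,s) \cdot \phi'_\an(g,s)^{-1}$, viewed as a morphism of $\cC$-groups $(\GG_{m,\CC}^\an)^g \times \cC \to \cG$, agrees with the identity section to infinite order along $(\GG_{m,\CC}^\an)^g \times \{s_0\}$, since by hypothesis $\phi_\an$ and $\phi'_\an$ have equal base change to $\cC_\formal$. For each fixed $g$, the restriction of $\delta$ to $\{g\} \times \cC$ is a holomorphic map on the connected $1$-dimensional complex manifold $\cC$ agreeing with the identity to infinite order at $s_0$, hence equal to the identity on all of $\cC$ by the identity theorem. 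Therefore $\delta$ is identically the identity, so $\phi_\an = \phi'_\an$.

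I expect the main obstacle to be not the mathematics itself, which is short once assembled, but the careful bookkeeping of the various formal, analytic, and algebraic categories in play, in particular, comparing the base change of $\phi_\formal$ along the ringed-space morphism $C^\an_\formal \to C_\formal$ with the formal completion of $\phi_\an$ along $\cG_0$. The two rigidity inputs, namely \cref{formal-analytic-torus-end} in the formal direction and the one-dimensional identity theorem in the transversal analytic direction, then do the actual work.
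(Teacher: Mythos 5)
Your argument is essentially the paper's: both proofs start from \cref{exists-complex-uniformisation}, adjust by an element of $\GL_g(\ZZ) \cong \Aut((\GG_{m,\CC}^\an)^g)$ so that the restriction to $\{s_0\}$ matches that of $\phi_\formal$, and then invoke \cref{formal-analytic-torus-end} to promote agreement on the closed fibre to agreement on $\cC_\formal$. For uniqueness the paper simply cites faithfulness of the analytic formal-completion functor, where you instead spell out the one-dimensional identity-theorem argument that underlies that faithfulness; this is the same idea at a finer level of detail.
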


\begin{proof}
By \cref{exists-complex-uniformisation}, there exists a complex analytic uniformisation $\phi_\an \colon (\GG_{m,\CC}^\an)^g \times \cC \to \cG$.
By composing $\phi_\an$ with a suitable element of
\[ \Aut_{\cC\textbf{-Grp}}((\GG_{m,\CC}^\an)^g \times \cC) \cong \GL_g(\ZZ) \cong \Aut_{\CC\textbf{-Grp}}((\GG_{m,\CC}^\an)^g), \]
we may assume that $(\phi_\an)_0$ is equal to the analytification of $(\phi_\formal)_0 \colon \GG_{m,\CC}^g \to G_0$ (composed with the natural identification $G^\an_0 \to \cG_0$).

Let $(\phi_\an)_\formal \colon (\GG_{m,\CC}^\an)_\formal^g \times \cC_\formal \to \cG_\formal$ denote the base change of $\phi_\an$ to the formal analytic completion $\cC_\formal$.
Let $(\phi_\formal)^\an \colon (\GG_{m,\CC}^\an)_\formal^g \times \cC_\formal \to \cG_\formal$ denote the analytification of the formal uniformisation $\phi_\formal$ (using the canonical isomorphism $\cC_\formal \cong C^\an_\formal$).
By the definition of formal uniformisation, $\phi_\formal$ is invertible, so we may compose $(\phi_\an)_\formal$ with $(\phi_\formal^{-1})^\an$ to obtain
\[ (\phi_\formal^{-1})^\an \circ (\phi_\an)_\formal : (\GG_{m,\CC}^\an)_\formal^g \times \cC_\formal \to (\GG_{m,\CC}^\an)_\formal^g \times \cC_\formal. \]
By our choice of $\phi_\an$,
the reduction $(\phi_\formal^{-1})_0 \circ (\phi_\an)_0$ is the identity endomorphism of $(\GG_{m,\CC}^\an)^g$.
By \cref{formal-analytic-torus-end}, it follows that $(\phi_\formal^{-1})^\an \circ (\phi_\an)_\formal$ is the identity endomorphism of $(\GG_{m,\CC}^\an)_\formal^g \times \cC_\formal$.
Hence, we obtain $(\phi_\an)_\formal = (\phi_\formal)^\an$, as required.

For the uniqueness of~$\phi_\an$: by our hypotheses, $C$ is a smooth algebraic curve, so $(\GG_{m,\CC}^\an)^g \times \cC$ is a complex manifold.
Furthermore, $\cC$, and hence $(\GG_{m,\CC}^\an)^g \times \cC$, are connected.
It follows, by the identity principle, that holomorphic functions on~$(\GG_{m,\CC}^\an)^g \times \cC$ are uniquely determined by their Taylor series at any single point.
Applying this at a point in~$(\GG_{m,\CC}^\an)^g \times \{s_0\}$, we conclude that the equation $(\phi_\an)_\formal = (\phi_\formal)^\an$ uniquely determines~$\phi_\an$. 
\end{proof}

\subsection{Summary: uniformisations at all places over the ring of integers of a number field} \label{subsec:uniformisations-places}

Let $R$ be a Dedekind domain whose field of fractions is a number field~$K$ (in other words, $R$ is a localisation of the ring of integers~$\cO_K$).
Let $\fC$ be an integral regular scheme of dimension~$2$ equipped with a morphism $\fC \to \Spec(R)$ which is flat and of finite type.
Let $\fC_0$ be the image of a section $\Spec(R) \to \fC$.
Note that $\fC_0$ is a closed subscheme of $\fC$ (see, for example, \cite[Exercise II.4.8]{Har75}).

Write $C = \fC \times_{\Spec(R)} \Spec(K)$, $\{s_0\} = \fC_0 \times_{\Spec(R)} \Spec(K)$, and $\fC_\formal = $ formal completion of $\fC$ along $\fC_0$.
Let $C^*$ be a Zariski open subset of $C \setminus \{s_0\}$.

Let $\fG \to \fC$ be a semiabelian scheme such that $\fG_0 \cong \GG_m^g \times \fC_0$ and $G^* \to C^*$ is an abelian scheme.

For each place $v$ of~$K$ visible to~$R$, we define a formal scheme $\fC_{v,\formal\dag}$ (together with a morphism of formal schemes $\fC_{v,\formal\dag} \to \fC_\formal$) and an open subset $\cC_v \subset C^\van$ as follows:
\begin{enumerate}
\item If $v$ is a non-archimedean place of~$K$ visible to~$R$, then let $\fp_v$ denote the ideal of $R$ associated with~$v$ and let $R_v$ denote the $v$-adic completion of~$R$.
Let $\fC_{v,\formal\dag}$ denote the fibre product of formal schemes $\fC_\formal \times_{\Spec(R)} \Spf(R_v, (\fp_v))$.
(By \cite[Prop.~10.9.7]{EGAI} with $X = \fC$, $X' = \fC_0$, $Y = \Spec(R_v)$, $Y' = \Spec(R_v/(\fp_v))$, $S=S'=\Spec(R)$, this could equivalently be described as the formal completion of $\fC_v := \fC \times_{\Spec(R)} \Spec(R_v)$ along $\fC_{v,0} := $ the fibre of $\fC_0 \to \Spec(R)$ above~$\fp_v$.) Let $\cC_v = \fC_{v,\formal\dag}^\rig$.

\item If $v$ is an archimedean place of~$K$, then let $\fC_{v,\formal\dag}$ denote the fibre product of formal schemes $\fC_\formal \times_{\Spec(R)} \Spec(\CC)$, via the morphism $\Spec(R) \to \Spec(\CC)$ associated with the place~$v$. Choose a simply connected neighbourhood $\cC_v$ of $s_0^\van$ in the complex manifold $C^\van$, such that $\cC_v \setminus \{s_0^\van\} \subset (C^*)^\van$.
\end{enumerate}
Let $G_{v,\formal\dag} = \fG_\formal \times_{\fC_\formal} \fC_{v,\formal\dag}$ and let $\cG_v = G^\van|_{\cC_v}$.

The following proposition follows from \cref{exists-rigid-uniformisation-compatible,exists-complex-uniformisation-compatible}.

\begin{proposition} \label{exist-uniformisations-places}
Let $\phi_\formal$ be a formal uniformisation of $\fG_\formal$ over~$\fC_\formal$.
For each place $v$ of~$K$ visible to~$R$, let $\phi_{v,\formal\dag} \colon \GG_m^g \times \fC_{v,\formal\dag} \to \fG_{v,\formal\dag}$ denote the base change of $\phi_\formal$ along $\fC_{v,\formal\dag} \to \fC_\formal$.
There exists a unique $v$-adic analytic uniformisation $\phi_v \colon (\GG_m^\van)^g \times \cC_v \to \cG_v$ which is compatible with $\phi_{v,\formal\dag}$.
\end{proposition}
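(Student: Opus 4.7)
The strategy is simply to verify, for each place $v$ visible to $R$, that the hypotheses of the already-established compatibility lemma for that type of place are satisfied, and then invoke it. Uniqueness in each case follows from the uniqueness in the cited lemma, so the crux is existence.

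First I would treat the non-archimedean case. Here $v$ corresponds to a maximal ideal $\fp_v \subset R$, and $R_v$ is a complete discrete valuation ring with fraction field $K_v$. Base-changing along the flat morphism $\Spec(R_v) \to \Spec(R)$, I set $\fC_v = \fC \times_{\Spec(R)} \Spec(R_v)$ and $\fG_v = \fG \times_\fC \fC_v$. The point is that $\fC_v$ satisfies the hypotheses imposed on $\fC$ in \cref{sssec:formal-compatibility}: it is a separated flat $R_v$-scheme of finite type (base change preserves these), and regularity together with integrality descend along the flat local extension $R_{\fp_v} \to R_v$ (regularity is preserved by flat base change with geometrically regular fibres, and integrality of $\fC_v$ can be checked using regularity plus connectedness of $\fC_v$, which in turn follows from the fact that $\fC_0$ is a section). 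The closed subscheme $\fC_{v,0}$ of the closed fibre plays the role of $\fC_{0,\fp}$, and the formal completion of $\fC_v$ along $\fC_{v,0}$ coincides with $\fC_{v,\formal\dag}$ as defined in \cref{subsec:uniformisations-places} (by \cite[Prop.~10.9.7]{EGAI}, as already observed in the definition). The hypothesis that $\fG_{v,0} \cong \GG_m^g \times \fC_{v,0}$ follows by base change from the analogous hypothesis on $\fG \to \fC$. Hence $\phi_{v,\formal\dag}$ is a formal uniformisation of $\fG_{v,\formal\dag}$ over $\fC_{v,\formal\dag}$ in the sense of \cref{sssec:formal-compatibility}, and a direct application of \cref{exists-rigid-uniformisation-compatible} yields a unique rigid uniformisation $\phi_v \colon (\GG_m^\van)^g \times \cC_v \to \cG_v$ compatible with $\phi_{v,\formal\dag}$.

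Next I would treat the archimedean case, which is more straightforward since all the work has already been done in \cref{exists-complex-uniformisation-compatible}. Fix an archimedean embedding representing $v$, giving $C_v := C \times_{\Spec(K)} \Spec(\CC)$ and $G_v := G \times_{\Spec(K)} \Spec(\CC)$; the Zariski open $C^*_v \subset C_v$ and the point $s_{0,v} \in C_v(\CC)$ corresponding to $s_0$ play the roles of $C^*$ and $s_0$ in the setup of that lemma. The hypotheses that $G_v|_{C^*_v}$ is an abelian scheme, that $G_{v,s_{0,v}} \cong \GG_{m,\CC}^g$, and that $\cC_v$ is a simply connected neighbourhood of $s_{0,v}^\an$ with $\cC_v \setminus \{s_{0,v}^\an\} \subset (C^*_v)^\an$ are all built into our setup in \cref{subsec:uniformisations-places}. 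The base change $\phi_{v,\formal\dag}$ is a formal uniformisation of $G_v$ along $\{s_{0,v}\}$, since formal completion commutes with base change, so \cref{exists-complex-uniformisation-compatible} applies verbatim and supplies the unique complex analytic uniformisation compatible with $\phi_{v,\formal\dag}$.

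The only non-formal point in the plan is ensuring that $\fC_v$ inherits integrality (and, more cosmetically, normality/regularity) from $\fC$ in the non-archimedean case; I expect this to be a routine verification using flatness of completion together with the regularity of $\fC$, but it is where I would spend the most care, since the ambient hypotheses in \cref{sssec:formal-compatibility} do require these properties. Once that is in hand, the proposition is simply the concatenation of \cref{exists-rigid-uniformisation-compatible} and \cref{exists-complex-uniformisation-compatible} applied place-by-place.
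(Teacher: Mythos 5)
Your proposal takes exactly the same route as the paper: the paper's entire proof is the one-sentence observation that the proposition follows from \cref{exists-rigid-uniformisation-compatible} and \cref{exists-complex-uniformisation-compatible}, applied place by place, and you are simply spelling out the (omitted) verification that the hypotheses of those lemmas hold after base change. One small caveat: your assertion that connectedness of $\fC_v$ ``follows from the fact that $\fC_0$ is a section'' is not quite a complete argument as phrased -- the right chain is that the section forces $K$ to be algebraically closed in $K(C)$ (since $C$ is normal and has a $K$-point), hence $C$ is geometrically irreducible, hence $C_v$ is irreducible, and then flatness of $\fC_v \to \Spec(R_v)$ forces every generic point of $\fC_v$ to lie over the generic point of $\Spec(R_v)$, so $\fC_v$ is irreducible -- but this is exactly the kind of routine detail you flagged as needing care, and it does go through.
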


\section{Construction of period G-functions} \label{sec:formal-periods}

Let $C$ be a smooth curve over a number field $K$, and let $G \to C$ be a semiabelian scheme such that $G|_{C^*}$ is an abelian scheme for some non-empty open subset $C^* \subset C$, and $G_{s_0} \cong \GG_{m,K}^g$ for some $s_0 \in C(K) \setminus C^*(K)$.
Andr\'e observed that, for the complex analytification of such a semiabelian scheme, the ``locally invariant'' periods near~$s_0$ are given by evaluations of G-functions (which are power series with coefficients in~$K$).
Furthermore, the periods with respect to different archimedean places of~$K$ are given by evaluating the same power series at the respective places of~$K$.
This observation was key to Andr\'e's strategy for proving height bounds for fibres of such a semiabelian scheme with large endomorphism rings.

In this section, we interpret geometrically the non-archimedean as well as the archimedean evaluations of these period G-functions, using formal and rigid uniformisations.
By using compatibility between formal uniformisation over the ring of integers of~$K$ and analytic uniformisations at all places, we ensure that our ``periods'' at all places come from evaluations of the same power series in $\powerseries{K}{X}$.

In order to discuss formal uniformisation, we require an integral model of the semiabelian scheme $G \to C$.
It is not clear, however, what integrality condition we could impose on de Rham classes directly, because de Rham cohomology does not behave well on formal integral models.
Instead, we choose invariant differential forms $\omega_1, \dotsc, \omega_m$ extending over the integral model, and then we consider additional de Rham classes $\eta_1, \dotsc, \eta_n$ belonging to the span of Gauss--Manin derivatives of $\omega_1, \dotsc, \omega_m$.
This allows us to repeatedly use the following strategy:
\begin{enumerate}
\item Use the integral model $\fG \to \fC$ to construct some object or prove some property associated with invariant differential forms $\omega_1, \dotsc, \omega_m$.
\item Differentiate using the Gauss--Manin connection to construct a corresponding object or prove a corresponding property associated with de Rham classes $\eta_1, \dotsc, \eta_n$.
\end{enumerate}
Thanks to \cref{gauss-manin-basis}, in the situation in which we apply \cref{exist-formal-periods} in this article, it is possible to choose $\eta_1, \dotsc, \eta_n$ so that $\omega_1, \dotsc, \omega_m, \eta_1, \dotsc, \eta_n$ span $H^1_{DR}(G|_{C^*}/C^*)$.

\subsection{Statement of theorem on period G-functions} \label{subsec:formal-periods-statement}

Let $R$ be a Dedekind domain whose field of fractions is a number field~$K$.
Let $\fC$ be an integral regular scheme of dimension~$2$ equipped with a morphism $\fC \to \Spec(R)$ which is flat and of finite type.

Let $\fC_0$ denote the image of a section $\Spec(R) \to \fC$, which is a closed subscheme of $\fC$.
Let $\fC_\formal$ denote the formal completion of $\fC$ along $\fC_0$.
Write $C = \fC \times_{\Spec(R)} \Spec(K)$ and $\{s_0\} = \fC_0 \cap C$.
Let $C^*$ be a Zariski open subset of $C \setminus \{s_0\}$.
Let $C' = C^* \cup \{s_0\}$, which is also a Zariski open subset of~$C$.

Let $\pi \colon \fG \to \fC$ be a semiabelian scheme such that $\fG_0 \cong \GG_{m,\fC_0}^g$.
Write $G := \fG \times_\fC C$ and $G^* := G \times_C C^*$ and suppose that $G^* \to C^*$ is an abelian scheme.
Let $\tilde\fG = \GG_m^g \times \fC$ and define $\tilde G$ and $\tilde G^*$ analogously.

Let $\nabla$ denote the Gauss--Manin connection on the relative de Rham cohomology $H^1_{DR}(G^*/C^*)$.
% This connection has regular singularities at $s_0$
We write $H^1_{DR}(G^*/C^*)^\can$ for the canonical extension of $(H^1_{DR}(G^*/C^*), \nabla)$ over $C' = C^* \cup \{s_0\}$.
As in Section \ref{subsec:derivatives-forms}, we define $\Omega^\inv_{G/C}$ to be the $\cO_C$-module of $G$-invariant forms in $\pi_*\Omega^1_{G/C}$.
Since the canonical extension can be realised as the hypercohomology of the log de Rham complex, there is a morphism of $\cO_{C'}$-modules 
\[ \iota \colon \Omega_{G/C}^\inv|_{C'} \to H^1_{DR}(G^*/C^*)^\can. \]
Since $G^* \to C^*$ is an abelian scheme, $\iota|_{C^*}$ is injective.  Since $\Omega_{G/C}^\inv$ is locally free, it follows that $\iota$ is injective.
We henceforth identify $\Omega_{G/C}^\inv|_{C'}$ with its image under~$\iota$.

\begin{theorem} \label{exist-formal-periods}
Let $x \in \cO(\fC)$ be a function such that $x|_C$ is a local parameter around~$s_0$.

Let $\omega_1, \dotsc, \omega_m \in \Omega^{\inv}_{G/C}(C')$.
Let $\eta_1, \dotsc, \eta_n$ be elements of the $\cO(C')$-submodule of $H^1_{DR}(G^*/C^*)^\can(C')$ generated by $\{ (\nabla_{d/dx})^k \omega_\ell : k \in \ZZ_{\geq 0}, 1 \leq \ell \leq m \}$.

Let $\tilde\omega_1, \dotsc, \tilde\omega_g$ denote the standard $\cO_\fC$-basis for $\Omega^{\inv}_{\GG_m^g \times \fC/\fC}$ (that is, $\tilde\omega_i = du_i/u_i$, where $u_i$ is the coordinate on the $i$-th copy of $\GG_m$).

Let $\phi_\formal \colon \tilde\fG_\formal \to \fG_\formal$ be a formal uniformisation of $\fG_\formal$ over~$\fC_\formal$.
For each place $v$ of~$K$ visible to~$R$, let $\cC_v \subset C^\van$ denote the open set called $\cC_v$ in~\cref{subsec:uniformisations-places}.

Let $\cG_v = G^\van|_{\cC_v}$ and $\tilde\cG_v = (\GG_m^\van)^g \times \cC_v$.
Let $\phi_v \colon \tilde\cG_v \to \cG_v$ denote the $v$-adic uniformisation of $\cG_v$ over $\cC_v$ compatible with~$\phi_\formal$ given by \cref{exist-uniformisations-places}.

Then there exist G-functions $F_{ij} \in \powerseries{K}{X}$ (for $1 \leq i \leq m, 1 \leq j \leq g$) and $G_{ij} \in \powerseries{K}{X}$ (for $1 \leq i \leq n, 1 \leq j \leq g$),
open sets $U_v \subset C'^\van$ and real numbers $r_v$ (for each place~$v$ of $K$ visible to~$R$) with the following properties:
\begin{enumerate}
\item $r_v>0$ for all $v$, and $r_v=1$ for almost all~$v$;
\item $s_0^\van \in U_v \subset \cC_v$ for all~$v$;
\item $x^{\van}$ maps $U_v$ isomorphically (in the category of $K_v$-analytic spaces) onto the open disc $D(r_v,K_v)$;
\item the $v$-adic radii of convergence of $F_{ij}$ and $G_{ij}$ are at least $r_v$ for all $v$, $i$ and~$j$;
\item for each place $v$ of~$K$ visible to~$R$, for every finite extension of complete fields $L/K_v$ and for all $s \in U_v^*(L)$, where $U^*_v:=U_v\setminus\{s^{\van}_0\}$, we have
\begin{align*}
   \phi_{v,s}^*(\omega_{i,s}^\van) & = \sum_{j=1}^g F_{ij}^{\van}(x^\van(s)) \tilde\omega_{j,s}^\van \text{ for } 1 \leq i \leq m,
% \end{align*}
% in $\omega_{\tilde G_s^\van/L}$, and
% \begin{align*}
\\ \phi_{v,s}^*(\eta_{i,s}^\van)   & = \sum_{j=1}^g G_{ij}^{\van}(x^\van(s)) \tilde\omega_{j,s}^\van \text{ for } 1 \leq i \leq n,
\end{align*}
in the analytic de Rham cohomology $H^1_{DR}((\GG_m^\van)^g / L)$.
\end{enumerate}
\end{theorem}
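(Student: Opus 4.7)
The plan is to define $F_{ij}$ and $G_{ij}$ algebraically on the formal model via $\phi_\formal$, then to deduce the $v$-adic identities from the compatibility of $\phi_\formal$ with each $\phi_v$ provided by \cref{exist-uniformisations-places}. Specifically, $\phi_\formal$ is an isomorphism of formal group schemes over $\fC_\formal$, hence it induces an isomorphism on invariant differentials. Restricting $\omega_i$ from $C'$ to the formal completion of $G$ along $G_{s_0}$ (which coincides with the base change of $\fG_\formal$ to $K$) and pulling back via the base change of $\phi_\formal$ to $K$ yields an invariant form on $\tilde\fG_\formal \otimes_R K$, which we expand uniquely as
\[ \phi_\formal^* \omega_i = \sum_{j=1}^g F_{ij}\,\tilde\omega_j, \]
with $F_{ij} \in \cO(\fC_\formal) \otimes_R K$. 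Since $\fC$ is regular of dimension~$2$ and $\fC_0$ is a regular codimension-$1$ subscheme cut out locally by $x$, Taylor-expanding these coefficients at $s_0$ embeds them into $\powerseries{K}{x}$.

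To construct $G_{ij}$, I use that on the constant group scheme $\tilde\fG = \GG_m^g \times \fC$, the basis $\tilde\omega_1, \dotsc, \tilde\omega_g$ is flat for the Gauss--Manin connection, whose action on $\cO$-linear combinations is just differentiation of coefficients. Since formal pullback commutes with Gauss--Manin, iterating gives $\phi_\formal^*((\nabla_{d/dx})^k \omega_\ell) = \sum_j F_{\ell j}^{(k)}\,\tilde\omega_j$ in the formal relative de Rham cohomology. Writing each $\eta_i = \sum_{k,\ell} a_{ik\ell}\,(\nabla_{d/dx})^k \omega_\ell$ with $a_{ik\ell} \in \cO(C')$ and Taylor-expanding $a_{ik\ell}$ at $s_0$, I set
\[ G_{ij} = \sum_{k,\ell} a_{ik\ell}(x)\, F_{\ell j}^{(k)} \in \powerseries{K}{x}. \]

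Next I would verify convergence and the G-function property at each place~$v$ visible to~$R$. At non-archimedean $v$, Berthelot's rigid functor applied to $\phi_\formal$ after base change over $R_v$ yields a formal rigid uniformisation on $\fC_{v,\formal\dag}^\rig$, and the compatibility with $\phi_v$ from \cref{exist-uniformisations-places} gives convergence of $F_{ij}$ on this rigid disc; moreover, after choosing $\omega_i$ to extend over the integral model (clearing denominators, which affects only finitely many~$v$), the $F_{ij}$ have $v$-integer coefficients for almost all $v$, giving convergence on the full unit disc. At archimedean $v$, compatibility with the complex analytic uniformisation yields convergence of $F_{ij}$ on the simply connected neighborhood $\cC_v$ of $s_0$. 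Combined with the Picard--Fuchs linear ODE arising from the Gauss--Manin connection on $H^1_{DR}(G^*/C^*)^\can$ (polynomial in~$x$ after clearing denominators), this establishes the G-function property for both $F_{ij}$ and $G_{ij}$. Finally, I apply \cref{power-series-locally-invertible} to the Taylor expansion of $x$ in a fixed local parameter at $s_0$ to produce $U_v$ and $r_v$ satisfying (a)--(d), shrinking $r_v$ if necessary so that $D(r_v,K_v)$ lies inside the common domain of convergence of the $F_{ij}$ and $G_{ij}$; property (e) then follows from the formal identity together with the $v$-adic compatibility at each~$v$.

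The main obstacle will be passing the formal-algebraic identities for $\phi_\formal^*\omega_i$ and $\phi_\formal^*\eta_i$ through $v$-adic analytification to obtain identities in the analytic de Rham cohomology $H^1_{DR}((\GG_m^\van)^g / L)$, since this requires carefully matching the algebraic, formal, rigid, and complex analytic versions of the Gauss--Manin connection and the canonical extension of $H^1_{DR}(G^*/C^*)$ at the degenerate fibre $s_0$. The hypothesis that each $\eta_i$ lies in the $\nabla$-span of the invariant differentials $\omega_\ell$ (rather than being an arbitrary de Rham class) is precisely what makes this tractable, because it reduces everything to statements about invariant differentials, for which the formal, rigid, and complex analytic constructions agree directly via the compatible uniformisations.
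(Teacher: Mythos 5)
Your proposal takes essentially the same approach as the paper: define $F_{ij}$ by expanding $\phi_\formal^*\omega_i$ in the basis $\tilde\omega_j$ over $\fC_\formal$, define $G_{ij}$ by Gauss--Manin differentiation of $F_{ij}$ and Taylor expansion of the coefficients $a_{ik\ell}$, verify the non-archimedean identities via Berthelot's rigid functor and compatibility with $\phi_v$, verify the archimedean identities via compatibility with the complex analytic uniformisation, and invoke \cref{power-series-locally-invertible} for the sets $U_v$ and radii $r_v$. One point you gloss over: the claim that ``$\fC_0$ is a regular codimension-$1$ subscheme cut out locally by $x$'' (so that $\cO(\fC_\formal)\otimes_R K$ embeds into $\powerseries{K}{x}$ with globally bounded image) is not immediate --- the zero locus of $x$ on the arithmetic surface $\fC$ is generally larger than $\fC_0$ --- and the paper devotes \cref{exist-formal-periods-reduction-1,lem:opens,exist-formal-periods-reduction-2} (passing to an affine cover, identifying $\fC_\formal$ with $\Spf(\powerseries{R}{T},(T))$ for some auxiliary $t$, and then composing with the Taylor series of $x$ in $t$ via \cref{power-series-locally-invertible}) to making this rigorous and to getting $r_v=1$ for almost all~$v$; with that caveat the two arguments match.
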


For the definition of G-functions used in this article, see \cite[p. 1]{And89}.

\subsection{Simplifying the structure of the formal completion} \label{subsec:Cfor-reduction}

We begin by showing that, in order to prove \cref{exist-formal-periods}, it suffices to prove it subject to additional conditions on the structure of $\fC_\formal$.
The purpose of these conditions is so that the restriction to $\fC_\formal$ of a function on~$\fC$ is precisely the Taylor series of that function.
We impose these additional conditions in two stages.

\subsubsection{Reduction to abstract power series ring}

We first reduce to the case where $\fC_\formal$ is $\Spf$ of a power series ring.

\begin{lemma} \label{exist-formal-periods-reduction-1}
In order to prove \cref{exist-formal-periods}, it suffices to prove it under the following additional conditions:
\begin{enumerate}[label=(\roman*)]
\item $\fC$ is affine.
\item $\fC_\formal$ is isomorphic (as a formal scheme over $\Spec(R)$) to $\Spf(\powerseries{R}{T}, (T))$, via an isomorphism under which the indeterminate $T$ corresponds to the formal completion of a regular function on $\fC$.
\end{enumerate}
\end{lemma}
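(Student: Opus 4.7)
The strategy is to shrink $\fC$ to an open affine subscheme containing $\fC_0$ on which the defining ideal of $\fC_0$ is principal, and then compute the formal completion directly.

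First, I would observe that all the data and the conclusion of \cref{exist-formal-periods} are local around $\fC_0$. Indeed, replacing $\fC$ with any open subscheme containing $\fC_0$ leaves $\fC_\formal$ unchanged, allows $\cC_v$ to be chosen within the new $C'^\van$, restricts the sections $\omega_i$ and $\eta_i$ naturally, and only shrinks the ambient $C'^\van$ of $U_v$, which remains open in the original $C'^\van$. Consequently, the theorem for a suitable open subscheme of $\fC$ yields the theorem for $\fC$ itself, so it suffices to exhibit such a subscheme satisfying (i) and (ii).

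For condition~(i), since $\fC_0 \cong \Spec R$ is an affine closed subscheme of the regular Noetherian scheme $\fC$, a standard argument produces an affine open neighborhood of $\fC_0$ in $\fC$; replacing $\fC$ by this neighborhood, I have $\fC = \Spec A$ affine. For condition~(ii), the ideal $I = \ker(A \twoheadrightarrow R)$ defining $\fC_0$ is locally principal, because $\fC_0$ is an effective Cartier divisor in the regular $2$-dimensional scheme $\fC$; after a further shrinking, I may assume $I = (T)$ for some regular function $T \in A$. To verify the isomorphism $\fC_\formal \cong \Spf(\powerseries{R}{T}, (T))$, I would note that the structural morphism $\fC \to \Spec R$ and the section $\Spec R \to \fC$ give $A$ the structure of an $R$-algebra with a canonical retraction $A \twoheadrightarrow A/I = R$; since $A$ is a domain, $T$ is a nonzero-divisor, so the short exact sequences
\[
    0 \to A/(T^n) \xrightarrow{\;\cdot T\;} A/(T^{n+1}) \to A/(T) = R \to 0
\]
combined with induction on $n$ yield that the natural $R$-algebra homomorphism $R[T]/(T^{n+1}) \to A/(T^{n+1})$, sending the indeterminate to the class of $T$, is an isomorphism. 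Passing to the inverse limit gives $\widehat A_I \cong \powerseries{R}{T}$, hence $\fC_\formal \cong \Spf(\powerseries{R}{T}, (T))$, with the indeterminate being the image of the regular function $T \in \cO(\fC)$.

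The main technical hurdle in this plan is the second shrinking: principalising $I$ globally requires trivialising the line bundle $\cO_\fC(-\fC_0)$ on a neighborhood of $\fC_0$, whose restriction $I/I^2$ to $\fC_0$ is a potentially nontrivial element of $\Pic(R)$. Producing the required shrinking therefore calls for a careful local-to-global argument, likely exploiting the specific structure provided by the section $\Spec R \to \fC$ (for instance, writing a uniformiser of the DVR $\cO_{\fC,\eta}$ at the generic point $\eta$ of $\fC_0$ as a fraction $a/s$ with $a \in A$, $s \in A \setminus I$ and then inverting finitely many elements of $A$ coming from the finite generation of $I$).
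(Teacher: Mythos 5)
The paper's proof of this lemma does \emph{not} work by shrinking $\fC$ to a single open subscheme containing all of $\fC_0$: it covers $\Spec(R)$ by finitely many opens $\Spec(R_\tau)$, finds for each $\tau$ an affine $\fC_\tau \subset \fC$ over $R_\tau$ satisfying (i)--(ii), and then proves a separate sublemma showing that the power series $F_{\tau ij}, G_{\tau ij}$ are independent of $\tau$, after which the $U_v$ and $r_v$ are assembled place by place. Your proposal instead hinges on the claim that a single open subscheme of $\fC$ containing all of $\fC_0$, on which the defining ideal $I$ is principal, will do. That claim is exactly where the argument breaks: $I/I^2$ is a rank-one projective $R$-module (the conormal bundle of the section, which is a regular codimension-one immersion since $\fC$ is regular), and since $R$ is merely a Dedekind domain this class in $\Pic(R)$ can be nontrivial, in which case no open neighbourhood of $\fC_0$ in $\fC$ has principal $I$. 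You flag this in your last paragraph as ``the main technical hurdle,'' but the fix you sketch (inverting finitely many elements of $A$) cannot help, because the obstruction lives downstairs in $\Pic(R)$, not in $A$.

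Once one is forced to localize $R$ itself, your opening ``locality'' observation no longer closes the argument: each localization $R_\tau$ sees only a subset of the places of $K$ visible to $R$, so the version of \cref{exist-formal-periods} obtained over $\fC_\tau$ gives information only at those places. To recover the statement over $R$ one must show that the G-functions $F_{ij}, G_{ij}$ produced over different $\fC_\tau$ coincide, and then patch the data $U_v, r_v$ across the finitely many $\tau$. This comparison step (proved in the paper by evaluating both families of power series at a common place and using density) is the genuine content missing from your proposal; without it, the reduction is not complete. The remainder of your argument — that the affine neighbourhood exists, and that a principal nonzero-divisor $T$ with $A/(T) = R$ forces $\widehat{A}_{(T)} \cong \powerseries{R}{T}$ — is sound, and in spirit is the same computation the paper outsources to [EGA IV, Cor.~16.9.9].
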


In order to prove \cref{exist-formal-periods-reduction-1}, we will use the following lemma to replace $\fC$ with suitable open subsets. Its proof is inspired by nfdc23's MathOverflow comment~\cite{MO:formal-comp}.

\begin{lemma}\label{lem:opens}
Let $K$ be a number field and let $R$ be a Dedekind domain with $\Frac(R)=K$. Let $f:\fC\to\Spec(R)$ be a flat separated morphism locally of finite type with $\fC$ integral of dimension $2$, and let $j:\Spec (R)\to\fC$ denote a section of $f$ such that, for every closed point $\mathfrak{p}\in\Spec(R)$, the fibre $\fC_{\mathfrak{p}}$ is smooth at $j(\mathfrak{p})$.
Let $\fC_0 \subset \fC$ denote the image of~$j$.
Then there exist finite collections of affine open subschemes $\fC_\tau$ of $\fC$ and of elements $\alpha_\tau \in R$ such that:
\begin{enumerate}[(a)]
\item the open subschemes $\Spec(R[1/\alpha_\tau])$ cover $\Spec(R)$;
% \item the (closed) image $\fC_0$ of~$j$ is contained in $\bigcup_\tau \fC_\tau$;
\item for each~$\tau$, $f(\fC_\tau) \subset \Spec(R_\tau)$;
\item for each~$\tau$, condition~(ii) of \cref{exist-formal-periods-reduction-1} holds when $\fC$ is replaced by $\fC_\tau$ and $R$ is replaced by $R_\tau = R[1/\alpha_\tau]$ for some $\alpha_\tau \in R$. %, such that $\fC_0 \cap \fC_\tau = j(\Spec(R_\tau))$.
\end{enumerate}
\end{lemma}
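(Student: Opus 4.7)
The plan is to construct the affine opens $\fC_\tau$ pointwise along the image of the section, with each $\fC_\tau$ equipped with a regular function $t$ cutting out $\fC_\tau \cap \fC_0$, and then to extract a finite subcover by quasi-compactness of $\Spec R$.

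The first step is to upgrade the smoothness hypothesis to all of $j(\Spec R)$: the smooth locus of~$f$ is open in~$\fC$, and its preimage under~$j$ is an open of $\Spec R$ containing every closed point, hence (since the generic point lies in every non-empty open) equal to all of $\Spec R$. Combined with flatness and regularity of~$R$, this makes $\cO_{\fC,j(p)}$ a regular local ring of dimension $1+\dim R_p$; for closed $\mathfrak{p}$ this is two-dimensional, hence a UFD, so the height-one ideal of $\fC_0$ is principal, generated by some $t_{\mathfrak{p}} \in \cO_{\fC,j(\mathfrak{p})}$. After extending $t_{\mathfrak{p}}$ to an affine open $V_\mathfrak{p} \ni j(\mathfrak{p})$, the cokernel of $(t_{\mathfrak{p}}) \hookrightarrow \cI$ (where $\cI$ is the ideal sheaf of $\fC_0 \cap V_\mathfrak{p}$) is a coherent sheaf with vanishing stalk at~$j(\mathfrak{p})$, so shrinking $V_\mathfrak{p}$ further (and keeping it affine) one may assume $\cI = (t_{\mathfrak{p}})$ on $V_\mathfrak{p}$.

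To secure conditions (a) and (b), note that $V_\mathfrak{p} \cap \fC_0$ is a quasi-compact open of the Noetherian scheme $\Spec R$, hence the complement of a finite set of maximal ideals $\mathfrak{q}_1,\dotsc,\mathfrak{q}_N$, each distinct from $\mathfrak{p}$. Prime avoidance yields $\alpha_\mathfrak{p} \in \bigcap_i \mathfrak{q}_i \setminus \mathfrak{p}$, and setting $\fC_\tau := V_\mathfrak{p}[1/\alpha_\mathfrak{p}]$ produces an affine open with $\fC_\tau \cap \fC_0 = \Spec R[1/\alpha_\mathfrak{p}]$ and $\alpha_\mathfrak{p}$ invertible on $\fC_\tau$, so that $f(\fC_\tau) \subset \Spec R[1/\alpha_\mathfrak{p}]$. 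The ideal of $\fC_\tau \cap \fC_0$ inside $\fC_\tau$ is still generated by $t_\mathfrak{p}$. As $\mathfrak{p}$ ranges over $\Spec R$, the opens $\Spec R[1/\alpha_\mathfrak{p}]$ cover $\Spec R$; a finite subcover exists by quasi-compactness.

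The main obstacle is verifying condition~(c): that the formal completion of $\fC_\tau$ along $\fC_\tau \cap \fC_0$ is isomorphic to $\Spf(\powerseries{R_\tau}{T},(T))$ with $T$ corresponding to $t_\mathfrak{p}$. Writing $A = \cO(\fC_\tau)$, $R_\tau = R[1/\alpha_\mathfrak{p}]$ and $t = t_\mathfrak{p}|_{\fC_\tau}$, one has $A/(t) \cong R_\tau$, and $t$ is a non-zero-divisor in the domain~$A$. The key observation is that multiplication by $t^n$ induces an isomorphism $A/(t) \xrightarrow{\sim} (t)^n/(t)^{n+1}$, so that $\gr_{(t)} A \cong R_\tau[T]$; a lifting argument using this, combined with Krull's intersection theorem in the Noetherian ring~$A$, then identifies the map $\powerseries{R_\tau}{T} \to \widehat{A}$ sending $T \mapsto t$ as an isomorphism. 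This completion calculation, though ultimately standard, is the only genuinely technical ingredient; the rest is sheaf-theoretic shrinking.
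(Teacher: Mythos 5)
Your proof is correct and reaches the same conclusion, but it takes a noticeably more hands-on route than the paper's. The paper's argument, after upgrading fibrewise smoothness to smoothness of $f$ at $j(\fp)$, invokes EGA IV 17.12.3 to conclude that $j$ is a quasi-regular immersion near $\fp$, and then EGA IV 16.9.9 to identify the formal completion with $\Spf(\powerseries{R'}{T},(T))$; this only gives an abstract isomorphism, so a further step is required (a surjectivity argument onto $\powerseries{R'}{T}/(T^2)$ and an automorphism of the power series ring) to arrange that $T$ corresponds to the completion of an actual regular function on $\fC$. You instead exploit the fact that, under the same smoothness-over-regular-base reasoning, $\cO_{\fC, j(\fp)}$ is a two-dimensional regular local ring, hence a UFD, so the height-one prime $\cI_{j(\fp)}$ is principal with an explicit generator $t_\fp$; you then spread this out, trim the open, and carry out the completion computation directly from $\gr_{(t)} A \cong R_\tau[T]$ via the non-zero-divisor property of $t$. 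What your approach buys is that the local parameter is a concrete regular function from the outset, so the ``$T$ corresponds to a regular function'' clause of condition (ii) is automatic rather than retrofitted; what the paper's route buys is brevity, outsourcing the conormal-sheaf/graded-ring bookkeeping to EGA. Two tiny points of hygiene, neither a real gap: when you ``extend $t_\fp$ to an affine open $V_\fp$'' you must already shrink so that $t_\fp$ vanishes on $\fC_0\cap V_\fp$ before speaking of the injection $(t_\fp)\hookrightarrow\cI$; and the existence of $\alpha_\fp\in\bigcap_i\fq_i\setminus\fp$ is not literally prime avoidance but the fact that a prime containing $\prod_i\fq_i$ must contain one of the $\fq_i$, hence equal it, contradicting $\fq_i\neq\fp$.
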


\begin{proof}
It suffices to show that, for each closed point $\mathfrak{p}\in\Spec(R)$, there exists an $\alpha \in R$ such that $\fp \in \Spec(R[1/\alpha])$ and an affine open subscheme $U \subset \fC$ such that $(U, \alpha)$ have the properties required of $(\fC_\tau, \alpha_\tau)$.

By \cite[Lemma 29.34.14]{stacks}, $f$ is smooth at $j(\mathfrak{p})$. Hence, by \cite[Cor.~17.12.3]{EGAIV}, $j$ is a quasi-regular immersion in an open neighbourhood $V' \subset \Spec(R)$ of~$\mathfrak{p}$.

Let $U'$ denote an affine open neighbourhood of $j(\fp)$ in~$\fC$.
Let $V = V' \cap j^{-1}(U)$, which is an open neighbourhood of $\fp$ in $\Spec(R)$.
Shrinking $V$, we may assume that $V = \Spec(R')$ where $R' = R[1/\alpha]$ for some $\alpha \in R$, still with $\fp \in V$.

Let $U = U' \cap f^{-1}(V) = U' \times_{\Spec(R)} V$.
Since $U'$, $V$ and $\Spec(R)$ are all affine schemes, so is $U$.
By construction, $f(U) \subset V$ and $j(V) = \fC_0 \cap U$.

By \cite[Cor.~16.9.9]{EGAIV}, since $j|_V \colon V \to U$ is a quasi-regular immersion, the formal completion $U_{\formal}$ of $U$ along $\fC_0\cap U$ is isomorphic to $\Spf(\powerseries{R'}{T},(T))$. (See \cite[Remark 16.1.11]{EGAIV} for the connection with the formal completion.) 

Let $I$ denote the ideal of $\cO:=\cO_{\fC}(U)$ corresponding to the reduced closed subscheme $\fC_0\cap U \subset U$. Then $U_{\formal}=\Spf(\hat{\cO},I\hat{\cO})$, where $\hat{\cO}$ denotes the $I$-adic completion of $\cO$. The isomorphism $\Spf(\powerseries{R'}{T},(T)) \to \fC_0\cap U$ obtained above induces an isomorphism $\beta:\hat{\cO}\to \powerseries{R'}{T}$, which is compatible with the homomorphisms $\hat{\cO}\to R'$ (induced by $j$) and $\powerseries{R'}{T}\to R'$ (induced by $T\mapsto 0$). Thus, we obtain an isomorphism
\(\cO/I^2\cong\hat{\cO}/(I\hat{\cO})^2\to \powerseries{R'}{T}/(T^2) \)
and so the composition
\[\cO\to\hat{\cO}\to \powerseries{R'}{T}\to \powerseries{R'}{T}/(T^2)\]
is surjective. In particular, there exists $x\in\cO$ such that $\beta(x)\equiv T\text{ mod }(T^2)$. As a result, there exists a $T$-adically continuous automorphism $\sigma$ of $\powerseries{R'}{T}$ sending $\beta(x)$ to $T$. Therefore, $\sigma\circ\beta$ induces the desired isomorphism of formal schemes. 
\end{proof}

Now we prove \cref{exist-formal-periods-reduction-1}.

Indeed, suppose that we are in the situation of \cref{exist-formal-periods}.
Applying \cref{lem:opens} to $\fC$ and $\fC_0$, we obtain rings $R_\tau$ and open subsets $\fC_\tau \subset \fC$ which satisfy conditions (i) and~(ii) from \cref{exist-formal-periods-reduction-1}.

Assume that \cref{exist-formal-periods} holds under the additional conditions (i) and~(ii) of \cref{exist-formal-periods-reduction-1}.
Thus, we are assuming that for each $\tau$, \cref{exist-formal-periods} holds with $R$ replaced by~$R_\tau$, $\fC$ by $\fC_\tau$, and all the other objects in the input of \cref{exist-formal-periods}, namely $C^*$, $\fG$, $x$, $\omega_1, \dotsc, \omega_m, \eta_1, \dotsc, \eta_n$, and $\phi_\formal$, base-changed from $\fC$ to $\fC_\tau$.
This gives us, for each~$\tau$, power series $F_{\tau ij}, G_{\tau ij} \in \powerseries{K}{X}$, open sets $U_{\tau,v} \subset C_\tau'^\van \subset C'^\van$ and real numbers $r_{\tau,v} > 0$ for each place of $K$ visible to~$R_\tau$, satisfying the conclusions of \cref{exist-formal-periods} at places visible to~$R_\tau$.

The key claim that we need to deduce \cref{exist-formal-periods} for $\fC$ itself is the following:

\begin{lemma}
For each $i, j$, the power series $F_{\tau ij}, G_{\tau ij} \in \powerseries{K}{X}$ are independent of~$\tau$.
\end{lemma}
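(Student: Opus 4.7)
The plan is to compare $F_{\tau ij}$ with $F_{\tau' ij}$ (and $G_{\tau ij}$ with $G_{\tau' ij}$) by evaluating at an archimedean place~$v$ of~$K$, which is always visible to every Dedekind localisation of~$\cO_K$ and hence to both $R_\tau$ and $R_{\tau'}$. A power series in $\powerseries{K}{X}$ is determined by the Taylor coefficients at~$0$ of its $v$-adic analytification (via $K \hookrightarrow K_v = \CC$), so it will be enough to show that $F_{\tau ij}^\van$ and $F_{\tau' ij}^\van$ agree on some open neighbourhood of~$0$.

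The key geometric input is that on the overlap $\cC_{v,\tau} \cap \cC_{v,\tau'}$ of the two complex analytic domains, the complex analytic uniformisations $\phi_{v,\tau}$ and $\phi_{v,\tau'}$ coincide. Both are obtained from the single formal uniformisation $\phi_\formal$, restricted to the formal completions of $\fC_\tau$ and $\fC_{\tau'}$ respectively. On any simply connected open subset of the overlap that contains $s_0^\van$, both restrictions are complex analytic uniformisations compatible with the common formal datum, so the uniqueness clause of \cref{exists-complex-uniformisation-compatible} (or of \cref{exist-uniformisations-places}) forces them to be equal.

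Choose an open neighbourhood $W$ of $s_0^\van$ contained in $U_{\tau,v} \cap U_{\tau',v}$ on which both uniformisations agree. For each $s \in W \cap C^{*,\van}$, conclusion~(5) of the theorem applied to both $\fC_\tau$ and $\fC_{\tau'}$ gives
$$\sum_{j=1}^g F_{\tau ij}^\van(x^\van(s))\,\tilde\omega_{j,s}^\van = \phi_{v,s}^*(\omega_{i,s}^\van) = \sum_{j=1}^g F_{\tau' ij}^\van(x^\van(s))\,\tilde\omega_{j,s}^\van.$$
Since $\tilde\omega_1, \dotsc, \tilde\omega_g$ form a basis, we obtain $F_{\tau ij}^\van(t) = F_{\tau' ij}^\van(t)$ for all $t$ in the open set $x^\van(W) \subset \CC$, which contains a neighbourhood of~$0$ because $x$ is a local parameter at~$s_0$. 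By the identity principle the two holomorphic functions have identical Taylor expansions at~$0$, whence $F_{\tau ij} = F_{\tau' ij}$ as elements of $\powerseries{K}{X}$. The identical argument, using the classes $\eta_1, \dotsc, \eta_n$ in place of $\omega_1, \dotsc, \omega_m$, yields $G_{\tau ij} = G_{\tau' ij}$.

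The only real obstacle is the first step: confirming that both $v$-adic uniformisations descend from the same global formal uniformisation and therefore fall under a common uniqueness statement. This is built into the hypotheses, since we fixed $\phi_\formal$ once and for all on $\fC_\formal$ and used only its restriction to $(\fC_\tau)_\formal$ in the $\tau$-th application of the theorem; the remainder of the argument is a standard identity-principle calculation.
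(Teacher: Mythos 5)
Your proof is correct and takes essentially the same route as the paper's: pick a place $v$ visible to both $R_\tau$ and $R_{\tau'}$, apply conclusion~(5) of \cref{exist-formal-periods} in both cases to express the same pulled-back classes in terms of the $\tilde\omega_{j,s}^\van$, then invoke the identity principle to equate the power series. The step you take care to make explicit---that the two $v$-adic uniformisations agree near $s_0^\van$ because both are compatible with the single global $\phi_\formal$ and by the uniqueness in \cref{exists-complex-uniformisation-compatible}---is left implicit in the paper (which simply writes $\phi_{v,s}$ for both); the paper also allows $v$ non-archimedean (using the identity principle for rigid analytic functions and a WLOG inclusion $U_{1,v}\subset U_{2,v}$ in place of your common neighbourhood $W$), but your restriction to archimedean $v$ is harmless since such a place always exists.
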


\begin{proof}
We shall show that $F_{1ij} = F_{2ij}$ and $G_{1ij} = G_{2ij}$.  The same argument applies to all other $\tau = 1, \dotsc, m$.

Choose a place $v$ of~$K$ which is visible to both $R_1$ and~$R_2$ (for example, any archimedean place).
Assume without loss of generality that $r_{1,v} \leq r_{2,v}$ or, equivalently, $U_{1,v} \subset U_{2,v}$.

Then, for all $s \in U_{1,v}^*(L)$, for any finite extension $L/K_v$, we can apply \cref{exist-formal-periods}(5) for both~$\fC_1$ and $\fC_2$, obtaining
\begin{align*}
   \phi_{v,s}^*(\omega_{i,s}^\van) & = \sum_{j=1}^g F_{1ij}^{\van}(x^\van(s)) \tilde\omega_{j,s}^\van
\\ & = \sum_{j=1}^g F_{2ij}^{\van}(x^\van(s)) \tilde\omega_{j,s}^\van
\end{align*}
in $H^1_{DR}((\GG_m^\van)^g/L)$, for $1 \leq i \leq g$.

Since $\tilde\omega_{1,s}^\van, \dotsc, \tilde\omega_{g,s}^\van$ form an $L$-basis of $H^1_{DR}((\GG_m^\van)^g/L)$, it follows that
\[ F_{1ij}^{\van}(x^\van(s)) =  F_{2ij}^{\van}(x^\van(s)) \]
for all $s \in U_{1,v}^*(\ov K_v)$, or in other words,
\[ F_{1ij}^\van(\xi) = F_{2ij}^\van(\xi) \]
for all $\xi \in D^*(0, r_{1,v}, K_v)$.

Consequently $F_{1ij} = F_{2ij}$ as power series in $\powerseries{K}{X}$.

A similar argument, using the second equation from \cref{exist-formal-periods}(5), establishes that $G_{1ij} = G_{2ij}$.
\end{proof}

Now we can simply choose $F_{ij} = F_{\tau ij}$, $G_{ij} = G_{\tau ij}$ for any~$\tau$.

Thanks to conclusion~(a) of \cref{lem:opens}, every place of $K$ visible to~$R$ is visible to at least one~$R_\tau$.
So, if $v$ is a place of~$K$ visible to~$R$, we can pick some~$\tau$ such that $v$ is visible to~$R_\tau$ and define $U_v = U_{\tau,v}$ and $r_v = r_{\tau,v}$.
Then properties (2)--(5) in \cref{exist-formal-periods} for $\fC$ for the place~$v$ follow from the analogous statements for~$\fC_\tau$ for the place~$v$.

It only remains to check the claim in \cref{exist-formal-periods}(1) that $r_v = 1$ for almost all~$v$.
For each $\tau$, there are only finitely many places visible to~$R_\tau$ for which $r_{\tau,v} \neq 1$.
Since there are only finitely many indices~$\tau$, it follows that the set
\[ \{ v \text{ visible to } R : r_{\tau,v} \neq 1 \text{ for some } \tau \text{ such that } v \text{ is visible to } R_\tau \} \]
is finite.
If $v$ is not in this set, then $r_v=1$, as required.

\subsubsection{Reduction to power series ring generated by \texorpdfstring{$x$}{x}}

We strengthen \cref{exist-formal-periods-reduction-1} by showing that we may additionally assume that the indeterminate of the power series ring corresponds to (the formal completion of) our given local parameter~$x$ on~$\fC$.
Note that this additional assumption in \cref{exist-formal-periods-reduction-2}(ii') is the only difference between \cref{exist-formal-periods-reduction-2} and \cref{exist-formal-periods-reduction-1} (we have relabelled the indeterminate of the power series ring from $T$ to~$X$ because it is this indeterminate that gives rise to the ``$X$'' appearing \cref{exist-formal-periods}).

\begin{lemma} \label{exist-formal-periods-reduction-2}
In order to prove \cref{exist-formal-periods}, it suffices to prove it under the following additional conditions:
\begin{enumerate}[label=(\roman*)]
\item $\fC$ is affine.
\item[(ii')] $\fC_\formal$ is isomorphic (as a formal scheme over $\Spec(R)$) to $\Spf(\powerseries{R}{X}, (X))$, via an isomorphism under which the indeterminate $X$ corresponds to the formal completion of the local parameter $x \in \cO(\fC)$ given in \cref{exist-formal-periods}.
\end{enumerate}
\end{lemma}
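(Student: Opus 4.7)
The plan is to first invoke \cref{exist-formal-periods-reduction-1}, so that we may assume conditions (i) and (ii) already hold. Thus $\fC_\formal \cong \Spf(\powerseries{R}{T}, (T))$ via an isomorphism under which $T$ corresponds to the formal completion of some regular function $t \in \cO(\fC)$, and since $T$ generates the defining ideal of $\fC_0$, the function $t|_C$ is itself a local parameter at $s_0$. The formal completion of the given $x$ in $\powerseries{R}{T}$ then has the form $f(T) = a_1 T + a_2 T^2 + \dotsb$ with $a_i \in R$ and $a_1 \neq 0$ in $K$ (the latter because $x|_C$ is also a local parameter). Being the Taylor expansion of the algebraic function $x$ in the parameter $t$, $f$ is a G-function in the sense of the paper by a classical theorem of Eisenstein, and its formal compositional inverse $g \in \powerseries{K}{X}$ is likewise a G-function.

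I would then apply the hypothesised case of \cref{exist-formal-periods} to a modified input with $t$ in the role of $x$. By our choice of isomorphism, condition (ii$'$) holds trivially for this modified input. The only non-automatic requirement is that $\eta_1, \dotsc, \eta_n$ should lie in the $\cO(C')$-span of $\{\nabla_{d/dt}^k \omega_\ell\}$; using $\nabla_{d/dx} = (dx/dt)^{-1} \nabla_{d/dt}$ and the fact that $dx/dt$ equals $a_1 \neq 0$ at $s_0$, there is a Zariski open $W \subset C$ containing $s_0$ on which $dx/dt$ is invertible, and replacing $C^*$ by $C^* \cap W$ (restricting $\omega_\ell, \eta_i$ accordingly) makes the two spans agree. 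This produces G-functions $\tilde F_{ij}, \tilde G_{ij} \in \powerseries{K}{X}$, open sets $\tilde U_v \subset C'^\van$, and radii $\tilde r_v > 0$ (equal to $1$ for almost all $v$) realising the analytic identities in terms of $\tilde F_{ij}^\van(t^\van(s))$ and $\tilde G_{ij}^\van(t^\van(s))$.

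To convert back to $x$, I would set $F_{ij} := \tilde F_{ij} \circ g$ and $G_{ij} := \tilde G_{ij} \circ g$ in $\powerseries{K}{X}$. Applying \cref{power-series-locally-invertible} to $f$ yields neighbourhoods $Y_v \ni 0$ in $K_v$ and radii $r^*_v > 0$ (with $r^*_v = 1$ and $Y_v = D(1, K_v)$ for almost all $v$) on which $f^\van$ is an analytic isomorphism onto $D(r^*_v, K_v)$. Choosing $r_v \in (0, r^*_v]$ small enough that $g^\van(D(r_v, K_v)) \subset Y_v \cap D(\tilde r_v, K_v)$ (so $r_v = 1$ for almost all $v$) and setting $U_v := \tilde U_v \cap (x^\van)^{-1}(D(r_v, K_v))$, the map $x^\van$ restricts to an isomorphism $U_v \to D(r_v, K_v)$, and substituting $t = g(x)$ into the identities for $\tilde F_{ij}, \tilde G_{ij}$ delivers the required identities for $F_{ij}, G_{ij}$.

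The main obstacle I anticipate is verifying that $f$, $g$, and the compositions $\tilde F_{ij} \circ g$, $\tilde G_{ij} \circ g$ are all genuine G-functions in the paper's sense: this relies on Eisenstein's theorem for Taylor expansions of algebraic functions, closure of G-functions under formal inversion with invertible linear term, and closure under composition with a G-function vanishing at the origin, each of which needs careful checking of both the denominator and archimedean growth conditions. A secondary bookkeeping point is to organise the radii $r_v$ so that conclusions (1), (3), and (4) of \cref{exist-formal-periods} are satisfied simultaneously, in particular so that $r_v = 1$ at almost all places.
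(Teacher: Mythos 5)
Your approach is essentially the paper's: apply the hypothesised case with $t$ (the preferred coordinate from \cref{exist-formal-periods-reduction-1}(ii)) as the local parameter, then transport back via the compositional inverse $g$ of the Taylor series $f$ of $x$ in $t$ (the paper's $\lambda$ and $\mu$), using \cref{power-series-locally-invertible} for the analytic inverses. Your shrinking of $C^*$ so that $dx/dt$ becomes a unit, making the Gauss--Manin spans with respect to $d/dx$ and $d/dt$ agree, is a sensible precaution that the paper passes over silently.

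However, setting $U_v := \tilde U_v \cap (x^\van)^{-1}(D(r_v,K_v))$ does not give the claimed isomorphism $x^\van \colon U_v \to D(r_v,K_v)$. On $\tilde U_v$ one has $x^\van = f^\van \circ t^\van$, so this set equals $\tilde U_v \cap (t^\van)^{-1}\bigl((f^\van)^{-1}(D(r_v,K_v))\bigr)$; but $(f^\van)^{-1}(D(r_v,K_v))$ generally contains components other than $g^\van(D(r_v,K_v))$, and these can meet $D(\tilde r_v,K_v)$ --- for instance at an archimedean place, $Y_v$ need not contain $D(\tilde r_v,\CC)$, and the same can happen at the finitely many non-archimedean places excluded in \cref{power-series-locally-invertible}(d). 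Then $x^\van|_{U_v}$ is not injective and conclusion (3) fails. You should instead set $U_v := \tilde U_v \cap (t^\van)^{-1}\bigl(g^\van(D(r_v,K_v))\bigr)$, as the paper does. Finally, the ``main obstacle'' you anticipate (re-verifying that $f$, $g$, and the compositions are G-functions) is not an obstacle in the paper's organisation: the G-function property of $F_{ij}, G_{ij}$ is proved once, at the end of \cref{subsec:arch-interp-integrals}, from the archimedean period-integral interpretation (\cref{lem:arch-periods} and the corollary citing Andr\'e), and the reduction in \cref{exist-formal-periods-reduction-2} preserves that interpretation, so there is nothing more to check. Your proposed route (Eisenstein's theorem plus Andr\'e's closure theorems for inversion and composition) would also work, but is unnecessary extra effort.
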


\begin{proof}
Suppose that we are in the setting of \cref{exist-formal-periods}, under the additional conditions (i) and~(ii) of \cref{exist-formal-periods-reduction-1}.

Let $t$ be the regular function on~$\fC$ which corresponds to the indeterminate~$T$ via the isomorphism $\cO(\fC_\formal) \to \powerseries{R}{T}$ given by condition~(ii) of \cref{exist-formal-periods-reduction-1}.
Note that $t|_C$ is a local parameter around~$s_0$, because $t$ induces an isomorphism $C_\formal \to \Spf(\powerseries{K}{T}, (T))$.

We assume that \cref{exist-formal-periods} holds under the additional conditions (i) and~(ii') of \cref{exist-formal-periods-reduction-2}.
Hence we can apply \cref{exist-formal-periods}, to the same objects $\fC$, $C^*$, $\fG$, $\omega_1, \dotsc, \omega_m$, $\eta_1, \dotsc, \eta_n$, $\phi_\formal$ that we already have, but with local parameter~$t$ instead of~$x$ (since $t$, $T$ are related as in condition~(ii') by construction).
Thus we obtain G-functions $F_{ij}^\square, G_{ij}^\square \in \powerseries{K}{T}$, open sets $U_v^\square \subset C^\van$ and real numbers~$r_v^\square$ satisfying \cref{exist-formal-periods}(1)--(5) with $x$ replaced by~$t$.

Let $\lambda \in \powerseries{R}{T}$ denote the image of $x \in \cO(\fC) \subset \cO(\fC_\formal)$ under the isomorphism $\cO(\fC_\formal) \to \powerseries{R}{T}$ (in other words, $\lambda$ is the Taylor series of $x|_C$ around $s_0$ in terms of~$t|_C$).
Since $x$ and~$t$ are both local parameters around~$s_0$, we have $\lambda(0)=0$ and $\lambda'(0) \neq 0$.
Hence we can apply \cref{power-series-locally-invertible} to~$\lambda$, obtaining real numbers $r_v^*(\lambda) > 0$ and $K_v$-analytic open sets $Y_v \subset K_v$, for each place $v$ of~$K$ visible to~$R$.

For each place visible to~$R$, $Y_v \cap D(r_v^\square, K_v)$ is an open subset of $Y_v$, so is mapped by $\lambda^\van$ isomorphically onto an open neighbourhood of~$0$ in $D(r_v^*(\lambda), K_v)$, thanks to \cref{power-series-locally-invertible}(c).
We can therefore choose an open neighbourbood $Z_v$ of~$0$ in $Y_v \cap D(r_v^\square, K_v)$ such that $\lambda^\van$ maps $Z_v$ isomorphically onto an open disc $D(r_v, K_v)$, for some $r_v$ with $0 < r_v \leq r_v^*(\lambda)$.

Furthermore, by \cref{power-series-locally-invertible}(d) and \cref{exist-formal-periods}(1) for $r_v^\square$, for almost all places~$v$, we have $Y_v = D(1, K_v) = D(r_v^\square, K_v)$.
For such places, we can choose $Z_v = Y_v$ and $r_v = r_v^*(\lambda)$.
Hence by \cref{power-series-locally-invertible}(a), we obtain $r_v \geq 1$ for almost all places~$v$.
Thus these values $r_v$ satisfy conclusion~(1) of \cref{exist-formal-periods}.

Let $U_v = U_v^\square \cap (t^\van)^{-1}(Z_v)$.

We can represent this in the following diagram, where all horizontal inclusions are inclusions of open sets and (thanks to the above discussion and to \cref{exist-formal-periods}(3) for~$U_v^\square$) all vertical arrows are isomorphisms of $K_v$-analytic spaces:
\[ \begin{tikzcd}
  & U_v                  \ar[d, "t^\van"]       \ar[r, phantom, "\subset"]
                         % \ar[dd, bend right, "x^\van"']
  & U_v^\square          \ar[d, "t^\van"]
\\  Y_v                  \ar[d, "\lambda^\van"] \ar[r, phantom, "\supset"]
  & Z_v                  \ar[d, "\lambda^\van"] \ar[r, phantom, "\subset"]
  & D(r_v^\square, K_v)
\\  D(r_v^*(\lambda), K_v)                    \ar[r, phantom, "\supset"]
  & D(r_v, K_v)
  &
\end{tikzcd} \]
Now $x^\van|_{U_v}$ and $\lambda^\van \circ t^\van|_{U_v}$ are $K_v$-analytic functions on an irreducible $K_v$-analytic space with the same Taylor series at~$s_0^\van$, so they must be equal.
Thus $x^\van$ restricts to an isomorphism $U_v \to D(r_v, K_v)$, giving \cref{exist-formal-periods}(3) for~$U_v$.

Since $0 \in Z_v$ and $U_v \subset U_v^\square$, conclusion~(2) of \cref{exist-formal-periods} for $U_v$ follows from the analogous conclusion for $U_v^\square$.

Since $\lambda(0) = 0$ and $\lambda'(0) \neq 0$, the power series $\lambda$ possesses a compositional inverse $\mu \in \powerseries{K}{X}$.
Since $\mu(0)=0$, we may algebraically compose power series with $\mu$ on the right, allowing us to define
\[ F_{ij} = F_{ij}^\square \circ \mu,
  \quad  G_{ij} = G_{ij}^\square \circ \mu \in \powerseries{K}{X}. \]

For each place $v$ visible to~$R$, $(\lambda^\van)^{-1} \colon D(r_v, K_v) \to Z_v$ is a $K_v$-analytic function, whose Taylor series is $\mu$.
Since $Z_v \subset D(r_v^\square, K_v)$, and $r_v^\square \leq R_v(F_{ij}^\square)$ by \cref{exist-formal-periods}(4) for $F_{ij}^\square$, the $K_v$-analytic function $F_{ij}^{\square\van} \circ (\lambda^\van)^{-1}$ is defined on the disc $D(r_v, K_v)$.
Since the Taylor series of $F_{ij}^{\square\van} \circ (\lambda^\van)^{-1}$ is $F_{ij}$, we conclude that $R_v(F_{ij}) \geq r_v$, establishing conclusion~(4) of \cref{exist-formal-periods} for $F_{ij}$.
The same argument establishes \cref{exist-formal-periods}(4) for~$G_{ij}$.

Finally, conclusion~(5) of \cref{exist-formal-periods} (for $x$, $U_v$, $F_{ij}$) follows from conclusion~(5) of \cref{exist-formal-periods} (for $t$, $U_v^\square$, $F_{ij}^\square$) by the following calculation, valid for places $v$ visible to~$R$, all finite extensions $L/K_v$ and all $s \in U_v^*(L)$ and $1 \leq i \leq m$:
\begin{align*}
    \phi_{v,s}^* (\omega_{i,s}^\van)
    = \sum_{j=1}^g F_{ij}^{\square\van}(t^\van(s)) \tilde\omega_{j,s}^\van
  & = \sum_{j=1}^g F_{ij}^{\square\van}(\mu^\van(x^\van(s))) \tilde\omega_{j,s}^\van
\\& = \sum_{j=1}^g F_{ij}^{\van}(x^\van(s)) \tilde\omega_{j,s}^\van
\end{align*}
A similar calculation works for~$G_{ij}$.
\end{proof}

\subsection{Construction of the power series}\label{sec:construction-of-power-series}

We show that it suffices to prove \cref{exist-formal-periods} under further additional assumptions, which will be used in the non-archimedean interpretation of the period G-functions.

\begin{lemma} \label{exist-formal-periods-reduction-3}
In order to prove \cref{exist-formal-periods}, it suffices to prove it under the additional assumptions (i) and~(ii') of \cref{exist-formal-periods-reduction-2}, as well as the following further assumptions:
\begin{enumerate}
\item[(iii)] The differential forms $\omega_1, \dotsc, \omega_m \in \Omega^\inv_{G/C}(C')$ are restrictions of global sections (over~$\fC$) of $\Omega^\inv_{\fG/\fC}$.
\item[(iv)] $C^*$ is equal to the Artin set $A_1(\pi|_{G^*})$, as defined in \cite[Def.~25.2.1]{ABC}, where $\pi \colon \fG \to \fC$ is the structure morphism of the group scheme.
\end{enumerate}
\end{lemma}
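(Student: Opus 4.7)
The plan is to express each $\omega_i$ as an $\cO(C')$-linear combination of restrictions to~$C'$ of global sections of $\Omega^\inv_{\fG/\fC}$ over~$\fC$, apply the theorem (as assumed to hold under~(iii)) to these global sections, and assemble the output with the Taylor expansions of the $\cO(C')$-coefficients.

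First, since $\fC$ is affine (by~(i)) and $\Omega^\inv_{\fG/\fC}$ is locally free of rank~$g$, I would pick a finite generating set $\tau_1, \ldots, \tau_N \in \Omega^\inv_{\fG/\fC}(\fC)$ of the $\cO(\fC)$-module $\Omega^\inv_{\fG/\fC}(\fC)$. The open set $C' \subset C$ is the complement of finitely many closed points of the smooth affine $K$-curve~$C$, hence is itself affine, and coherence of $\Omega^\inv_{\fG/\fC}$ together with flat base change gives $\Omega^\inv_{G/C}(C') = \Omega^\inv_{\fG/\fC}(\fC) \otimes_{\cO(\fC)} \cO(C')$. The restrictions $\tau_k|_{C'}$ therefore generate $\Omega^\inv_{G/C}(C')$ as an $\cO(C')$-module, and I may write $\omega_i = \sum_{k=1}^N f_{ik}\,\tau_k|_{C'}$ with $f_{ik} \in \cO(C')$. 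By the Leibniz rule for $\nabla_{d/dx}$, each $(\nabla_{d/dx})^k \omega_\ell$ lies in the $\cO(C')$-span of the $(\nabla_{d/dx})^{k'}\tau_{k''}$, so each $\eta_i$ lies in the same span.

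Next, I would apply the assumed case of \cref{exist-formal-periods} to $(\tau_1|_{C'}, \ldots, \tau_N|_{C'}, \eta_1, \ldots, \eta_n)$, which satisfies~(iii), obtaining G-functions $F_{kj}^\sharp, G_{ij} \in \powerseries{K}{X}$, open sets $U_v^\sharp$, and radii $r_v^\sharp > 0$ with properties (1)--(5). For each $i,k$, let $\tilde f_{ik}(X) \in \powerseries{K}{X}$ denote the Taylor expansion of $f_{ik}$ at~$s_0$ in terms of~$x$. The key observation is that, as $f_{ik}$ is a rational function on~$C$ regular at~$s_0$, the series $\tilde f_{ik}$ is algebraic over $K(X)$ and is therefore a globally bounded G-function by Eisenstein's theorem. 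I would then define
\[ F_{ij} := \sum_{k=1}^N \tilde f_{ik} \cdot F^\sharp_{kj} \in \powerseries{K}{X}, \]
which is itself a G-function (as a sum of products of G-functions), and keep the same $G_{ij}$ as in the reduced problem. Finally, I would set $r_v$ to be the minimum of~$1$, $r_v^\sharp$, and the radii of convergence $R_v(\tilde f_{ik})$ for all $i,k$; this is positive for every~$v$ and equals~$1$ for almost all~$v$ (by global boundedness of the $\tilde f_{ik}$), and I would take $U_v = (x^\van|_{U_v^\sharp})^{-1}(D(r_v, K_v)) \subset U_v^\sharp$.

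Properties (1)--(4) are immediate from the construction. For property~(5), the computation
\[ \phi_{v,s}^*(\omega_{i,s}^\van) = \sum_k f_{ik}^\van(s)\,\phi_{v,s}^*(\tau_{k,s}^\van) = \sum_{j,k}\tilde f_{ik}^\van(x^\van(s))\,(F^\sharp_{kj})^\van(x^\van(s))\,\tilde\omega_{j,s}^\van = \sum_j F_{ij}^\van(x^\van(s))\,\tilde\omega_{j,s}^\van \]
holds for $s \in U_v^*(L)$, using $f_{ik}^\van(s) = \tilde f_{ik}^\van(x^\van(s))$ (valid on~$U_v$ by the choice of~$r_v$) together with property~(5) of the reduced theorem applied to the~$\tau_k$; the analogous identity for the~$\eta_i$ is immediate from the reduced theorem. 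The step I expect to be the main obstacle is the identification of each transition coefficient $\tilde f_{ik}$ as a G-function: this rests on Eisenstein's theorem and is precisely what propagates the G-function property from the $F^\sharp_{kj}$ to the $F_{ij}$, thereby closing the reduction.
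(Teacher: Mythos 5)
Your proof is correct, but it takes a genuinely different route from the paper's. The paper proves the reduction by scaling: it chooses a single regular function $f \in \cO(C)$ vanishing at the points of $C \setminus C'$ and nonzero at $s_0$, so that $f^N\omega_1, \dotsc, f^N\omega_m$ extend to global sections of $\Omega^\inv_{\fG/\fC}$ after also clearing an $R$-denominator. It then applies the reduced theorem to $f^N\omega_i$ (replacing $C^*$ by the smaller $C^{**}$ on which $f$ is invertible) and recovers the original $F_{ij}$ by multiplying by the Taylor series of $f^{-N}$. Your argument instead chooses a generating set $\tau_1, \dotsc, \tau_N$ of $\Omega^\inv_{\fG/\fC}(\fC)$, expresses each $\omega_i$ as $\sum_k f_{ik}\tau_k|_{C'}$ with $f_{ik} \in \cO(C')$, applies the reduced theorem to the $\tau_k$, and recombines with the Taylor series $\tilde f_{ik}$. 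Both routes are sound and rest on the same essential fact that the Taylor expansions of the algebraic transition coefficients (a single $f^{-N}$ in the paper; the $f_{ik}$ in yours) are globally bounded G-functions by Eisenstein's theorem, combined with the closure of G-functions under sums and products. The paper's version is shorter because only one scalar function is needed and the resulting $F_{ij}$ are a common rescaling of the reduced ones; your version avoids shrinking $C^*$ and spells out the radius estimates $r_v = \min(1, r_v^\sharp, R_v(\tilde f_{ik}))$ more explicitly, which the paper handles somewhat tersely. Your verification of properties (1)--(5), and in particular the identification $f_{ik}^\van(s) = \tilde f_{ik}^\van(x^\van(s))$ on $U_v$ via the isomorphism $x^\van \colon U_v \to D(r_v, K_v)$ and the determination of rigid analytic functions on an open disc by their Taylor series, is correct.
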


\begin{proof}
Suppose that we are in the setting of \cref{exist-formal-periods}, under the additional conditions (i) and~(ii') of \cref{exist-formal-periods-reduction-2}.

By assumption~(i) of \cref{exist-formal-periods-reduction-2}, $\fC$ is affine.
Hence we can choose a regular function $f \in \cO(C)$ which has zeroes at all of the finitely many points in $C \setminus C'$, and which is non-zero at $s_0$.
Since $\omega_1, \dotsc, \omega_m$ can have poles only on $C \setminus C'$, for large enough~$N$, $f^N\omega_1, \dotsc, f^N\omega_m$ are restrictions of global sections (over~$C$) of $\Omega^\inv_{G/C}$.

Again since $\fC$ is affine, the sheaf of $\cO_\fC$-modules $\Omega^{\inv}_{\fG/\fC}$ is induced from its $\cO(\fC)$-module of global sections.
Thus
\[ \Omega^{\inv}_{G/C}(C) = \Omega^{\inv}_{\fG/\fC}(\fC) \otimes_{\cO(\fC)} \cO(C) = \Omega^{\inv}_{\fG/\fC}(\fC) \otimes_R K. \]
The last equality holds because $C = \fC \times_{\Spec(R)} \Spec(K)$, and all these schemes are affine, so $\cO(C) = \cO(\fC) \otimes_R K$.
Consequently, after multiplying $f$ by a non-zero element of~$R$, we may assume that $f^N\omega_1, \dotsc, f^N\omega_m$ are restrictions of global sections (over~$\fC$) of $\Omega^\inv_{\fG/\fC}$.

Let $C^{**}$ denote the complement of the zeroes of~$f$ in~$C^*$, and let $C'' = C^{**} \cup \{s_0\}$.
Since we chose $f$ so that $f(s_0) \neq 0$, we have $f \in \cO(C'')^\times$.
In particular, $\eta_1, \dotsc, \eta_n$ are in the $\cO(C'')$-submodule of $H^1_{DR}(G^*/C^*)^\can(C'')$ spanned by 
\[\{\nabla_{d/dx}^kf^N\omega_\ell:k\in\bZ_{\geq 0},1\leq \ell\leq m\}.\]

Assuming that \cref{exist-formal-periods} holds under the additional assumptions (i), (ii') and~(iii), it holds with $\omega_1, \dotsc, \omega_m$ replaced by $f^N\omega_1, \dotsc, f^N\omega_m$ and $C^*$ replaced by $C^{**}$.
In order to establish that \cref{exist-formal-periods} holds with the original $\omega_1, \dotsc, \omega_m$ over~$C^{**}$, we simply need to multiply the resulting power series $F_{ij}$ by $f^{-N}$, which is regular on $C''$ and hence on all $U_v$.

Finally, we observe that, if \cref{exist-formal-periods} holds with $C^*$ replaced by the Zariski dense open subset~$C^{**}$, then it holds for the original~$C^*$ with the same sets~$U_v \subset C''^\van \subset C'^\van$.

This last observation also permits us to assume that (iv) is satisfied, since $A_1(\pi|_{G^*})$ is Zariski dense open in~$C^*$ by \cite[Cor.~25.2.2]{ABC}.
\end{proof}

In the remainder of \cref{sec:formal-periods}, we prove \cref{exist-formal-periods}, under the additional assumptions (i) and~(ii') of \cref{exist-formal-periods-reduction-2} and assumptions (iii) and~(iv) of \cref{exist-formal-periods-reduction-3}. Moreover, we continue to use the notation $\omega_1, \dotsc, \omega_m$ for the differential forms in~$\Omega^\inv_{\fG/\fC}$ given by assumption~(iii).

Since $\phi_\formal^*(\omega_{1,\formal}), \dotsc, \phi_\formal^*(\omega_{m,\formal})$ are sections of $\Omega^{\inv}_{\GG_m^g \times \fC_\formal/\fC_\formal}$, we can write them as linear combinations of the basis $\tilde\omega_{1,\formal}, \dotsc, \tilde\omega_{g,\formal}$:
\begin{equation} \label{eqn:Fdiamond}
\phi_\formal^*(\omega_{i,\formal}) = \sum_{j=1}^g F_{ij}^\diamond \tilde\omega_{j,\formal} \text{ for some } F_{ij}^\diamond \in \cO(\fC_\formal).
\end{equation}
The isomorphism $\fC_\formal \cong \Spf(\powerseries{R}{X})$ from condition~(ii') of \cref{exist-formal-periods-reduction-2} induces a ring isomorphism $\cO(\fC_\formal) \to \powerseries{R}{X}$ and we let $F_{ij} \in \powerseries{R}{X}$ denote the image of $F_{ij}^\diamond$ under this isomorphism.

By the hypotheses of \cref{exist-formal-periods}, we can write
\begin{equation} \label{eqn:gauss-manin-aikl}
\eta_i = \sum_{k=0}^N \sum_{\ell=1}^m a_{ik\ell}^\diamond (\nabla_{d/dx})^k \omega_\ell,
\end{equation}
for each $i=1,\dotsc,n$,
for some $N \in \ZZ_{\geq0}$ and some $a_{ik\ell}^\diamond \in \cO(C')$.
 Let $a_{ik\ell} \in \powerseries{K}{X}$ be the Taylor series of $a_{ik\ell}^\diamond$ in terms of~$x$.
For each $i=1, \dotsc, r$, $j = 1, \dotsc, g$, define
\begin{equation} \label{eqn:Gij-definition}
G_{ij} = \sum_{k=0}^N \sum_{\ell=1}^m a_{ik\ell} \frac{d^k}{dX^k} F_{ij} \in \powerseries{K}{X}.
\end{equation}

These are the power series $F_{ij}$, $G_{ij}$ which appear in the conclusion of \cref{exist-formal-periods}.
In the following subsections, we shall construct open sets $U_v$ and real numbers $r_v$, and prove that they satisfy conclusions (1)--(5) of \cref{exist-formal-periods}.

\subsection{Non-archimedean interpretation of period G-functions}

Let $v$ be non-archimedean place of~$K$ visible to~$R$.
Let
\begin{multline*}
r_v = \min \bigl(\{1\}
         \cup \{ R_v(a_{ik\ell}) : 1 \leq i \leq g, 0 \leq k \leq N, 1 \leq \ell \leq m \}
\\ \cup \{ \abs{x(s)}_v : s \in C(\ov K) \setminus C'(\ov K), x(s) \neq 0 \} \big).
\end{multline*}
Note that this is the minimum element of a finite set, so it is well-defined.
Since the functions $a_{ik\ell}^\diamond$ are algebraic over $K(x)$, the power series $a_{ik\ell}$ are globally bounded by a theorem of Eisenstein (see \cite[\S84]{Die57}).
Hence each $R_v(a_{ik\ell})$ is $> 0$ for all~$v$ and $\geq 1$ for almost all~$v$.
Likewise each $\abs{x(s)}_v$ is $\geq 1$ for almost all~$v$, while we have explicitly excluded those $s$ for which $x(s) = 0$.
Hence $r_v > 0$ for all~$v$ and $r_v = 1$ for almost all~$v$, establishing \cref{exist-formal-periods} for non-archimedean places.

As in~\cref{subsec:uniformisations-places}(1), let $\fp_v \in \Spec(R)$ denote the closed point corresponding to the place~$v$, let $\fC_{v,\formal\dag}$ denote the formal completion of $\fC_v$ along $\fC_{v,0} = \fC_0 \times_{\Spec(R)} \{\fp_v\}$.
We could also describe $\fC_{v,\formal\dag}$ as the fibre product of formal schemes $\fC_\formal \times_{\Spec(R)} \Spf(R_v, (\fp_v))$.
Let $\cC_v = \fC_{v,\formal\dag}^\rig \subset C^\van$.

Under the isomorphism $\fC_\formal \cong \Spf(\powerseries{R}{X}, (X))$, the closed subscheme $\fC_{v,0}$ corresponds to the ideal $(\fp_v, X)$.
The $(\fp_v, X)$-adic completion of the ring $\powerseries{R}{X}$ is $\powerseries{R_v}{X}$.
Therefore, $\fC_{v,\formal\dag}$ is isomorphic to $\Spf(\powerseries{R_v}{X}, (\fp_v, X))$, so $x^\van$ induces an isomorphism of $K_v$-rigid spaces $\cC_v \to D(1,K_v)$.
Thus,
\[ U_v := \cC_v \cap (x^\van)^{-1}(D(r_v,K_v)) \]
satisfies \cref{exist-formal-periods}(2) and~(3).

Furthermore, we claim that $U_v \subset C'^\van$.
Indeed, if $s \in C(\ov K) \setminus C'(\ov K)$ and $x(s) \neq 0$, then $r_v \leq \abs{x(s)}_v$ so $s^\van \not\in (x^\van)^{-1}(D(r_v,K_v)$.
On the other hand, if $s \in C(\ov K) \setminus C'(\ov K)$ and $x(s) = 0$, then $s^\van \not\in \cC_v$ because $x^\van|_{\cC_v}$ is injective and $s_0^\van \in \cC_v$ with $x(s_0) = 0$.
Thus, for every $s \in C(\ov K) \setminus C'(\ov K)$, we have $s^\van \not\in U_v$, so that $U_v \subset C'^\van$.

Since $F_{ij} \in \powerseries{R}{X}$, $R_v(F_{ij}) \geq 1 \geq r_v$ for all non-archimedean places $v$ visible to~$R$.
Furthermore, since $R_v(a_{ik\ell}) \geq r_v$, and using equation~\eqref{eqn:gauss-manin-aikl}, we obtain that $R_v(G_{ij}) \geq r_v$ for such~$v$. 
Thus \cref{exist-formal-periods}(4) holds for all non-archimedean places visible to~$R$.

Write $F_{ij}^{\diamond\vrig}$ for the image of $F_{ij}^\diamond$ under the map
$\cO(\fC_\formal) \to \cO(\fC_{v,\formal\dag}) \to \cO(\cC_v)$
given by base change along $\Spf(R_v, (\fp_v)) \to \Spec(R)$, followed by the $v$-adic rig functor.

\begin{lemma} \label{phi-omega-van}
For $1 \leq i \leq m$, we have the following equation in 
$\Omega^{\inv}_{\tilde\cG_v/\cC_v}$:
\[ \phi_v^*(\omega_i^\van|_{\cC_v}) = \sum_{j=1}^g F_{ij}^{\diamond\vrig} \tilde\omega_i^\van|_{\cC_v}. \]
\end{lemma}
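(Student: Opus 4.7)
The plan is to push the defining identity for $F_{ij}^\diamond$ through the functorial pipeline (base change, then Berthelot's $\rig$ functor), use compatibility of the uniformisations, and extend from the open formal neighbourhood of the identity section to all of $\tilde\cG_v$ via invariance.

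First, the equation \eqref{eqn:Fdiamond} defining $F_{ij}^\diamond$ lives in $\Omega^\inv_{\tilde\fG_\formal/\fC_\formal}$. Base change along $\Spf(R_v,(\fp_v)) \to \Spec(R)$ yields an identity of invariant forms on $\tilde\fG_{v,\formal\dag}/\fC_{v,\formal\dag}$, whose coefficients in the basis $\tilde\omega_{j,v,\formal\dag}$ are the images of $F_{ij}^\diamond$ in $\powerseries{R_v}{X}$. Applying Berthelot's rig functor (which respects pullbacks and the group structure, hence the formation of invariant forms) produces an identity in $\Omega^\inv_{\tilde\fG_{v,\formal\dag}^\rig/\cC_v}$ with coefficients $F_{ij}^{\diamond\vrig}$ in the basis $(\tilde\omega_{j,v,\formal\dag})^\rig$.

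Now interpret this on $\tilde\cG_v$. By \cref{rig-an-compatibility}, there is a canonical open immersion $\tilde\fG_{v,\formal\dag}^\rig \to \tilde G^\van$, whose image is precisely the open subgroup $\GG_m(1)^g \times \cC_v \subset \tilde\cG_v$, and under which $(\tilde\omega_{j,v,\formal\dag})^\rig$ is identified with the restriction of $\tilde\omega_j^\van$ (this is a general compatibility between $\rig$-analytification of coherent sheaves on formal completions and $\an$-analytification of their algebraic counterparts, reducing to the diagram~\eqref{eqn:an-for-diagram}). By the definition of compatibility in~\cref{sssec:formal-compatibility}, the restriction of $\phi_v$ to $\GG_m(1)^g \times \cC_v$ equals $\phi_{v,\formal\dag}^\rig$, and the analogous identification holds for $\omega_i$ in place of $\tilde\omega_j$ by the same open-immersion argument. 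Combining these identifications, the identity produced in the previous paragraph becomes exactly the restriction of the claimed equation to the open subgroup $\GG_m(1)^g \times \cC_v$.

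It remains to upgrade the identity from the open subgroup $\GG_m(1)^g \times \cC_v$ to all of $\tilde\cG_v$. Both sides of the claimed equation are global sections of the $\cO_{\cC_v}$-module $\Omega^\inv_{\tilde\cG_v/\cC_v}$, which via $e^*\Omega^1_{\tilde\cG_v/\cC_v}$ is determined by pullback along the zero section. Since the zero section $\cC_v \to \tilde\cG_v$ factors through the open subgroup $\GG_m(1)^g \times \cC_v$, equality on that open subgroup already forces equality on all of~$\tilde\cG_v$. The main technical obstacle is the bookkeeping in the second paragraph, namely verifying carefully that the sheaf-theoretic identifications between the formal, rigid and analytic sides of the picture are indeed compatible with the pullbacks of $\omega_i$ and with the translation-invariant structure; this amounts to unwinding definitions, since the open immersion $\fG_{v,\formal\dag}^\rig \hookrightarrow G^\van$ is itself a morphism of rigid groups over the corresponding open immersion of bases.
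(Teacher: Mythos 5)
Your proof is correct and follows essentially the same route as the paper: push the formal identity~\eqref{eqn:Fdiamond} through base change and Berthelot's $\rig$ functor, use compatibility of $\phi_v$ with $\phi_{v,\formal\dag}$ to identify the resulting rigid identity with the restriction of the claimed one to $\GG_m(1)^g \times \cC_v$, and then extend to all of $\tilde\cG_v$ by invariance (the paper states this extension as injectivity of the restriction map $\Omega^{\inv}_{\tilde\cG_v/\cC_v} \to \Omega^{\inv}_{\fG_{v,\formal\dag}^\rig/\cC_v}$, while you observe that an invariant form is determined by pullback along the zero section, which factors through the open subgroup; these are the same observation). One small inaccuracy: your appeal to diagram~\eqref{eqn:an-for-diagram} is misplaced, since that diagram is the complex-analytic formal-completion comparison, not the rigid one; the compatibility of coherent sheaves under the formal-to-rigid passage (which the paper simply calls ``rigid GAGA'') is what is actually needed, and it comes from \cref{rig-an-compatibility} rather than \eqref{eqn:an-for-diagram}.
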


\begin{proof}
Let $\nu_\fC \colon \cC_v = \fC_{v,\formal\dag}^\rig \to \fC_{v,\formal\dag}$ and $\nu_\fG \colon \fG_{v,\formal\dag}^\rig \to \fG_{v,\formal\dag}$ denote the natural morphisms of G-ringed spaces coming from the construction of the rigid generic fibre.
By \cite[7.1.12]{dJ95}, we have $\nu_\fC^* \Omega^1_{\fC_{v,\formal\dag}} \cong \Omega^1_{\cC_v}$ and $\nu_\fG^* \Omega^1_{\fG_{v,\formal\dag}} \cong \Omega^1_{\fG_{v,\formal\dag}^\rig}$.
Since the pullback functors $\nu_\fC^*$ and $\nu_\fG^*$ are right exact, we obtain $\nu_\fG^* \Omega^1_{\fG_{v,\formal\dag}/\fC_{v,\formal\dag}} \cong \Omega^1_{\fG_{v,\formal\dag}^\rig/\cC_v}$.
Consequently
\[ \nu_\fC^* \Omega^\inv_{\fG_{v,\formal\dag}/\fC_{v,\formal\dag}}
   \cong \nu_\fC^* e^* \Omega^1_{\fG_{v,\formal\dag}/\fC_{v,\formal\dag}}
   = e^* \nu_\fG^* \Omega^1_{\fG_{v,\formal\dag}/\fC_{v,\formal\dag}}
   \cong e^* \Omega^1_{\fG_{v,\formal\dag}^\rig/\cC_v}
   \cong \Omega^\inv_{\fG_{v,\formal\dag}^\rig/\cC_v}. \]
Hence we obtain an $\cO(\fC_{v,\formal\dag})$-linear map
\begin{equation} \label{eqn:Omega-G-formal-to-rigid}
\Omega^\inv_{\fG_{v,\formal\dag}/\fC_{v,\formal\dag}}(\fC_{v,\formal\dag})
   \to (\nu_\fC^* \Omega^\inv_{\fG_{v,\formal\dag}/\fC_{v,\formal\dag}})(\cC_v)
   \cong \Omega^\inv_{\fG_{v,\formal\dag}^\rig/\cC_v}(\cC_v)
\end{equation}
Let $\omega_1^\vrig, \dotsc, \omega_g^\vrig \in \Omega^\inv_{\fG_{v,\formal\dag}^\rig/\cC_v}(\cC_v)$ denote the image of $\omega_{1,\formal}|_{\fC_{v,\formal\dag}}, \dotsc, \omega_{g,\formal}|_{\fC_{v,\formal\dag}}$ under this map.

The analogous map
\begin{equation} \label{eqn:Omega-Gm-formal-to-rigid}
\Omega^\inv_{\GG_m^g \times \fC_{v,\formal\dag}/\fC_{v,\formal\dag}}(\fC_{v,\formal\dag})
   \to \Omega^\inv_{\GG_m(1)^g \times \cC_v/\cC_v}(\cC_v)
\end{equation}
maps $\tilde\omega_{1,\formal}|_{\fC_{v,\formal\dag}}, \dotsc, \tilde\omega_{g,\formal}|_{\fC_{v,\formal\dag}}$ to the standard basis for $\Omega^{\inv}_{\GG_m(1)^g \times \cC_v/\cC_v}$, which we denote by $\tilde\omega_1^\vrig, \dotsc, \tilde\omega_g^\vrig$.

We base-change $\phi_\formal \colon \GG_m^g \times \fC_\formal \to \fG_\formal$ to $\fC_{v,\formal\dag}$ and apply Berthelot's generic fibre functor to obtain $\phi_{v,\formal\dag}^\rig \colon \GG_m(1)^g \times \cC_v \to \fG_{v,\formal\dag}^\rig$.
By the naturality and $\cO(\fC_{v,\formal\dag})$-linearity of \eqref{eqn:Omega-G-formal-to-rigid} and~\eqref{eqn:Omega-Gm-formal-to-rigid}, equation~\eqref{eqn:Fdiamond} implies that
\[ (\phi_{v,\formal\dag}^\rig)^*(\omega_i^\vrig) = \sum_{j=1}^g F_{ij}^{\diamond\vrig} \tilde\omega_j^\vrig \]
in $\Omega^{\inv}_{\GG_m(1)^g \times \cC_v/\cC_v}$.

Since all forms in $\Omega^{\inv}_{\tilde\cG_v/\cC_v}$ are invariant under translations by $\tilde\cG_v$,
the restriction map $\Omega^{\inv}_{\tilde\cG_v/\cC_v} \to \Omega^{\inv}_{\fG_{v,\formal\dag}^\rig/\cC_v}$ is injective.
Using the injectivity of this restriction, together with the compatibility of $\phi_v$ with $\phi_{v,\formal\dag}$, we obtain
\[ \phi_v^*(\omega_i^\van|_{\cC_v}) = \sum_{j=1}^g F_{ij}^{\diamond\vrig} \tilde\omega_j^\van|_{\cC_v} \]
in $\Omega^{\inv}_{\tilde\cG_v/\cC_v}$ as required.
\end{proof}

By construction, we have $F_{ij}^{\diamond\vrig} = F_{ij}^\van \circ x^\van$ as analytic functions on $\cC_v$.
Hence evaluating the equation from \cref{phi-omega-van} at points $s \in U_v(L)$, for any finite extension $L/K_v$, we obtain
\[ \phi_{v,s}^*(\omega_{i,s}^\van) = \sum_{j=1}^g F_{ij}^\van(x^\van(s)) \tilde\omega_{j,s}^\van \]
in $\Omega^{\inv}_{(\GG_{m,L}^\van)^g/L}$.
Via the map $\Omega^{\inv}_{(\GG_{m,L}^\van)^g/L} \to H^1_{DR}((\GG_{m,L}^\van)^g/L)$, we obtain the first equation in \cref{exist-formal-periods}(5).

The second equation of \cref{exist-formal-periods}(5) is given by the following lemma.

\begin{lemma} \label{Gij-eqn-rigid}
For all $s \in U_v^*$, we have
$\phi_{v,s}^*(\eta_{i,s}^\van) = \sum_{j=1}^g G_{ij}^\van(x^\van(s)) \tilde\omega_{j,s}^\van$.
\end{lemma}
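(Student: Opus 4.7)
The plan is to pull back the defining expression $\eta_i = \sum_{k,\ell} a_{ik\ell}^\diamond (\nabla_{d/dx})^k \omega_\ell$ via the rigid uniformisation $\phi_v$, and then match terms by using (i) \cref{phi-omega-van}, which handles the $\omega_\ell$ already, and (ii) the functoriality of the Gauss--Manin connection under $\phi_v$, together with the flatness of $\tilde\omega_1^\van,\dotsc,\tilde\omega_g^\van$ on the trivial family $\tilde\cG_v = (\GG_m^\van)^g \times \cC_v$. Throughout, the identities take place in the relative de Rham cohomology of the analytified families over the rigid base $\cC_v^* := \cC_v \cap (C^*)^\van$; evaluation at the point $s \in U_v^* \subset \cC_v^*$ will be the final step.

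In detail, I would first restrict everything to $\cC_v^*$ so that both $G^\van|_{\cC_v^*}$ and $\tilde\cG_v|_{\cC_v^*}$ are rigid analytic abelian schemes, on which the Gauss--Manin connection is defined and naturally compatible with the corresponding algebraic Gauss--Manin connection on $C^*$ (this is where the canonical extension fades out of the picture, since $\eta_i$ and the $\omega_\ell$, together with the chosen lift of $d/dx$, already restrict nicely to $\cC_v^*$). Then, by \cref{phi-omega-van}, $\phi_v^*(\omega_\ell^\van) = \sum_j F_{\ell j}^{\diamond\vrig}\,\tilde\omega_j^\van$, and since $\phi_v$ is a morphism of rigid analytic commutative groups over $\cC_v^*$, it commutes with the Gauss--Manin connection. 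Moreover, the forms $\tilde\omega_j = du_j/u_j$ are pulled back from $\GG_m^g$, so, on the constant family $\tilde\cG_v^* \to \cC_v^*$, they are flat sections of the Gauss--Manin connection. Iterating then gives
\[
(\nabla_{d/dx})^k\, \phi_v^*(\omega_\ell^\van) \;=\; \sum_{j=1}^g \Bigl(\tfrac{d^k F_{\ell j}}{dX^k}\Bigr)^{\van}\!(x^\van)\,\tilde\omega_j^\van,
\]
where we used that $F_{\ell j}^{\diamond\vrig} = F_{\ell j}^\van \circ x^\van$ by construction of the $F_{\ell j}$ in \cref{sec:construction-of-power-series}.

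Substituting this into $\phi_v^*(\eta_i) = \sum_{k,\ell} a_{ik\ell}^\van \,(\nabla_{d/dx})^k\,\phi_v^*(\omega_\ell^\van)$ and collecting the coefficient of each $\tilde\omega_j^\van$ yields precisely $\sum_j G_{ij}^\van(x^\van)\,\tilde\omega_j^\van$ by the definition of $G_{ij}$ in \eqref{eqn:Gij-definition}. Evaluating at $s \in U_v^*(L)$ then gives the claimed identity in $H^1_{DR}((\GG_m^\van)^g/L)$.

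The main obstacle is purely bookkeeping at the interface between the algebraic, formal and rigid analytic categories: one must check that the Gauss--Manin connection defined algebraically on $H^1_{DR}(G^*/C^*)^{\can}$ restricts compatibly to the Gauss--Manin connection on the rigid analytic de Rham cohomology of $G^\van|_{\cC_v^*}$, and similarly that $\phi_v^*$ commutes with it (a functoriality statement for morphisms of rigid analytic families over the same base). Once these compatibilities are in place, the computation itself is straightforward formal manipulation.
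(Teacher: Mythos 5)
Your proposal is correct and follows essentially the same route as the paper: pull back the expansion \eqref{eqn:gauss-manin-aikl} via $\phi_v$, invoke \cref{phi-omega-van}, use that $\phi_v^*$ commutes with $\nabla_{d/dx}$ and that the $\tilde\omega_j^\van$ are flat on the constant family $\tilde\cG_v^*$, and match with \eqref{eqn:Gij-definition}. The only difference is presentational: you make explicit the functoriality of the Gauss--Manin connection under $\phi_v$, which the paper applies without comment.
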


\begin{proof}
By assumption~(iv) of \cref{exist-formal-periods-reduction-3}, $C^*$ is equal to the Artin set $A_1(\pi|_{G^*})$, as defined in \cite[Def.~25.2.1]{ABC}.
This remains true after extending scalars to~$K_v$.
Hence, by \cite[Theorem~32.2.1]{ABC}, $H^1_{DR}(G^{*\van}/C^{*\van})$ is isomorphic to $H^1_{DR}(G^*/C^*)^\van$ as modules with integrable connection on~$C^{*\van}$
(the condition about Liouville numbers is satisfied because $G^*/C^*$ is defined over a number field).

Therefore, by analytifying equation~\eqref{eqn:gauss-manin-aikl}, we obtain the following equation in $H^1_{DR}(G^{*\van}/C^{*\van})$:
\[ \eta_i^\van = \sum_{k=0}^N \sum_{\ell=1}^m a_{ik\ell}^{\diamond\van} (\nabla^\van_{d/dx})^k \omega_\ell^\van. \]
Restricting to $\cC_v^*$, pulling back by $\phi_v$ and using \cref{phi-omega-van}, we obtain
\begin{align*}
    \phi_v^*(\eta_i^\van|_{\cC_v^*})
  & = \sum_{k=0}^N \sum_{\ell=1}^m a_{ik\ell}^{\diamond\van} (\nabla^\van_{d/dx})^k \phi^*_v(\omega_\ell^\van|_{\cC_v^*})
\\& = \sum_{k=0}^N \sum_{\ell=1}^m \sum_{j=1}^g a_{ik\ell}^{\diamond\van} (\nabla^\van_{d/dx})^k \bigl( F_{\ell j}^{\diamond\vrig} \tilde\omega_j^\van|_{\cC_v^*} \bigr)
\end{align*}
in $H^1_{DR}(\tilde\cG_v^*/\cC_v^*)$.

Since $\tilde\cG_v^*$ is a constant analytic space over $\cC_v^*$ and $\tilde\omega_j^\van$ are obtained by base change from differential forms in~$\Omega^{\inv}_{\GG_{m,\ZZ}/\ZZ}$, and since $F_{ij}^{\diamond\vrig} = F_{ij}^\van \circ x^\van|_{\cC_v}$, we have
\begin{align*}
    (\nabla^\van_{d/dx})^k \bigl( F_{\ell j}^{\diamond\vrig} \tilde\omega_j^\van|_{\cC_v^*} \bigr)
  & = \bigl( (d/dx)^k F_{\ell j}^{\diamond\vrig} \bigr) \tilde\omega_j^\van|_{\cC_v^*}
\\& = \Bigl( \bigl(\frac{d^k}{dX^k} F_{\ell j}^\van \bigr) \circ x^\van \Bigr) \tilde\omega_j^\van|_{\cC_v^*}.
\end{align*}
Therefore, we get
\begin{align*}
    \phi_v^*(\eta_i^\van|_{U_v^*})
  & = \sum_{j=1}^g \sum_{k=0}^N \sum_{\ell=1}^m a_{ik\ell}^{\diamond\van} \Bigl( \bigl(\frac{d^k}{dX^k} F_{\ell j^\van} \bigr) \circ x^\van \Bigr) \tilde\omega_j^\van|_{U_v^*}
\\& = \sum_{j=1}^g (G_{ij}^\van \circ x^\van) \, \tilde\omega_j^\van|_{U_v^*}.
\end{align*}
Evaluating at $s \in U_v^*(L)$ proves the lemma.
\end{proof}

\subsection{Archimedean interpretation of period G-functions}\label{subsec:arch-interp}

Let $v$ be an archi\-medean place of~$K$.
Since $x|_C$ is a local parameter around~$s_0$, we can choose an open set $U_v \subset C'^\van$ containing $s_0^\van$ and $r_v > 0$ such that $x^\van$ restricts to an isomorphism $U_v \to D(r_v, \CC)$.
After shrinking $U_v$ and $r_v$, we may assume that $U_v \subset \cC_v$ and that $r_v \leq R_v(a_{ik\ell})$ for all $i$, $k$, $\ell$.
Then \cref{exist-formal-periods}(1), (2) and~(3) hold for~$v$.
(Note that we can ignore archimedean places for the claim ``$r_v = 1$ for almost all~$v$'' in \cref{exist-formal-periods}(1).)

\begin{lemma} \label{arch-radius-Fij}
For each $i,j$, $R_v(F_{ij}) \geq r_v$.
For all $i=1, \dotsc, g$ and all $s \in U_v$, we have
$\phi_{v,s}^*(\omega_{i,s}^\van) = \sum_{j=1}^g F_{ij}^{\van}(x^\van(s)) \tilde\omega_{j,s}^\van$.
\end{lemma}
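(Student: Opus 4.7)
The strategy parallels \cref{phi-omega-van}, substituting complex analytic arguments for rigid-analytic ones. Since $\Omega^{\inv}_{\tilde\cG_v/\cC_v}$ is a free $\cO_{\cC_v}$-module on the basis $\tilde\omega_1^\van|_{\cC_v}, \dotsc, \tilde\omega_g^\van|_{\cC_v}$, we can write uniquely
\[ \phi_v^*(\omega_i^\van|_{\cC_v}) = \sum_{j=1}^g f_{v,ij} \, \tilde\omega_j^\van|_{\cC_v} \]
for some holomorphic functions $f_{v,ij} \in \cO(\cC_v)$. The task reduces to showing that the Taylor series of $f_{v,ij}$ at $s_0^\van$, in the coordinate $x^\van$, equals the image of $F_{ij}$ in $\powerseries{\CC}{X}$ under the embedding $v \colon K \hookrightarrow \CC$.

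To establish this identification, take the analytic formal completion $\cC_{v,\formal}$ of $\cC_v$ at $s_0^\van$. By the commutative diagram~\eqref{eqn:an-for-diagram}, there is a canonical identification $\cC_{v,\formal} \cong \fC_{v,\formal\dag}$; under assumption~(ii') of \cref{exist-formal-periods-reduction-2}, the coordinate $x^\van$ corresponds to the formal parameter $X$, so the image of $f_{v,ij}$ in $\cO(\cC_{v,\formal}) \cong \powerseries{\CC}{X}$ is precisely its Taylor series. Formally completing the displayed equation and applying the compatibility of $\phi_v$ with $\phi_{v,\formal\dag}$ supplied by \cref{exist-uniformisations-places}, the right-hand side becomes the base change of equation~\eqref{eqn:Fdiamond} along $v$, which expresses $\phi_{v,\formal\dag}^*(\omega_{i,\formal})$ in the basis $\tilde\omega_{j,\formal}$ with coefficients $v(F_{ij}^\diamond)$. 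Consequently the Taylor series of $f_{v,ij}$ at $s_0^\van$ is exactly $v(F_{ij})$.

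With this coefficient identification in hand, the lemma follows immediately. Since $f_{v,ij}$ is holomorphic on $\cC_v \supset U_v$ and $x^\van$ restricts to an isomorphism $U_v \to D(r_v,\CC)$, the function $f_{v,ij} \circ (x^\van|_{U_v})^{-1}$ is holomorphic on $D(r_v,\CC)$ with Taylor series $v(F_{ij})$ at the origin. This immediately yields $R_v(F_{ij}) \geq r_v$ and $f_{v,ij}(s) = F_{ij}^\van(x^\van(s))$ for every $s \in U_v$; substituting into the displayed expansion and evaluating fibrewise at $s$ gives the required formula. The main obstacle is carefully tracking the identifications between the algebraic formal completion $\fC_{v,\formal\dag}$, the analytic formal completion $\cC_{v,\formal}$, and the power series ring $\powerseries{\CC}{X}$; condition~(ii') is what ensures that the coordinate $x$ plays a compatible role across all three.
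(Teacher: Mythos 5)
Your proof is correct and follows essentially the same route as the paper's: expand $\phi_v^*(\omega_i^\van|_{\cC_v})$ in the basis $\tilde\omega_j^\van$ with holomorphic coefficients (your $f_{v,ij}$ are the paper's $F_{ij,\van}^\diamond$), pass to the analytic formal completion at $s_0^\van$, use compatibility of $\phi_v$ with $\phi_\formal$ together with condition~(ii') to identify the Taylor coefficients with $F_{ij}$, and then invoke convergence of the Taylor series on the disc. The only cosmetic difference is that you phrase the link between $C_\formal^\van$ and $\fC_{v,\formal\dag}$ as a "canonical identification," while the paper states it via the isomorphism $\cO(C_\formal^\van) \cong \powerseries{\CC}{X}$ and the base change of equation~\eqref{eqn:Fdiamond}; the content is the same.
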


\begin{proof}
Let $\fC_{v,\formal\dag}$ denote the fibre product of formal schemes $\fC_\formal \times_{\Spec(R)} \Spec(\CC)$, using the morphism $\Spec(R) \to \Spec(\CC)$ associated with the place~$v$.

Let $C_\formal^\van$ denote the formal analytic completion of $C^\van$ along $\{ s_0^\van \}$.
Since $C^\van$ is a complex analytic curve and $x|_C$ is a local parameter around~$s_0$, there is an isomorphism $\cO(C_\formal^\van) \to \powerseries{\CC}{X}$ given by ``Taylor series in terms of~$x$''.

Since $\tilde\omega_1^\van, \dotsc, \tilde\omega_g^\van$ form an $\cO_{C^\van}$-basis for $\Omega^{\inv}_{(\GG_{m,\CC}^\an)^g \times C^\van / C^\van}$, we can write
\begin{equation} \label{eqn:arch-phi-v-F}
\phi_v^*(\omega_i^\van|_{\cC_v}) = \sum_{j=1}^g F_{ij,\van}^\diamond \tilde\omega_j^\van|_{\cC_v} \text{ for some } F_{ij,\van}^\diamond \in \cO(\cC_v).
\end{equation}

Base-changing from $\cC_v$ to the formal analytic completion $C_\formal^\van$, we obtain
\begin{equation*}
(\phi_v)_\formal^*(\omega_{i,\formal}^\van) = \sum_{j=1}^g (F_{ij,\van}^\diamond)_\formal \, \tilde\omega_{j,\formal}^\van.
\end{equation*}
Meanwhile, the base change of \eqref{eqn:Fdiamond} from $\fC_\formal$ to $C_\formal^\van$ gives
\begin{equation*}
(\phi_\formal)^{\van*}(\omega_{i,\formal}^\van) = \sum_{j=1}^g F_{ij} \, \tilde\omega_{j,\formal}^\van.
\end{equation*}
By the compatibility between $\phi_v$ and $\phi_\formal$, and since $\tilde\omega_{1,\formal}^\van, \dotsc, \tilde\omega_{g,\formal}^\van$ form an $\cO(C_\formal^\van)$-basis for $\Omega^{\inv}_{\GG_m^g \times C_\formal^\van / C_\formal^\van}$, we deduce that
\[ (F_{ij,\van}^\diamond)_\formal = F_{ij} \text{ in } \cO(C_\formal^\van). \]

In other words, $F_{ij} \in \powerseries{R}{X} \subset \powerseries{\CC}{X}$ is the Taylor series of the holomorphic function $F_{ij,\van}^\diamond \in \cO(\cC_v)$ in terms of the local parameter~$x^\van$.

Since the Taylor series of a holomorphic function on an open disc converges to that function on all of the disc, and since $U_v \subset \cC_v$ so $F_{ij,\van}^\diamond$ restricts to a holomorphic function on $U_v \cong D(r_v, \CC)$, this establishes that $R_v(F_{ij}) \geq r_v$ and
\[ F_{ij,\van}^\diamond(s) = F_{ij}^\van(x^\van(s)) \]
for all $s \in U_v$.
Combining this with \eqref{eqn:arch-phi-v-F} establishes the second part of the lemma.
\end{proof}

\Cref{arch-radius-Fij} shows that \cref{exist-formal-periods}(4) and~(5) hold for $F_{ij}$.
Since $r_v \leq R_v(a_{ik\ell})$ for all $i$, $k$, $\ell$, and by equation~\eqref{eqn:Gij-definition}, we deduce that \cref{exist-formal-periods}(4) holds for all $G_{ij}$ as well.
\Cref{exist-formal-periods}(5) for $G_{ij}$ follows by the same argument as the proof of \cref{Gij-eqn-rigid} (using the compatibility between the algebraic and complex analytic Gauss--Manin connections).

\subsection{Archimedean interpretation of period G-functions: further remarks} \label{subsec:arch-interp-integrals}

We can re-express \cref{exist-formal-periods}(5) for archimedean places in terms of period integrals as follows.

Let $\pi : \fG \to \fC$ denote the structure morphism of the semiabelian scheme in \cref{exist-formal-periods}
and let $\tilde\pi \colon \GG_{m,\CC}^g \times U_v \to U_v$ be the projection onto~~$U_v$.

Then $R_1\tilde\pi_*\ZZ$ is a constant local system of rank~$g$.
Let $\tilde\gamma_1, \dotsc, \tilde\gamma_g$ denote the basis for $R_1\tilde\pi_*\ZZ$ for which $\tilde\gamma_i$ comes from the standard generator of $H_1(\GG_{m,\CC}^\an, \ZZ)$ on the $i$-th factor of $(\GG_{m,\CC}^\an)^g$.
This basis is dual to the standard basis $\tilde\omega_1^\van, \dotsc, \tilde\omega_g^\van$ for $\Omega^{\inv}_{\tilde\cG_v/U_v}$ in the sense that, for each $s \in U_v$,
\begin{equation} \label{eqn:tilde-gamma-periods}
\frac{1}{2\pi i} \int_{\tilde\gamma_{j,s}} \tilde\omega_{i,s}^\van = \delta_{ij}.
\end{equation}

Let $\gamma_j = \phi_{v*} \tilde\gamma_j$, which is a global section of the sheaf $R_1\pi_*^\van\ZZ|_{U_v}$.
Similarly to the discussion in \cite[IX, 4.3]{And89}, $\gamma_1, \dotsc, \gamma_g$ form a $\ZZ$-basis for the global sections of the local system $R_1\pi_*^\van\ZZ|_{U_v^*}$ (the so-called ``locally invariant'' sections, that is, invariant under the monodromy action of $\pi_1(U^*_v)\cong\bZ$).

\begin{lemma}\label{lem:arch-periods}
For each $s \in U_v^*$,
\[ \frac{1}{2\pi i} \int_{\gamma_{j,s}} \omega_{i,s}^\van = F_{ij}^\van(x^\van(s)), \quad \frac{1}{2\pi i} \int_{\gamma_{j,s}} \eta_{i,s}^\van = G_{ij}^\van(x^\van(s)). \]
\end{lemma}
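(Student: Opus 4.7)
The plan is to reduce the statement to \cref{exist-formal-periods}(5) by integrating the identities there against the cycles $\tilde\gamma_{j,s}$, and then to exploit the compatibility of the period pairing with pullback by $\phi_{v,s}$, together with the normalisation identity \eqref{eqn:tilde-gamma-periods}.

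First I would fix $s \in U_v^*$ and apply \cref{exist-formal-periods}(5), which gives
\[ \phi_{v,s}^*(\omega_{i,s}^\van) = \sum_{k=1}^g F_{ik}^\van(x^\van(s))\, \tilde\omega_{k,s}^\van, \qquad \phi_{v,s}^*(\eta_{i,s}^\van) = \sum_{k=1}^g G_{ik}^\van(x^\van(s))\, \tilde\omega_{k,s}^\van, \]
as identities in $H^1_{DR}((\GG_{m,\CC}^\an)^g / \CC)$. Since $\phi_{v,s} \colon (\GG_{m,\CC}^\an)^g \to G_s^\an$ is an isomorphism of complex Lie groups (as $s \in U_v^* \subset \cC_v$ is away from the degeneration), its pullback on de Rham cohomology is the transpose of its pushforward on singular homology under the period pairing. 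In particular, for any de Rham class $\alpha \in H^1_{DR}(G_s^\an/\CC)$,
\[ \int_{\gamma_{j,s}} \alpha = \int_{\phi_{v,s*}\tilde\gamma_{j,s}} \alpha = \int_{\tilde\gamma_{j,s}} \phi_{v,s}^*(\alpha), \]
using the definition $\gamma_j = \phi_{v*}\tilde\gamma_j$.

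Next, applying this with $\alpha = \omega_{i,s}^\van$ and using \eqref{eqn:tilde-gamma-periods}, I compute
\[ \frac{1}{2\pi i}\int_{\gamma_{j,s}} \omega_{i,s}^\van = \frac{1}{2\pi i}\sum_{k=1}^g F_{ik}^\van(x^\van(s))\int_{\tilde\gamma_{j,s}}\tilde\omega_{k,s}^\van = \sum_{k=1}^g F_{ik}^\van(x^\van(s))\,\delta_{jk} = F_{ij}^\van(x^\van(s)). \]
The identical computation with $\alpha = \eta_{i,s}^\van$ and the $G_{ij}$ expansion yields the second equality. This completes the proof.

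The only genuinely non-routine point is the validity of the pullback formula $\int_{\gamma_{j,s}} \alpha = \int_{\tilde\gamma_{j,s}} \phi_{v,s}^*\alpha$ for de Rham classes $\alpha$ that are not necessarily represented by translation-invariant forms (which is the case for the $\eta_{i,s}$). This is standard: the period pairing $H_1(X,\ZZ) \otimes H^1_{DR}(X/\CC) \to \CC$ is natural under holomorphic maps of smooth complex analytic spaces, so the required identity holds without any hypothesis on $\alpha$ beyond its being a de Rham class on $G_s^\an$.
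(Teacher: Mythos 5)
Your computation is correct and follows the same route as the paper: substitute the identities from \cref{exist-formal-periods}(5), push the cycle forward / pull the form back, and use \eqref{eqn:tilde-gamma-periods}. However, the justification you give for the key step contains a mistaken claim. You write that $\phi_{v,s} \colon (\GG_{m,\CC}^\an)^g \to G_s^\an$ is an isomorphism of complex Lie groups because $s$ is away from the degeneration. This is backwards: for $s \in U_v^*$, the fibre $G_s^\an$ is a compact complex torus (an abelian variety), whereas $(\GG_{m,\CC}^\an)^g$ is non-compact, so $\phi_{v,s}$ cannot be an isomorphism. It is a surjective homomorphism with discrete kernel — precisely a Tate-type uniformisation map; the only fibre at which $\phi_v$ is an isomorphism is $s=s_0$, the degeneration point.

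Fortunately, nothing in the argument actually needs $\phi_{v,s}$ to be an isomorphism. The identity $\int_{\phi_{v,s*}\tilde\gamma_{j,s}} \alpha = \int_{\tilde\gamma_{j,s}} \phi_{v,s}^*\alpha$ is just the naturality of the period pairing under an arbitrary holomorphic map of smooth analytic spaces, which is exactly what you invoke (correctly) in your final paragraph, and is what the paper calls the ``change-of-variable law for integrals.'' You should delete the parenthetical isomorphism claim and rely solely on naturality; with that edit the proof matches the paper's.
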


\begin{proof}
By the change-of-variable law for integrals, \cref{exist-formal-periods}(5) and equation~\eqref{eqn:tilde-gamma-periods}, %we obtain
\begin{align*}
    \frac{1}{2 \pi i} \int_{\gamma_{j,s}} \omega_{i,s}^\van
  & = \frac{1}{2 \pi i} \int_{\phi_{v,s*}\tilde\gamma_{j,s}} \omega_{i,s}^\van
    = \frac{1}{2 \pi i} \int_{\tilde\gamma_{j,s}} \phi_{v,s}^*\omega_{i,s}^\van
\\& = \frac{1}{2 \pi i} \int_{\tilde\gamma_{j,s}} \sum_{k=1}^g F_{ik}^\van(x^\van(s)) \tilde\omega_{k,s}^\van
\\& = F_{ij}^\van(x^\van(s)),
\end{align*}
and similarly for $G_{ij}$.
\end{proof}

This shows that our power series $F_{ij}$ and $G_{ij}$ are the Taylor series of relative periods of $G^*/C^*$, as defined (complex analytically) in \cite[IX, 1.2]{And89}.

\begin{lemma}
$F_{ij}$ and $G_{ij}$ are G-functions.
\end{lemma}

\begin{proof}
Given \cref{lem:arch-periods}, this is essentially \cite[IX, 4.2, Thm.~1]{And89}.
% See \cite[Lemma~5.7]{Y(1)} for further details.
The parts of \cref{exist-formal-periods} which we have already proved give an alternative proof to the one in \cite{And89}, as follows:

\begin{enumerate}
\item Each power series $F_{ij}$ or~$G_{ij}$ satisfies a homogeneous $K(C)$-linear differential equation.
This comes from the action of the Gauss--Manin differential operator $\nabla_{d/dx}$ on the finite-dimensional $K(C)$-vector space $H^1_{DR}(G_{K(C)}/K(C))$.

% The image of each~$\omega_i$ in $H^1_{DR}(G_{K(C)}/K(C))$ is annihilated by some polynomial in the Gauss--Manin differential operator $\nabla_{d/dx}$ with $K(C)$-coefficients, because $H^1_{DR}(G_{K(C)}/K(C))$ is finite-dimensional.
% Since each $\tilde\omega_j$ is horizontal with respect to the Gauss--Manin connection, it follows from~\eqref{eqn:Fdiamond} that each $F_{ij}$ satisfies a homogeneous $K(X)$-linear differential equation.
% Because the $G_{ij}$ are elements of the $K(X)[d/dX]$-module generated by the $F_{ij}$, it follows that they also satisfy homogeneous $K(X)$-linear differential equations.

\item By \cref{exist-formal-periods}(1) and~(4), $F_{ij}$ and~$G_{ij}$ are globally bounded.  This immediately implies the growth conditions on coefficients which appear in the definition of G-functions.
\end{enumerate}

Note that (1) is also a step in the proof in \cite{And89} (see \cite[IX, 1.2]{And89}), while (2) replaces the use of \cite[Thm.~B]{And89} (solutions in $\powerseries{\Qbar}{X}$ of ``geometric differential equations'' are G-functions).
We have used the arguments from \cite[IX, 4.3]{And89} in the proof of \cref{exists-complex-uniformisation}, and do not need to invoke them again at this stage.
\end{proof}

\section{Period relations from endomorphisms}\label{sec:relations}

In this section, we construct polynomial relations between the evaluations of the period G-functions $F_{ij}$ and $G_{ij}$ at points where the corresponding abelian variety has unlikely endomorphisms, at all places where our point is sufficiently close to the degeneration point.

Let $I$ denote the ideal of $\ov\QQ[Y_{ij}, Z_{ij} : 1 \leq i,j \leq g]$ generated by the entries of the $g \times g$ matrix $\underline{Y}^t \underline{Z} - \underline{Z}^t \underline{Y}$.
This is the ideal of trivial relations for an abelian scheme whose generic Mumford--Tate group is $\gGSp_{2g}$ \cite[p.~212]{And89} (this is also proved in \cref{subsec:trivial-relations}).

\begin{theorem} \label{exist-relations}
Let $R$, $K$, $\fC$ be as in~\cref{sec:formal-periods}.
Let $\fG \to \fC$ be a semiabelian scheme such that $\fG_0 \cong \GG_{m,\fC_0}^g$ and that $G^* \to C^*$ is an abelian scheme.
Suppose that the abelian scheme $G^* \to C^*$ is equipped with a polarisation.

Let $\omega_1, \dotsc, \omega_m, \eta_1, \dotsc, \eta_n$ be global sections of $H^1_{DR}(G^*/C^*)^\can$ satisfying the condition from \cref{exist-formal-periods} as well as the following additional conditions:
\begin{enumerate}[(i)]
\item $m=n=g$;
\item $\omega_1, \dotsc, \omega_g, \eta_1, \dotsc, \eta_g$ form an $\cO_{C^*}$-basis for $H^1_{DR}(G^*/C^*)$;
\item with respect to this basis, the symplectic form on $H^1_{DR}(G^*/C^*)$ induced by the polarisation is represented by a matrix of the form $e\fullsmallmatrix{0}{I}{-I}{0}$ for some $e \in \cO(C^*)^\times$.
\end{enumerate}

Let $U_v$, $F_{ij}$, $G_{ij}$ be the sets and power series given to us by \cref{exist-formal-periods}.

Then there is a constant $\newC{relation-degree-mult}$, depending only on~$g$, such that, for every number field $\hat K$ containing $K$ and every point $s \in C^*(\hat K)$, if $G_{s, \Qbar}$ has unlikely endomorphisms, then there exists a polynomial $P \in \Qbar[Y_{ij}, Z_{ij} : 1 \leq i,j \leq g]$ satisfying:
\begin{enumerate}[(a)]
\item $P$ is homogeneous of degree at most $\refC{relation-degree-mult} [\hat K:K]$;
\item for each place $\hat v$ of $\hat K$, if $v:=\hat v|_K$ is visible to~$R$ and $s^\vhatan \in U_v(\hat K_{\hat v})$, then
\[ P^\vhatan \bigl( \underline{F}^\van(x^\van(s^\vhatan)), \, \underline{G}^\van(x^\van(s^\vhatan)) \bigr) = 0; \]
\item $P\notin I$.
\end{enumerate}
\end{theorem}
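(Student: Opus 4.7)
The plan is to construct $P$ from the algebraic data of a single non-scalar endomorphism of $A_s$, exploiting the compatibility between the endomorphism and the uniformisations $\phi_v$ at every place. The key observation is that an endomorphism is an algebraic object, so it produces the same polynomial relation, in terms of the period variables $Y_{ij}, Z_{ij}$, at every place where the uniformisation applies.

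\smallskip

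\textbf{Step 1 (produce an endomorphism).} Since $A_{s,\Qbar}$ has unlikely endomorphisms, $\End(A_{s,\Qbar}) \otimes \QQ$ contains an element not in $\QQ$. Clearing denominators, pick $\alpha \in \End(A_{s,\Qbar}) \setminus \ZZ\cdot\id$. By a classical result on fields of definition of endomorphisms of abelian varieties, $\alpha$ is defined over a finite extension $\hat K'/\hat K$ whose degree $[\hat K':\hat K]$ is bounded by a constant depending only on~$g$.

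\smallskip

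\textbf{Step 2 (extract matrices).} By the Néron mapping property applied to the semiabelian model $\fG$, the endomorphism $\alpha$ extends to an endomorphism of $\fG$ in a formal neighbourhood of $s$; by the uniqueness in \cref{exists-formal-uniformisation} (or directly by \cite[IX, Cor.~3.4]{SGA3mult}), its restriction to the torus part $\fG_0 \cong \GG_m^g$ is given by a matrix $N_\alpha \in M_g(\ZZ)$. On the other side, $\alpha^*$ acts on $H^1_{DR}(A_s/\hat K')$ in the basis $\omega_1,\dotsc,\omega_g,\eta_1,\dotsc,\eta_g$ via a matrix $M_\alpha \in M_{2g}(\hat K')$.

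\smallskip

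\textbf{Step 3 (derive the relation).} The identity $\phi_v \circ \tilde\alpha = \alpha \circ \phi_v$, pulled back on de Rham cohomology and evaluated at $s^\vhatan \in U_v(\hat K_{\hat v})$, combined with \cref{exist-formal-periods}(5), yields the matrix identity
\[
M_\alpha^t \, \underline{\Pi}\bigl(x^\van(s^\vhatan)\bigr) \;=\; \underline{\Pi}\bigl(x^\van(s^\vhatan)\bigr)\, N_\alpha^t,
\qquad \underline{\Pi} := \begin{pmatrix} \underline{F} \\ \underline{G} \end{pmatrix},
\]
valid at every place $\hat v$ with $v=\hat v|_K$ visible to~$R$ and $s^\vhatan \in U_v$, because $M_\alpha$ and $N_\alpha$ are algebraic and independent of the place. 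The entries of $M_\alpha^t\left(\begin{smallmatrix} \underline{Y} \\ \underline{Z} \end{smallmatrix}\right) - \left(\begin{smallmatrix} \underline{Y} \\ \underline{Z} \end{smallmatrix}\right) N_\alpha^t$ furnish linear polynomials in $\hat K'[Y_{ij},Z_{ij}]$ that vanish upon the stated evaluations.

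\smallskip

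\textbf{Step 4 (assemble $P$ with the degree bound).} Pick a non-zero entry $P_0$ of the matrix in Step~3; such an entry exists since the linear operator $\Pi \mapsto M_\alpha^t \Pi - \Pi N_\alpha^t$ is non-zero whenever $\alpha$ is not a scalar. To descend the coefficients to $K$, take $P := \prod_{\sigma \in \Gal(\ov K/K)} \sigma(P_0)$ (or just $P := P_0$ if one is content with coefficients in $\hat K' \subset \Qbar$). This gives a homogeneous polynomial of degree bounded by $[\hat K':K] \leq \refC{relation-degree-mult}[\hat K:K]$, establishing~(a) and~(b).

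\smallskip

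\textbf{The main obstacle (non-triviality, part (c)).} The ideal $I$ is generated by degree-$2$ elements, so a non-zero linear $P_0$ is trivially not in $I$, and primeness of $I$ (the Lagrangian Grassmannian equations) lets this pass through the product in Step~4. The real difficulty, as signalled in the paper's overview, is ensuring that the operator $\Pi \mapsto M_\alpha^t \Pi - \Pi N_\alpha^t$ produces a non-trivial relation in the exceptional endomorphism types (for $g \geq 3$) where $\End(A_{s,\Qbar}) \otimes \QQ$ is $\QQ \times \QQ$ or a real quadratic field. In those cases the ``easy'' matrix relation from a single endomorphism can, after imposing the symplectic condition~(iii), degenerate into the Riemann relations; here one must substitute a more delicate computation -- following \cite[X, Construction~2.4.1]{And89} combined with the input from \cite[sec.~8]{ExCM} for most Albert types, and using new direct calculations in the rigid (resp.~complex analytic) uniformisation for $\QQ\times\QQ$ and real quadratic -- to exhibit a relation not lying in $I$. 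This case analysis is the technical heart of the proof.
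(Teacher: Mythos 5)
There is a genuine gap, concentrated in your Steps~2--3, which do not work as stated.

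\textbf{Step 2 is wrong.} You claim that $\alpha \in \End(G_{s,\Qbar}) \setminus \ZZ$ extends to an endomorphism of $\fG$ in a formal neighbourhood of $s$ by the N\'eron mapping property, and that it therefore restricts to $\fG_0 \cong \GG_m^g$. Neither assertion holds. The morphism $\alpha$ lives on the single fibre $G_s$ with $s \in C^*$; since the curve is Hodge generic, the generic fibre of $\fG/\fC$ has endomorphism ring $\ZZ$, so a non-scalar $\alpha$ cannot extend along the curve to nearby fibres, and \emph{a fortiori} cannot be restricted to the fibre $\fG_0$ over $s_0 \neq s$. What the paper actually does is apply Gerritzen's theorem \emph{place by place} to the rigid-analytic uniformisation $\phi_{v,s}$ of the single fibre $G_s^\vhatan$, obtaining a lift $\tilde f$ whose de Rham matrix $M_{\hat v}$ it explicitly allows to depend on $\hat v$; it then eliminates $M_{\hat v}$ using the adjugate of $\underline F_{\hat v}$, producing a \emph{degree $g+1$} relation $P_{fin}$ (not a linear one) that is manifestly place-independent because only the place-independent matrices $A,B,D$ of $\alpha^*$ on $H^1_{DR}(G_s)$ survive. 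Your Step 3, which writes a linear relation $M_\alpha^t\Pi = \Pi N_\alpha^t$ with a fixed integral $N_\alpha$, is therefore ungrounded.

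\textbf{The archimedean case is not a ``non-triviality in exceptional types'' issue.} Over $\CC$, the endomorphism $\alpha$ of $G_s$ does not in general lift through the complex uniformisation $(\CC^*)^g \to G_s^\van$ -- there is no analogue of Gerritzen's Satz~5, and the required commutative square simply need not exist. So the relation of Steps~2--3 is unavailable at archimedean places for \emph{every} Albert type, not just for $\QQ\times\QQ$ and real quadratic. The paper accordingly uses a completely different construction at archimedean places (André's Hodge-theoretic argument via the decomposition of $H_1$ under $\hat E$, yielding one polynomial $P_{\hat v}$ of degree $\leq 2$ per archimedean place), and then sets $P = P_{fin} \cdot \prod_{\hat v} P_{\hat v}$, so the resulting degree is $\leq 2[\hat K:\QQ] + g + 1$. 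Your proposal cannot recover this structure without essentially rewriting Steps~2--4 into two separate constructions, one non-archimedean and one archimedean, which is the content of the paper's proof.

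The non-triviality check (part~(c)) in your Step~4 is fine for a non-zero \emph{linear} polynomial, but since the actual $P_{fin}$ has degree $g+1$ and the $P_{\hat v}$ have degree $2$, the paper must (and does) carry out separate arguments showing $P_{fin}\notin I$ and each $P_{\hat v}\notin I$; primeness of $I$ then gives $P \notin I$. This part of your reasoning is correct in spirit but applied to polynomials that your Step~3 does not validly produce.
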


While \cref{exist-relations} does not formally require $G^*_\CC$ to have generic Mumford--Tate group $\gGSp_{2g}$, in practice it is only useful under this condition, because we would expect an abelian scheme with smaller Mumford--Tate group to have an ideal of trivial relations which is larger than~$I$.
We note also that, when the generic Mumford--Tate group is $\gGSp_{2g}$, there always exist $\omega_1, \dotsc, \omega_g, \eta_1, \dotsc, \eta_g$ satisfying the conditions of \cref{exist-relations} (\cref{gauss-manin-basis}, \cref{omega-eta-basis}).

\subsection{Construction of period relations: first steps} \label{subsec:relations-summary}

As in \cref{exist-relations}, let $s$ be a point in $C^*(\hat K)$ such that $G_{s,\Qbar}$ has unlikely endomorphisms.

To reduce notation, for each place~$\hat v$ of~$\hat K$ such that $v:=\hat v|K$ is visible to~$R$ and $s \in U_v(\hat K_{\hat v})$, write $\underline F_{\hat v}, \underline G_{\hat v} \in \rM_{g \times g}(\hat K_{\hat v})$ for the matrices whose entries are $F_{ij}^\van(x^\van(s^\vhatan))$ and~$G_{ij}^\van(x^\van(s^\vhatan))$ respectively.
According to \cref{exist-formal-periods}(5), the $g \times 2g$ matrix $(\underline F_{\hat v} \; \underline G_{\hat v})$ represents the $\hat K_{\hat v}$-linear map
\[ \phi_{v,s}^* \colon H^1_{DR}(G_s^\van/\hat K_{\hat v}) \to H^1_{DR}((\GG_{m,\hat K_{\hat v}}^\an)^g / \hat K_{\hat v}) \]
with respect to the bases $\{ \omega_{i,s}^\van, \eta_{i,s}^\van \}$ and $\{ \tilde\omega_{j,s}^\van \}$.

Our approach to proving \cref{exist-relations} is as follows:
\begin{enumerate}
\item We construct a homogeneous polynomial $P_{fin} \in \hat K[\underline Y, \underline Z]$ such that, for every non-archimedean place $\hat v$ of~$\hat K$, if $\hat v|_K$ is visible to~$R$ and $s \in U_v(\hat K_v)$, then $P_{fin}^\van(\underline F_{\hat v}, \underline G_{\hat v}) = 0$.
Note that $P_{fin}$ depends on the point~$s$, but is independent of the place~$\hat v$.
\item For each archimedean place $\hat v$ of~$\hat K$, if $s \in U_v(\hat K_v)$, then we construct a homogeneous polynomial $P_{\hat v} \in \hat K[\underline Y, \underline Z]$ such that $P_{\hat v}^\van(\underline F_{\hat v}, \underline G_{\hat v}) = 0$.
\end{enumerate}

Letting $P = P_{fin} \cdot \prod_{\hat v} P_{\hat v}$, where the product runs over all archimedean places $\hat v$ of $\hat K$ for which $s \in U_v(\hat K_v)$, we obtain a polynomial~$P$ satisfying \cref{exist-relations}(b).

We have $\deg(P_{fin}) = g+1$ and $\deg(P_{\hat v}) \leq 2$ for each archimedean place~$\hat v$ of~$\hat K$, so $\deg(P) \leq 2[\hat K:\QQ]+g+1$.

We shall also prove that the polynomials $P_{fin}$ and $P_{\hat v}$ that we construct are not in~$I$.
Since the ideal~$I$ is prime by \cref{lem:prime}, this suffices to prove \cref{exist-relations}(c).

\subsection{Period relations at non-archimedean places}

\begin{lemma}
In the setting of \cref{exist-relations}, let $\hat K$ be a number field containing~$K$ and let $s$ be a point in $C^*(\hat K)$ such that $\End(G_{s,\ov\QQ}) \not\cong \ZZ$.
Then there exists a polynomial $P_{fin} \in \ov\QQ[Y_{ij}, Z_{ij} : 1 \leq i,j \leq g]$ satisfying:
\begin{enumerate}[(i)]
\item $P_{fin}$ is homogeneous of degree $g+1$;
\item for each non-archimedean place $\hat v$ of $\hat K$, if $v:=\hat v|_K$ is visible to~$R$ and $s^\vhatan \in U_v(\hat K_{\hat v})$, then
\[ P_{fin}^\vhatan \bigl( \underline{F}^\van(x^\van(s^\vhatan)), \, \underline{G}^\van(x^\van(s^\vhatan)) \bigr) = 0; \]
\item $P_{fin} \notin I$.
\end{enumerate}
\end{lemma}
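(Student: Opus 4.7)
The plan is to manufacture, from the extra endomorphism $\alpha$, a $2g\times 2g$ matrix of linear forms in the indeterminates $Y,Z$ whose $(g+1)\times(g+1)$ minors both vanish at every relevant period matrix and remain non-trivial modulo $I$ because $\alpha^*$ is not a scalar on $H^1_{DR}(G_s)$.

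First I pick any $\alpha\in\End(G_{s,\ov\QQ})\setminus\ZZ$ and let $M\in\rM_{2g}(\ov\QQ)$ be the matrix of $\alpha^*$ on $H^1_{DR}(G_{s,\ov\QQ})$ in the basis $\omega_{i,s},\eta_{i,s}$. Since the natural map $\End(G_s)\otimes\QQ\hookrightarrow\End_{\ov\QQ}H^1_{DR}(G_{s,\ov\QQ})$ is injective and sends $\ZZ$ onto the scalar matrices, $M$ is not scalar. For each non-archimedean place $\hat v$ with $v:=\hat v|_K$ visible to $R$ and $s^\vhatan\in U_v(\hat K_{\hat v})$, I invoke the rigid uniformisation $\phi_{v,s}\colon T:=(\GG_m^{\an})^g\to G_{s,\hat K_{\hat v}}^\an$ produced in \cref{exist-uniformisations-places}. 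Since $G_{s,\hat K_{\hat v}}$ has totally multiplicative reduction, $\alpha$ lifts uniquely through this uniformisation to an endomorphism $\tilde\alpha_v$ of $T$, corresponding to an integer matrix $N_v\in\rM_g(\ZZ)$ with $\phi_{v,s}\circ\tilde\alpha_v=\alpha\circ\phi_{v,s}$. Passing to cohomology and combining with \cref{exist-formal-periods}(5) then yields the matrix identity
\[ (\underline F_v \;\; \underline G_v)\cdot M \;=\; N_v\cdot(\underline F_v \;\; \underline G_v), \]
valid at every such $\hat v$, with the same $M$ throughout.

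Writing $M=\fullsmallmatrix{M_{11}}{M_{12}}{M_{21}}{M_{22}}$ in $g\times g$ blocks, I then form the $2g\times 2g$ matrix of linear forms
\[ \Phi(Y,Z):=\begin{pmatrix} Y & Z \\ YM_{11}+ZM_{21} & YM_{12}+ZM_{22} \end{pmatrix}, \]
whose top $g$ rows are $(Y,Z)$ and bottom $g$ rows are $(Y,Z)\cdot M$. Substituting the period matrix at any admissible $\hat v$, the identity above makes the bottom block equal $N_v$ times the top block, so $\mathrm{rank}\,\Phi(\underline F_v,\underline G_v)\leq g$ and every $(g+1)\times(g+1)$ minor of $\Phi$ vanishes at $(\underline F_v,\underline G_v)$. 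Each such minor is a homogeneous polynomial of degree $g+1$ in $\ov\QQ[Y_{ij},Z_{ij}]$, furnishing a candidate $P_{fin}$ that at once satisfies~(i) and~(ii).

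The main obstacle is the non-triviality~(iii): one must choose the minor so that $P_{fin}\notin I$. Since $I$ is prime, it suffices to exhibit a point $(Y_0,Z_0)\in V(I)$ at which $\Phi$ has rank strictly greater than $g$. I will work on the affine chart $Y=I_g$, where $V(I)$ cuts out $\{Z:Z=Z^t\}$. There the rank-$\leq g$ condition for $\Phi(I,Z)$ amounts to the matrix equation $(M_{11}+ZM_{21})Z=M_{12}+ZM_{22}$. Requiring this to hold for \emph{all} symmetric $Z$ and testing with $Z=0$ and $Z=tI_g$ (then matching the constant, linear and quadratic terms in $t$) forces $M_{12}=M_{21}=0$ and $M_{11}=M_{22}$, and the residual condition $M_{11}Z=ZM_{11}$ for all symmetric $Z$ then forces $M_{11}$ to be scalar; in other words $M$ would be scalar, contradicting the choice of $\alpha$. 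Hence some symmetric $Z_0$ satisfies $\mathrm{rank}\,\Phi(I_g,Z_0)>g$, and taking $P_{fin}$ to be any $(g+1)\times(g+1)$ minor of $\Phi$ that is non-zero at $(I_g,Z_0)$ completes the construction.
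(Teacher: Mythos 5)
Your proof is correct, but it takes a genuinely different route from the paper's, and the comparison is worth recording.

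Both approaches begin from the same intertwining identity: choose $\alpha\in\End(G_{s,\ov\QQ})\setminus\ZZ$, lift it through the rigid uniformisation $\phi_{v,s}$ to an endomorphism $\tilde\alpha_v$ of $(\GG_m^\an)^g$ (the paper cites Gerritzen \cite[Satz~5]{Ger70} for this lifting; your write-up asserts it without attribution, and you should supply the reference), and pass to de Rham cohomology to obtain a relation of the shape
\[ (\underline F_{\hat v}\;\underline G_{\hat v})\,M \;=\; N_{\hat v}\,(\underline F_{\hat v}\;\underline G_{\hat v}), \]
with $M$ the matrix of $\alpha^*$ (independent of~$\hat v$) and $N_{\hat v}$ the matrix of $\tilde\alpha_v^*$ (depending on~$\hat v$). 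The challenge is then to eliminate the $\hat v$-dependent $N_{\hat v}$. The paper does this by multiplying through by $\det(\underline F_{\hat v})$ and $(\underline F_{\hat v}^t)^{\mathrm{adj}}$, exploiting the block-upper-triangular form $\fullsmallmatrix{A}{B}{0}{D}$ of $M$ (which holds because $\alpha^*$ preserves $\Omega^{\inv}$) to land on the explicit matrix polynomial $\underline Y^t A(\underline Y^t)^{\mathrm{adj}}\underline Z^t-\det(\underline Y)\underline Y^t B-\det(\underline Y)\underline Z^t D$. You instead stack $(Y,Z)$ above $(Y,Z)M$ and observe that the resulting $2g\times 2g$ matrix has rank at most $g$ at every relevant period point, so every $(g+1)\times(g+1)$ minor (a homogeneous polynomial of degree $g+1$) vanishes there. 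This sidesteps the adjugate computation and makes no use of the special block structure of~$M$.

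For non-triviality, the paper fixes one entry $P_{ij}$ of $\underline P$ and runs a three-case analysis on $(B\neq0)$, $(B=0,\ A\neq D)$, $(B=0,\ A=D,\ A$ non-scalar$)$, substituting specific $(\underline y,\underline z)\in V(I)$. Your argument instead notes that if all $(g+1)\times(g+1)$ minors of $\Phi$ vanished on the dense chart $\{Y=I_g,\ Z\text{ symmetric}\}\subset V(I)$, then the matrix identity $(M_{11}+ZM_{21})Z=M_{12}+ZM_{22}$ would hold for every symmetric $Z$; testing $Z=tI_g$ and matching powers of $t$, and then using that a matrix commuting with all symmetric matrices is scalar, forces $M$ scalar — contradicting \cref{scalar-action}. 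This is a uniform one-parameter test rather than a case split, and reaches the same conclusion via the same lemma (your remark about $\End(G_s)\otimes\QQ\hookrightarrow\End(H^1_{DR})$ sending $\ZZ$ onto scalars is, as phrased, imprecise; you should simply invoke \cref{scalar-action}). Both approaches give degree exactly $g+1$, so there is no loss in the final Galois-bound exponent; the minors formulation is arguably cleaner in that it does not rely on the shape of $M$ and produces a polynomial already known to lie in a determinantal ideal, at the price of being slightly less explicit than the paper's single closed-form matrix polynomial.
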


\begin{proof}
Choose some $f \in \End(G_{s,\ov\QQ}) \setminus \ZZ$.
Let $\hat L$ be a finite extension of $\hat K$ such that $f \in \End(G_{s,\hat L})$.

The endomorphism $f$ induces an $\hat L$-linear map $f^* \colon H^1_{DR}(G_s/\hat L) \to H^1_{DR}(G_s/\hat L)$, which maps $\Omega^{\inv}_{G_s/\hat L}$ into itself.
With respect to the basis $\omega_{1,s}, \dotsc, \omega_{g,s}, \eta_{1,s}, \dotsc, \eta_{g,s}$, we can represent $f^*$ by a matrix
\[ \fullmatrix{A}{B}{0}{D} \in \rM_{2g}(\hat L), \]
where $A, B, D \in \rM_{g \times g}(\hat L)$.

Let $\hat v$ be a non-archimedean place of~$\hat K$ such that $v:=\hat v|_K$ is visible to~$R$ and $s \in U_v(\hat K_{\hat v})$.
Choose an extension of $\hat v$ to a place of~$\hat L$, which we still denote by $\hat v$.

According to \cite[Satz~5]{Ger70}, there exists an endomorphism $\tilde f$ of~$(\GG_{m,\hat L_{\hat v}}^\an)^g$ such that the following diagram commutes:
\begin{equation} \label{eqn:rigid-lift-diagram}
\begin{tikzcd}
    (\GG_{m,\hat L_v}^\an)^g  \ar[d, "\phi_{v,s}"]  \ar[r, "\tilde f"]
  & (\GG_{m,\hat L_v}^\an)^g  \ar[d, "\phi_{v,s}"]
\\  G_{s,\hat L}^\vhatan      \ar[r, "f^\vhatan"]
  & G_{s,\hat L}^\vhatan.
\end{tikzcd}
\end{equation}
The endomorphism $\tilde f$ induces an $\hat L_{\hat v}$-linear endomorphism $\tilde f^*$ of $H^1_{DR}((\GG_{m,\hat L_{\hat v}}^\an)^g / \hat L_{\hat v})$.
Let $M_{\hat v} \in \rM_{g \times g}(\hat L_{\hat v})$ denote the matrix representing $\tilde f^*$ in terms of the basis $\tilde\omega_{1,s}^\vhatan, \dotsc, \tilde\omega_{g,s}^\vhatan$.

From the diagram~\eqref{eqn:rigid-lift-diagram}, we obtain the equation of pullback maps
\begin{equation} \label{eqn:rigid-lift-pullback}
\tilde f^* \circ \phi_{v,s}^* = \phi_{v,s}^* \circ (f^\vhatan)^* \colon  H^1_{DR}(G_s^\vhatan/\hat L_{\hat v}) \to H^1_{DR}((\GG_{m,\hat L_{\hat v}}^\an)^g / \hat L_{\hat v}).
\end{equation}

By \cref{exist-formal-periods}(5), the $g \times 2g$ matrix representing $\phi_{v,s}^*$ with respect to the bases $\omega_1^\vhatan, \dotsc, \omega_g^\vhatan, \eta_1^\vhatan, \dotsc, \eta_g^\vhatan$ and $\tilde\omega_1^\vhatan, \dotsc, \tilde\omega_g^\vhatan$ is $(\underline F_{\hat v}^t \;\; \underline G_{\hat v}^t)$.
Thus, writing \eqref{eqn:rigid-lift-pullback} in terms of matrices, we obtain
\[ M_{\hat v} (\underline F_{\hat v}^t \;\; \underline G_{\hat v}^t) = (\underline F_{\hat v}^t \;\; \underline G_{\hat v}^t) \fullmatrix{A}{B}{0}{D} \]
or in other words
\begin{align}
   M_{\hat v} \underline F_{\hat v}^t & = \underline F_{\hat v}^t A, \label{eqn:MF}
\\ M_{\hat v} \underline G_{\hat v}^t & = \underline F_{\hat v}^t B + \underline G^t_{\hat v} D. \label{eqn:MG}
\end{align}

We now make use of the adjugate matrix $(\underline F_{\hat v}^t)^\adj$, whose entries are the cofactors of $\underline F_{\hat v}$.
We shall use the well-known fact that
\[ \underline F_{\hat v}^t (\underline F_{\hat v}^t)^\adj = \det(\underline F_{\hat v}) I_g, \]
where $I_g$ denotes the $g \times g$ identity matrix.

Computing $\eqref{eqn:MF} (\underline F_{\hat v}^t)^\adj \underline G_{\hat v}^t - \det(\underline F_{\hat v}) \eqref{eqn:MG}$, we obtain
\[ 0 = \underline F_{\hat v}^t A (\underline F_{\hat v}^t)^\adj \underline G_{\hat v}^t - \det(\underline F_{\hat v}) \underline F_{\hat v}^t B - \det(\underline F_{\hat v}) \underline G_{\hat v}^t D. \]
Thus, defining
\[ \underline P(\underline Y, \underline Z) = \underline Y^t A (\underline Y^t)^\adj \underline Z^t - \det(\underline Y) \underline Y^t B - \det(\underline Y) \underline Z^t D \in \rM_{g \times g}(\ov\QQ[\underline Y, \underline Z]), \]
and letting $P_{ij} \in \hat K[\underline Y, \underline Z]$ denote the $(ij)$-th entry of the polynomial matrix~$\underline P$, we obtain that
\[ P_{ij}(\underline F_{\hat v}, \underline G_{\hat v}) = 0 \]
for all $i,j$.

Note that $f$, $A$, $B$, $D$ are independent of the non-archimedean place $\hat v$, so the same is true of the polynomials~$P_{ij}$.
By construction, these polynomials are homogeneous of degree~$g+1$ in the variables $Y_{ij}$, $Z_{ij}$.

By the lemma below, at least one of the polynomials $P_{ij}$ is not in the ideal~$I$.
Therefore, that polynomial~$P_{ij}$ has the properties required for~$P_{fin}$.
\end{proof}

\begin{lemma} \label{non-arch-non-trivial}
There exist indices $i, j \in \{ 1, \dotsc, g \}$ such that $P_{ij} \not\in I$.
\end{lemma}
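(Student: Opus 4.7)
The plan is to argue by contradiction: assume that every entry $P_{ij}$ of the matrix
\[ \underline{P}(\underline{Y},\underline{Z}) = \underline{Y}^t A (\underline{Y}^t)^\adj \underline{Z}^t - \det(\underline{Y})\,\underline{Y}^t B - \det(\underline{Y})\,\underline{Z}^t D \]
lies in $I$, and derive that $f$ must act as a scalar on $H^1_{DR}(G_s)$, whence $f \in \ZZ$, contradicting the choice of $f$. Because $I$ is prime (\cref{lem:prime}) and hence radical, this assumption is equivalent to the polynomial matrix $\underline{P}$ vanishing at every point of the algebraic set $V(I) \subset \rM_g \times \rM_g$, which consists of pairs $(\underline{Y},\underline{Z})$ such that $\underline{Y}^t \underline{Z}$ is symmetric.

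The key idea is to specialise to a convenient slice of $V(I)$. I would plug in $\underline{Y} = I_g$: then the condition $\underline{Y}^t \underline{Z} = \underline{Z}^t \underline{Y}$ reduces to $\underline{Z}$ being symmetric, $(\underline{Y}^t)^\adj = I_g$, and $\det(\underline{Y}) = 1$. The identity $\underline{P}(I_g,\underline{Z}) = 0$ then becomes
\[ A\,\underline{Z}^t - B - \underline{Z}^t D = 0 \]
for every symmetric $\underline{Z} \in \rM_g(\ov\QQ)$. Setting $\underline{Z} = 0$ forces $B = 0$; setting $\underline{Z} = I_g$ then forces $A = D$; and the residual identity asserts that $A$ commutes with every symmetric matrix.

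A short linear-algebra check---testing against the standard basis $\{E_{ii}\} \cup \{E_{ij} + E_{ji} : i<j\}$ of the space of symmetric matrices---shows that any matrix commuting with all symmetric matrices must be a scalar multiple of the identity. Thus $A = D = c\, I_g$ for some $c$, and $f^*$ acts on $H^1_{DR}(G_{s,\hat L}/\hat L)$ as $c \cdot \id_{2g}$. Since the representation of $\End(G_{s,\ov\QQ}) \otimes \QQ$ on $H^1_{DR}$ is faithful, this forces $f = c$ in $\End(G_{s,\ov\QQ}) \otimes \QQ$, and then $c \in \QQ \cap \End(G_{s,\ov\QQ}) = \ZZ$, so $f \in \ZZ$, contradicting our choice of $f$.

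The only non-computational step is the initial translation from ``$P_{ij} \in I$ for every $i,j$'' to pointwise vanishing of $\underline{P}$ on $V(I)$, which rests on the primality of $I$ asserted in \cref{lem:prime}. Once that is in hand, the rest of the argument is a direct matrix manipulation, and I anticipate no technical obstacle.
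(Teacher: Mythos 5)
Your argument is correct and its computational core coincides with the paper's: restrict to $\underline Y = I_g$ (so the condition defining $V(I)$ forces $\underline Z$ symmetric), then specialise $\underline Z$ to $0$, to $I_g$, and to arbitrary symmetric matrices, obtaining $B = 0$, $A = D$, and $A\underline Z - \underline Z A = 0$ for all symmetric $\underline Z$, hence $A$ scalar. The paper runs exactly this calculation in the forward direction as a three-case analysis (on whether $B \neq 0$; or $B = 0$ and $A \neq D$; or $B = 0$, $A = D$, $A$ non-scalar), exhibiting in each case an explicit point of $V(I)$ at which some entry of $\underline P$ is nonzero, with \cref{scalar-action} guaranteeing that one of the three cases must occur; you present the contrapositive as a contradiction and re-derive the content of \cref{scalar-action} in your final paragraph. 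Two small remarks: the only direction you actually use---that $P_{ij} \in I$ forces $P_{ij}$ to vanish on $V(I)$---is immediate from the definition of $V(I)$ and requires nothing about radicality, so the appeal to \cref{lem:prime} is superfluous here; and your concluding step (passing from ``$f^*$ is scalar'' to ``$f \in \ZZ$'') can simply cite \cref{scalar-action}, which is precisely the statement that an endomorphism outside $\ZZ$ cannot act as a scalar on $H^1_{DR}$.
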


\begin{proof}
In order to show that $P_{ij} \not\in I$, it suffices to show that there exist matrices $\underline y, \underline z \in \rM_{g \times g}(\hat K)$ such that every entry of $\underline y^t \underline z - \underline z^t \underline y$ is zero, while $P_{ij}(\underline y, \underline z) \neq 0$.
Thus, in order to prove the lemma, it suffices to find $\underline y, \underline z \in \rM_{g \times g}(\hat K)$ such that $\underline y^t \underline z - \underline z^t \underline y = 0$ while $\underline P(\underline y, \underline z) \neq 0$ (as matrices).

Since $f \not\in \ZZ$, by \cref{scalar-action}, the matrix $\fullmatrix{A}{B}{0}{D}$ is not scalar.
Consequently, one of the following cases occurs:
\begin{enumerate}
\item $B \neq 0$.  In this case, we can take $\underline y = I_g$, $\underline z = 0$ to obtain $\underline P(\underline y, \underline z) = -B \neq 0$ while $\underline y^t \underline z - \underline z^t \underline y = 0-0 = 0$.
\item $B = 0$ and $A \neq D$.  In this case, we can take $\underline y = \underline z = I_g$, to obtain $\underline P(\underline y, \underline z) = A-D \neq 0$ while $\underline y^t \underline z - \underline z^t \underline y = I_g - I_g = 0$.
\item $B=0$, $A=D$ and $A$ is not a scalar matrix.  In this case, there exists a symmetric matrix $\underline z \in \rM_{g \times g}(\hat K)$ which does not commute with~$A$ (indeed, if $A$ is not diagonal, choose $z = E_{ii}$ where $A$ has a non-zero, non-diagonal entry in the $i$-th column; if $A_{ii} \neq A_{jj}$, choose $z = E_{ij} + E_{ji}$, where $E_{ij}$ denotes the matrix with $1$ as the $ij$-entry and zeroes everywhere else).  Choosing this~$\underline z$ along with $\underline y = I_g$, we obtain $\underline P(\underline y, \underline z) = A\underline z - \underline z D \neq 0$ while $\underline y^t \underline z - \underline z^t \underline y = \underline z - \underline z^t = 0$.
\qedhere
\end{enumerate}
\end{proof}

The proof above makes use of the following lemma.

\begin{lemma} \label{scalar-action}
Let $G$ be an abelian variety over a field $\hat L$ which can be embedded into~$\CC$.
Let $f \in \End(G) \setminus \ZZ$.
Then $f^* \in \End(H^1_{DR}(G/\hat L))$ is not a scalar.
\end{lemma}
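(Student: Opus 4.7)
The plan is to reduce to the transcendental setting via the comparison isomorphism and exploit the integrality of the Betti lattice. Fix an embedding $\hat L \hookrightarrow \CC$. Base change is compatible with the action on de Rham cohomology, so it suffices to prove the statement after replacing $\hat L$ by $\CC$. The Betti--de Rham comparison then provides a natural isomorphism
\[ H^1_{DR}(G_\CC/\CC) \cong H^1(G^\an, \ZZ) \otimes_\ZZ \CC \]
which intertwines $f^*_{DR}$ with the $\CC$-linear extension of the pullback action of $f$ on singular cohomology.

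Suppose for contradiction that $f^*$ acts as a scalar $c \in \CC$ on $H^1_{DR}(G_\CC/\CC)$. Then the same scalar acts on $H^1(G^\an, \ZZ) \otimes_\ZZ \CC$; since this action stabilises the $\ZZ$-sublattice $H^1(G^\an, \ZZ) \subset H^1(G^\an, \ZZ) \otimes_\ZZ \CC$, we must have $c \in \ZZ$. Write $c = n$. Dually, the induced action $f_*$ on $H_1(G^\an, \ZZ)$ is then also multiplication by~$n$.

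Finally, I would invoke the standard injectivity of the rational representation: the natural map $\End(G_\CC) \to \End_\ZZ(H_1(G^\an,\ZZ))$ is injective, because under the analytic uniformisation $G_\CC^\an \cong \CC^g / H_1(G^\an, \ZZ)$ every endomorphism of~$G_\CC$ lifts uniquely to a $\CC$-linear endomorphism of~$\CC^g$, which in turn is determined by its restriction to the lattice. Applying this to $f - n \cdot \id_G$, which acts as zero on $H_1$, yields $f = n \cdot \id_G \in \ZZ$, contradicting the hypothesis $f \not\in \ZZ$.

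There is no serious obstacle here; the argument is essentially a bookkeeping exercise combining the Betti--de Rham comparison with the well-known faithfulness of the rational (in fact, integral) representation of the endomorphism ring of a complex abelian variety on its first homology.
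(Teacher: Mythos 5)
Your proof is correct and follows essentially the same route as the paper's: both reduce to the complex case via a fixed embedding and the Betti--de Rham comparison, then invoke the faithfulness of the (integral) representation of $\End(G)$ on singular (co)homology. You phrase the argument contrapositively and make explicit the intermediate observation that a scalar preserving the Betti $\ZZ$-lattice must lie in $\ZZ$, a step the paper leaves implicit; this is a stylistic rather than substantive difference.
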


\begin{proof}
Let $f^*_B$ denote the endomorphism of the singular cohomology $H^1(G^{\CCan}, \ZZ)$ induced by~$f$.
Since $f \not\in \ZZ$ and the action of $\End(G)$ on the $\ZZ$-module $H^1(G^{\CCan}, \ZZ)$ is faithful, $f^*_B$ is not a scalar.
Using the comparison isomorphism
\[ H^1_{DR}(G/\CC) \cong H^1(G^{\CCan}, \ZZ) \otimes_\ZZ \CC, \]
this implies that $f^*$ acting on $H^1_{DR}(G/\hat L)$ is not a scalar.
\end{proof}

\subsection{Period relations at archimedean places} \label{subsec:archimedean-relations}

\begin{lemma} \label{exist-relations-arch}
In the setting of \cref{exist-relations}, let $\hat K$ be a number field containing~$K$.
Let $s$ be a point in $C^*(\hat K)$ such that $G_{s,\Qbar}$ has unlikely endomorphisms.

Let $\hat v$ be an archimedean place of~$\hat K$ such that $s^\vhatan \in U_v$, where $v = \hat v|_K$.

Then there exists a polynomial $P_{\hat v} \in \ov\QQ[Y_{ij}, Z_{ij} : 1 \leq i,j \leq g]$ satisfying:
\begin{enumerate}[(i)]
\item $P_{\hat v}$ is homogeneous of degree at most~$2$;
\item $P_{\hat v}^\vhatan \bigl( \underline{F}^\van(x^\van(s^\vhatan)), \, \underline{G}^\van(x^\van(s^\vhatan)) \bigr) = 0$;
\item $P_{\hat v} \notin I$.
\end{enumerate}
\end{lemma}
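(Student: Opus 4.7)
The plan is to adapt the strategy used in the non-archimedean case (choose a non-trivial endomorphism $f$, lift it to the uniformisation, derive matrix identities, and eliminate the unknown lift to obtain a polynomial relation in $\underline Y,\underline Z$), replacing the determinant-and-adjugate elimination step with a quadratic argument that exploits the Hodge-theoretic structure available over $\CC$. This follows the strategy of \cite[X, Construction~2.4.1]{And89} as advertised in the introduction. The cases in which $\End(G_{s,\Qbar})\otimes\QQ$ is $\QQ\times\QQ$ or a real quadratic field, where André's construction does not immediately apply, are already excluded from the unlikely hypothesis for $g=2$, and for $g\ge 3$ they fall within the scope of the construction, since the generic fibre is assumed to have $\End=\ZZ$.

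First I would produce an endomorphism $f\in\End(G_{s,\Qbar})\setminus\ZZ$ using the unlikely endomorphism hypothesis, and observe that $f$ lifts to an analytic endomorphism $\tilde f$ of the complex torus $(\GG_{m,\CC}^\an)^g$ with $\phi_{v,s}\circ\tilde f=f\circ\phi_{v,s}$. The key point permitting the lift is that $f$ commutes with the monodromy of the family around $s_0$ (both being defined on the local family over $\cC_v$), hence preserves the invariant sublattice $\Lambda\cong\ZZ^g$ of $\ker(\exp\colon\Lie(\cG/\cC)\to\cG)$ identified in the proof of \cref{exists-complex-uniformisation}. Once the lift exists, the same computation as in the non-archimedean proof (equations~\eqref{eqn:MF}--\eqref{eqn:MG}) yields $\tilde M_{\hat v}\,\underline F_{\hat v}^T=\underline F_{\hat v}^T A$ and $\tilde M_{\hat v}\,\underline G_{\hat v}^T=\underline F_{\hat v}^T B+\underline G_{\hat v}^T D$, where $M_f=\fullsmallmatrix{A}{B}{0}{D}$ represents $f^*$ on $H^1_{DR}(G_s/\hat K_{\hat v})$ in the basis $\omega_i,\eta_i$ and $\tilde M_{\hat v}$ represents $\tilde f^*$ on $H^1_{DR}((\GG_m)^g/\hat K_{\hat v})$ in the basis $\tilde\omega_j$.

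The archimedean-specific step, which replaces the degree-$(g+1)$ adjugate construction of the non-archimedean proof, proceeds through the Hodge decomposition that is available at $\hat v$: the cohomology class of the graph of $f$ in $H^2(G_s\times G_s,\QQ)$ is of type $(1,1)$, a Hodge-theoretic constraint that is quadratic in the periods. Equivalently, at an archimedean place $\phi_{v,s}$ factorises as $\CC^g\cong\Lie(G_s)\twoheadrightarrow(\CC^\times)^g\twoheadrightarrow G_s^\an$ through the holomorphic universal cover, and the $\CC$-linear action $\hat f$ of $f$ on $\Lie(G_s)$ preserves the rank-$g$ invariant sublattice of the period lattice. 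The integrality/complex-linearity of $\hat f$ gives, via the period comparison, an expression for $\tilde M_{\hat v}$ as an explicit $\CC$-linear function of $A$ and the complex periods $\underline F_{\hat v}$; substituting this expression into the second matrix identity, and using that at archimedean places $\underline F_{\hat v}$ is invertible (the restriction of $\phi_{v,s}^*$ to $\Omega^\inv$ being an isomorphism on Lie algebras), produces a bilinear polynomial identity between the entries of $\underline F_{\hat v}$ and $\underline G_{\hat v}$, which defines the required polynomial $P_{\hat v}$ of degree at most $2$. Non-triviality $P_{\hat v}\notin I$ is then verified by the same style of case analysis used in the non-archimedean argument: \cref{scalar-action} together with $f\notin\ZZ$ implies $M_f$ is non-scalar, and one splits according to which block of $M_f$ witnesses this to exhibit matrices $(\underline y,\underline z)$ satisfying the Lagrangian condition $\underline y^T\underline z-\underline z^T\underline y=0$ at which $P_{\hat v}$ does not vanish.

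The hardest part will be making the degree-$\leq 2$ construction uniform and explicit across all the isomorphism types of $\End(G_{s,\Qbar})\otimes\QQ$ covered by the unlikely endomorphism hypothesis (imaginary quadratic, higher-degree CM, quaternionic, and — for $g\ge 3$ — totally real fields of degree $\ge 2$, etc.). As the introduction notes, André's construction already handles most of these uniformly; the remaining types will require choosing $f$ within a specific sub-algebra of $\End^0(G_{s,\Qbar})$ so that the induced Hodge-theoretic relation is manifestly quadratic in the complex periods, and carrying out a direct calculation to confirm that the resulting polynomial is not a consequence of the Riemann bilinear relation defining~$I$.
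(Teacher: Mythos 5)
Your proposal has two genuine gaps.

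\textbf{The lift $\tilde f$ does not exist at archimedean places.} You propose to mirror the non-archimedean proof by lifting an endomorphism $f \in \End(G_{s,\Qbar}) \setminus \ZZ$ to an endomorphism $\tilde f$ of $(\GG_{m,\CC}^\an)^g$ compatible with $\phi_{v,s}$, arguing that $f$ ``commutes with the monodromy of the family around $s_0$'' and hence preserves the locally invariant sublattice $\Lambda$. This is incorrect: $f$ is an endomorphism of the single fibre $G_s$, not of the family over $\cC_v$, so there is no reason whatsoever for $f_*$ to preserve the maximal locally invariant sublattice $\Lambda \subset H_1(G_s^\vhatan,\ZZ)$ determined by the degeneration at the nearby point $s_0$. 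Indeed, the entire point of the ``unlikely endomorphism'' hypothesis is that $f$ does \emph{not} extend over a neighbourhood of $s_0$, and if $f_*$ preserved $\Lambda$ the resulting relations would be trivial. The non-archimedean analogue works for an entirely different reason, namely Gerritzen's theorem (\cite[Satz~5]{Ger70}) that every endomorphism of an abelian variety with multiplicative reduction lifts uniquely through the rigid-analytic Tate uniformisation; this is a $p$-adic phenomenon with no archimedean counterpart, since over $\CC$ the multiplicative uniformisation $(\CC^\times)^g \to G_s^\an$ is a proper intermediate cover between the universal cover $\CC^g$ and $G_s^\an$, and does not have a universal property with respect to arbitrary endomorphisms. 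Consequently the matrix identities~\eqref{eqn:MF}--\eqref{eqn:MG} have no archimedean analogue, and the elimination step you sketch has no starting point. The paper instead follows André's archimedean construction, which works directly with the Hodge structure, the decomposition of $H_1 \otimes \hat F$ into eigenspaces $\hat W_{\hat\sigma}$ for a commutative semisimple $\hat E \subset \hat D$, and the half-dimensional subspace $\hat W^1$ spanned by the $\gamma_{i,s}$.

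\textbf{The $\QQ \times \QQ$ and real quadratic cases for $g \ge 3$ are not covered.} You assert that, since the generic endomorphism ring is $\ZZ$, all endomorphism types allowed by the unlikely hypothesis ``fall within the scope of'' \cite[X, Construction~2.4.1]{And89}. This is false. In André's Case~3 (where $[\hat E:\QQ]=2$ and both eigenspaces $\hat W_{\hat\sigma}$ meet $\hat W^1$ trivially), the argument crucially uses that the Rosati involution acts as complex conjugation on $\hat E$, so that the $\hat W_{\hat\sigma}$ are isotropic for the polarisation form; this holds when $\hat E$ is CM but fails when $\hat E$ is $\QQ\times\QQ$ or real quadratic, since then the Rosati involution restricts to the identity and the $\hat W_{\hat\sigma}$ are symplectic subspaces rather than isotropic ones. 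This is exactly why the paper devotes a separate subsection to this case, constructing $P_{\hat v}$ from the restriction of the polarisation to $\hat W_{DR}^{\hat\sigma}$ and verifying $P_{\hat v} \notin I$ via an independent combinatorial argument on rows (\cref{case-3-real-exists-relation}, \cref{case-3-real-non-trivial}). Your proposal contains none of this, so the most novel portion of the lemma remains unproven.
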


\begin{proof}[Proof of \cref{exist-relations-arch}, except for the cases of $\QQ \times \QQ$ or a real quadratic field]
We follow the strategy of \cite[Ch.~X]{And89}.

Let $\gamma_1, \dotsc, \gamma_g$ denote the $\ZZ$-basis for the locally invariant sections of the local system $R_1\pi_*^\vhatan\ZZ|_{U_v^*}$ chosen in \cref{subsec:arch-interp-integrals}.
According to \cite[X, Lemma~2.3]{And89}, the locally invariant sections form a maximal isotropic subsystem of $R_1\pi_*^\vhatan\ZZ|_{U_v^*}$ with respect to the symplectic form induced by the polarisation of $G^* \to C^*$.
Therefore, we can choose $\delta_1, \dotsc, \delta_g$ such that $\gamma_{1,s}, \dotsc, \gamma_{g,s}, \delta_1, \dotsc, \delta_g$ form a symplectic basis for $H_1(G_s^\vhatan, \ZZ)$.

Let $\hat D = \End(G_{s,\ov\QQ}) \otimes \QQ$.
Let $\hat E$ be a maximal commutative semisimple subalgebra of $\hat D$.
For some types of algebra~$\hat D$, we will later impose a more specific choice of~$\hat E$.

The algebra $\hat E$ is a product of fields.
Let $\hat F$ be a number field which contains $\hat K$, the Galois closures of the direct factors of~$\hat E$, the field of definition of all endomorphisms of $G_{s,\ov\QQ}$, and the square root $\sqrt{e(s)}$ where $e \in K(C)^\times$ appears in \cref{exist-relations}(iii).
($\hat F$ here plays the role which was played by $\hat K$ in \cite{And89} and \cite{ExCM}; this notation seems more convenient as we avoid needing to replace $\hat K$ by a finite extension of bounded degree.)
We view $\hat F$ as a subfield of~$\CC$, via an embedding whose restriction to $\hat K$ is associated with the place~$\hat v$.

The action of $\hat E$ on $H_1(G_s^\vhatan, \QQ)$ induces a splitting of $\hat F$-vector spaces
\[ H_1(G_s^\vhatan, \QQ) \otimes_\QQ \hat F = \bigoplus_{\hat\sigma} \hat W_{\hat\sigma} \]
where the sum is over $\QQ$-algebra homomorphisms $\hat\sigma \colon \hat E \to \hat F$.
Similarly, the action of $\hat E$ on $H^1_{DR}(G_{s,\ov\QQ}/\ov\QQ)$ induces a splitting of $\hat F$-vector spaces
\[ H^1_{DR}(G_{s,\hat F}/\hat F) = \bigoplus_{\hat\sigma \colon \hat E \to \hat F} \hat W_{DR}^{\hat\sigma}. \]
Since each direct factor $E_i$ of $\hat E$ acts non-trivially on $H_1(G_s^\vhatan, \QQ)$, and since $\Gal(\ov\QQ/\QQ)$ acts transitively on the homomorphisms $E_i \to \hat F$, all subspaces $\hat W_{\hat\sigma}$ are non-zero.
By the duality between $\hat W_{\hat\sigma} \otimes_{\hat F} \CC$ and $\hat W_{DR}^{\hat\sigma} \otimes_{\hat F} \CC$, we deduce that all subspaces $\hat W_{DR}^{\hat\sigma}$ are non-zero.

Let $\hat W^1$ denote the $\hat F$-linear subspace of $H_1(G_s^\vhatan, \QQ) \otimes_\QQ \hat F$ spanned by $\gamma_{1,s}, \dotsc, \gamma_{g,s}$ (that is, the intersection of $H_1(G_s^\vhatan, \QQ) \otimes_\QQ \hat F$ with the fibre at~$s$ of the maximal constant local subsystem of $R_1\pi_*^\vhatan\QQ|_{U_v^*}$).
Note that
\[ \dim_{\hat F}(\hat W^1) = g = \tfrac12 \dim_{\hat F} (H_1(G_s^\vhatan, \QQ) \otimes_\QQ \hat F). \]

By hypothesis, $\hat D \neq \QQ$.
This implies that $[\hat E:\QQ] \neq 1$.
We remark that, at the beginning of \cite[X, Construction~2.4.1]{And89}, or in \cite[Thm.~8.1]{ExCM}, establishing the analogous fact $[\hat E:E]\neq 1$ requires additional restrictions on $\hat D$ (where $E$ is a maximal commutative semisimple subalgebra of the generic endomorphism algebra of $\fG \to \fC$).
In the present setting, we avoid any restrictions on~$\hat D$ by using the hypothesis that $E=\QQ$.

Therefore, as in \cite[X, Construction~2.4.1]{And89}, we may split into three cases.
\begin{itemize}
\item \textit{Case~1}. $[\hat E:\QQ] \geq 3$.
\item \textit{Case~2}. $[\hat E:\QQ] = 2$ and there exists a homomorphism $\hat\sigma \colon \hat E \to \CC$ such that $\hat W_{\hat\sigma} \cap \hat W^1 \neq \{0\}$.
\item \textit{Case~3}. $[\hat E:\QQ] = 2$ and, for both homomorphisms $\hat\sigma \colon \hat E \to \CC$, we have $\hat W_{\hat\sigma} \cap \hat W^1 = \{0\}$.
\end{itemize}

In Cases~1 and~2, we can follow \cite[X, Construction~2.4.1]{And89} to obtain a non-zero polynomial in $\hat F[\underline Y, \underline Z]$, homogeneous of degree~$1$, which vanishes at $(\underline F_{\hat v}, \underline G_{\hat v})$.
Since all generators of~$I$ are homogeneous of degree~$2$, this polynomial is not in~$I$.

In Case~3, $\hat D$ may be $\QQ \times \QQ$, a real quadratic field, an imaginary quadratic field, or a quaternion algebra over~$\QQ$ (split or non-split).
The cases where $\hat D$ is $\QQ \times \QQ$ or a real quadratic field are dealt with in the the next subsection.

If $\hat D$ is an imaginary quadratic field, we have $\hat E = \hat D$.
If $\hat D$ is a quaternion algebra, then, by \cite[Lemma~8.7]{ExCM}, we may choose $\hat E \subset \hat D$ to be an imaginary quadratic field preserved by the Rosati involution.
Consequently, in either of these cases, the argument from \cite[X, Construction~2.4.1, Case~3]{And89} applies.
(It is explained in the penultimate paragraph of the proof of \cite[Theorem~8.1]{ExCM} why the argument in \cite[X, Construction~2.4.1, Case~3]{And89} requires $\hat E$ to be a CM field preserved by the Rosati involution.)
This gives a homogeneous polynomial $P_{\hat v}$ of degree~$2$ such that $P_{\hat v}(\underline F_{\hat v}, \underline G_{\hat v}) \neq 0$.  It is shown in \cite[X, Lemma~3.3]{And89} that this polynomial~$P_{\hat v}$ is not in~$I$.
\end{proof}

\subsection{Period relations at archimedean places: Case 3, real quadratic endomorphism algebra}

We will now prove the remaining case of \cref{exist-relations-arch}, namely, Case~3, with $\hat D = \hat E$ being either $\QQ \times \QQ$ or a real quadratic field, using the same notation as in~\cref{subsec:archimedean-relations}.
For these algebras $\hat D$, the hypothesis that $G_s$ has unlikely endomorphisms implies that $g>2$.

Let $\hat\sigma, \hat\tau$ denote the two homomorphisms $\hat E \to \hat F$.
Let $p \colon \hat W^1 \to \hat W_{\hat\sigma}$ and $q \colon \hat W^1 \to \hat W_{\hat\tau}$ denote the restriction to $\hat W^1$ of the projections coming from the splitting
\[ H_1(G_s^\vhatan, \QQ) \otimes_\QQ \hat F = \hat W_{\hat\sigma} \oplus \hat W_{\hat\tau}. \]
The conditions that $\dim_{\hat F}(\hat W^1) = g$ and $\hat W_{\hat\sigma} \cap \hat W^1 = \hat W_{\hat\tau} \cap \hat W^1 = \{0\}$ imply that
\[ \dim_{\hat F}(\hat W_{\hat\sigma}) = \dim_{\hat F}(\hat W_{\hat\tau}) = g, \]
and that $p \colon \hat W^1 \to \hat W_{\hat\sigma}$, $q \colon \hat W^1 \to \hat W_{\hat\tau}$ are isomorphisms.

Let
\[ 2\pi i \langle \cdot, \cdot \rangle \colon H_1(G_s^\vhatan, \hat F) \times H_1(G_s^\vhatan, \hat F) \to \hat F(2\pi i) \]
denote the symplectic form induced by the polarisation of~$G_s$.
The Rosati involution of $\hat D$ is the identity.  It follows that $\hat W_{\hat\sigma}$ is orthogonal to $\hat W_{\hat\tau}$ with respect to~$\langle \cdot, \cdot \rangle$.
Consequently, the non-degeneracy of~$\langle \cdot, \cdot \rangle$ implies that $\langle \cdot, \cdot \rangle$ restricts to non-degenerate symplectic forms on $\hat W_{\hat\sigma}$ and $\hat W_{\hat\tau}$ respectively.
(This is where the present case differs from Case~3 with $\hat E$ an imaginary quadratic field, as considered in \cite[X, Construction~2.4.1]{And89}, since in that case the Rosati involution restricts to complex conjugation on $\hat E$ and the subspaces $\hat W_{\hat\sigma}, \hat W_{\hat\tau}$ are $\psi$-isotropic.)
This implies that $\dim_{\hat F}(\hat W_{\hat\sigma}) = g$ is even.

The action of $\hat E$ on $H^1_{DR}(G_{s,\ov\QQ}/\ov\QQ)$ induces a splitting of $\hat F$-vector spaces
\[ H^1_{DR}(G_{s,\hat F}/\hat F) = \hat W_{DR}^{\hat\sigma} \oplus \hat W_{DR}^{\hat\tau}. \]
The duality between de Rham cohomology and singular homology implies that
\[ \dim_{\hat F}(\hat W_{DR}^{\hat\sigma}) = \dim_{\hat F}(\hat W_{DR}^{\hat\tau}) = g. \]

Let $\langle \cdot, \cdot \rangle_{DR}$ denote the symplectic form on $H^1_{DR}(G_{s,\hat F}/\hat F)$ induced by the polarisation of $G_s$ via the isomorphism $H_1^{DR}(G^*/C^*)^\vee \to H_1^{DR}((G^*)^\vee/C^*)$ given by \cite[Thm.~5.1.6]{BBM82}).
By the same arguments as for $\langle \cdot, \cdot \rangle$, $\hat W_{DR}^{\hat\sigma}$ is orthogonal to $\hat W_{DR}^{\hat\tau}$ with respect to $\langle \cdot, \cdot \rangle_{DR}$, and the restrictions of $\langle \cdot, \cdot \rangle_{DR}$ to $\hat W_{DR}^{\hat\sigma}$ and $\hat W_{DR}^{\hat\tau}$ respectively are non-degenerate.

Choose a symplectic $\hat F$-basis $\hat\omega_1, \dotsc, \hat\omega_g, \hat\eta_1, \dotsc, \hat\eta_g$ for $H^1_{DR}(G_{s,\hat F}/\hat F)$ such that:
\begin{enumerate}
\item $\hat\omega_1, \dotsc, \hat\omega_{g/2}, \hat\eta_1, \dotsc, \hat\eta_{g/2}$ form a symplectic $\hat F$-basis for $\hat W_{DR}^{\hat\sigma}$; and
\item $\hat\omega_{g/2+1}, \dotsc, \hat\omega_g, \hat\eta_{g/2+1}, \dotsc, \hat\eta_g$ form a symplectic $\hat F$-basis for $\hat W_{DR}^{\hat\tau}$.
\end{enumerate}
As the notation suggests, we can choose this basis so that $\hat\omega_1, \dotsc, \hat\omega_g$ are invariant differential forms on $G_{s,\hat{F}}$, but we won't use this extra condition on $\hat\omega_1, \dotsc, \hat\omega_g$.

Let $\fullmatrix{A}{B}{C}{D} \in \rM_{2g}(\hat F)$ be the matrix which expresses
$\hat\omega_1, \dotsc, \hat\omega_g, \hat\eta_1, \dotsc, \hat\eta_g$
as linear combinations of $\omega_{1,s}, \dotsc, \omega_{g,s}, \eta_{1,s}, \dotsc, \eta_{g,s}$. That is,
\begin{align*}
   \hat\omega_i & = \sum_{k=1}^g A_{ki} \omega_{k,s} + \sum_{k=1}^g C_{ki} \eta_{k,s},
\\ \hat\eta_i   & = \sum_{k=1}^g B_{ki} \omega_{k,s} + \sum_{k=1}^g D_{ki} \eta_{k,s}.
\end{align*}
Since $\omega_1, \dotsc, \omega_g, \eta_1, \dotsc, \eta_g$ satisfy condition~(iii) of \cref{exist-relations}, while $\hat\omega_1, \dotsc, \hat\omega_g$, $\hat\eta_1, \dotsc, \hat\eta_g$ form a symplectic basis, we have
\[ \sqrt{e(s)} \fullmatrix{A}{B}{C}{D} \in \gSp_{2g}(\hat F). \]

Let $\underline{\hat F}_{\hat v}$, $\underline{\hat G}_{\hat v}$ denote the $g \times g$ matrices of periods with entries
\begin{equation*}
    (\underline{\hat F}_{\hat v})_{ij} = \frac{1}{2\pi i} \int_{\gamma_{j,s}} \hat\omega_i^\vhatan, \qquad
(\underline{\hat G}_{\hat v})_{ij} = \frac{1}{2\pi i} \int_{\gamma_{j,s}} \hat\eta_i^\vhatan. 
\end{equation*}
Then we have
\begin{align}\label{eqn:hatF}
   \underline{\hat F}_{\hat v} & = A^t \underline F_{\hat v} + C^t \underline G_{\hat v},
\\ \label{eqn:hatG}
   \underline{\hat G}_{\hat v} & = B^t \underline F_{\hat v} + D^t \underline G_{\hat v}.
\end{align}

Let us also write $\underline{\hat H}_{\hat\sigma}$ for the $g \times g$ matrix made up of the first $g/2$ rows of $\underline{\hat F}_{\hat v}$ and the first $g/2$ rows of $\underline{\hat G}_{\hat v}$.
Thus, the entries of $\underline{\hat H}_{\hat\sigma}$ are the periods of a basis of $\hat W_{DR}^{\hat\sigma}$ with respect to a basis of $\hat W^1$.

We can describe the entries of $\underline{\hat H}_{\hat\sigma}$ in a slightly different way as follows.
For the top half of $\underline{\hat H}_{\hat\sigma}$, for each $i = 1, \dotsc, g/2$ and $j = 1, \dotsc, g$, we have $\hat\omega_i \in \hat W_{DR}^{\hat\sigma}$ and $q(\gamma_{j,s}) \in \hat W_{\hat\tau}$, so
\[ \frac{1}{2\pi i} \int_{q(\gamma_{j,s})} \hat\omega_i^\vhatan = 0. \]
Consequently,
\begin{equation*}%\label{eqn:def-hatH}
    (\underline{\hat H}_{\hat\sigma})_{ij}
   = (\underline{\hat F}_{\hat v})_{ij}
   = \frac{1}{2\pi i} \int_{p(\gamma_{j,s})} \hat\omega_i^\vhatan + \frac{1}{2\pi i} \int_{q(\gamma_{j,s})} \hat\omega_i^\vhatan
   = \frac{1}{2\pi i} \int_{p(\gamma_{j,s})} \hat\omega_i^\vhatan. 
\end{equation*} 
A similar argument shows that, for the bottom half of $\underline{\hat H}_{\hat\sigma}$, for each $i = 1, \dotsc, g/2$ and $j = 1, \dotsc, g$, we have
\begin{equation*}%\label{eqn:def-hatH'}
    (\underline{\hat H}_{\hat\sigma})_{g/2+i,j}
   = (\underline{\hat G}_{\hat v})_{ij}
   = \frac{1}{2\pi i} \int_{p(\gamma_{j,s})} \hat\eta_i^\van. 
\end{equation*} 
Thus we can interpret $\underline{\hat H}_{\hat\sigma}$ as a period matrix for the basis $\hat\omega_1, \dotsc, \hat\omega_{g/2}, \hat\eta_1, \dotsc, \hat\eta_{g/2}$ of $\hat W_{DR}^{\hat\sigma}$ with respect to the basis $p(\gamma_{1,s}), \dotsc, p(\gamma_{g,s})$ of $\hat W_{\hat\sigma}$.

\begin{lemma} \label{case-3-real-exists-relation}
There exists a non-zero homogeneous polynomial $Q \in \hat F[X_{ij} : 1 \leq i,j \leq g]$ of degree~$2$ such that $Q(\underline{\hat H}_{\hat\sigma}) = 0$.
\end{lemma}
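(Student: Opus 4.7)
The plan is to derive $Q$ from a Riemann bilinear relation for $\underline{\hat H}_{\hat\sigma}$, exploiting the inequality $g > 2$ (which, combined with the earlier observation that $\dim_{\hat F}\hat W_{\hat\sigma}=g$ is even, gives $g/2 \geq 2$).

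Write $P = \underline{\hat H}_{\hat\sigma}$, $V = \hat W_{DR}^{\hat\sigma}$ and $W = \hat W_{\hat\sigma}$. The paragraph preceding the lemma interprets $P$ as the matrix, in the symplectic basis $\hat\omega_1, \dots, \hat\omega_{g/2}, \hat\eta_1, \dots, \hat\eta_{g/2}$ of $V$ and the basis $p(\gamma_{1,s}), \dots, p(\gamma_{g,s})$ of $W$, of the period pairing $V \otimes \CC \times W \otimes \CC \to \CC$. Let $J \in \rM_g(\hat F)$ denote the standard symplectic matrix $\fullsmallmatrix{0}{I_{g/2}}{-I_{g/2}}{0}$, which is the Gram matrix of the restriction of $\langle \cdot, \cdot \rangle_{DR}$ to $V$ in our chosen symplectic basis; let $K \in \rM_g(\hat F)$ denote the Gram matrix of the restriction of $\langle \cdot, \cdot \rangle$ to $W$ in the basis $p(\gamma_{j,s})$. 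We have already seen that this restricted form is non-degenerate, so $K$ is invertible.

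The central step will be to establish the Riemann bilinear relation
\[ P K^{-1} P^t = c \, J \]
for some $c \in \CC^\times$. This is the $\hat\sigma$-isotypic restriction of the usual Riemann relation for the polarized abelian variety $G_s$, which expresses the compatibility between the period pairing and the two polarization-induced symplectic forms. Concretely, the period pairing identifies $V \otimes \CC$ with $(W \otimes \CC)^\vee$; pulling back along this isomorphism the form on $(W \otimes \CC)^\vee$ dual to $\langle \cdot, \cdot \rangle$ (whose Gram matrix in the dual basis is $K^{-1}$) gives a form on $V \otimes \CC$ which must agree with $\langle \cdot, \cdot \rangle_{DR}$ up to a scalar, since both arise from the polarization of $G_s$. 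The $\hat E$-equivariance of the polarization (the Rosati involution being trivial on $\hat E$) and of the period pairing ensures that the decomposition into $\hat\sigma$- and $\hat\tau$-isotypic pieces is respected by all maps involved, so this compatibility genuinely descends to the restricted subspaces.

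Granted the Riemann relation, the rest is immediate. Since $g/2 \geq 2$, we have $J_{12} = 0$, so the $(1,2)$-entry of $PK^{-1}P^t$ vanishes. Setting
\[ Q(X) := (X K^{-1} X^t)_{12} = \sum_{i, j=1}^g (K^{-1})_{ij} X_{1i} X_{2j} \in \hat F[X_{ij}], \]
we obtain a polynomial, homogeneous of degree $2$ with coefficients in $\hat F$, such that $Q(P) = 0$. It is non-zero as a polynomial because $K^{-1}$ is invertible and hence has a non-zero entry. The most delicate part of the plan will be the careful descent of the Riemann relation to the restricted subspaces with a single well-defined scalar $c$; once the conventions for the period pairing, the duality, and the symplectic forms are pinned down, the identity $PK^{-1}P^t = cJ$ is a routine but not entirely trivial matrix computation.
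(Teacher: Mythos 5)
Your proposal is correct and follows the same overall strategy as the paper (restrict a Riemann bilinear relation to the $\hat\sigma$-isotypic pieces, then use $g/2\geq 2$), but the final step is packaged differently, and your version is arguably cleaner. The paper establishes
\[
\underline{\hat H}_{\hat\sigma}^t J \underline{\hat H}_{\hat\sigma} = \frac{1}{2\pi i} M,
\]
where both $J$ and $M$ have $\hat F$-entries but the right-hand side carries the transcendental factor $\frac{1}{2\pi i}$. To get a relation with $\hat F$-coefficients, the paper must eliminate that factor, which it does by choosing $\lambda,\mu\in\hat F$, not both zero, with $\lambda M_{1,2}+\mu M_{1,g/2+2}=0$ and taking a linear combination of the corresponding two entries of $\underline X^t J\underline X$; the hypothesis $g>2$ (equivalently $g/2\geq 2$, since $g$ is even) is used to ensure those two particular entries are available and independent. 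Your version inverts the relation to $\underline{\hat H}_{\hat\sigma}\,M^{-1}\underline{\hat H}_{\hat\sigma}^t = c J$ (with $c=-\frac{1}{2\pi i}$, a computation you should make explicit), so the "known" matrix $J$ sits on the outside. Because $g/2\geq 2$, the $(1,2)$-entry of $J$ is zero, and the transcendental scalar $c$ multiplies a zero entry, so it disappears automatically. This lets you read off a single entry and avoid the $\lambda,\mu$ trick entirely. As a side effect, your $Q$ involves only variables $X_{1i},X_{2j}$ with row index $\leq g/2$, so the substituted polynomial $\hat P$ of the following subsection involves only $\underline Y$-variables; this makes the non-triviality check of Lemma~\ref{case-3-real-non-trivial} even more immediate (a nonzero polynomial in $\underline Y$ alone is never in $I$, since every generator of $I$ vanishes at $\underline Z=0$). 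One point to flesh out in a full write-up: you should verify that $\underline{\hat H}_{\hat\sigma}$ is invertible before passing from $P^tJP=\frac{1}{2\pi i}M$ to $PM^{-1}P^t=cJ$; this follows from the $\hat E$-equivariance of the period pairing, which forces $\hat W_{DR}^{\hat\sigma}$ and $\hat W_{\hat\tau}$ to be orthogonal under the period pairing, so the pairing between $\hat W_{DR}^{\hat\sigma}$ and $\hat W_{\hat\sigma}$ is non-degenerate.
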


\begin{proof}
Since the duality between $H^1_{DR}(G_{s,\hat F}/\hat F) \otimes_{\hat F} \CC$ and $H_1(G_s^\vhatan, \QQ) \otimes_\QQ \hat \CC$ is compatible with the symplectic forms $\langle \cdot, \cdot \rangle_{DR}$ and $\langle \cdot, \cdot \rangle$ (up to a factor $1/2\pi i$), and since $\hat\omega_1, \dotsc, \hat\omega_{g/2}, \hat\eta_1, \dotsc, \hat\eta_{g/2}$ is a symplectic basis for $\hat W_{DR}^{\hat\sigma}$, we obtain that
\begin{equation} \label{eqn:case-3-relation-hatH}
\underline{\hat H}_{\hat\sigma}^t J \underline{\hat H}_{\hat\sigma} = \frac{1}{2\pi i} M,
\end{equation}
where $J = \fullmatrix{0}{I_{g/2}}{-I_{g/2}}{0}$ is the matrix representing the standard symplectic form on $\CC^g$ and $M$ is the matrix representing the non-degenerate symplectic form $\langle \cdot, \cdot \rangle|_{\hat W_{\hat\sigma}}$ with respect to the (not necessarily symplectic) basis $p(\gamma_{1,s}), \dotsc, p(\gamma_{g,s})$.
Since this is an $\hat F$-basis and $\langle \cdot, \cdot \rangle|_{\hat W_{\hat\sigma}}$ is an $\hat F$-symplectic form, we have $M \in \rM_g(\hat F)$.

Let $R,S \in \QQ[X_{i,j} : 1 \leq i,j \leq g]$ be the polynomials which give respectively the $(1,2)$- and $(1,g/2+2)$-entries of the matrix $\underline X^t J \underline X$, that is,
\begin{align*}
   R(\underline X) & = \sum_{k=1}^{g/2} X_{k,1}X_{g/2+k,2} - \sum_{k=1}^{g/2} X_{g/2+k,1}X_{k,2},
\\ S(\underline X) & = \sum_{k=1}^{g/2} X_{k,1}X_{g/2+k,g/2+2} - \sum_{k=1}^{g/2} X_{g/2+k,1}X_{k,g/2+2}.
\end{align*}
Note that we use the hypothesis $g>2$ here to ensure that the $(1,2)$-entry of the $g \times g$ matrix $\underline X^t J \underline X$ is in the top left quadrant, while the $(1,g/2+2)$-entry exists.

For any values of the matrix entries $M_{1,2}, M_{1,g/2+2} \in \hat F$, there exist $\lambda, \mu \in \hat F$, not both zero, such that
\begin{equation} \label{eqn:lambda-mu-combination}
\lambda M_{1,2} + \mu M_{1,g/2+2} = 0.
\end{equation}
Let $Q = \lambda R + \mu S \in \hat F[X_{i,j} : 1 \leq i,j \leq g]$.
Thanks to \eqref{eqn:case-3-relation-hatH} and \eqref{eqn:lambda-mu-combination}, $Q$ gives a homogeneous relation of degree~$2$ between entries of $\underline{\hat H}_{\hat\sigma}$:
\begin{equation} \label{eqn:Q-hatH}
Q(\underline{\hat H}_{\hat\sigma}) = 0.
\end{equation}
The polynomial $Q$ is non-zero because $R, S$ are both non-zero, $R$ involves only terms $X_{k,i}X_{g/2+k,j}$ where $\{i,j\}=\{1,2\}$, and $S$ involves only terms $X_{k,i}X_{g/2+k,j}$ where $\{i,j\}=\{1,g/2+2\}$.
\end{proof}

Write $\hat P \in \hat F[Y_{ij}, Z_{ij} : 1 \leq i,j \leq g]$ for the polynomial obtained from $Q \in \hat F[X_{i,j} : 1 \leq i,j \leq g/2]$ by the substitutions
\begin{equation} \label{eqn:X-YZ}
\left.
\begin{aligned}
   X_{i,j} &= Y_{ij}
\\ X_{g/2+i,j} &= Z_{ij}
\end{aligned}
\right\}
\text{ for } 1 \leq i \leq g/2, \; 1 \leq j \leq g.
\end{equation}
In other words, the top half of the matrix~$\underline X$ is substituted by the top half of~$\underline Y$, and the bottom half of ~$\underline X$ is substituted by the top half of~$\underline Z$.
Let
\[ P_{\hat v}(\underline Y, \underline Z) = \hat P(A^t \underline Y + C^t \underline Z, B^t \underline Y + D^t \underline Z). \]
By \eqref{eqn:hatF}, \eqref{eqn:hatG} and the definition of~$\underline{\hat H}_{\hat\sigma}$, which follows those equations, we have
\[ P_{\hat v}(\underline F_{\hat v}, \underline G_{\hat v}) = \hat P(\underline{\hat F}_{\hat v}, \underline{\hat G}_{\hat v}) = Q(\underline{\hat H}_{\hat\sigma}) = 0. \]
Thus $P_{\hat v}$ gives a homogeneous relation of degree~$2$ between the $v$-adic periods $\underline F_{\hat v}$ and~$\underline G_{\hat v}$.
It only remains to check that this relation is not in the ideal~$I$.

\begin{lemma} \label{case-3-real-non-trivial}
$P_{\hat v}$ is not in the ideal~$I$.
\end{lemma}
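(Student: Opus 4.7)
The plan is to reduce the claim to $\hat P \notin I$ via a change-of-variables argument, and then produce an explicit point of $V(I)$ at which $\hat P$ does not vanish.

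For the first step, I would show that the linear substitution $\phi \colon (\underline Y, \underline Z) \mapsto (A^t \underline Y + C^t \underline Z, \, B^t \underline Y + D^t \underline Z)$, which is an automorphism of $\hat F[Y_{ij}, Z_{ij}]$, preserves the ideal $I$. A direct expansion yields
\[ \phi(\underline Y)^t \phi(\underline Z) - \phi(\underline Z)^t \phi(\underline Y) = \underline Y^t (AB^t - BA^t) \underline Y + \underline Y^t (AD^t - BC^t) \underline Z + \underline Z^t (CB^t - DA^t) \underline Y + \underline Z^t (CD^t - DC^t) \underline Z. \]
The hypothesis $\sqrt{e(s)}\, M \in \Sp_{2g}(\hat F)$, where $M = \fullsmallmatrix{A}{B}{C}{D}$, is equivalent to $M J M^t = (1/e(s)) J$, which unpacks into the block identities $AB^t = BA^t$, $CD^t = DC^t$, and $AD^t - BC^t = (1/e(s)) I_g$. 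Substituting these into the expression above yields $\phi(\underline Y^t \underline Z - \underline Z^t \underline Y) = (1/e(s))(\underline Y^t \underline Z - \underline Z^t \underline Y)$, so $\phi(I) = I$. Since $P_{\hat v} = \phi(\hat P)$, we conclude $P_{\hat v} \in I$ if and only if $\hat P \in I$.

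For the second step, I would invoke primality of $I$ (\cref{lem:prime}) to reduce $\hat P \notin I$ to exhibiting a point $(\underline y, \underline z) \in \rM_g(\Qbar)^2$ with $\underline y^t \underline z = \underline z^t \underline y$ but $\hat P(\underline y, \underline z) \neq 0$. Partition $\underline y$, $\underline z$ into blocks of size $(g/2) \times g$, writing $\underline y = \bigl(\begin{smallmatrix} y_1 \\ y_2 \end{smallmatrix}\bigr)$ and $\underline z = \bigl(\begin{smallmatrix} z_1 \\ z_2 \end{smallmatrix}\bigr)$. By construction of $\hat P$ from $Q$, $\hat P$ depends only on $(y_1, z_1)$ and equals $\lambda \tilde R(y_1, z_1) - \mu \tilde S(y_1, z_1)$, where
\[ \tilde R = \sum_{k=1}^{g/2} \bigl((y_1)_{k,1} (z_1)_{k,2} - (z_1)_{k,1} (y_1)_{k,2}\bigr), \quad \tilde S = \sum_{k=1}^{g/2} \bigl((y_1)_{k,1} (z_1)_{k,g/2+2} - (z_1)_{k,1} (y_1)_{k,g/2+2}\bigr). \]
Since $g > 2$ we have $g/2 + 2 \neq 2$, so $\tilde R$ and $\tilde S$ have disjoint monomial supports; combined with $(\lambda, \mu) \neq (0,0)$, this makes $\hat P$ a non-zero polynomial, and we may choose $(y_1, z_1)$ so that $\hat P(y_1, z_1) \neq 0$.

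Finally, the required identity $\underline y^t \underline z - \underline z^t \underline y = 0$ splits as $(y_1^t z_1 - z_1^t y_1) + (y_2^t z_2 - z_2^t y_2) = 0$, so it suffices to find $(y_2, z_2)$ with $y_2^t z_2 - z_2^t y_2 = -A$, where $A := y_1^t z_1 - z_1^t y_1$ is antisymmetric of some rank $2r \leq g$. The normal form for antisymmetric bilinear forms gives a decomposition $A = \sum_{i=1}^r (u_i v_i^t - v_i u_i^t)$ with $u_i, v_i \in \Qbar^g$; taking $y_2$ with $i$-th row $v_i^t$ and $z_2$ with $i$-th row $u_i^t$ (padded with zeros for $r < i \leq g/2$) yields $y_2^t z_2 - z_2^t y_2 = -A$ as required. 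The main technical ingredient is this surjectivity claim, which reduces to the standard normal form; the bulk of the argument is just combining the change-of-variables identity for $\phi$ with the block-decomposition structure of $\hat P$.
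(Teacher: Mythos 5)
Your argument is correct, and it diverges from the paper's proof in an interesting way. The first half---showing that the substitution $\phi$ preserves the ideal $I$, so that it suffices to prove $\hat P \notin I$---is essentially the same as in the paper, and you correctly obtain the scalar $1/e(s)$ rather than the $e^{-1/2}$ written there, which looks like a slip (either way $\phi(I)=I$, so nothing is affected). For the second half, the paper uses a short degree argument: the generators $(\underline Y^t\underline Z - \underline Z^t\underline Y)_{ij}$ of $I$ are invariant under the simultaneous permutation of the rows of $\underline Y$ and of $\underline Z$, so the degree-$2$ graded piece of $I$ consists of forms invariant under swapping rows $1$ and $g$ of both matrices, whereas the non-zero degree-$2$ form $\hat P$ is visibly not invariant because it only involves the top halves of $\underline Y$ and $\underline Z$; this makes no use of \cref{lem:prime}. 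You instead exhibit a $\Qbar$-point of $V(I)$ at which $\hat P \neq 0$: choose the top halves $(y_1,z_1)$ with $\hat P(y_1,z_1)\neq 0$ (possible because, when $g>2$, the substituted $\tilde R$ and $\tilde S$ involve distinct column indices $2$ and $g/2+2$, so $\hat P$ is a non-zero polynomial in these variables), then fill in the bottom halves using the normal form of the skew-symmetric matrix $y_1^t z_1 - z_1^t y_1$, whose half-rank is at most $g/2$ as you need. This is sound and more constructive than the paper's argument; one small simplification is that you do not actually need primality (or even radicality) of $I$ here, since $I \subseteq I(V(I))$ unconditionally, so a single point of $V(I)$ at which $\hat P$ fails to vanish already forces $\hat P \notin I$.
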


\begin{proof}
First note that the substitution $\underline Y \mapsto A^t \underline Y + C^t \underline Z$, $\underline Z \mapsto B^t \underline Y + D^t \underline Z$ induces an automorphism~$\Phi$ of the polynomial ring $\hat F[Y_{ij}, Z_{ij} : 1 \leq i,j \leq g]$, because $\fullsmallmatrix{A}{B}{C}{D}$ is an invertible matrix.
Furthermore, since $e^{1/2}\fullsmallmatrix{A}{B}{C}{D}$ is symplectic, we have
\begin{align*}
  & \phantom{={}} (A^t \underline Y + C^t \underline Z)^t(B^t \underline Y + D^t \underline Z) - (B^t \underline Y + D^t \underline Z)^t(A^t \underline Y + C^t \underline Z)
\\& = \underline Y^t (AB^t - BA^t) \underline Y + \underline Y^t (AD^t - BC^t) \underline Z + \underline Z^t (CB^t - DA^t) \underline Y + \underline Z^t (CD^t - DC^t) \underline Z
\\& = e^{-1/2}(\underline Y^t \underline Z - \underline Z^t \underline Y).
\end{align*}
Multiplying the generators by a scalar $e^{1/2}$ does not change the ideal~$I$, so $\Phi(I) = I$.

Thus, it suffices to show that $\hat P = \Phi(P_{\hat v}) \not\in I$.
We shall prove this by constructing matrices $\underline y, \underline z \in \rM_{g \times g}(\hat F)$ such that every entry of $\underline y^t \underline z - \underline z^t \underline y$ is zero, while $\hat P(\underline y, \underline z) \neq 0$.

To that end, let
\[ \underline y = \fullmatrix{I_{g/2}}{0}{-I_{g/2}}{0}, \quad \underline z = \fullmatrix{\underline v}{\underline w}{\underline v}{\underline w}, \]
where $\underline v, \underline w \in \rM_{g/2 \times g/2}(\hat F)$ are to be chosen later.
These matrices do indeed satisfy $\underline y^t \underline z - \underline z^t \underline y = 0$.
Letting $\underline x$ be the matrix constructed from $\underline y$ and~$\underline z$ by the substitutions~\eqref{eqn:X-YZ}, we obtain
\[ \underline x = \fullmatrix{I_{g/2}}{0}{\underline v}{\underline w}. \]
We can calculate
\[ \underline x^t J \underline x = \fullmatrix{\underline v - \underline v^t}{\underline w}{-\underline w^t}{0}, \]
so
\[ \hat P(\underline y, \underline z) = \lambda R(\underline x) + \mu S(\underline x) = \lambda(v_{12} - v_{21}) + \mu w_{12}. \]
Since $\lambda$ and~$\mu$ are not both zero, we can choose $\underline v$ and $\underline w$ to make $\hat P(\underline y, \underline z)$ non-zero.
\end{proof}

\begin{remark} \label{just-likely-relations}
If we consider Case~3, with $\hat E \cong \QQ \times \QQ$ or a real quadratic field, and with $g=2$ (corresponding to a ``just likely intersection'' between a curve and a Hilbert modular surface in $\cA_2$), both sides of \eqref{eqn:case-3-relation-hatH} are skew-symmetric $2 \times 2$ matrices, so \eqref{eqn:case-3-relation-hatH} gives only a single independent inhomogeneous relation of the form
\[ \bigl( \text{$\ov\QQ$-polynomial in the entries of } \underline{\hat H}_{\hat\sigma} \bigr) = \frac{1}{2 \pi i} M_{ij}. \]
Because of the factor $1/2\pi i$, this is not a relation between the entries of $\underline{\hat H}_{\hat\sigma}$ with $\ov\QQ$-coefficients.
It can be checked that following the same method with $\hat\sigma$ replaced by~$\hat\tau$ results in an inhomogeneous relation which is equivalent to \eqref{eqn:case-3-relation-hatH} modulo the ideal of trivial relations~$I$.
Thus, when $g=2$, we cannot eliminate $\frac{1}{2\pi i}$ and obtain a $\ov\QQ$-relation between archimedean periods.
\end{remark}

\subsection{Primality of the ideal of trivial relations} \label{subsec:prime-ideal}

The proof that the ideal~$I$ is prime has two parts: we show that the subvariety~$V$ of $\gM_{g,\CC}^2$ defined by~$I$ is irreducible, and that $I$ is a radical ideal.

In order to show that $V$ is irreducible, we show that it is the Zariski closure of the projection of a variety $W$ which is easily seen to be irreducible.
This fact will also be used later, in \cref{subsec:trivial-relations}.
The variety $W \subset \gM_{2g,\CC}$ is defined as follows, for some $\mu \in \CC^\times$:
\begin{equation} \label{eqn:Theta}
W = \Bigl\{ M \in \gM_{2g,\CC} : M^t\fullsmallmatrix{0}{I_g}{-I_g}{0}M = \mu\fullsmallmatrix{0}{I_g}{-I_g}{0} \Bigr\}.
\end{equation}
(The number $\mu$ is not important in proving that~$I$ is irreducible, but it will be important when \cref{lem:dom} is applied in \cref{subsec:trivial-relations}.)
Since $\mu \neq 0$, $W$ is a $\gSp_{2g,\CC}$-torsor over $\Spec(\CC)$ and, in particular, it is irreducible.

If we view $2g \times 2g$ matrices as being made up of $g \times g$ blocks $\fullmatrix{\underline{Y}}{\underline{Y}'}{\underline{Z}}{\underline{Z}'}$,
then $W$ is equivalently described by the equations
\begin{gather}
   \underline{Y}^t \underline{Z} - \underline{Z}^t \underline{Y} = 0; \label{eqn:YtZ}
\\ (\underline{Y}')^t \underline{Z}' - (\underline{Z}')^t \underline{Y}' = 0;
\\ \underline{Y}^t \underline{Z}' - \underline{Z}^t \underline{Y}' = \mu I_g.
\end{gather}
Note that the entries of the matrix \eqref{eqn:YtZ} are precisely the generators of the ideal~$I$.

Let $\pi \colon \gM_{2g,\CC} \to \gM_{2g \times g,\CC}$ denote the projection
\[ \fullmatrix{\underline{Y}}{\underline{Y}'}{\underline{Z}}{\underline{Z}'}
\mapsto \begin{pmatrix} \underline{Y} \\ \underline{Z} \end{pmatrix}. \]

\begin{lemma} \label{lem:dom}
The morphism $\pi|_W:W \to V$ is dominant. In particular, $V$ is irreducible.
\end{lemma}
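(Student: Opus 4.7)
The plan is to establish dominance of $\pi|_W$ in two steps: first I will identify the image $\pi(W)$ precisely as the Zariski open subset $V^\circ \subseteq V$ of matrices of maximal rank $g$; second I will show $V$ is irreducible by exhibiting a surjective morphism onto $V$ from an irreducible variety. Irreducibility of $V$ is the ``in particular'' consequence, and the two steps together prove the lemma.

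For the first step, every $M \in W$ satisfies $(\det M)^2 = \mu^{2g} \neq 0$, so $M$ is invertible. The upper-left $g \times g$ block of the defining equation $M^t J M = \mu J$, with $J = \fullsmallmatrix{0}{I_g}{-I_g}{0}$, asserts that the first $g$ columns of $M$ span an isotropic subspace, and being linearly independent they in fact span a Lagrangian; hence $\pi(M) \in V^\circ$. Conversely, given $(\underline{Y}, \underline{Z}) \in V^\circ$, the columns form a basis of a Lagrangian $L \subset \CC^{2g}$, and Witt's extension theorem for symplectic vector spaces yields a Lagrangian complement $L'$ together with a basis of $L'$ dually paired with the given basis of $L$, rescaled by the factor $\mu$; assembling this basis of $L'$ as the block $\begin{pmatrix} \underline{Y}' \\ \underline{Z}' \end{pmatrix}$ completes $(\underline{Y}, \underline{Z})$ to an element of $W$. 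So $\pi(W) = V^\circ$.

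For the second step I introduce the auxiliary morphism
\[ \psi \colon \gSp_{2g,\CC} \times \gM_{g \times g, \CC} \to V, \qquad (M, A) \mapsto M \begin{pmatrix} A \\ 0 \end{pmatrix}, \]
and a direct calculation with $M^t J M = J$ verifies that $\psi(M, A)$ indeed lies in $V$. Surjectivity is a second application of Witt's theorem: any $(\underline{Y}, \underline{Z}) \in V$ has isotropic column span $L$, which sits inside some Lagrangian $L_0$; since $\gSp_{2g,\CC}$ acts transitively on Lagrangians, I pick $M \in \gSp_{2g,\CC}$ carrying the standard Lagrangian to $L_0$, and then each column of $\begin{pmatrix} \underline{Y} \\ \underline{Z} \end{pmatrix}$ is a unique linear combination of the first $g$ columns of $M$, giving the required $A$. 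Since $\gSp_{2g,\CC} \times \gM_{g \times g, \CC}$ is a product of a connected algebraic group and an affine space, it is irreducible, and so $V$ is irreducible as the image of an irreducible variety.

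Combining these, $V$ is irreducible and $V^\circ = \pi(W)$ is a non-empty Zariski open subset (containing, for example, $(I_g, 0)$), hence dense in $V$, and $\pi|_W$ is dominant. The main technical point is the two applications of Witt's theorem; both are classical, but the first needs careful bookkeeping of the scalar $\mu$ to produce a symplectic completion with the correct multiplier.
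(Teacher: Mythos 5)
Your proof is correct, but it takes a genuinely different logical route from the paper's. Both begin by identifying $\pi(W)$ with the rank-$g$ locus (your $V^\circ$, the paper's $V_0$; you establish the equality $\pi(W) = V^\circ$, the paper only the containment $\pi(W) \supseteq V_0$, which suffices). The divergence is in how density of this open set in $V$ is obtained. The paper proves density directly by a degeneration argument: given a rank-deficient $(\underline Y, \underline Z) \in V$ with columns $w_1, \dotsc, w_g$ spanning an isotropic subspace, it perturbs the dependent columns along a one-parameter family $w_i + t_i u_i$ inside a fixed Lagrangian containing them, showing the original point lies in the Zariski closure of $V_0$; irreducibility of $V$ then follows \emph{a posteriori} from dominance, since $V = \overline{\pi(W)}$ is the closure of the image of the irreducible variety $W$. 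You instead prove irreducibility of $V$ \emph{first}, via the surjective morphism $\psi \colon \gSp_{2g,\CC} \times \gM_{g,\CC} \to V$, $(M,A) \mapsto M \cdot \bigl( \begin{smallmatrix} A \\ 0 \end{smallmatrix} \bigr)$, whose source is manifestly irreducible, and then conclude that $V^\circ$ is dense as a nonempty open subset of an irreducible variety. Both routes ultimately rest on the transitivity of $\gSp_{2g}$ on Lagrangians; your version replaces the paper's explicit perturbation computation by an algebraic-group parametrisation, which is somewhat cleaner and more conceptual at the cost of introducing one auxiliary morphism. The bookkeeping of the multiplier $\mu$ that you flag (scaling the complementary Lagrangian basis by $\mu$) is handled correctly and matches what the paper does.
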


\begin{proof}
Let $V_0$ denote the Zariski open subset of $V$ defined by the condition that the columns in $\gM_{2g\times g}$ are linearly independent. 

We claim that $\pi(W)$ contains $V_0$. To see this, let $M \in V_0(\CC)$. Then, by the definition of $V_0$, the columns of $M$ span a maximal totally isotropic subspace of $\CC^{2g}$ for the standard symplectic form. We can therefore extend it to a symplectic basis of $\CC^{2g}$ and, after scaling by $\mu^{-1}$, the columns formed by these new vectors appended to $M$ yield a matrix $\tilde{M} \in W(\CC)$.

To complete the proof, it remains to show that $V_0$ is Zariski dense in $V$.
Let $M\in V(\CC)$ and let $w_1,\ldots,w_g\in\CC^{2g}$ denote its columns. By the definition of $V$, these vectors lie in a totally isotropic subspace of $\CC^{2g}$. Without loss of generality, suppose that $w_1,\ldots,w_r$ are linearly independent and that $w_{r+1},\ldots,w_g$ are in the span of $w_1,\ldots,w_r$. Choose $u_{r+1},\ldots, u_g$ such that $w_1,\ldots, w_r, u_{r+1},\ldots,
u_g$ form a basis for a maximal totally isotropic space of $\CC^{2g}$.
For any $t=(t_{r+1},\ldots, t_g) \in \CC^{g-r}$ and $i=r+1,\ldots,g$, define
$w_i'(t) = w_i + t_i u_i$.
If $t_{r+1},\ldots, t_g$ are all non-zero, then
$w_1,\ldots, w_r, w_{r+1}'(t),\ldots, w_g'(t)$
are linearly independent, which is to say that the matrix with these columns is in~$V_0$.
Therefore, since $(0,...,0)$ is in the Zariski closure of $\GG_m^{g-r}$ in $\AAA^{g-r}$, we conclude that $M$ belongs to the Zariski closure of~$V_0$.
\end{proof}

\begin{lemma}\label{lem:prime}
The ideal $I$ is prime.
\end{lemma}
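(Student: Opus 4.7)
By \cref{lem:dom}, the variety $V = V(I) \subset \gM_{2g \times g, \CC}$ is irreducible, so $\sqrt{I}$ is prime; it therefore suffices to show that $I$ is radical. The plan is to verify that $\ov\QQ[\underline{Y}, \underline{Z}]/I$ is Cohen--Macaulay by a codimension count, then to establish generic reducedness by exhibiting a single smooth point of $V$. Since a Noetherian Cohen--Macaulay ring is reduced if and only if it is generically reduced, this will suffice.

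For the codimension count, first observe that the matrix $\underline{Y}^t \underline{Z} - \underline{Z}^t \underline{Y}$ is skew-symmetric, so its $\binom{g}{2}$ strictly above-diagonal entries already generate~$I$. To compute $\dim V$, I would use the fact that the dense open subset $V_0 \subset V$ introduced in the proof of \cref{lem:dom} is naturally a $\GL_g$-bundle over the Lagrangian Grassmannian $LG(g, 2g)$, via the map sending a matrix to the (Lagrangian) subspace spanned by its columns. This gives
\[ \dim V = \dim LG(g, 2g) + g^2 = \tfrac{g(g+1)}{2} + g^2 = \tfrac{g(3g+1)}{2}, \]
so $\codim V = \binom{g}{2}$, matching the number of generators of $I$. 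Hence $V$ is a set-theoretic complete intersection of the expected dimension, and $\ov\QQ[\underline{Y}, \underline{Z}]/I$ is Cohen--Macaulay.

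For generic reducedness, I would check that $V$ is smooth at the point $(\underline{Y}, \underline{Z}) = (I_g, 0)$. A direct calculation yields
\[ \frac{\partial (\underline{Y}^t \underline{Z} - \underline{Z}^t \underline{Y})_{ij}}{\partial Z_{ab}}\bigg|_{(I_g, 0)} = \delta_{ai}\delta_{bj} - \delta_{aj}\delta_{bi}, \]
so for each $i < j$ the row of the Jacobian indexed by the $(i,j)$-entry contains $+1$ in column $Z_{ij}$, $-1$ in column $Z_{ji}$, and zeros in every other column indexed by a $Z$-variable. These $\binom{g}{2}$ rows are visibly linearly independent, so the Jacobian has full rank at $(I_g, 0)$. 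Combined with the Cohen--Macaulay property, this shows $\ov\QQ[\underline{Y}, \underline{Z}]/I$ is reduced; together with the irreducibility of $V$, $I$ is prime.

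The step most in need of care is the dimension computation: one must verify that $V_0$ is indeed a $\GL_g$-torsor over $LG(g,2g)$ (which amounts to unwinding the symplectic definitions) and that $V_0$ is Zariski dense in $V$, the latter being already established in the proof of \cref{lem:dom}.
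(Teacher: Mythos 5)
Your proof is correct, and the overall architecture matches the paper's: irreducibility via \cref{lem:dom}, then radicality via the combination of the Cohen--Macaulay property and generic reducedness (which together give $R_0 + S_1$, hence reduced). The Jacobian computation at $(I_g, 0)$, showing that the partials with respect to the $Z$-variables already have rank $\binom{g}{2}$, is essentially identical to the one in the paper.

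The one genuinely different ingredient is how you obtain $\codim V = \binom{g}{2}$. You compute $\dim V$ directly by observing that the dense open locus $V_0$ (rank-$g$ matrices) is a $\GL_g$-bundle over the Lagrangian Grassmannian $LG(g,2g)$, giving $\dim V = \tfrac{g(g+1)}{2} + g^2$ and hence the codimension. The paper instead abstracts the whole step into \cref{prop:martins-fact}: it reuses the same single Jacobian-rank computation to deduce $\codim V \geq m$ (rank of the Jacobian at a point of an irreducible variety is at most its codimension), then pairs this with the trivial bound $\codim V \leq m$ from the generators. The paper's route is slightly more economical, since one local computation feeds both the Cohen--Macaulay step and the generic-reducedness step, and it avoids needing the dimension of $LG(g,2g)$; your route is more geometric and makes the ``expected dimension'' assertion transparent. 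Both are fully rigorous. One minor terminological point: where you write that $V$ is a ``set-theoretic complete intersection of the expected dimension,'' the fact actually being used is the stronger, scheme-theoretic statement that $I$ itself is generated by $\codim V(I)$ elements, which is what makes $\ov\QQ[\underline{Y},\underline{Z}]/I$ Cohen--Macaulay; it would be cleaner to say so directly.
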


\begin{proof}
Thanks to \cref{lem:dom}, the variety $V \subset \gM_{2g \times g}$ defined by~$I$ is irreducible.
Hence it suffices to show that $I$ is a radical ideal.

By definition, $I$ is generated in $\ov\QQ[\underline{Y},\underline{Z}]$ by the elements
\[
f_{ij}=\sum_{k=1}^gY_{ki}Z_{kj}-Z_{ki}Y_{kj},\quad 1\leq i,j\leq g.
\]
%Let $I'$ denote the ideal they generate in $\ov\QQ(\underline{Y})[\underline{Z}]$. Since the $f_{ij}$ are linear in the $Z_{ij}$ over $\ov\QQ(\underline{Y})$, the quotient of $B:=\ov\QQ(\underline{Y})[\underline{Z}]$ by $I'$ is isomorphic to a polynomial ring over $\ov\QQ(\underline{Y})$. In particular, $I'$ is prime.
The Jacobian matrix whose entries are the partial derivatives of the $f_{ij}$ with respect to the $Y_{ij}$ and $Z_{ij}$ is the derivative of the map $\gM_g(\CC)^2\to\gM_g(\CC)$ given by $\underline{Y}^t \underline{Z} - \underline{Z}^t \underline{Y}$. As such, its evaluation at a point $(\underline{A}, \underline{B})$ is the linear map $\gM_g(\CC)^2\to\gM_g(\CC)$ given by $\underline{A}^t \underline{Z} + \underline{Y}^t \underline{B} - \underline{B}^t \underline{Y} - \underline{Z}^t \underline{A}$. For $\underline{A}=I$ and $\underline{B}=0$, this becomes $\underline{Z}-\underline{Z}^t$, the image of which is precisely the set of skew symmetric matrices. In particular, the rank of the Jacobian matrix at this point is $g(g-1)/2$.

On the other hand, since $f_{ji}=-f_{ij}$ for all $i,j$, $I$ is generated by $g(g-1)/2$ elements.
Now the result follows from \cref{prop:martins-fact}, below. 
\end{proof}

\begin{proposition}\label{prop:martins-fact}
Let $k$ be a perfect field and let $V$ be an irreducible algebraic subvariety of $\AAA^n_k$. Suppose that $V=V(I)$ for some ideal $I=(f_1,\ldots,f_m)$ of $k[X_1,\ldots,X_n]$. If the Jacobian matrix $(\partial f_i/\partial X_j)_{ij}$ has rank at least~$m$ at some point of $V$, then $I$ is radical.
\end{proposition}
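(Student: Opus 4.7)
The plan is to combine the Jacobian criterion applied at the given point with the unmixedness of complete intersections in the Cohen--Macaulay ring $R := k[X_1, \ldots, X_n]$. Setting $J = \sqrt{I}$, the task reduces to showing $I = J$, because $V$ irreducible implies $J$ is prime. By lower semicontinuity of the rank of the Jacobian matrix together with the Jacobson property of $R$, the locus in $V$ where the rank is at least $m$ is open and contains a closed point, so I may as well assume that $p$ is a closed point of $V$, at which the rank is then exactly $m$.

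Next I would apply the Jacobian criterion at $p$. Since $k$ is perfect, the residue field $\kappa(p)$ is separable over $k$, and the natural map $\fm_p/\fm_p^2 \to \Omega^1_{R/k} \otimes_R \kappa(p)$ is a $\kappa(p)$-linear isomorphism. Under this identification, the images of $f_1, \ldots, f_m$ in $\fm_p/\fm_p^2$ are recorded by the columns of the Jacobian matrix at $p$, so the hypothesis implies that they are $\kappa(p)$-linearly independent. Hence $f_1, \ldots, f_m$ extend to a regular system of parameters of the regular local ring $R_{\fm_p}$, so $IR_{\fm_p}$ is a prime ideal of height $m$. Since $JR_{\fm_p}$ is a prime of $R_{\fm_p}$ containing $IR_{\fm_p}$ and $\mathrm{height}(JR_{\fm_p}) \leq \mathrm{height}(J) = \mathrm{height}(I) \leq m$ by Krull's principal ideal theorem, both primes have height $m$ and so coincide; further localisation gives $IR_J = JR_J$.

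To conclude, $R$ is Cohen--Macaulay and $I$ is a complete intersection of height $m$, so $I$ is unmixed: every associated prime of $I$ has height $m$. The unique minimal prime over $I$ is $J$ (of height $m$), so $J$ is the only associated prime of $I$, making $I$ a $J$-primary ideal. For any $J$-primary ideal one has $I = IR_J \cap R$, and combining this with $IR_J = JR_J$ yields $I = J$, as required.

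The main obstacle is the application of the Jacobian criterion at a closed point whose residue field may properly contain $k$; this is precisely where the perfectness hypothesis enters, ensuring separability of $\kappa(p)/k$ and hence that the Jacobian matrix correctly detects linear independence in the cotangent space $\fm_p/\fm_p^2$.
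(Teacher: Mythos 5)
Your proof is correct, and takes a genuinely different route from the paper's. Both arguments first establish that $I$ is a complete intersection (of height $m$), so that the relevant rings are Cohen--Macaulay. For the lower bound on the height of $I$, the paper cites Eisenbud's codimension estimate (Thm.~16.19), whereas you localise at a closed point $p$ of $V$ where the Jacobian rank is $m$ (reducing to this case by semicontinuity of rank and the Jacobson property) and use the perfectness of $k$ to identify $\fm_p/\fm_p^2$ with $\Omega^1_{R/k}\otimes\kappa(p)$, so that $f_1,\ldots,f_m$ extend to a regular system of parameters at $p$. (A minor slip: with the convention $(\partial f_i/\partial X_j)_{ij}$, the images of the $f_i$ in $\fm_p/\fm_p^2$ correspond to the \emph{rows}, not the columns, of the Jacobian; the rank is of course unaffected.) The endgames then diverge. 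The paper applies Serre's criterion $(R_0)+(S_1)\Rightarrow\text{reduced}$ (Eisenbud Thm.~18.15(a)) to $k[X_1,\ldots,X_n]/I$, with $(S_1)$ coming from Cohen--Macaulayness and $(R_0)$ from the rank hypothesis together with irreducibility. You instead observe that $IR_{\fm_p}$ is prime of height $m$, deduce $IR_J = JR_J$ where $J=\sqrt{I}$, and invoke the unmixedness theorem for complete intersections to show $I$ is $J$-primary, whence $I = IR_J\cap R = JR_J\cap R = J$. Both routes are sound; yours makes the role of perfectness explicit and works concretely in the local ring at a smooth point, while the paper's is shorter and relies on packaged criteria.
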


\begin{proof}
Let $R = k[X_1, \dotsc, X_n]/I$.

Since the rank of the Jacobian matrix is~$m$ at some point of~$V$, by \cite[Theorem 16.19]{eisenbud1995}, $\codim(V) \geq m$.
On the other hand, since $I$ is generated by $m$ elements, $\codim(V) \leq m$.
Thus $\codim(V) = m$.
By \cite[Theorem 18.13]{eisenbud1995}, we deduce that $R$ is Cohen-Macaulay.

Let $J \subset R$ denote the ideal generated by the $m \times m$ minors of the Jacobian matrix, taken modulo~$I$.
By our hypothesis that the Jacobian matrix has rank at least~$m$ at some point of $V$, $V(J)$ is a proper subvariety of~$V$.  Hence $J$ has codimension at least~$1$ in~$R$.

Therefore, by \cite[Theorem 18.15 (a)]{eisenbud1995}, $R$ is reduced.
In other words, $I$ is a radical ideal.
\end{proof}

We thank one of the anonymous referees for suggesting the following remark.

\begin{remark}
\Cref{lem:prime} and \cref{prop:martins-fact} are not essential for the proof of \cref{exist-relations}(c).
Indeed, the proofs of \cref{non-arch-non-trivial,case-3-real-non-trivial} show that, in the cases which they deal with, the polynomials $P_{ij}$ and $P_{\hat v}$, respectively, are not in the radical ideal~$\sqrt{I}$.
One can likewise verify that the polynomials from \cite[X, Construction~2.4.1]{And89} are not in~$\sqrt{I}$.
Hence \cref{lem:dom} suffices to establish that the product $P = P_{fin} \cdot \prod_{\hat v}  P_{\hat v}$ is not in~$\sqrt{I}$; \textit{a fortiori} it is not in~$I$.
\end{remark}

\section{Proof of height and Galois bounds} \label{sec:main-proofs}

In this section, we use the tools developed in Sections \ref{sec:prelims}--\ref{sec:relations} to prove our main results, as stated in \cref{intro}. First, we prove some auxiliary results. 

\subsection{Auxiliary results} 

As in \cite[Definition 8.3.1]{Liu}, we define a \defterm{fibered surface} to be a flat projective morphism $X\to S$ where $X$ is an integral scheme of dimension~$2$ and $S$ is a Dedekind scheme (which, for us, includes the condition $\dim S=1$). We refer to a fibered surface $X\to S$ as regular if $X$ is a regular scheme. As in
\cite[Definition 10.1.1]{Liu}, if $C$ is a smooth connected projective curve over a number field $K$, we define a \defterm{(regular) $\cO_K$-model} of $C$ to be a (regular) fibered surface $\fC\to\Spec(\cO_K)$ equipped with an isomorphism $\fC_K \to C$. By \cite[Proposition 10.1.8]{Liu}, such a model always exists. If $\fC$ and $\fC'$ are two $\cO_K$-models of $C$, we say that a morphism $\fC\to\fC'$ is a \defterm{morphism of $\cO_K$-models} if it restricts to the identity on $C$. 
 
The following lemma will be used repeatedly in the sections to follow. 

\begin{lemma}\label{lem:ext}
Let $K$ be a number field, and let $f:C_2\to C_1$ denote a morphism of smooth connected projective curves over $K$. Let $\fC_1$ (resp. $\fC'_2$) denote an $\cO_K$-model of $C_1$ (resp. $C_2$). Then there exists a regular $\cO_K$-model $\fC_2$ of $C_2$ such that (i) there is a morphism of $\cO_K$-models $\fC_2\to\fC'_2$ and (ii) $f \colon C_2 \to C_1$ extends to a morphism $\fC_2\to\fC_1$.
\end{lemma}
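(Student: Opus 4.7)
The plan is to construct $\fC_2$ in two stages: first, produce a (possibly singular) $\cO_K$-model of $C_2$ by taking the scheme-theoretic closure of the graph of $f$ inside a fibre product of the given models; second, desingularise the result using Lipman's theorem for two-dimensional excellent schemes.

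To begin, I would form the projective flat $\cO_K$-scheme $\fC'_2 \times_{\Spec \cO_K} \fC_1$, whose generic fibre is $C_2 \times_K C_1$. Inside this generic fibre sits the graph $\Gamma_f$ of $f$, a closed subscheme which the first projection identifies with $C_2$. Let $\fX$ denote the scheme-theoretic closure of $\Gamma_f$ in $\fC'_2 \times_{\Spec \cO_K} \fC_1$. Being the closure of an integral subscheme, $\fX$ is integral; being a closed subscheme of something projective over $\cO_K$, it is projective over $\cO_K$; being integral and dominating the Dedekind base $\Spec \cO_K$, it is flat over $\cO_K$. Since $\fX$ is flat over a one-dimensional base with one-dimensional generic fibre, $\dim \fX = 2$, so $\fX \to \Spec \cO_K$ is a fibered surface with generic fibre canonically identified with $C_2$. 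The two projections of the ambient fibre product restrict to morphisms $\fX \to \fC'_2$ and $\fX \to \fC_1$ which, on the generic fibre, are respectively the identity on $C_2$ and $f$ itself.

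Next, to upgrade $\fX$ to a regular model, I would invoke Lipman's desingularisation theorem for excellent reduced noetherian schemes of dimension at most two: this produces a proper birational morphism $\pi \colon \fC_2 \to \fX$ with $\fC_2$ regular. Because $\pi$ is birational, $\fC_2$ is integral with the same function field as $\fX$, and in particular with generic fibre canonically identified with $C_2$; flatness over $\cO_K$ again follows from integrality and dominance over the Dedekind base, while projectivity follows from that of both $\pi$ and $\fX \to \Spec \cO_K$. Hence $\fC_2$ is a regular $\cO_K$-model of $C_2$, and the compositions $\fC_2 \to \fX \to \fC'_2$ and $\fC_2 \to \fX \to \fC_1$ furnish the morphisms required by (i) and~(ii) respectively: on the generic fibre they reduce to the identity on $C_2$ and to $f$ by construction.

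The only non-trivial input is Lipman's theorem; the rest is formal manipulation of Zariski closures, fibre products, and flatness over a Dedekind base. I do not anticipate a serious obstacle, though one must verify that the generic fibres are identified with $C_2$ in a manner compatible with both claimed morphisms, which is automatic from the construction of $\fX$ as the closure of the graph of $f$.
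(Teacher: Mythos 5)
Your proposal is correct and takes essentially the same route as the paper: close the graph of $f$ inside $\fC'_2 \times_{\Spec\cO_K} \fC_1$ to obtain a (possibly singular) model $\fX$, then resolve singularities using the two-dimensional excellent case; the paper reaches the same $\fX$ by first spreading $f$ to the domain of definition $U$ of the induced rational map $\fC'_2 \dashrightarrow \fC_1$ and closing the graph of $f_U$, but that closure coincides with yours since $\Gamma_f$ is already dense in it, and the paper's citation of Liu, Cor.~8.3.51 is just Lipman packaged for fibered surfaces.

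One step in your write-up is stated too loosely. You claim that, because $\pi\colon\fC_2\to\fX$ is birational, $\fC_2$ ``has the same function field as $\fX$, and in particular [has] generic fibre canonically identified with $C_2$.'' Having the same function field does not by itself identify the generic fibres (e.g.\ blowing up a closed point of the generic fibre preserves the function field but changes the generic fibre). What one actually needs is that the resolution is an isomorphism over the regular locus of $\fX$ — the ``desingularisation in the strong sense'' that Liu's Cor.~8.3.51 provides — so that, the generic fibre of $\fX$ being the smooth curve $C_2$, the morphism $\pi$ is an isomorphism there. Alternatively: $\fC_2$ regular and $K$ perfect force the generic fibre of $\fC_2$ to be a smooth projective curve, and a proper birational morphism of smooth projective curves is an isomorphism. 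Either patch suffices, but the ``in particular'' as written is a non sequitur. Similarly, projectivity of $\fC_2\to\Spec\cO_K$ uses that $\pi$ itself is projective (a composition of normalisations and blow-ups); the paper covers this with Liu, Ex.~8.3.27(a), and you should cite something for it rather than leave it implicit.
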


\begin{proof}
The restriction of~$f$ to the generic point of~$\fC'_2$ gives rise to a rational $\cO_K$-map $\fC'_2 \dashrightarrow \fC_1$ [EGA~I, Prop.~7.1.11].
Let $U \subset \fC'_2$ be the domain of definition of this rational map.
Thus $U$ is a Zariski open subset of $\fC'_2$ and there is a morphism $f_U \colon U \to \fC_1$ whose restriction to the generic point of $\fC'_2$ is the same as that of~$f$.
By \cite[Prop.~4.1.16]{Liu}, $U$ contains all codimension~$1$ points of $\fC'_2$.  Thus $C_2 \subset U$, and $f_U|_{C_2} = f$.

Let $\Gamma \subset U\times\fC_1$ denote the graph of $f_U$, and let $\overline{\Gamma}$ denote its Zariski closure in $\fC'_2\times\fC_1$ (with the reduced scheme structure). Then $\pi:\overline{\Gamma}\to\Spec(\cO_K)$, obtained from restricting the structure morphism $\fC'_2\times\fC_1\to\Spec(\cO_K)$, is a fibered surface. (Indeed, it is clearly integral and projective, and flatness follows from the surjectivity of $\pi$, as noted in \cite[Definition 8.3.1]{Liu}.)

The generic fibre of $\ov{\Gamma}$ is the graph of~$f$, which is isomorphic to $C_2$ and hence smooth. Therefore, by \cite[Corollary 8.3.51]{Liu}, $\ov{\Gamma}$ admits a desingularization in the strong sense. In other words, we obtain a proper birational morphism $\fX\to\ov{\Gamma}$, with $\fX$ a regular scheme, which is an isomorphism above every regular point of $\ov{\Gamma}$. In particular, the 
generic fibre of $\fX \to \ov\Gamma \to \Spec(\cO_K)$ is isomorphic to $C_2$. Furthermore, $\fX$ is integral and, by \cite[Exercise 8.3.27(a)]{Liu}, $\fX\to\Spec(\cO_K)$ is projective. Since $\fX\to\Spec(\cO_K)$ is flat by surjectivity, we deduce that $\fC_2:=\fX$ is a regular $\cO_K$-model of $C_2$.

The composition of $\fC_2 \to \ov\Gamma$ with the projection $\fC'_2\times\fC_1\to\fC'_2$ yields a morphism of $\cO_K$-models $\fC_2\to\fC'_2$.
Meanwhile, the composition of $\fC_2\to\ov{\Gamma}$ with the projection $\fC'_2\times\fC_1\to\fC_1$ extends $f \colon C_2 \to C_1$.
\end{proof}

We derive the following, which will be used in \cref{sec:G-funcs}.

\begin{lemma}\label{lem:ext2}
    Let $K$ be a number field. Let $C$ be a smooth connected projective curve over $K$ and let $\fC''$ denote a regular $\cO_K$-model of $C$. Let $x:C\to\PP^1$ and $\sigma_k:C\to C$ (for $k=1,\ldots,n$) be morphisms over $K$. Then there exist regular $\cO_K$-models $\fC$ and $\fC'$ of $C$ and a morphism $\fC\to\fC''$ such that $x$ extends to a morphism $\fC\to\PP^1_{\cO_K}$ and the $\sigma_k$ extend to morphisms $\fC'\to\fC$.
\end{lemma}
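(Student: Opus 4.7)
The plan is to apply \cref{lem:ext} repeatedly, first to deal with $x$ and then iteratively to deal with the $\sigma_k$, using the fact that if a single model dominates several earlier ones through a chain of morphisms of $\cO_K$-models, then an extension of a map from any intermediate model can be pulled back through that chain.

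First, I would apply \cref{lem:ext} to the morphism $x \colon C \to \PP^1_K$, with target model $\fC_1 = \PP^1_{\cO_K}$ and source model $\fC'_2 = \fC''$. This produces a regular $\cO_K$-model $\fC$ of $C$ together with a morphism of $\cO_K$-models $\fC \to \fC''$ and an extension of $x$ to a morphism $\fC \to \PP^1_{\cO_K}$, which already takes care of the first half of the statement.

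To construct $\fC'$, I would iterate \cref{lem:ext} on the maps $\sigma_k$, starting with $\fC'_0 := \fC$. At the $k$-th step, I apply the lemma to $\sigma_k \colon C \to C$ with target model $\fC_1 = \fC$ and source model $\fC'_2 = \fC'_{k-1}$. This yields a regular $\cO_K$-model $\fC'_k$ of $C$ equipped with a morphism of $\cO_K$-models $\fC'_k \to \fC'_{k-1}$ and an extension $\fC'_k \to \fC$ of $\sigma_k$. Setting $\fC' := \fC'_n$, one has for each $k \leq n$ a chain of morphisms of $\cO_K$-models $\fC' \to \fC'_{n-1} \to \cdots \to \fC'_k$, every link of which restricts to the identity on the generic fibre $C$. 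Composing this chain with the extension $\fC'_k \to \fC$ of $\sigma_k$ therefore produces a morphism $\fC' \to \fC$ whose generic fibre is $\sigma_k$, as required.

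The main obstacle here is essentially bookkeeping rather than any deep geometric input: \cref{lem:ext} does all the heavy lifting (in particular, invoking desingularisation in the strong sense and flatness by surjectivity). The only care needed is to iterate in the indicated order, so that the final model $\fC' = \fC'_n$ dominates every intermediate $\fC'_k$ through morphisms of $\cO_K$-models, allowing previously constructed extensions of the $\sigma_k$ to be recovered from $\fC'$ by composition.
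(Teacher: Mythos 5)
Your proof is correct and follows essentially the same route as the paper: repeatedly apply \cref{lem:ext}, first to $x$ with source model $\fC''$, then iteratively to the $\sigma_k$, and compose the resulting chain of $\cO_K$-model morphisms (each restricting to the identity on $C$) with the extensions to produce, from the final model $\fC'$, a map $\fC' \to \fC$ extending each $\sigma_k$. Your bookkeeping is in fact slightly cleaner than the paper's: you keep the target model fixed at $\fC$ throughout the iteration, so each extension $\fC'_k \to \fC$ already lands in $\fC$ and only needs to be precomposed with model morphisms, whereas the paper alternates the target model and must compose with model morphisms on both sides.
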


\begin{proof}
First, we apply \cref{lem:ext} with $f=x$, $C_2=C$, $C_1=\PP^1$, $\fC'_2=\fC''$, and $\fC_1=\PP^1_{\cO_K}$, thereby yielding an extension $\fC\to\PP^1_{\cO_K}$ of $x$, for some regular $\cO_K$-model $\fC$ of $C$, and a morphism $\fC\to\fC''$ of $\cO_K$-models. Next, we apply \cref{lem:ext} with $f=\sigma_1$, $C_2=C_1=C$, and $\fC'_2=\fC_1=\fC$, thereby yielding an extension $\tilde{\sigma}_1:\fC_1\to\fC$ of $\sigma_1$, for some regular $\cO_K$-model $\fC_1$ of $C$, and a morphism $\iota_1:\fC_1\to\fC$ of $\cO_K$-models. Next, we we apply \cref{lem:ext} with $f=\sigma_2$, $C_2=C_1=C$, $\fC'_2=\fC$, and $\fC_1=\fC_1$, thereby yielding an extensions $\tilde{\sigma}_2:\fC_2\to\fC_1$ of $\sigma_2$, for some regular $\cO_K$-model $\fC_2$ of $C$, and a morphism $\iota_2:\fC_2\to\fC$ of $\cO_K$-models. Composing, we obtain morphisms
\begin{align*}
  & \fC_2\xrightarrow{\iota_2}\fC_1\xrightarrow{\tilde{\sigma}_1}\fC,
\\& \fC_2\xrightarrow{\tilde{\sigma}_2}\fC_1\xrightarrow{\iota_1}\fC
\end{align*}
extending $\sigma_1$ and $\sigma_2$ respectively.
Iterating this procedure yields the result.   
\end{proof}

We will also use the following lemma in \cref{sec:G-funcs}.

\begin{lemma}\label{lem:point}
Let $K$ be a number field and let $\fC\to\Spec (O_K)$ be a fibered surface. Let $s$ denote a closed point $\Spec (K)\to C$ of the generic fibre and let $S$ denote the Zariski closure of $s$ in $\fC$. Then there exists an isomorphism $\Spec(\cO_K)\to S$ the composition of which with $\Spec (K)\to\Spec(\cO_K)$ is $s$.
\end{lemma}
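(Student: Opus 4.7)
The plan is to construct a section $\sigma\colon\Spec\cO_K\to\fC$ of the structural morphism $\pi\colon\fC\to\Spec\cO_K$ whose image is exactly $S$, and then to verify the compatibility with $s$.

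First I would use projectivity to obtain an $\cO_K$-embedding $\fC\hookrightarrow\PP^n_{\cO_K}$. The given $K$-point $s$ gives a $K$-valued point of $\PP^n_{\cO_K}$, represented by homogeneous coordinates $(a_0,\ldots,a_n)\in K^{n+1}$. Letting $I\subset K$ be the fractional $\cO_K$-ideal generated by $a_0,\ldots,a_n$, the surjection $\cO_K^{n+1}\twoheadrightarrow I$ sending the standard basis to $(a_0,\ldots,a_n)$ defines (by the usual functor-of-points description of projective space) a morphism $\tilde s\colon\Spec\cO_K\to\PP^n_{\cO_K}$ over $\Spec\cO_K$ whose restriction to the generic point is $s$. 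Since $\fC$ is closed in $\PP^n_{\cO_K}$, the preimage $\tilde s^{-1}(\fC)\subset\Spec\cO_K$ is a closed subset; and it contains the generic point because $s$ factors through $\fC$. Hence $\tilde s^{-1}(\fC)=\Spec\cO_K$, so $\tilde s$ factors as a morphism $\sigma\colon\Spec\cO_K\to\fC$. By construction $\pi\circ\sigma=\id_{\Spec\cO_K}$, so $\sigma$ is a section of~$\pi$. (An alternative, essentially equivalent, route is to invoke the valuative criterion of properness applied to each $\cO_{K,\mathfrak p}$ and glue the resulting local sections using separatedness; I prefer the explicit projective coordinates approach since it avoids any gluing argument.)

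Next I would observe that since $\pi$ is separated (indeed projective), the section $\sigma$ is automatically a closed immersion. Thus $\sigma(\Spec\cO_K)$ is a closed subscheme of $\fC$ isomorphic to $\Spec\cO_K$; in particular it is integral and reduced, with generic point $\sigma(\eta)=s$, where $\eta\in\Spec\cO_K$ is the generic point. Since $S$ is the Zariski closure of $s$ (with its reduced induced structure), the underlying sets of $S$ and $\sigma(\Spec\cO_K)$ coincide, and both are reduced, so $\sigma$ induces an isomorphism of $\cO_K$-schemes $\Spec\cO_K\xrightarrow{\sim} S$.

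Finally, the composition $\Spec K\to\Spec\cO_K\xrightarrow{\sigma}S\subset\fC$ sends the unique point of $\Spec K$ to $\sigma(\eta)=s$ and on rings corresponds to the canonical inclusion $\cO_K\hookrightarrow K$; this is exactly the original morphism $s\colon\Spec K\to\fC$, as required. No step is seriously difficult here; the only point requiring a small amount of care is to confirm that the scheme-theoretic image $\sigma(\Spec\cO_K)$ equals $S$ as closed subschemes (and not merely as underlying subsets), but this follows at once from both being reduced and irreducible with the same generic point.
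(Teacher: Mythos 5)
Your proof is correct but takes a genuinely different route from the paper's. The paper invokes \cite[Proposition 8.3.4]{Liu} to conclude directly that $S \to \Spec\cO_K$ is finite and surjective, hence $S=\Spec(A)$ with $A$ a domain integral over $\cO_K$ and injecting into $K$; since $\cO_K$ is integrally closed in $K$, one gets $\cO_K = A$. Your argument instead works by producing a section $\sigma\colon\Spec\cO_K\to\fC$ of the structural morphism — via an explicit $\PP^n_{\cO_K}$-embedding and the functor-of-points description of projective space applied to the invertible fractional ideal generated by the homogeneous coordinates of $s$ — and then observes that $\sigma$ is a closed immersion (sections of separated morphisms) whose image is the reduced irreducible closed subscheme with generic point $s$, i.e.\ $S$. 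The two approaches buy different things: the paper's argument is shorter, purely ring-theoretic, and sidesteps any construction of a morphism; yours makes the extension of the $K$-point to an $\cO_K$-section completely explicit (which is closer in spirit to the valuative-criterion-plus-gluing argument you mention as an alternative, without the gluing). One small point of care in your write-up: when concluding that $\tilde s$ factors through the closed subscheme $\fC$, you should note that a set-theoretic containment $\tilde s(\Spec\cO_K)\subset\fC$ gives a scheme-theoretic factorisation precisely because $\Spec\cO_K$ is reduced; you use this implicitly but it is worth flagging.
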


\begin{proof}
This is an easy consequence of Zariski's Main Theorem \cite[Cor. 4.4.9]{EGAIII}. The following is an alternative, elementary proof. 

By \cite[Proposition 8.3.4]{Liu}, the structure morphism $S\to\Spec(\cO_K)$ is finite and surjective. In particular, $S$ is affine, isomorphic to $\Spec (A)$, say. Therefore, the morphisms $\Spec (K)\to S\to\Spec(\cO_K)$ correspond to the ring homomorphisms $\cO_K\to A\to K$. Since $\Spec (K)\to S$ is dominant and $A$ is reduced, $A\to K$ is injective. However, since $S\to\Spec(\cO_K)$ is finite, $A$ is an integral extension of~$\cO_K$. Therefore, $\cO_K\to A$ is an isomorphism. The second claim is now obvious. 
\end{proof}

\subsection{Rewriting Theorem \ref{main-bound} in terms of semiabelian schemes}

In order to prove \cref{main-bound}, we claim that it suffices to establish the following.

\begin{theorem}\label{scheme-bound}
Let $C$ be a smooth projective geometrically irreducible algebraic curve over a number field~$K$.
Let $G\to C$ be a semiabelian scheme of relative dimension $g\geq 2$.

Suppose there is a dense open subset $C^* \subset C$ and a point $s_0 \in C(K) \setminus C^*(K)$ such that $G^* := G|_{C^*} \to C^*$ is a principally polarised abelian scheme and $G_{s_0} \cong \GG_{m,K}^g$.
Suppose further that the map $m:C^*\to\cA_g$ to the coarse moduli space is non-constant, and
that $m(C^*)$ is Hodge generic in~$\Ag$.
    
Then there exist positive constants $\newC{scheme-bound-mult}$ and $\newC{scheme-bound-exp}$ such that, for every point $s\in C(\Qbar)$ for which $G_s$ has unlikely endomorphisms, we have
\[[K(s):K]\geq\refC{scheme-bound-mult}\abs{\disc(\End(G_s))}^{\refC{scheme-bound-exp}}.\]
\end{theorem}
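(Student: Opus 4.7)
The plan is to combine \cref{exist-formal-periods} and \cref{exist-relations} with André's Hasse principle for G-functions to bound the height of $x(s)$ linearly in $D := [K(s):K]$, then convert this into the required Galois orbit bound using Masser--Wüstholz isogeny estimates.

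First, after replacing $K$ by a finite extension (which only affects constants), I would fix a regular $\cO_K$-model $\fC$ of $C$ using \cref{lem:ext2} and \cref{lem:point}, such that $s_0$ extends to a closed subscheme $\fC_0 \subset \fC$ cut out by a section of $\fC \to \Spec \cO_K$. After inverting finitely many primes, $G$ extends to a semiabelian scheme $\fG \to \fC$ with $\fG_0 \cong \GG_{m,\fC_0}^g$. The Hodge genericity of $m(C^*)$ combined with the principal polarisation force the generic Mumford--Tate group to equal $\gGSp_{2g}$, so the generic endomorphism algebra is $\QQ$ (Albert type~I) and \cref{gauss-manin-basis} yields $\cM_{G/C} = H^1_{DR}(G^*/C^*)$. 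This allows me to select invariant differentials $\omega_1, \ldots, \omega_g$ extending over $\fG/\fC$, together with de Rham classes $\eta_1, \ldots, \eta_g$ in the Gauss--Manin $\cD$-submodule generated by the $\omega_i$, so that $\omega_i, \eta_i$ form a symplectic $\cO_{C^*}$-basis of $H^1_{DR}(G^*/C^*)$ up to a unit, meeting the hypotheses of \cref{exist-relations}.

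Applying \cref{exist-formal-periods} produces globally bounded G-functions $F_{ij}, G_{ij} \in \powerseries{K}{X}$ and $v$-adic neighbourhoods $U_v \subset C'^\van$ of radius $r_v$, with $r_v = 1$ almost everywhere. For any $s \in C(\Qbar)$ such that $G_s$ has unlikely endomorphisms, \cref{exist-relations} supplies a homogeneous polynomial $P_s$ of degree at most $\refC{relation-degree-mult} D$ lying outside the ideal $I$ of trivial relations, vanishing on $(\underline F^\van, \underline G^\van)(x^\van(s^\vhatan))$ at every place $\hat v$ of $K(s)$ with $s^\vhatan \in U_v$. The main step is then to invoke André's Hasse principle for G-functions: because $r_v = 1$ almost everywhere, the product formula applied to $x(s)$ forces most Galois conjugates of $s$ to fall inside the $U_v$, providing enough places at which the relation $P_s$ applies. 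The non-triviality guaranteed by $P_s \not\in I$, together with the globally bounded nature of the G-functions, then yields an upper bound $h(x(s)) \leq \newC{*}(D+1)$. The main obstacle here is that $P_s$ depends on $s$ and has degree linear in $D$, so the Hasse principle must be invoked in the form where the degree of the relation is carefully balanced against the local weights; adapting the combinatorial calculation from the authors' earlier work \cite{Y(1)} on $Y(1)^n$ to higher-dimensional periods and Gauss--Manin-derived de Rham classes is the central new technical challenge.

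Finally, since $m \colon C^* \to \cA_g$ is non-constant, the height of $x(s)$ controls the Faltings height of $G_s$ up to bounded distortion, giving $h(G_s) \leq \newC{*}(D+1)$. Masser--Wüstholz isogeny estimates then realise any non-scalar element of $\End(G_s)$ by an endomorphism of degree at most $\newC{*}(h(G_s)+1)^{\newC{*}} D^{\newC{*}}$; since the minimal degree of such an endomorphism is comparable to a power of $|\disc(\End(G_s))|$, combining with the height bound yields
\[ |\disc(\End(G_s))| \leq \newC{*} (D+1)^{\newC{*}}, \]
which rearranges to the required inequality $D \geq \refC{scheme-bound-mult}\, |\disc(\End(G_s))|^{\refC{scheme-bound-exp}}$.
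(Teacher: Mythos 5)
Your proposal correctly identifies the overall architecture (integral models, \cref{exist-formal-periods}, \cref{exist-relations}, Andr\'e's Hasse principle, Masser--W\"ustholz), the role of Hodge genericity in forcing generic Mumford--Tate group $\gGSp_{2g}$ and hence applicability of \cref{gauss-manin-basis}, and the reduction of the Galois bound to a height bound. But there is a genuine gap precisely at the point you flag as ``the central new technical challenge,'' and it is not a technicality that can be deferred.

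The problem is the step ``because $r_v = 1$ almost everywhere, the product formula applied to $x(s)$ forces most Galois conjugates of $s$ to fall inside the $U_v$.'' This is false as written. The product formula forces $\abs{x^\van(s^\vhatan)}_{\hat v} < r_v$ at many places, but that does not put $s^\vhatan$ in $U_v$: the map $x^\van$ is generically $n$-to-one with $n = \deg x > 1$, and $U_v$ is only the preimage component containing $s_0^\van$. The bulk of the Galois conjugates will land near the \emph{other} zeros of $x$, where your G-functions $F_{ij}, G_{ij}$ say nothing. Without handling this, one simply does not have relations at enough places to run the Hasse principle. The paper resolves it by (a) choosing $x$ to be Galois via \cite[Lemma~5.1]{Y(1)}, so each zero $s_k$ is $\sigma_k(s_1)$ for $\sigma_k \in \Aut_x(C)$; (b) constructing separate period G-functions for each pullback $G^{(k)} = \sigma_k^*G$ near $s_1$, so that a conjugate near $s_k$ can be translated to $\sigma_k^{-1}(s)^\vhatan$ near $s_1$; (c) forming the product polynomial $P = \prod_k P_k(\underline Y^{[k]}, \underline Z^{[k]})$ in variables indexed by a set of isogeny-class representatives $\Lambda$, together with an auxiliary G-function $H$ to normalise the radii; and (d) proving non-triviality of this combined relation via $\gM^\der = \gSp_{2g}^n$ (Proposition~\ref{prop:Sp2gn}), Andr\'e's Normal Monodromy Theorem, and the primality of the ideal $I^\Lambda$ of trivial relations for the fibre product $G^{*\Lambda}$.

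The non-triviality point is a second, related gap: $P_s \notin I$ for the individual relation is not enough once several copies of the abelian scheme are in play. One needs to know the ideal of trivial relations for the full fibre product, which is why the isogeny-class equivalence $\sim$ and the monodromy computation are required. Your proposal, having not introduced the fibre product at all, has no version of this step. Both gaps are substantial; the remainder of your outline (reduction via height bound, Masser--W\"ustholz for endomorphism discriminants, comparison of Weil height with Faltings height) matches the paper.
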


In order to see that \cref{scheme-bound} implies \cref{main-bound}, let $C$ be as in \cref{main-bound}. By \cite[Proposition 9.4]{ExCM}, we obtain a smooth projective geometrically irreducible curve $\tilde{C}'$,
an open subset $\tilde{C}\subset\tilde{C}'$,
a point $\tilde{s}_0\in\tilde{C}'\setminus\tilde{C}$,
a finite surjective morphism $\tilde{C}\to C$, and
a semiabelian scheme $G'\to \tilde{C}'$
such that $G'|_{\tilde{C}}$ is a principally polarised abelian scheme of relative dimension $g$, the map $\tilde{C}\to\cA_g$ associated with $G'|_{\tilde{C}}\to\tilde{C}$ is the composition $\tilde{C}\to C\hookrightarrow\cA_g$, and the fibre $G'_{\tilde{s}_0}$ is a torus.

These data are defined over a number field~$K$.
After replacing $K$ by a finite extension, we may assume that the torus $G'_{\tilde{s}_0}$ is split.
As such, we find ourselves in the situation of Theorem \ref{scheme-bound} (choosing $\tilde{C}'$ for~$C$, $\tilde{C}$ for $C^*$ and $\tilde{s}_0$ for $s_0$).
If $s\in C(\Qbar)$ is such that $A_s$ has unlikely endomorphisms, then $G_{\tilde{s}}$ will have unlikely endomorphisms for any preimage $\tilde{s}$ of~$s$ under the morphism $\tilde{C} \to C$.
The degree of the field extension $[K(\tilde{s}):\QQ(s)]$ is bounded independently of~$s$.
We conclude that Theorem \ref{scheme-bound} implies Theorem \ref{main-bound}. 

\subsection{Reduction of Galois bound to height bound}

In order to prove \cref{scheme-bound}, we claim that it suffices to establish the following.

\begin{theorem}\label{height-bound}
Consider the situation in \cref{scheme-bound} and let $h$ denote a Weil height on $C$. Then there exist positive constants $\newC{height-bound-mult}$ and $\newC{height-bound-exp}$ such that, for any point $s\in C^*(\Qbar)$ for which $G_s$ has unlikely endomorphisms, we have
\[h(s)\leq \refC{height-bound-mult}[K(s):K]^{\refC{height-bound-exp}}.\]
\end{theorem}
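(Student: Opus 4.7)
The plan is to follow André's G-functions strategy, using the period G-functions of \cref{exist-formal-periods} and the polynomial relations of \cref{exist-relations}, combined with a Hasse principle for G-functions in the spirit of \cite[Lemma 5.1]{Y(1)}. As a first step, after passing to a finite extension of $K$ and localising $\cO_K$ (whose cost is absorbed into the final constants), we invoke Gabber's Lemma to extend $G\to C$ to a semiabelian scheme $\fG\to\fC$ over a regular projective $\cO_K$-model $\fC$ of $C$, with $\fG|_{\fC_0}\cong\GG_{m,\fC_0}^g$ for $\fC_0$ the Zariski closure of $s_0$. By \cref{lem:point}, $\fC_0\cong\Spec\cO_K$, and by \cref{lem:ext2} we extend a chosen local parameter $x$ at $s_0$ to a regular function on $\fC$. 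The Hodge genericity of $C$ together with its multiplicative degeneration at $s_0$ forces the generic endomorphism algebra of $\fG$ to be $\QQ$ and the geometric monodromy of $G^*\to C^*$ to be the full symplectic group $\gSp_{2g}$ (the unipotent contribution at $s_0$, normality in the derived Mumford--Tate group, and the almost-simplicity of $\gSp_{2g}$, leave no other option).

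By \cref{gauss-manin-basis}, $H^1_{DR}(G^*/C^*)$ is generated as a $\cD_{C^*}$-module by $\Omega^{\inv}_{G/C}$. We pick an $\cO_C$-basis $\omega_1,\dotsc,\omega_g$ of $\Omega^{\inv}_{G/C}$ extending to global sections of $\Omega^{\inv}_{\fG/\fC}$, and de Rham classes $\eta_1,\dotsc,\eta_g$ expressible as $\cO_{C^*}$-linear combinations of iterated Gauss--Manin derivatives of the $\omega_i$ in $d/dx$, arranged so that together they form a symplectic basis of the shape required by \cref{exist-relations}(iii). Then \cref{exist-formal-periods} yields G-functions $F_{ij},G_{ij}\in\powerseries{K}{X}$ with $v$-adic domains $U_v$; and, for any $s\in C^*(\ov K)$ whose fibre $G_s$ has unlikely endomorphisms, \cref{exist-relations} produces a homogeneous polynomial $P=P_s$ of degree at most $\refC{relation-degree-mult}[K(s):K]$, with $P\notin I$, satisfying $P^\vhatan(\underline F^\van(x^\van(s^\vhatan)),\underline G^\van(x^\van(s^\vhatan)))=0$ at every place $\hat v$ of $K(s)$ with $s^\vhatan\in U_{\hat v|_K}$.

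Next, form the power series $f_P(X):=P(\underline F(X),\underline G(X))\in\powerseries{\ov K}{X}$. Analytic continuation of $(\underline F,\underline G)$ along loops in $C^*$ produces a monodromy orbit whose Zariski closure, by the maximality of the monodromy, is precisely the variety $V$ of \cref{subsec:prime-ideal} cut out by $I$. Hence the ideal of all polynomial relations satisfied by $(\underline F(X),\underline G(X))$ equals $I$, and since $P\notin I$ we conclude $f_P\neq 0$. As a polynomial combination of bounded degree of the G-functions $F_{ij},G_{ij}$, $f_P$ is itself a G-function whose size is controlled in terms of $\deg P$.

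Setting $\xi:=x(s)$, the nonzero G-function $f_P$ vanishes at $\xi$ at every place $\hat v$ of $K(s)$ for which $s^\vhatan\in U_{\hat v|_K}$, while at the remaining places $|\xi|_{\hat v}\geq r_{\hat v|_K}$, with $r_v=1$ for almost all $v$. Feeding $(f_P,\xi)$ into the Bombieri--André Hasse principle for G-functions, in the form of \cite[Lemma 5.1]{Y(1)}, yields $h(\xi)\leq \refC{height-bound-mult}[K(s):K]^{\refC{height-bound-exp}}$; since $x\colon C\to\PP^1$ is non-constant, this is equivalent (up to uniform constants) to the desired bound on $h(s)$. The main obstacle is the non-triviality step of the previous paragraph: a hidden polynomial relation between the periods preserved by the full monodromy would push $P$ back into $I$ and collapse the entire approach, so one must genuinely exploit both the Hodge genericity hypothesis and the large-monodromy consequence of multiplicative degeneration.
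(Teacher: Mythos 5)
The proposal follows the right overall strategy (integral model via Gabber, period G-functions from \cref{exist-formal-periods}, relations from \cref{exist-relations}, non-triviality via monodromy, Hasse principle), but it has a genuine gap at the Hasse-principle step that the paper is specifically designed to fill.

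You assert that, once $\xi=x(s)$, the relation $f_P(\xi)=0$ holds ``at every place $\hat v$ of $K(s)$ for which $s^\vhatan\in U_{\hat v|_K}$, while at the remaining places $\abs{\xi}_{\hat v}\geq r_{\hat v|_K}$.'' The second clause is false. The function $x$ has $\deg(x)$ zeroes on~$C$, and since $x$ is a local parameter at $s_0$ these must all be simple; hence unless $C\cong\PP^1$ there are zeroes $s_2,\dotsc,s_n$ of~$x$ other than~$s_0$. At a place~$\hat v$ where $s$ is $v$-adically close to one of these other zeroes, $\abs{x(s)}_{\hat v}$ is small yet $s^\vhatan\notin U_{\hat v|_K}$, and you have no period G-functions defined there and hence no vanishing of $f_P$. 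The input needed for the Bombieri--Andr\'e Hasse principle---a nontrivial relation at \emph{every} place where $\abs{\xi}_{\hat v}$ is small, or at least at enough places for the product formula to give a height bound---is therefore missing, and the argument collapses at exactly the point where you declare it finished.

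The paper handles this by arranging, via \cite[Lemma 5.1]{Y(1)} (this is the reduction lemma that produces~$x$, not the Hasse principle), that $x\colon C\to\PP^1$ is Galois and that every zero of~$x$ is a degeneration point (condition~(ii)--(iii) of \cref{height-bound-integral-models}). One then picks automorphisms $\sigma_k$ with $x\circ\sigma_k=x$ and $\sigma_k(s_1)=s_k$, and studies the twisted semiabelian schemes $G^{(k)}=\sigma_k^*G$ near the single point~$s_1$. Each of these yields its own matrices of G-functions $F^{(k)}_{ij}, G^{(k)}_{ij}$, and if $s$ is close to $s_k$ at~$\hat v$, then $\sigma_k^{-1}(s)$ is close to~$s_1$ and the relation from \cref{exist-relations} for~$G^{[k]}$ applies. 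This in turn forces the non-triviality argument to take place over the fibre product $G^{*\Lambda}$ of one representative from each generic isogeny class, and that is why the paper needs \cref{prop:Sp2gn} ($\gM^\der=\gSp_{2g}^n$, proved by Goursat plus lifting of automorphisms of $\gPSp_{2g}$) rather than just the single-factor monodromy statement you invoke. Your monodromy argument for $P\notin I\Rightarrow f_P\neq 0$ is morally right for one copy of~$G$, but once the twists enter the picture the ideal of trivial relations becomes $I^\Lambda$ over $\ov\QQ[\underline{Y}^{(k)},\underline{Z}^{(k)}:k\in\Lambda]$, and one must check $P\notin I^\Lambda$ using the big-monodromy statement for the fibre product.
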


The proof that \cref{height-bound} implies \cref{scheme-bound} proceeds exactly as in the latter half of the proof of \cite[Theorem~6.5]{QRTUI}.
The key ingredient is a theorem, due to Masser and Wüstholz \cite{MW:endo}, which implies that there exist positive constants $\newC{MW-mult}$ and $\newC{MW-exp}$ such that, for any point $s\in C^*(\Qbar)$, we have
\[\abs{\disc(\End(G_s))}\leq\refC{MW-mult}\max\{h_F(G_s),[K(s):K]\}^{\refC{MW-exp}}.\]
Here $h_F(G_s)$ denotes the semistable Faltings height.
Furthermore, by a result of Faltings, we have 
\[|h_F(G_s)-h(s)|=O(\log h(s)).\] 
\Cref{scheme-bound} now follows immediately from \cref{height-bound}.   

\subsection{Proof of Theorem \ref{height-bound}: integral models}

\begin{lemma} \label{height-bound-integral-models}
In order to prove \cref{height-bound}, it suffices to prove it under the additional assumption that there exists a regular $\cO_K$-model $\fC$ for~$C$, a semiabelian scheme $\fG \to \fC$ and a non-constant rational function $x \in K(C)$ with the following properties:
\begin{enumerate}[(i)]
\item $\fG_K \cong G$ as a $C$-group scheme;
\item every point $s \in C(\ov K)$ such that $x(s)=0$ is a simple zero of~$x$, lies in $C(K)$, and satisfies $G_s \cong \GG_{m,K}^g$;
\item $x \colon C \to \PP^1$ is Galois (that is, $\Aut_x(C) := \{ \sigma \in \Aut(C) : x \circ \sigma = x \}$ acts transitively on each $\Qbar$-fibre of $x$).
\end{enumerate}
\end{lemma}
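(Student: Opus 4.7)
The plan is to show that by passing to a finite cover of~$C$ and a finite extension of~$K$, an arbitrary triple $(K,C,G)$ satisfying the hypotheses of \cref{height-bound} can be replaced by one carrying additional data $(\fC,\fG,x)$ as in (i)--(iii), and that a height bound upstairs then transfers downstairs. Concretely, if $\pi \colon \tilde C \to C$ is any finite surjective morphism of smooth projective curves over a finite extension $K'/K$, and $\tilde G = G\times_C \tilde C$, then for any $s \in C^*(\ov K)$ and any lift $\tilde s \in \tilde C^*(\ov K)$ we have $\tilde G_{\tilde s} \cong G_s$, so $\tilde s$ has unlikely endomorphisms whenever $s$ does, and $[K'(\tilde s):K'] \geq [K(s):K]/([K':K]\deg\pi)$, while Weil heights on $\tilde C$ and $C$ are comparable under pullback. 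Hence a polynomial bound of the shape $h_{\tilde C}(\tilde s) \leq c_1 [K'(\tilde s):K']^{c_2}$ on $\tilde C$ implies the bound required in \cref{height-bound}, with only the constants altered.

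First I construct the function. Pick any non-constant $x_0 \in K(C)$ with a simple zero at~$s_0$; in general it has further zeros which need not lie at points where $G$ has split multiplicative reduction. Following \cite[Lemma~5.1]{Y(1)} (which relies on Gabber's Lemma), there is a finite cover $C' \to C$ together with $x_1 \in K(C')$ whose divisor of zeros is supported only on preimages of~$s_0$, each appearing with multiplicity one. Taking the Galois closure of $x_1 \colon C' \to \PP^1_K$ produces a smooth projective geometrically irreducible curve $\tilde C$ over~$K$ together with a finite Galois morphism $\tilde x \colon \tilde C \to \PP^1_K$, and every zero of $\tilde x$ still maps to~$s_0$ in~$C$. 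After a further finite extension of the base field, which is harmless by the discussion above, each zero of $\tilde x$ is $K$-rational and the corresponding toric part of $\tilde G := G \times_C \tilde C$ is split; properties~(ii) and~(iii) then hold for $(\tilde C, \tilde G, \tilde x)$.

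Finally I build the integral model. Let $\sigma_1,\dotsc,\sigma_n \in \Aut(\tilde C/\PP^1)$ be the deck transformations of $\tilde x$. Apply \cref{lem:ext2} to $\tilde C$, the function $\tilde x$, the automorphisms $\sigma_k$, and an arbitrary regular $\cO_K$-model $\fC''$ of $\tilde C$ (which exists by \cite[Prop.~10.1.8]{Liu}), to obtain a regular $\cO_K$-model $\fC$ of $\tilde C$ to which $\tilde x$ extends as a morphism $\fC \to \PP^1_{\cO_K}$. The semiabelian scheme $\tilde G$ extends to a semiabelian scheme $\fG \to \fC$: one has N\'eron models at every codimension-$1$ point of~$\fC$ by the semistable reduction theorem, and these glue to a semiabelian scheme on all of~$\fC$ after possibly further blowing up~$\fC$ at finitely many closed points (which preserves regularity), by the theory of toroidal compactifications of Faltings--Chai~\cite{FC90}. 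Property~(i) is then automatic, while (ii) and~(iii) are inherited from the construction of~$\tilde x$. The main obstacle is the cover construction in the second paragraph: producing a function whose zeros lie only above~$s_0$ amounts to trivialising a potentially non-trivial divisor class on~$C$, and this is exactly what the cited form of Gabber's Lemma is designed to achieve.
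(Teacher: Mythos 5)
Your proposal reverses the two main stages of the paper's argument, and this reversal introduces a genuine gap. The paper first applies Gabber's Lemma to produce a proper surjection $\tilde\fX^* \to \fC_N^*$ over which the abelian scheme extends to a semiabelian scheme, picks a curve $\tilde C$ inside $C \times_\fC \tilde\fX$, and only then applies \cite[Lemma 5.1]{Y(1)} to manufacture the Galois function $x$ on a further cover $C_4$ of $\tilde C$; the integral semiabelian model over $\fC_4$ is then obtained by pulling back from $\tilde\fX$. You instead build the Galois cover $\tilde C$ and the function $\tilde x$ first, then assert that the semiabelian scheme $\tilde G$ extends over a regular $\cO_K$-model $\fC$ of $\tilde C$ ``after possibly further blowing up $\fC$ at finitely many closed points, by the theory of toroidal compactifications of Faltings--Chai.'' This is not justified, and it is essentially the place where the real difficulty lives. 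Having Néron models over all codimension-one local rings of $\fC$ does not mean they glue to a semiabelian scheme over $\fC$; over a two-dimensional regular base, extending across codimension-two points requires, in general, a proper \emph{cover} of the base, not a blowup -- that is precisely what Gabber's Lemma (\cite[Prop.~1.16]{deligne83}, \cite{conrad:gabber}) supplies. But such a cover would change the curve, and then the Galois and simple-zero properties of $\tilde x$ that you already arranged would need to be re-established, so the order of operations cannot be reversed without circularity.

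You also mischaracterise the role of the two ingredients when you parenthetically assert that \cite[Lemma~5.1]{Y(1)} ``relies on Gabber's Lemma'' and later say that trivialising the divisor class on $C$ ``is exactly what the cited form of Gabber's Lemma is designed to achieve.'' In the paper, Gabber's Lemma and \cite[Lemma~5.1]{Y(1)} serve entirely disjoint purposes: Gabber's Lemma is used to extend the abelian scheme to a semiabelian scheme over an integral model, while \cite[Lemma~5.1]{Y(1)} (which is an elementary function-field argument, not relying on Gabber) is used to produce the Galois function with simple zeros supported above the degeneration point. Correctly separating these two roles, and performing the Gabber step \emph{before} the function-construction step, is what makes the paper's proof go through.
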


\begin{proof}
Let $C$, $C^*$, $G$, $G^*$, $s_0$ and $K$ denote the data referred to in the statement of Theorem \ref{scheme-bound}.

The proof has two stages: first we construct integral models of $C$ and~$G$ using Gabber's lemma, then we use \cite[Lemma~5.1]{Y(1)} to construct a suitable rational function~$x$.
Both stages require replacing~$C$ by a finite cover, and extending the base field~$K$.
The full proof is summarised in a commutative diagram at the end.

Let $\fC$ denote a regular $\cO_K$-model of $C$. Spreading out, $G$ extends to a semiabelian scheme $\fG_N$ over the base change $\fC_N$ of $\fC$ to $\cO_K[\frac{1}{N}]$, for some $N\in\bN$. Furthermore, for some open subset $\fC_N^*$ of $\fC_N$, the restriction $\fG_N^* := \fG_N|_{\fC_N^*}$ is an abelian scheme.
Now we are in a position to apply Gabber's Lemma (see \cite[Proposition 1.16]{deligne83}). 

We use the version of Gabber's Lemma stated in \cite{conrad:gabber}.
In the notations of \cite[Theorem 3.1]{conrad:gabber}, we set $S=\fC$, $X=\fC^*_N$, and $A=\fG^*_N$. Hence, we obtain a proper surjection $\tilde{\fX}^*\to\fC^*_N$ and an open immersion $\tilde{\fX}^*\hookrightarrow\tilde{\fX}$ into a proper $\fC$-scheme $\tilde{\fX}$ such that the pullback abelian scheme $\tilde{\fG}^* \to \tilde{\fX}^*$ of $\fG^*_N \to \fC^*_N$ extends to a semiabelian scheme $\tilde{\fG}\to\tilde{\fX}$. By Chow's Lemma, we can and do assume that $\tilde{\fX} \to \Spec (\cO_K)$ is projective.

Let $\tilde{C}$ denote a closed (geometrically) irreducible curve in $(C\times_{\fC}\tilde{\fX})_{\Qbar}$ not contained in a fibre of the projection to $C_{\Qbar}$. Then $\tilde{C}$ is projective and $\tilde{C}\to C_{\Qbar}$ is surjective.
Let $\tilde{s}_0\in\tilde{C}$ denote a lift of $s_0\in C$.
Let $L$ denote a finite extension of $K$ over which $\tilde{C}$ and $\tilde{s}_0$ are defined.
Let $\tilde{C}^*:=C^*_L\times_{C_L}\tilde{C}$, which is an open subvariety of $\tilde{C}$. 

Let $\tilde\fC$ denote the Zariski closure of $\tilde{C}$ in $\tilde{\fX} \times_{\Spec (\cO_K)} \Spec (\cO_L)$.
This is an $\cO_L$-model of $\tilde{C}$.
(Note that $\tilde{C} \to \Spec (\cO_L)$ is projective and its image contains the generic point of $\Spec (\cO_L)$, so it is surjective, hence flat as observed in \cite[Def.~8.3.1]{Liu}.)

% In order to prove \cref{height-bound} for $C$, $C^*$, $G$, $s_0$, $K$, it suffices to prove it for $\tilde{C}$, $\tilde{C}^*$, $\tilde{\fG}|_{\tilde{C}}$, $\tilde{s}_0$ and~$L$.
% Then $\tilde\fC$ and $\tilde{\fG}_{\tilde{\fC}}$ provide the integral models required by the additional assumption of \cref{height-bound-integral-models}.

Applying \cite[Lemma 5.1]{Y(1)} to $\tilde C$, after possibly replacing $L$ by a finite extension, we obtain a smooth projective geometrically irreducible
\footnote{\cite[Lemma~5.1]{Y(1)} erroneously says that $C$ ($\tilde C$ in our application) and $C_4$ are irreducible, when it should say that both are geometrically irreducible.}
algebraic curve $C_4$ over~$L$,
a non-constant morphism $\nu:C_4\to \tilde C$, and
a non-constant rational function $x\in K(C_4)$
such that 
\begin{itemize}
\item every point $s \in C_4(\Qbar)$ such that $x(s)=0$ is a simple zero of $x$ and satisfies $\nu(s)=\tilde{s}_0$;
\item $x \colon C_4 \to \PP^1$ is Galois.
\end{itemize}
After replacing $L$ by a further finite extension, we may assume that every zero of~$x$ in $C_4(\Qbar)$ lies in~$C_4(L)$.

By \cref{lem:ext}, there exists a regular $\cO_L$-model $\fC_4$ of $C_4$ such that $\nu \colon C_4 \to \tilde{C}$ extends to a morphism $\fC_4\to\tilde{\fC}$. We let $\fG_4$ denote the pullback of $\tilde{\fG}$ along $\fC_4 \to \tilde{\fC}$, and we let $G_4$ denote the restriction of $\fG_4$ to $C_4$.

Let $C^*_4 = \tilde{C}^* \times_{\tilde{C}} C_4$, which is an open subvariety of~$C_4$.
Let $G^*_4 = G_4|_{C^*_4}$, which is an abelian scheme over $C^*_4$.

Choose a point $s_1 \in C_4(L)$ such that $x(s_1)=0$.
Since $\nu(s_1) = \tilde{s}_0$, we have $G_{4,s_1} \cong \GG_{m,L}^g$.

Now $C_4$, $C_4^*$, $G_4$, $G_4^*$, $s_1$ and~$L$ satisfy the conditions of \cref{scheme-bound}, and it suffices to prove \cref{height-bound} with $C_4$, $C_4^*$, $G_4$, $G_4^*$, $s_1$ and~$L$ in place of $C$, $C^*$, $G$, $G^*$, $s_0$ and~$K$.
Furthermore, $\fC_4$ and $\fG_4$ provide integral models of $C$ and~$G$ respectively, as required by \cref{height-bound-integral-models}, while $x \in K(C_4)$ satisfies conditions (ii) and~(ii) of \cref{height-bound-integral-models}.

We show all the objects in this proof in the following commutative diagram (we use hook arrows to designate open immersions on the base schemes).
 
\begin{center}
\begin{tikzcd}
& G^* \arrow[r] \arrow[d]                      
& G \arrow[rrr, "\mathrm{spreading\ out}" description] \arrow[d]
& 
&
& \fG_N \arrow[d, "\mathrm{semiabelian}" description]
& \fG^*_N \arrow[d, "\mathrm{abelian}" description] \arrow[l]                                     \\
& C^* \arrow[r, hook]
& C \arrow[rr] \arrow[rrr, bend left]
&
& \mathfrak{C}
& \fC_N \arrow[l, hook']
& \fC^*_N \arrow[l, hook']                                                                        \\
& \tilde{C}^* \arrow[u] \arrow[r, hook]
& \tilde{C} \arrow[u] \arrow[r]
& \tilde{\fC} \arrow[r]
& \tilde{\fX} \arrow[u, "\mathrm{proper}" description]
&
& \tilde{\fX}^* \arrow[u, "\mathrm{proper\ surjective}" description] \arrow[ll, hook']            \\
  C_4^* \arrow[r, hook] \arrow[ru]
& C_4 \arrow[ru, "\nu" description] \arrow[r]
& \fC_4 \arrow[ru]
& \tilde{\fG}|_{\tilde{\fC}} \arrow[r] \arrow[u]
& \tilde{\fG} \arrow[u, "\mathrm{semiabelian}" description]
&
& {\tilde{\fG}^*} \arrow[u, "\mathrm{abelian}" description] \arrow[uuu, bend right=49] \arrow[ll]
\\
  G_4^* \arrow[u] \arrow[r]
& G_4 \arrow[u] \arrow[r]
& \fG_4 \arrow[u] \arrow[ru] 
\end{tikzcd}
\end{center}
The diagram should be read in the following order:
the upper-left square represents the data of \cref{scheme-bound}. The upper-right portion represents spreading out to $\cO_K[\frac{1}{N}]$. The lower-right rectangles represent the application of Gabber's Lemma.
% The lower-central square represents the modification of Gabber's construction so that the base becomes a fibered surface.
Finally the lower-left squares and diagonal arrows represent the application of \cite[Lemma~5.1]{Y(1)} and \cref{lem:ext}.
\end{proof}

\subsection{Proof of Theorem \ref{height-bound}: G-functions}\label{sec:G-funcs}

Suppose that we are in the situation of \cref{height-bound}, under the additional assumptions of \cref{height-bound-integral-models}.
Let $s_1,\ldots,s_n\in C(K)$ denote the zeroes of $x$.
By condition~(iii) of \cref{height-bound-integral-models}, for each $k=1,\ldots,n$, there is an automorphism $\sigma_k\in\Aut_x(C)$ such that $\sigma_k(s_1)=s_k$. (In particular, $\sigma_1$ is the identity.) As in \cite{Y(1)}, rather than analysing the periods of $G$ near each of the points $s_1,\ldots,s_{n}$, we will analyse the periods of $\sigma^*_kG$ near $s_1$ for $k=1,\ldots,n$. 

By \cref{lem:ext2} (with $\fC''=\fC$), after possibly replacing $\fC$ and pulling back, we can and do assume that $x:C\to\PP^1$ extends to a morphism $\fC\to\PP^1_{\cO_K}$ and each automorphism $\sigma_k:C\to C$ extends to a morphism $\fC'\to\fC$ for $\fC'$ another regular $\cO_K$-model of $C$. We let $\fC_0$ denote the Zariski closure of the image of the morphism $\Spec (K)\to C\to\fC'$ associated with $s_1\in C(K)$ (with the reduced subscheme structure). By \cref{lem:point}, $\fC_0$ is the image of a section $\Spec(\cO_K)\to\fC'$ as in the setup of \cref{sec:formal-periods}.
For $k=1,\ldots,n$, we let $\fG^{(k)}$ denote the pullback of the semiabelian scheme~$\fG$ along $\fC'\xrightarrow{\sigma_k}\fC$,
and we let $\fC'_{\formal}$ denote the formal completion of $\fC'$ along~$\fC_0$. We summarise these constructions in the following diagram.

\medskip
\begin{center}
\begin{tikzcd}
                & \fG^{(k)}_{\formal} \arrow[r] \arrow[d] & \fG^{(k)} \arrow[r] \arrow[d] & \fG \arrow[d]      &               \\
\fC_0 \arrow[r] & \fC'_{\formal} \arrow[r, hook]          & \fC' \arrow[r, "\sigma_k"]    & \fC \arrow[r, "x"] & \PP^1_{\cO_K}
\end{tikzcd}
\end{center}
\medskip

After possibly shrinking $C^*$, we may assume that $\sigma_k(C^*) = C^*$ for all~$k$, with $C^*$ still being an open subset of $C$ (indeed, $C^*$ will be the complement of finitely many closed $\Aut_x(C)$-orbits in~$C$).
Let $C' = C^* \cup \{s_1\}$, which is a Zariski open subset of~$C$.

We let $G^{(k)}$ denote the pullback of $G$ along $C' \hookrightarrow C \xrightarrow{\sigma_k} C$.
Since $\sigma_k(C^*) \subset C^*$, $G^{(k)}|_{C^*} \to C^*$ is a principally polarised abelian scheme for each~$k$.
Furthermore, by condition~(ii) of \cref{height-bound-integral-models}, $G^{(k)}_{s_1} \cong G_{s_k} \cong \GG_{m,K}^g$. 

Recall the notation $H^1_{DR}(G^*/C^*)^\can$ from section~\ref{subsec:formal-periods-statement}.
The following lemma will be used in Section \ref{subsec:trivial-relations}.

\begin{lemma} \label{omega-eta-basis}
We can replace $C^*$ with a Zariski open subset and $C'$ with the new $C^* \cup \{s_1\}$ such that, for every $k=1,\dotsc,n$, there exist 
\[ \omega_1^{(k)}, \dotsc, \omega_g^{(k)} \in \Omega^\inv_{G^{(k)}/C'}(C')\text{ and }\eta_1^{(k)}, \dotsc, \eta_g^{(k)} \in H^1_{DR}(G^{(k)*}/C^*)^\can(C') \]
satisfying the following conditions:
\begin{enumerate}[(i)]
\item the restrictions $\omega_1^{(k)}|_{C^*}, \dotsc, \omega_g^{(k)}|_{C^*}, \eta_1^{(k)}|_{C^*}, \dotsc, \eta_g^{(k)}|_{C^*}$ form an $\cO_{C^*}$-basis for $H^1_{DR}(G^{(k)*}/C^*)$;
\item with respect to this basis, the symplectic form $\psi$ on $H^1_{DR}(G^{(k)*}/C^*)$ induced by the polarisation is represented by a matrix of the form $e_k\fullsmallmatrix{0}{I}{-I}{0}$, for some $e_k \in \cO(C^*)^\times$.
\end{enumerate}
\end{lemma}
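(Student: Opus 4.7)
The strategy is linear-algebraic, adapted to the Hodge and symplectic structures of the polarised semiabelian scheme.

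First I would shrink $C^*$ (removing only finitely many $\Aut_x(C)$-orbits so that the condition $\sigma_k(C^*) = C^*$ is preserved, and also to ensure that $C'$ is affine, which is possible since $C \setminus C'$ is a non-empty finite set after removing at least one $\Aut_x(C)$-orbit) so that both locally free sheaves $\Omega^\inv_{G^{(k)}/C'}$ and $H^1_{DR}(G^{(k)*}/C^*)^\can$ become free $\cO(C')$-modules. Choose an $\cO(C')$-basis $\omega_1^{(k)}, \ldots, \omega_g^{(k)}$ of $\Omega^\inv_{G^{(k)}/C'}(C')$. Because $\Omega^\inv$ is the Hodge filtration $F^1 \subset H^1_{DR}$, it is Lagrangian for the polarisation-induced symplectic form $\psi$, so $\psi(\omega_i^{(k)}, \omega_j^{(k)}) = 0$ automatically.

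Next, construct a free $\cO(C')$-submodule $\cF' \subset H^1_{DR}(G^{(k)*}/C^*)^\can$ complementary to $\cF := \Omega^\inv$ and Lagrangian for the extended symplectic form. Lagrangian complements to $\cF$ in $(\cH, \psi)$ form a torsor on $C'$ under the sheaf $\operatorname{Sym}^2 \cF^\vee$, which is trivial because $H^1(C', \operatorname{Sym}^2 \cF^\vee) = 0$ on an affine curve. Picking an $\cO(C')$-basis $\tilde\eta_1, \ldots, \tilde\eta_g$ of $\cF'$, we obtain $\psi(\tilde\eta_i, \tilde\eta_j) = 0$, and the matrix $M_{ij} := \psi(\omega_i^{(k)}, \tilde\eta_j) \in \cO(C')$ satisfies $\det(M) \in \cO(C^*)^\times$, since $\psi$ is non-degenerate on $C^*$ and $\cF, \cF'$ are complementary Lagrangians there.

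Finally, renormalise via the adjugate: set $\eta_j^{(k)} := \sum_i \operatorname{adj}(M)_{ij} \tilde\eta_i \in H^1_{DR}(G^{(k)*}/C^*)^\can(C')$. The identity $M \cdot \operatorname{adj}(M) = \det(M) I$ gives $\psi(\omega_i^{(k)}, \eta_j^{(k)}) = \det(M)\, \delta_{ij}$, while $\psi(\eta_i^{(k)}, \eta_j^{(k)}) = 0$ is preserved because the $\tilde\eta$'s span a Lagrangian module. Thus the pairing matrix is $e_k \fullsmallmatrix{0}{I}{-I}{0}$ with $e_k := \det(M) \in \cO(C^*)^\times$, and the new vectors form a basis on $C^*$ since $\det(\operatorname{adj}(M)) = \det(M)^{g-1}$ is invertible there.

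The main obstacle is the construction of the global Lagrangian complement $\cF'$. Two ingredients make it available in our setting: (i) the polarisation-induced symplectic form extends to a non-degenerate pairing on the canonical extension across $s_1$, a standard feature of variations of Hodge structure with unipotent monodromy, which holds here because the multiplicative degeneration forces the local monodromy at $s_1$ to be unipotent (Serre-Tate); and (ii) the vanishing of $H^1(C', \operatorname{Sym}^2 \cF^\vee)$ on the affine curve $C'$, which globalises the local existence of Lagrangian complements.
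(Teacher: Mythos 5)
Your proof is correct in its essentials, but it takes a genuinely different route from the paper's, and trades simplicity for stronger input. The paper's argument is purely linear-algebraic over the function field: it chooses a symplectic basis of $H^1_{DR}(G^{(k)}_{K(C)}/K(C))$ adapted to the Lagrangian subspace $\Omega^\inv_{G^{(k)}_{K(C)}/K(C)}$, shrinks $C^*$ so that this basis trivialises $H^1_{DR}(G^{(k)*}/C^*)$ and so that $H^1_{DR}(G^{(k)*}/C^*)^\can$ is free over $\cO(C')$, and then scales the basis vectors by a single power $x^{N_k}$ to force them into $H^1_{DR}(G^{(k)*}/C^*)^\can(C')$; the factor $e_k$ comes out concretely as $x^{2N_k}$. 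Your approach instead shrinks to make $C'$ affine, invokes non-degeneracy of the polarisation pairing on the canonical extension, treats Lagrangian complements of $\cF=\Omega^\inv$ as a torsor, and kills the obstruction by Serre vanishing of $H^1$ of a quasi-coherent sheaf on an affine curve. Both work, but yours relies on the additional (true, for unipotent monodromy) fact that $\psi$ extends non-degenerately to the canonical extension across $s_1$, which the paper never needs. Two small inaccuracies in your version: the torsor is naturally under $\operatorname{Sym}^2\cF$ rather than $\operatorname{Sym}^2\cF^\vee$ (the difference of two Lagrangian complements is a symmetric map $\cF'\to\cF$, identified with an element of $\operatorname{Sym}^2\cF$ via $\psi$), though this does not affect the $H^1$-vanishing conclusion; and once you have a genuine Lagrangian complement $\cF'$ on all of $C'$ together with non-degeneracy of $\psi$ on $C'$, you get $\det(M)\in\cO(C')^\times$, not merely $\cO(C^*)^\times$, so the adjugate renormalisation is redundant --- a change of basis by $M^{-1}$ directly yields $e_k=1$.
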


\begin{proof}
Since $H^1_{DR}(G^{(k)}_{K(C)}/K(C))$ is a vector space over $K(C)$, and its subspace $\Omega^\inv_{G^{(k)}_{K(C)}/K(C)}$ is isotropic with respect to~$\psi$, we can choose a symplectic basis \[\cB:=\omega_1^{(k)\prime}, \dotsc, \omega_g^{(k)\prime}, \eta_1^{(k)\prime}, \dotsc, \eta_g^{(k)\prime}\subset H^1_{DR}(G^{(k)}_{K(C)}/K(C))\]
with the property that $\omega_1^{(k)\prime}, \dotsc, \omega_g^{(k)\prime}\in \Omega^\inv_{G^{(k)}_{K(C)}/K(C)}$.
After replacing $C^*$ by a Zariski open subset, we may assume that $\cB$ forms a $\cO_{C^*}$-basis for $H^1_{DR}(G^{(k)*}/C^*)$.

After shrinking $C^*$ (and thus $C'$) again, we may assume that $H^1_{DR}(G^{(k)*}/C^*)^\can$ is a free $\cO_{C'}$-module.
Then $H^1_{DR}(G^{(k)*}/C^*)^\can(C')$ is an $\cO(C')$-lattice in the $\cO(C^*)$-module $H^1_{DR}(G^{(k)*}/C^*)(C^*)$, and $\cO(C^*) = \cO(C')[1/x]$.
Hence we can choose $N_k \in \ZZ$ such that $x^{N_k}\omega_1^{(k)\prime}, \dotsc, x^{N_k}\omega_g^{(k)\prime}, x^{N_k}\eta_1^{(k)\prime}, \dotsc, x^{N_k}\eta_g^{(k)\prime}$ lie in $H^1_{DR}(G^{(k)*}/C^*)^\can(C')$.
Taking $\omega_j^{(k)} = x^{N_k}\omega_j^{(k)\prime}$ and $\eta_j^{(k)} = x^{N_k}\eta_j^{(k)\prime}$ proves~(i).

With respect to this basis, $\psi$ is represented by the matrix $e_k\fullsmallmatrix{0}{I}{-I}{0}$, where $e_k = x^{2N_k}$.
Since $G^* \to C^*$ is an abelian scheme, while $G_s \cong \GG_m^g$ for all zeroes $s$ of~$x$ in~$C$, $x$ has no zeroes on~$C^*$.  After shrinking $C^*$ again, we may assume that $x$ has no poles on~$C^*$.
Then $e_k \in \cO(C^*)^\times$, proving~(ii).
\end{proof}

We replace $C^*$ as described in Lemma \ref{omega-eta-basis}.  After possibly further shrinking~$C^*$, we may assume that $\sigma_k(C^*)=C^*$ for all~$k=1,\ldots,n$.  We then replace $C'$ by the new $C^* \cap \{s_1\}$.

Since $m(C)$ is Hodge generic in $\Ag$, the abelian scheme $G^{(k)} \to C'$ satisfies the conditions of \cref{gauss-manin-basis}.
Hence, by \cref{gauss-manin-basis}, the $\cO(C^*)$-$\nabla_{d/dx}$-module generated by $\omega_1^{(k)}|_{C^*}, \dotsc, \omega_g^{(k)}|_{C^*}$ is all of $H^1_{DR}(G^{(k)^*}/C^*)$. In particular, it contains $\eta_1^{(k)}, \dotsc, \eta_g^{(k)}$.
Thus, $\omega^{(k)}_{1},\ldots,\omega^{(k)}_{g},\eta^{(k)}_{1},\ldots,\eta^{(k)}_{g}$ satisfy the requirements of \cref{exist-formal-periods} and \cref{exist-relations}.

We let $\tilde{\omega}_{1},\ldots,\tilde{\omega}_{g}$ denote the standard $\cO_{\fC'}$-basis for $\Omega^{\inv}_{\GG^g_m\times\fC'/\fC'}$ as in \cref{exist-formal-periods}.

By \cite[Example 10.1/5]{BLR90}, since $C\cap \fC_0=\{s_1\}$ and $G^{(k)}_{s_1} \cong \GG_{m,K}^g$, the semiabelian scheme $\fG^{(k)}\times_{\fC'}\fC_0$ is a split torus of dimension $g$ over $\fC_0\cong\Spec(\cO_K)$. Therefore, let $\phi^{(k)}_\formal \colon \GG_m^g \times \fC'_\formal \to \fG^{(k)}_\formal$ be a formal uniformisation of $\fG^{(k)}_\formal$ over~$\fC'_\formal$.
For each place $v$ of~$K$, let $\phi^{(k)}_v$ denote the $v$-adic uniformisation of $\fG^{(k)}$ compatible with~$\phi^{(k)}_\formal$, defined over an open set $\cC_v^{(k)} \subset C^\van$, as defined in~\cref{subsec:uniformisations-places}.

For each $k\in\{1,\ldots,n\}$, we apply \cref{exist-formal-periods} with $R=\cO_K$, $\fC=\fC'$, $\fC_0$ and $C^*$ as defined above, $s_0=s_1$, $\fG=\fG^{(k)}$, $x=x\circ\sigma_1$, $\omega_i=\omega^{(k)}_{i}$, $\eta_i=\eta^{(k)}_{i}$, $\phi_\formal=\phi^{(k)}_\formal$ and $\phi_v=\phi^{(k)}_v$. Hence, we obtain G-functions $F^{(k)}_{ij},G^{(k)}_{ij}\in \powerseries{K}{X}$ for $i,j=1,\ldots,g$ and, for each place $v$ of $K$, admissible open sets $U^{(k)}_{v}\subset C'^{\van}$ and real numbers $r^{(k)}_{v}$ such that
\begin{enumerate}
\item $r^{(k)}_v > 0$ for all~$v$, and $r^{(k)}_v = 1$ for almost all~$v$;
\item $s_1^\van \in U_v^{(k)} \subset \cC_v^{(k)}$;
\item $x^{\van}$ maps $U^{(k)}_{v}$ isomorphically onto the open disc $D(r^{(k)}_{v},K_v)$;
\item the $v$-adic radii of convergence of $F^{(k)}_{ij}$ and $G^{(k)}_{ij}$ are at least $r^{(k)}_{v}$ for all $i$ and~$j$;
\item for every place $v$ of~$K$, for every finite extension of complete fields $L/K_v$ and for all $s \in U^{(k)*}_{v}(L)$, we have
\begin{align*}
   \phi_{v,s}^{(k)*} \omega_{i,s}^{(k)\van} & = \sum_{j=1}^g F^{(k)\van}_{ij}(x^\van(s)) \tilde\omega_{j,s}^{\van} \text{ for } 1 \leq i \leq g,\\
   \phi_{v,s}^{(k)*} \eta_{i,s}^{(k)\van} & = \sum_{j=1}^g G^{(k)\van}_{ij}(x^\van(s)) \tilde\omega_{j,s}^{\van} \text{ for } 1 \leq i \leq g,
\end{align*}
in the analytic de Rham cohomology $H^1_{DR}((\GG^\van_m)^g/L)$.
\end{enumerate}

\subsection{Proof of Theorem \ref{height-bound}: global relations}

This section follows the blueprint of \cite[sec. 5.H.]{Y(1)}. As in \cite{Y(1)}, some of the group schemes $G^{(k)}$ may be generically isogenous to each other.  We must pick one of the schemes $G^{(k)}$ from each generic isogeny class, in order to eliminate trivial relations between their periods near $s_1$. To that end, let $\bar{\eta}$ denote a geometric generic point of $C$, and define an equivalence relation $\sim$ on $\{1,\ldots,n\}$ by \[ k\sim k'\text{ if there exists an isogeny }G^{(k)}_{\bar{\eta}}\to G^{(k')}_{\bar{\eta}}.\]
We let $\Lambda$ denote a set of representatives for $\sim$ and, for any $k=1,\ldots,n$, we let $[k]$ denote the unique element in $\Lambda$ such that $k\sim[k]$. By abuse of notation, for superscripts, we will write $[k]$ instead of $([k])$.

Consider the set of $G$-functions
\[\cG=\{F^{(k)}_{ij},G^{(k)}_{ij}:k\in\Lambda\text{ and }i,j=1,\ldots,g\}\]
and, for any place $v$ of $K$, define $R_v$ to to be $\min(R(F^v):F\in\cG)$. We will construct a global relation between the elements of $\cG$ at $x(s)$ for any $s\in C^*(\Qbar)$ for which $G_s$ has unlikely endomorphisms. 

However, in order to control the radii within which global relations are required to hold, we will add an additional G-function to the set $\cG$.
To construct this G-function, we first apply \cite[Lemma 5.5]{Y(1)} to $C$ and $x$. We deduce that, for each place $v$ of $K$, there exists a real number $r_v>0$ and open sets $U_{v,1},\ldots,U_{v,n}$ of $C^\van$ such that
\begin{enumerate}[(i)]
    \item $r_v \geq 1$ for almost all $v$;
\item $s_k^\van \in U_{v,k}$;
\item for each $v$, the sets $U_{v,1}, \ldots, U_{v,n}$ are pairwise disjoint and $U_{v,1} \cup \cdots \cup U_{v,n}$ is equal to the preimage under $x^\van$ of $D(r_v,K_v)$;
\item for each $v$ and $k$, $x^\van$ restricts to a bijection from $U_{v,k}$ to the open disc $D(r_v,K_v)$. 
\end{enumerate}

% We can and do assume that, for all $v$ and $k$, the set $U_{v,k}$ is contained in $\sigma_k(U^{(k)}_{v})$ (in other words, that $r_v\leq r^{(k)}_{v}$).
Shrinking $r_v$ and $U_{v,k}$, we can and do assume that $r_v \leq r^{(k)}_v$ for all $v$ and~$k$.
It follows that, for all places~$v$ of~$K$ and all $k,k'$ with $1 \leq k,k' \leq n$, we have $U_{v,k} \subset \sigma_k(U_v^{(k')})$.
Indeed, since $x \circ \sigma_k = x$, $x^\van$ maps $\sigma_k^\van(U_v^{(k')})$ isomorphically onto $D(r_v^{(k')},K_v)$, which contains $D(r_v,K_v)$.
Consequently, $\sigma_k^\van(U_v^{(k')})$ contains one of the sets $U_{v,1}, \dotsc, U_{v,n}$.
Since $s_k^\van = \sigma_k^\van(s_1^\van) \in \sigma_k^\van(U_v^{(k')})$, we conclude that $U_{v,k} \subset \sigma_k^\van(U_v^{(k')})$.

Now choose $\zeta\in K^\times$ such that $|\zeta^\van|_{v}\leq r_v$ for all places $v$ of $K$ for which $r_v<1$ (which is the case for only finitely many $v$ by (i)). We let $H$ denote the G-function $\zeta/(\zeta-X)=\sum_{k=0}^\infty(X/\zeta)^k$ and we let $\cG'=\cG\cup\{H\}$. We define $R'_{v}:=\min(1,R(H^\van),R_v)$. In particular, we have $R'_v\leq r_v$.

\medskip

Now let $s\in C^*(\Qbar)$ be such that $G_s$ has unlikely endomorphisms, and let $\hat{K}=K(s)$. For each $k=1,\ldots,n$, we have $\sigma^{-1}_k(s)\in C^*(\hat{K})$ and $G^{(k)}_{\sigma^{-1}_k(s)}\cong G_s$ has unlikely endomorphisms.
Since $k \sim [k]$, $G^{(k)}_{\sigma^{-1}_k(s),\Qbar}$ is isogenous to $G^{[k]}_{\sigma^{-1}_k(s),\Qbar}$.
Hence the latter also has unlikely endomorphisms.
Therefore, applying \cref{exist-relations} to the semiabelian scheme $\fG^{[k]}$ and the point $\sigma^{-1}_k(s) \in C^*(\hat{K})$, we obtain a homogeneous polynomial $P_k\in\Qbar[\underline{Y},\underline{Z}]$ of degree at most $\newC{mult-rel-1k}[\hat{K}:K]$, not in the ideal~$I$.

Let $\hat{v}$ be a place of $\hat{K}$ and let $v:=\hat{v}|_K$.
Suppose that $\abs{x^\van(s^\vhatan)}_{\hat v}<R'_v\leq r_v$. We have $s^\vhatan \in U_{v,k}(\hat K)$ for some $k\in\{1,\ldots,n\}$. Hence $\sigma_k^{-1}(s)^\vhatan\in U^{[k]}_{v}(\hat K) \subset C^{\van}(\hat K)$. Therefore,
\begin{align*}
   P^\van_{k} \bigl( \underline{F}^{[k]\van}&(x^\van(\sigma^{-1}_k(s)^\vhatan)), \, \underline{G}^{[k]\van}(x^\van(\sigma^{-1}_k(s)^\vhatan)) \bigr)
\\ = {} & P^\van_{k} \bigl( \underline{F}^{[k]\van}(x^\van(s^\vhatan)), \, \underline{G}^{[k]\van}(x^\van(s^\vhatan)) \bigr) = 0, 
\end{align*} 
where $\underline{F}^{[k]}$ (resp. $\underline{G}^{[k]}$) denotes the tuple of G-functions $(F^{[k]}_{11},\ldots,F^{[k]}_{gg})$ (resp. $(G^{[k]}_{11},\ldots,G^{[k]}_{gg})$). 

We define a new polynomial $P\in\Qbar[\underline{Y}^{(k)},\underline{Z}^{(k)}:k\in\Lambda]$ by taking the product of the $P_k(\underline{Y}^{[k]},\underline{Z}^{[k]})$, for all $k=1,\ldots,n$. We conclude that $P$ is a global relation of degree at most $\newC{mult-rel-manyk}[\hat{K}:K]$ between the elements of $\cG'$ at $x(s)$.

\subsection{Proof of Theorem \ref{height-bound}: trivial relations} \label{subsec:trivial-relations}

This section follows \cite[sec.~5.I]{Y(1)}, which is based on arguments in \cite[sec.~7]{Papas:heights}.

We claim that the global relation $P$ is non-trivial. \cref{height-bound} then follows, by \cite[Theorem E]{And89}.

In order to verify the claim, we fix an archimedean place $v$ of $K$, and we consider the fibre product $G^{*\Lambda}$ over $C^*$ of the $G^{*(k)}$ for $k\in\Lambda$. We let $\gM$ denote the generic Mumford--Tate group of this abelian scheme and we let $\pi:G^{*\Lambda}\to C^*$ denote its structure morphism. For notational ease, we relabel $n=|\Lambda|$.
Since the image of the curve $C^*$ in $\Ag$ is Hodge generic, the generic Mumford--Tate group of each $G^{*(k)}$ is $\gGSp_{2g}$.
Hence, the derived subgroup $\gM^\der$ of $\gM$ is a subgroup of $\gSp_{2g}^n$ that surjects on to each factor. 

\begin{proposition}\label{prop:Sp2gn}
We have $\gM^\der=\gSp_{2g}^n$.
\end{proposition}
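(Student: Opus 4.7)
The plan is to apply a Goursat-type decomposition argument, using that each factor $\gSp_{2g}$ is almost simple (the adjoint $\gPSp_{2g}$ is simple) with no outer automorphisms (the Dynkin diagram $C_g$ has no non-trivial symmetries), together with the fact that by construction $\Lambda$ consists of pairwise generically non-isogenous abelian schemes. The idea is that any proper subgroup $H \subset \gSp_{2g}^n$ surjecting onto each factor must, by Goursat, arise from a non-trivial identification between some pair of factors, and such an identification would force an isogeny contradicting the definition of $\Lambda$.

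More precisely, I would argue by contradiction. Suppose that $\gM^\der \neq \gSp_{2g}^n$. Then there exist distinct indices $k, k' \in \Lambda$ such that the projection $\pi_{k,k'} \colon \gM^\der \to \gSp_{2g} \times \gSp_{2g}$ has image strictly smaller than the full product. Since $\pi_{k,k'}(\gM^\der)$ surjects onto each of the two factors, Goursat's lemma (applied at the level of algebraic groups) combined with the almost-simplicity of $\gSp_{2g}$ forces $\pi_{k,k'}(\gM^\der)$ to be the graph of an algebraic group isomorphism $\sigma \colon \gSp_{2g} \to \gSp_{2g}$. Since $\gSp_{2g}$ has trivial outer automorphism group, $\sigma$ is inner, and hence the two projections of $\gM^\der$ onto the factors indexed by $k$ and $k'$ yield isomorphic representations. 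By the Tannakian interpretation of the Mumford--Tate group, this produces a non-zero $\gM$-invariant element of
\[ \Hom_\QQ\bigl(H_1(G^{(k)}_{\bar\eta}, \QQ), H_1(G^{(k')}_{\bar\eta}, \QQ)\bigr), \]
i.e., a morphism of rational Hodge structures of type $\{(1,0),(0,1)\}$. The standard correspondence between such Hodge-structure morphisms and morphisms of abelian varieties up to isogeny then yields a $\QQ$-isogeny $G^{(k)}_{\bar\eta} \sim G^{(k')}_{\bar\eta}$, contradicting that $k \neq k'$ are distinct representatives in $\Lambda$.

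The main obstacle will be the careful Tannakian bookkeeping that converts the graph structure of $\pi_{k,k'}(\gM^\der)$ into an actual morphism of $\QQ$-Hodge structures (rather than an abstract isomorphism of algebraic groups defined only up to conjugation by $\gGSp_{2g}$). One has to verify that the similitude factors match and that the isomorphism is $\QQ$-rational, so that the resulting homomorphism lies in $\Hom(G^{(k)}_{\bar\eta}, G^{(k')}_{\bar\eta}) \otimes \QQ$. This can be handled by expressing the Mumford--Tate group as the stabiliser of the Hodge classes in all tensor constructions on $\bigoplus_k H_1(G^{(k)}_{\bar\eta}, \QQ)$, so that a non-trivial Goursat identification produces a Hodge class in $\Hom(H_1(G^{(k)}_{\bar\eta}, \QQ), H_1(G^{(k')}_{\bar\eta}, \QQ))$ of type $(0,0)$; the rest is standard.
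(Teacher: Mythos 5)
Your proposal follows the same circle of ideas as the paper's proof --- Goursat's lemma, the absence of outer automorphisms of $\gSp_{2g}$ (\cref{auts-of-sp}), and the contradiction that a Goursat identification between two factors forces an isogeny $G^{(k)}_{\bar\eta} \to G^{(k')}_{\bar\eta}$, against the choice of $\Lambda$ --- but two intermediate steps are asserted without justification, both of which the paper handles explicitly through its induction. The first is the reduction ``if $\gM^\der \neq \gSp_{2g}^n$ then some pairwise projection $\pi_{k,k'}(\gM^\der)$ is proper.'' This is a Ribet-type lemma for products of almost-simple groups that itself requires an inductive proof and is not an immediate consequence of Goursat; the paper avoids it by running the induction directly on $\gM^\der \subset \gSp_{2g}^{n-1}\times\gSp_{2g}$.

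The second is the claim that a proper $\pi_{k,k'}(\gM^\der)$ surjecting onto both factors must be the \emph{graph of an isomorphism of $\gSp_{2g}$}. Since $\gSp_{2g}$ is only almost simple, Goursat a priori also allows $\pi_{k,k'}(\gM^\der)$ to be the preimage of the graph of an automorphism of the adjoint group $\gPSp_{2g}$. Ruling this out requires the connectedness of $\gM^\der$ combined with the lifting statement \cref{aut-lift}: once the adjoint automorphism is lifted to $\gSp_{2g}$, the preimage of the adjoint graph is seen to have two connected components, whereas the image of the connected group $\gM^\der$ is connected. The paper dispatches this case explicitly, whereas your proposal conflates it with the genuine graph case. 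Once both gaps are filled, your Tannakian argument for producing the isogeny is sound and is essentially what the paper leaves implicit in ``this implies that there is an isogeny'': the $\gM^\der$-stable line $\QQ\phi$ inside $\Hom\bigl(H_1(G^{(k)}_{\bar\eta},\QQ),H_1(G^{(k')}_{\bar\eta},\QQ)\bigr)$ is a rank-one sub-Hodge-structure of a weight-$0$ structure, hence isomorphic to $\QQ(0)$, hence $\gM$-invariant and a Hodge class.
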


In order to prove \cref{prop:Sp2gn}, we need the following lemmas.

\begin{lemma}\label{auts-of-sp}
The automorphism group of $\gSp_{2g}$ as a $\QQ$-algebraic group is equal to $\gPGSp_{2g}(\QQ)$.
\end{lemma}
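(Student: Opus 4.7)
The plan is to identify $\Aut(\gSp_{2g})$ as a $\QQ$-algebraic group scheme and then take its $\QQ$-points. I will carry this out in three steps.

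First, I would exploit the fact that $\gSp_{2g}$ is split semisimple over~$\QQ$ of type $C_g$. For such groups, the classical structure theorem provides an exact sequence of $\QQ$-group schemes
\[ 1 \to \Inn(\gSp_{2g}) \to \Aut(\gSp_{2g}) \to \mathrm{Out}(\gSp_{2g}) \to 1, \]
in which $\mathrm{Out}(\gSp_{2g})$ is the constant group associated with the automorphism group of the Dynkin diagram of type $C_g$. Since the Dynkin diagram of type $C_g$ has no non-trivial symmetries, the outer part is trivial, giving $\Aut(\gSp_{2g}) = \Inn(\gSp_{2g})$ as $\QQ$-algebraic groups.

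Next, I would compute that the center $Z(\gSp_{2g})$ consists of scalar matrices $\pm I$, so $Z(\gSp_{2g}) = \mu_2$. Hence $\Inn(\gSp_{2g}) = \gSp_{2g}/\mu_2 = \gPSp_{2g}$ by definition. Then comes the crucial step: identifying $\gPSp_{2g}$ with $\gPGSp_{2g}$ as $\QQ$-algebraic groups. The inclusion $\gSp_{2g} \hookrightarrow \gGSp_{2g}$ descends to a morphism $\gPSp_{2g} \to \gPGSp_{2g}$ whose scheme-theoretic kernel is $(\gSp_{2g} \cap \GG_m)/\mu_2 = \mu_2/\mu_2 = 1$, so the map is a closed immersion. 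A dimension count yields $\dim \gPSp_{2g} = g(2g+1) = \dim \gGSp_{2g} - 1 = \dim \gPGSp_{2g}$; since both $\QQ$-groups are smooth and connected, the closed immersion must be an isomorphism. Taking $\QQ$-points concludes the proof.

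The subtle point to watch is precisely the identification $\gPSp_{2g} = \gPGSp_{2g}$: one cannot simply verify it on $\QQ$-points, since $\gPSp_{2g}(\QQ)$ is strictly larger than the naive quotient $\gSp_{2g}(\QQ)/\{\pm I\}$ (the discrepancy being governed by $H^1(\QQ,\mu_2) = \QQ^\times/(\QQ^\times)^2$ via Kummer theory). The identification genuinely requires the scheme-theoretic argument above. Correspondingly, when one translates back, the equality $\Aut(\gSp_{2g})(\QQ) = \gPGSp_{2g}(\QQ)$ should be read as the statement that every $\QQ$-algebraic automorphism of $\gSp_{2g}$ is given by conjugation by some element of $\gGSp_{2g}(\QQ)$, uniquely determined modulo scalars.
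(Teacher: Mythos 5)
Your argument is correct and is essentially an unpacking of the reference that the paper cites: \cite[Theorem 2.8]{PR94} gives precisely the exact sequence $1 \to \overline{G}(K) \to \Aut(G) \to \mathrm{Sym}(\Delta)$ for a split semisimple group, with $\overline G$ the adjoint group, and the paper invokes the triviality of $\mathrm{Sym}(\Delta)$ for type~$C$ just as you do. The one step you spell out that the paper leaves implicit is the identification $\gPSp_{2g} \cong \gPGSp_{2g}$ as $\QQ$-groups (so that the adjoint group's $\QQ$-points really are $\gPGSp_{2g}(\QQ)$, not the possibly smaller $\gSp_{2g}(\QQ)/\{\pm I\}$); your closed-immersion-plus-dimension-count argument for that identification is fine, and the accompanying remark about $H^1(\QQ,\mu_2)$ correctly flags why this cannot be checked naively on $\QQ$-points.
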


\begin{proof}
This is \cite[Theorem 2.8]{PR94}, using the fact that a Dynkin diagram of type~$C$ has no non-trivial symmetries.
\end{proof}

\begin{lemma}\label{aut-lift}
Every $\phi\in\Aut(\gPSp_{2g})$ lifts to an automorphism of $\gSp_{2g}$.  
\end{lemma}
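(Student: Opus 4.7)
The plan is to derive this from an analog of \cref{auts-of-sp} for $\gPSp_{2g}$, together with the observation that the obvious conjugation action of $\gPGSp_{2g}$ on $\gSp_{2g}$ descends compatibly to $\gPSp_{2g}$.

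First I would verify, by essentially the same proof as \cref{auts-of-sp} (which only uses that the Dynkin diagram of type $C_g$ has no non-trivial symmetries, so that every automorphism is inner over $\ov\QQ$, and that the inner automorphism group of an adjoint group is the group of $\QQ$-points of the group itself), that
\[ \Aut(\gPSp_{2g}) = \gPGSp_{2g}(\QQ). \]
Here the identification sends a class $[g] \in \gPGSp_{2g}(\QQ)$ to the automorphism $\Inn([g])$ of $\gPSp_{2g}$ induced by conjugation by any lift $g \in \gGSp_{2g}$, which is well-defined because the center $\gZ(\gGSp_{2g}) = \GG_m$ acts trivially by conjugation on all of $\gGSp_{2g}$, and hence on $\gPSp_{2g}$.

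Next I would note that the same recipe produces a homomorphism
\[ \iota \colon \gPGSp_{2g}(\QQ) \longrightarrow \Aut(\gSp_{2g}), \]
again sending $[g]$ to conjugation by any lift $g \in \gGSp_{2g}$; this is well-defined for the same reason, and $\iota$ is the inverse of the isomorphism of \cref{auts-of-sp}. The key compatibility is that for each $[g] \in \gPGSp_{2g}(\QQ)$, the automorphism $\iota([g])$ of $\gSp_{2g}$ preserves the center $\{\pm I\}$ (as any automorphism of a connected algebraic group must), and the induced automorphism of $\gPSp_{2g} = \gSp_{2g}/\{\pm I\}$ is visibly $\Inn([g])$.

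Given $\phi \in \Aut(\gPSp_{2g})$, we therefore write $\phi = \Inn([g])$ for some $[g] \in \gPGSp_{2g}(\QQ)$ via the first step, and then $\iota([g]) \in \Aut(\gSp_{2g})$ is the required lift. I do not anticipate any serious obstacle beyond cleanly writing down the analog of \cref{auts-of-sp} for $\gPSp_{2g}$; alternatively, one could simply invoke the general fact that, for a semisimple algebraic group $H$ of adjoint type with simply connected cover $\tilde H$, every automorphism of $H$ lifts uniquely to an automorphism of $\tilde H$ (applied to $\tilde H = \gSp_{2g}$, $H = \gPSp_{2g}$).
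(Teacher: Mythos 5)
Your proof is correct, but it takes a genuinely different route from the paper. The paper's proof is purely formal: it applies the universal property of the simply connected cover (Milne, \emph{Algebraic Groups}, Prop.~20.6) to the composite $\phi \circ \pi \colon \gSp_{2g} \to \gPSp_{2g}$, obtains a homomorphism $f \colon \gSp_{2g} \to \gSp_{2g}$ with $\phi \circ \pi \circ f = \pi$, and then argues that $f$ must be an isomorphism so that $f^{-1}$ is the desired lift. This is essentially the ``general fact'' you mention in your last sentence, and it works without ever identifying either automorphism group explicitly. Your primary argument instead computes both $\Aut(\gSp_{2g})$ and $\Aut(\gPSp_{2g})$ as $\gPGSp_{2g}(\QQ)$ (the former being \cref{auts-of-sp}, the latter by the same argument using that type~$C$ has trivial outer automorphism group and that the adjoint group equals its own inner automorphism group scheme), and then observes that conjugation by a class $[g]$ is simultaneously realized on $\gSp_{2g}$ and on $\gPSp_{2g}$ in a manner compatible with $\pi$. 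Both arguments are sound; the paper's is shorter given the cited reference, while yours is more concrete and makes visible the identification of the two automorphism groups, at the cost of having to re-derive the analogue of \cref{auts-of-sp} for the adjoint form.

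One small point worth being careful about if you write this up: a class $[g] \in \gPGSp_{2g}(\QQ)$ need not lift to $\gGSp_{2g}(\QQ)$, only to $\gGSp_{2g}(\ov\QQ)$, so one should argue (as you implicitly do via the well-definedness of $\iota$) that the resulting automorphism of $\gSp_{2g}$ is nonetheless defined over $\QQ$; this follows because the conjugation map $\gPGSp_{2g} \to \underline{\Aut}(\gSp_{2g})$ is a morphism of $\QQ$-group schemes, so it carries $\QQ$-points to $\QQ$-points.
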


\begin{proof}
Let $\phi\in\Aut(\gPSp_{2g})$ and let $\pi:\gSp_{2g}\to\gPSp_{2g}$ denote the natural morphism. Applying \cite[Proposition 20.6]{milneiAG} with $G=\gPSp_{2g}$, $\tilde{G}=\gSp_{2g}$, $G'=\tilde{G}$ and $\varphi=\phi\circ\pi$, we obtain a morphism $f:\gSp_{2g}\to\gSp_{2g}$ such that $\varphi\circ f=\pi$. Since $\gSp_{2g}$ is simply connected, it follows that $f$ is an isomorphism, and so $\tilde\phi=f^{-1}$ fulfils the requirements of the lemma.
\end{proof}

\begin{proof}[Proof of \cref{prop:Sp2gn}]
Consider the product decomposition $\gSp_{2g}^n=\gSp_{2g}^{n-1}\times\gSp_{2g}$. By induction, it suffices to prove the lemma under the assumption that the projection of $\gM^\der$ on to the first of these two factors is surjective. Under this assumption, Goursat's Lemma tell us that $\gM^\der$ is equal to the preimage in $\gSp_{2g}^{n-1}\times\gSp_{2g}$ of the graph of an isomorphism 
\[\gSp_{2g}^{n-1}/\gK_2\to\gSp_{2g}/\gK_1,\]
where $\gK_i$ is the kernel of the $i$-th projection of $\gSp_{2g}^{n-1}\times\gSp_{2g}$.

The normal subgroups of $\gSp_{2g}$ are $\{1\}$, $\{\pm 1\}$, and $\gSp_{2g}$ itself, and the normal subgroups of $\gSp_{2g}^{n-1}$ are products thereof. If $\gK_1=\{1\}$, then $\gK_1$ must be the product of $n-2$ factors $\gSp_{2g}$ of $\gSp_{2g}^{n-1}$ so that $\gSp_{2g} \to \gSp_{2g}^{n-1}/\gK_2$ is the projection onto one factor.
Hence, by \cref{auts-of-sp}, after possibly reordering the factors, $\gM^{\rm der}$ is the subgroup of $\gSp^n_{2g}$ consisting of elements $(g_1,\ldots,g_n)$ such that $g_n=hg_1h^{-1}$ for some $h\in\gPSp_{2g}(\QQ)$.
However, this implies that there is an isogeny $G^{(1)}_{\bar\eta} \to G^{(n)}_{\bar\eta}$, contradicting the definition of~$\Lambda$.

On the other hand, if $\gK_1 = \{\pm 1 \}$, then, after reordering the factors, $\gK_2 = \{\pm 1\} \times \gSp_{2g}^{n-2}$ and $\gSp_{2g}^{n-1} \to \gSp_{2g}^{n-1}/\gK_2$ is projection onto the first factor composed with $\gSp_{2g} \to \gPSp_{2g}$.
By \cref{aut-lift}, the isomorphism $\gSp_{2g}^{n-1}/\gK_2 \to \gSp_{2g}/\gK_1$ lifts to an isomorphism $\tilde\phi \colon \gSp_{2g}^{n-1}/\gSp_{2g}^{n-2} \to \gSp_{2g}$.
Then, the preimage in $\gSp_{2g}^{n-1} \times \gSp_{2g}$ of the graph of $\tilde\phi$ is an index-$2$ subgroup of $\gM^\der$.
This is impossible since $\gM^{\der}$ is connected.

Therefore, $\gK_1=\gSp_{2g}$ and $\gM^{\rm der}=\gSp^n_{2g}$, as claimed.
\end{proof}

Let $U^*_{v,1}:=U_{v,1}\cap C^{*\van}$ and, for each $k\in\Lambda$, let $\pi^{(k)}:G^{*(k)}\to C^*$ denote the structure morphism. Let $\gamma^{(k)}_1,\ldots,\gamma^{(k)}_g\in R_1\pi^{(k)\van}_*\ZZ|_{U^*_{v,1}}$ denote the basis of locally invariant sections as in \cref{subsec:arch-interp-integrals}. By \cref{lem:arch-periods}, for any $s\in U_{v,1}^{*}$,
\begin{align*}
    Y^{(k)}_{ij}(s) & :=\frac{1}{2\pi i} \int_{\gamma^{(k)}_{j,s}} \omega_{i,s}^{(k)\van} = F_{ij}^{(k)\van}(x^\van(s)),\\
Z^{(k)}_{ij}(s) & :=\frac{1}{2\pi i} \int_{\gamma^{(k)}_{j,s}} \eta_{i,s}^{(k)\van} = G_{ij}^{(k)\van}(x^\van(s)).
\end{align*} 

Choosing a simply connected open subset $V_{u,1}$ of $U^*_{v,1}$ and completing $\gamma^{(k)}_1,\ldots,\gamma^{(k)}_g$ to a symplectic basis of $R_1\pi^{(k)\van}_*\ZZ|_{V_{v,1}}$ (for the symplectic form induced by the polarisation), we consider these functions as the first $g$ columns of a period matrix 
\[\fullmatrix{Y^{(k)}}{Y'^{(k)}}{Z^{(k)}}{Z'^{(k)}}:V_{v,1}\to\gM_{2g}(\CC)\]
for $G^{*(k)}$ and we consider this matrix as the $k^{\rm th}$ block of a period matrix 
\[\cP:V_{v,1}\to\gM_{2g}(\CC)^n\]
for the fibre product $G^{*\Lambda}\to C^*$. 

% Let $\Theta \subset C_\CC^* \times \gM_{2g,\CC}$ denote the subvariety defined by equation~\eqref{eqn:Theta} in \cref{subsec:prime-ideal}.
% Let $\Theta^\Lambda \subset {C_\CC}^*\times\gM^n_{2g,\CC}$ denote the fibre product over~$C_\CC^*$ of $n$ copies of $\Theta$, indexed by~$\Lambda$.
By the Riemann relations, and by \cref{omega-eta-basis}(ii), we deduce that the graph of $\cP$ is contained in the subvariety $\Theta$
of $C_\CC^*\times\gM^n_{2g,\CC}$ defined by the following equations for all $k\in\Lambda$:
\begin{gather}
   (Y^{(k)})^tZ^{(k)}-(Z^{(k)})^tY^{(k)}=0; \label{eqn:YkZk}
\\ (Y^{'(k)})^tZ^{'(k)}-(Z^{'(k)})^tY^{'(k)}=0;
\\ (Y^{(k)})^tZ^{'(k)}-(Z^{(k)})^tY^{'(k)}=\frac{e_k}{2\pi i}I_g.
\end{gather}
Since $e_k \in \cO(C^*)^\times$ the variety $\Theta$ is a $\gSp_{2g,\CC}^n$-torsor over $C_\CC^*$.

\begin{lemma}\label{lem:P-dense}
The graph of $\cP$ is Zariski dense in $\Theta$.
\end{lemma}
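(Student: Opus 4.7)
The plan is to prove this by a standard monodromy argument. Let $X \subset \Theta$ denote the Zariski closure of the graph of $\cP$. The goal is to show $X = \Theta$.

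First I would observe that $X$ is irreducible and that its projection to $C^*_\CC$ is dominant: indeed $U^*_{v,1}$ is a non-empty connected complex analytic open subset of $C^{*\an}$ and hence Zariski dense in $C^*_\CC$, and the graph of the holomorphic map $\cP$ is a connected analytic subset of $\Theta$, so its Zariski closure is irreducible. Since each fibre of $\Theta \to C^*_\CC$ is irreducible (being a $\gSp_{2g,\CC}^n$-torsor) of dimension $n \dim \gSp_{2g}$, it suffices to show that for some (hence every) point $s \in U^*_{v,1}$, the fibre $X_s$ is equal to the fibre $\Theta_s$.

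The key step is that $X$ is stable under the monodromy action on $\Theta$. More precisely, the period matrix $\cP$ extends by analytic continuation to a multivalued section of $\Theta \to C^*_\CC$ over all of $C^*(\CC)$. Each analytic branch near $s_1$ obtained by continuing $\cP$ around a loop $\gamma \in \pi_1(C^*(\CC), s_1)$ gives a new graph inside the same irreducible complex analytic component of $\Theta$ that contains the original graph (since analytic continuation preserves irreducible components of the Zariski closure). Thus the Zariski closure $X_{s_1}$ of the set of all such branches at $s_1$ is contained in $X_{s_1}$ itself, which means $X_{s_1}$ is stable under the monodromy representation of $\pi_1(C^*(\CC), s_1)$ acting on the fibre $\Theta_{s_1}$ through its action on the homology $H_1(G^{\Lambda}_{s_1}(\CC),\ZZ)$.

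Now I invoke the theorem of Deligne on algebraic monodromy (as used in \cite[Ch.~IV, Thm.~1.4]{And89}): the connected component of the Zariski closure of the monodromy group of the polarised variation of Hodge structures $R_1 \pi_*^\an \QQ$ inside $\gSp_{2g}^n$ coincides (up to finite index) with the derived Mumford--Tate group $\gM^\der$. By Proposition~\ref{prop:Sp2gn}, $\gM^\der = \gSp_{2g}^n$, so the monodromy image is Zariski dense in $\gSp_{2g,\CC}^n$. Since $\Theta_{s_1}$ is a $\gSp_{2g,\CC}^n$-torsor and $X_{s_1}$ is a non-empty Zariski closed subset stable under a Zariski dense subgroup of $\gSp_{2g,\CC}^n$, we conclude $X_{s_1} = \Theta_{s_1}$. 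Combined with dominance of $X \to C^*_\CC$ and irreducibility of $\Theta$, this gives $X = \Theta$.

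The main subtlety is keeping track of the torsor action versus the linear action when invoking monodromy: the monodromy acts on the symplectic cohomology fibre as $\gSp_{2g}^n$, and this action translates into the $\gSp_{2g,\CC}^n$-action on the torsor $\Theta_{s_1}$ by change of the first $g$ columns of each period block via the symplectic integer matrix. Once this identification is set up carefully, the density argument runs through exactly as outlined.
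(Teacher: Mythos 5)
Your proposal takes essentially the same route as the paper: André's normal monodromy theorem (which the paper cites as \cite[Thm.~1]{And92}; your \cite[Ch.~IV]{And89} reference is the same circle of ideas) combined with \cref{prop:Sp2gn} gives Zariski density of the monodromy image, and density of the graph then follows from the $\gSp_{2g,\CC}^n$-torsor structure of $\Theta$ — this last step is precisely what the paper outsources to \cite[Lemma 7.2]{Papas:heights}. One small slip: $s_1 \notin C^*$, so the fundamental group should be based at a point $z \in U^*_{v,1} \subset C^*$ rather than at $s_1$.
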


\begin{proof}
Arguing exactly as in the proof of \cite[Lemma 5.9]{Y(1)} (based on André's Normal Monodromy Theorem \cite[Thm.~1]{And92}), and using \cref{prop:Sp2gn}, we deduce that, for any Hodge generic point $z\in V_{v,1}$, the monodromy representation 
\[\pi_1(C^{*\van},z)\to\gM^{\der}(\QQ)=\gSp_{2g}(\QQ)^n\] 
attached to $R_1\pi^\van_*\QQ$ has Zariski dense image. We now argue exactly as in the proof of \cite[Lemma 7.2]{Papas:heights}.
\end{proof}

Let $\pi:\gM_{2g}(\CC)^n\to\gM_{2g\times g}(\CC)^n$ denote the projection on to the first $g$ columns in each $2g\times 2g$ matrix, and let $\Theta'$ denote the subvariety of $C_\CC^*\times\gM_{2g\times g,\CC}^n$ defined by the equations~\eqref{eqn:YkZk}.

\begin{corollary}
The graph of $\pi\circ\cP$ is Zariski dense in $\Theta'$.   
\end{corollary}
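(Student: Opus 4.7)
The plan is to deduce this corollary directly from \cref{lem:P-dense} and \cref{lem:dom}. Consider the morphism $(\id, \pi) \colon C_\CC^* \times \gM_{2g,\CC}^n \to C_\CC^* \times \gM_{2g \times g,\CC}^n$ induced by $\pi$. By inspection of the defining equations (the equations for $\Theta'$ are precisely the first of the three sets of equations defining $\Theta$), this morphism restricts to a morphism $\Theta \to \Theta'$, and the graph of $\pi \circ \cP$ is exactly the image of the graph of $\cP$ under this restriction.

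The main step is to verify that $(\id, \pi)|_\Theta \colon \Theta \to \Theta'$ is dominant. Both $\Theta \to C_\CC^*$ and $\Theta' \to C_\CC^*$ are flat and surjective, so it is enough to check fibrewise dominance. Over a point $s \in C^*(\CC)$, the fibre of $\Theta$ is a product over $k \in \Lambda$ of $\gSp_{2g,\CC}$-torsors of the form $W$ in \eqref{eqn:Theta}, each taken with parameter $\mu_k = e_k(s)/(2\pi i) \in \CC^\times$; correspondingly, the fibre of $\Theta'$ is the product of the same number of copies of the variety $V \subset \gM_{2g \times g, \CC}$ from \cref{subsec:prime-ideal}. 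By \cref{lem:dom}, $W \to V$ is dominant for each individual factor, so the product map on fibres is dominant. Hence $(\id, \pi)|_\Theta$ is dominant.

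To finish, we combine these two inputs: by \cref{lem:P-dense} the graph of $\cP$ is Zariski dense in $\Theta$, and the image of a Zariski dense subset under a dominant morphism is Zariski dense in the target. The image in question is precisely the graph of $\pi \circ \cP$, and the target is~$\Theta'$. I do not anticipate a serious obstacle here; the only point requiring care is confirming that the fibrewise torsor structure of $\Theta$ really matches the setup of \cref{lem:dom} with $\mu = e_k(s)/(2\pi i)$ varying over $\CC^\times$, which is immediate from $e_k \in \cO(C^*)^\times$.
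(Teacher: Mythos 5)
Your proof is correct and takes essentially the same route as the paper: both deduce fibrewise dominance of $\pi \colon \Theta_s \to \Theta'_s$ from \cref{lem:dom}, conclude that $\id \times \pi \colon \Theta \to \Theta'$ is dominant, and combine this with \cref{lem:P-dense}. You simply spell out the fibrewise-to-global dominance step and the torsor identification with $\mu_k = e_k(s)/(2\pi i)$, which the paper leaves implicit.
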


\begin{proof}
Thanks to \cref{lem:dom}, the morphism $\pi \colon \Theta_s \to \Theta'_s$ is dominant for every $s \in C^*(\CC)$.
Hence, $\id \times \pi \colon \Theta \to \Theta'$ is dominant.
Therefore, the result follows from Lemma \ref{lem:P-dense}.
\end{proof}

Now, by an argument entirely analogous to \cite[Proposition 5.11]{Y(1)}, we conclude that any trivial relation between the elements of $\cG$ is contained in the ideal $I^\Lambda$ of $\Qbar[\underline{Y}^{(k)},\underline{Z}^{(k)}:k\in\Lambda]$ generated by the equations \eqref{eqn:YkZk}. 

It remains, therefore, to show that $P\notin I^\Lambda$. However, this follows from the fact that $I^\Lambda$ is prime (which follows from the fact that $I$ is prime, using \cite[Prop.~5.17]{milneAG}). Indeed, suppose that $P \in I^\Lambda$. Then, since $I^\Lambda$ is prime, $P_k(\underline{Y}^{[k]},\underline{Z}^{[k]})\in I^\Lambda$ for some $k\in\{1,\ldots,n\}$. Since this implies $P_k\in I$, we obtain a contradiction.

\bibliographystyle{amsalpha}
\bibliography{galois-bounds}

\end{document}